\newcommand{\email}{\texttt}
\definecolor{gray}{RGB}{180,180,180}
\newcommand{\NEW}[1]{#1}
\newcommand{\citet}{\cite}
\newcommand{\child}{\mathsf{child}}
\newcommand{\parent}{\mathsf{par}}
\newcommand{\bmr}{\mathrel{\rightarrow}}
\newcommand{\rthin}{\mathrel{\mathrel{\ooalign{\hss\raisebox{-0.17ex}{$\sim$}\hss\cr\hss\raisebox{0.720ex}{\scalebox{0.75}{$\bullet$}}\hss}}}}
\newcommand{\AX}[1]{\textnormal{#1}}
\DeclareMathOperator{\lca}{lca}
\DeclareMathOperator{\Aho}{Aho}
\providecommand{\keywords}[1]{\textbf{\textit{Keywords: }} #1}
\newtheorem{theorem}{Theorem}
\newtheorem{lemma}{Lemma}
\newtheorem{proposition}{Proposition}
\newtheorem{corollary}{Corollary}
\newtheorem{definition}{Definition}
\newtheorem{fact}[theorem]{Observation}
\begin{document}
\setlength{\marginparsep}{-0.6cm}
\setlength{\marginparwidth}{2.3cm}

\title{Best Match Graphs}

\author[1]{Manuela Gei{\ss}}
\author[8]{Edgar Ch{\'a}vez}
\author[8]{Marcos Gonz{\'a}lez}
\author[8]{Alitzel L{\'o}pez }
\author[9]{Dulce Valdivia }
\author[8]{Maribel Hern{\'a}ndez Rosales}
\author[5]{B{\"a}rbel M.\ R.\ Stadler}
\author[2,3]{Marc Hellmuth}
\author[1,5,6,7]{Peter F.\ Stadler}

\affil[1]{\footnotesize Bioinformatics Group, Department of Computer Science; and
		    Interdisciplinary Center of Bioinformatics, University of Leipzig, 
			 H{\"a}rtelstra{\ss}e 16-18, D-04107 Leipzig}
\affil[2]{Dpt.\ of Mathematics and Computer Science, University of Greifswald, Walther-
  Rathenau-Strasse 47, D-17487 Greifswald, Germany 	}
\affil[3]{Saarland University, Center for Bioinformatics, Building E 2.1, P.O.\ Box 151150, D-66041 Saarbr{\"u}cken, Germany }
\affil[4]{Department of Mathematics and Computer Science,
		    University of Southern Denmark, Denmark }
\affil[5]{Max-Planck-Institute for Mathematics in the Sciences, 
  Inselstra{\ss}e 22, D-04103 Leipzig}
\affil[6]{Inst.\ f.\ Theoretical Chemistry, University of Vienna, 
  W{\"a}hringerstra{\ss}e 17, A-1090 Wien, Austria}
\affil[7]{Santa Fe Institute, 1399 Hyde Park Rd., Santa Fe, USA} 
\affil[8]{        CONACYT-Instituto de Matem{\'a}ticas, UNAM Juriquilla,
        Blvd. Juriquilla 3001,
        76230 Juriquilla, Quer{\'e}taro, QRO, M{\'e}xico}
\affil[9]{Universidad Aut{\'o}noma de Aguascalientes, Centro de Ciencias B{\'a}sicas,
        Av. Universidad 940,
        20131 Aguascalientes, AGS, M{\'e}xico;
        Instituto de Matem{\'a}ticas, UNAM Juriquilla,
        Blvd. Juriquilla 3001, 
        76230 Juriquilla, Quer{\'e}taro, QRO, M{\'e}xico.}
\date{}
\normalsize

\maketitle

\marginpar{\color{blue}\scriptsize
  A Corrigendum is appended to this preprint, see margin notes.} 
\abstract{   
  Best match graphs arise naturally as the first processing intermediate in
  algorithms for orthology detection.  Let $T$ be a phylogenetic (gene)
  tree $T$ and $\sigma$ an assignment of leaves of $T$ to species.  The
  best match graph $(G,\sigma)$ is a digraph that contains an arc from $x$
  to $y$ if the genes $x$ and $y$ reside in different species and $y$ is
  one of possibly many (evolutionary) closest relatives of $x$ compared to
  all other genes contained in the species $\sigma(y)$.  Here, we
  characterize best match graphs and show that it can be decided in cubic
  time and quadratic space whether $(G,\sigma)$ derived from a tree in this
  manner. If the answer is affirmative, there is a unique least resolved
  tree that explains $(G,\sigma)$, which can also be constructed in cubic
  time.
}

\bigskip
\noindent
\keywords{
 		Phylogenetic Combinatorics; Colored digraph; Reachable
    sets; Hierarchy; Hasse diagram; Rooted triples;    Supertrees
}

\sloppy

\section{Introduction}

Symmetric best matches \cite{Tatusov:97}, also known as bidirectional best
hits (BBH) \cite{Overbeek:99}, reciprocal best hits (RBH) \cite{Bork:98},
or reciprocal smallest distance (RSD) \cite{Wall:03} are the most commonly
employed method for inferring orthologs
\cite{Altenhoff:09,Altenhoff:16}. Practical applications typically produce,
for each gene from species $A$, a list of genes found in species $B$,
ranked in the order of decreasing \NEW{sequence similarity}.  From these
lists, reciprocal best \NEW{hits} are readily obtained. Some software
tools, such as \texttt{ProteinOrtho} \cite{Lechner:11a,Lechner:14a},
explicitly construct a digraph whose arcs are the (approximately)
co-optimal best matches. \NEW{Empirically, the pairs of genes that are
  identified as reciprocal best hits depend on the details of the
  computational method for quantifying sequence similarity. Most commonly,
  \texttt{blast} or \texttt{blat} scores are used. Sometimes exact pairwise
  alignment algorithms are used to obtain a more accurate estimate of the
  evolutionary distance, see \citet{MorenoHagelsieb:08} for a detailed
  investigation. Independent of the computational details, however,
  reciprocal best match are of interest because they approximate the
  concept of pairs of \emph{reciprocal evolutionarily most closely related}
  genes. It is this notion that links best matches directly to orthology:
  Given a gene $x$ in species $a$ (and disregarding horizontal gene
  transfer), all its co-orthologous genes $y$ in species $b$ are by
  definition closest relatives of $x$.}

Evolutionary relatedness is a phylogenetic property \NEW{and thus is
  defined relative to the phylogenetic tree $T$ of the genes under
  consideration. More precisely, we consider a set of genes $L$ (the leaves
  of the phylogenetic tree $T$), a set of species $S$, and a map $\sigma$
  assigning to each gene $x\in L$ the species $\sigma(x)\in S$ within which
  it resides. A gene $x$ is more closely related to gene $y$ than to gene
  $z$ if $\lca(x,y)\prec\lca(x,z)$. As usual, $\lca$ denotes the last
  common ancestor, and $p\prec q$ denotes the fact that $q$ is located
  above $p$ along the path connecting $p$ with the root of $T$. The partial
  order $\preceq$ (which also allows equality) is called the ancestor order
  on $T$. We can now make the notion of a \emph{best match} precise:}
\begin{definition}
  \label{def:genclose}
  Consider a tree $T$ with leaf set $L$ and a surjective map
  $\sigma:L\to S$.  Then $y\in L$ is a \emph{best match} of $x\in L$, in
  symbols $x\bmr y$, if and only if $\lca(x,y)\preceq \lca(x,y')$ holds for
  all leaves $y'$ from species $\sigma(y')=\sigma(y)$.
\end{definition}
  
\begin{figure}[tb]
  \begin{center} 
    \includegraphics[width=\textwidth]{./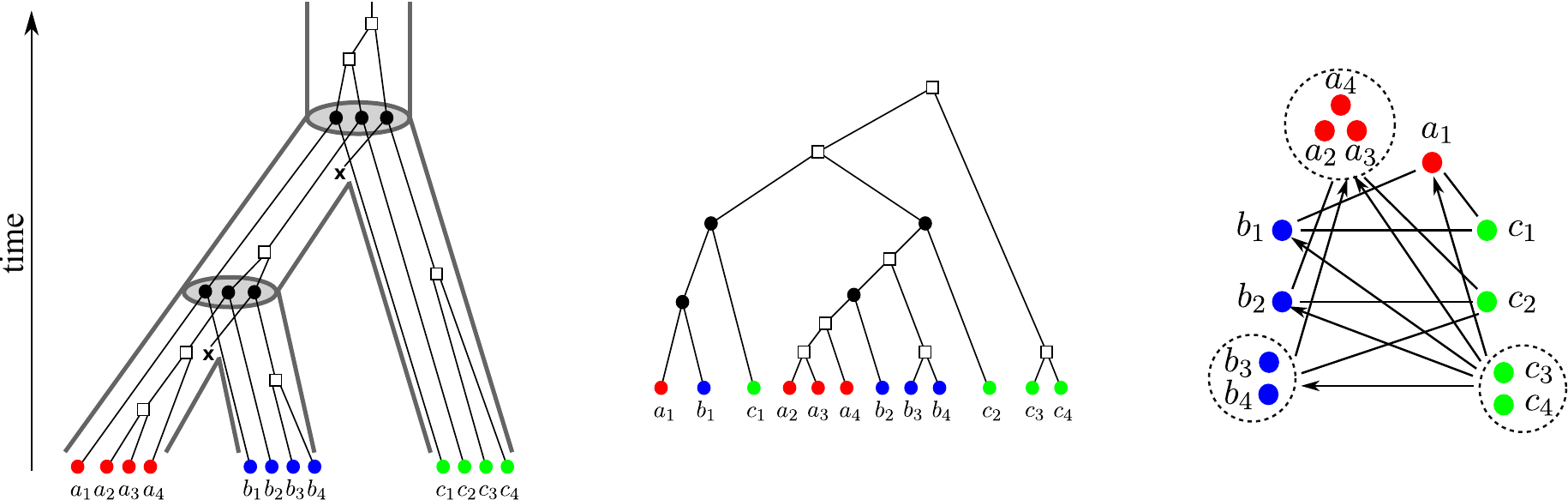}
  \end{center}
  \caption[]{\NEW{An evolutionary scenario (left) consists of a gene tree
      whose inner vertices are marked by the event type ($\bullet$ for
      speciations, $\square$ for gene duplications, and $\times$ for gene
      loss) together with its embedding into a species tree (drawn as
      tube-like outline). All events are placed on a time axis. The middle
      panel shows the observable part of the gene tree $(T,\sigma)$; it is
      obtained from the gene tree in the full evolutionary scenario by
      removing all leaves marked as loss events and suppression of all
      resulting degree two vertices
      \cite{HernandezRosales:12a,Hellmuth2017}. The r.h.s.\ panel shows the
      colored best match graph $(G,\sigma)$ that is explained by
      $(T,\sigma)$. Directed arcs indicate the best match relation
      $\bmr$. Bi-directional best matches ($x\bmr y$ and $y\bmr x$) are
      drawn as solid lines without arrow heads instead of pairs of arrows.
      Dotted circles collect sets of leaves that have the same in- and
      out-neighborhood. The corresponding arcs are shown only once.}  }
  \label{fig:exmpl-distance}
\end{figure}

\NEW{In order to understand how best matches (in the sense of Def.\
  \ref{def:genclose}) are approximated by best hits computed by mean
  sequence similarity we first observe that best matches can be expressed
  in terms of the evolutionary time. Denote by $t(x,y)$ the temporal
  distance along the evolutionary tree, as in
  Fig.~\ref{fig:exmpl-distance}. By definition $t(x,y)$ is twice the time
  elapsed between $\lca(x,y)$ and $x$ (or $y$), assuming that all leaves of
  $T$ live in the present.  Instead of Def.\ \ref{def:genclose} we can then
  use ``$x\bmr y$ holds if and only if $t(x,y)\le t(x,y')$ for all $y'$ with
  $\sigma(y')=\sigma(y)\ne\sigma(x)$.''
  Mathematically, this is equivalent to Def.\ \ref{def:genclose} whenever $t$
  is an ultrametric distance on $T$. For the temporal distance $t$ this is
  the case.} Best match heuristics therefore assume (often tacitly) that
the \emph{molecular clock hypothesis} \cite{Zuckerkandl:62,Kumar:05} is at
least a reasonable approximation.

While this strong condition is violated more often than not, best match
heuristics still perform surprisingly well on real-life data, in particular
in the context of orthology prediction \cite{Wolf:12}. \NEW{Despite
  practical problems, in particular in applications to Eukaryotic genes
  \cite{Dalquen:13}, reciprocal best heuristics perform at least as good
  for this task as methods that first estimate the gene phylogeny
  \cite{Altenhoff:16,Setubal:18a}. One reason for their resilience is that
  the identification of best matches only requires inequalities between
  sequence similarities. In particular, therefore they are invariant under
  monotonic transformations and, in contrast e.g.\ to distance based
  phylogenetic methods, does not require additivity. Even more generally, it
  suffices that the evolutionary rates of the different members of a gene
  family are roughly the same within each lineage.}

\NEW{Best match methods are far from perfect, however. Large differences in
  evolutionary rates between paralogs, as predicted by the DDC model
  \cite{Force:99}, for example, may lead to false negatives among
  co-orthologs and false positive best matches between members of slower
  subfamilies. Recent orthology detection methods recognize the sources of
  error and complement sequence similarity by additional sources of
  information. Most notably, synteny is often used to support or reject
  reciprocal best matches \cite{Lechner:14a,Jahangiri:17}. Another class of
  approaches combine the information of small sets of pairwise matches to
  improve orthology prediction \cite{Yu:11,Train:17}. In the Concluding
  Remarks we briefly sketch a simple quartet-based approach to identify
  incorrect best match assignments.

  Extending the information used for the correction of initial reciprocal
  best hits to a global scale, it is possible to improve orthology
  prediction by enforcing the global cograph of the orthology relation
  \cite{Hellmuth:15a,Lafond:16}. This work originated from an analogous
  question: Can empirical reciprocal best match data be improved just by
  using the fact that ideally a best match relation should derive from a
  tree $T$ according to Def.\ \ref{def:genclose}? To answer this question we
  need to understand the structure of best match relations.}

The best match relation is conveniently represented as a colored digraph.
\begin{definition} 
  \label{def:cBMG}
  Given a tree $T$ and a map $\sigma:L\to S$, the \emph{colored best match
    graph} (cBMG) $G(T,\sigma)$ has vertex set $L$ and arcs $xy\in E(G)$ if
  $x\ne y$ and $x\bmr y$. Each vertex $x\in L$ obtains the color
  $\sigma(x)$.  \newline The rooted tree $T$ \emph{explains} the
  vertex-colored graph $(G,\sigma)$ if $(G,\sigma)$ is isomorphic to the
  cBMG $G(T,\sigma)$.  
\end{definition}
To emphasize the number of colors used in $G(T,\sigma)$,
that is, the number of species in $S$, we will write $|S|$-cBMG.

\begin{figure}
  \begin{center} 
        \includegraphics[width=0.2\textwidth]{./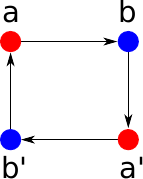}
        \caption{\NEW{Not every graph with non-empty out-neighborhoods is
            is a colored best match graph. The 4-vertex graph $(G,\sigma)$
            shown here is the smallest connected counterexample: there is
            no leaf-colored tree $(T,\sigma)$ that explains $(G,\sigma)$.}}
        \label{fig:counter} 
  \end{center}
\end{figure} 

\NEW{The purpose of this contribution is to establish a characterization of
  cBMGs as an indispensable prerequisite for any method that attempts to
  directly correct empirical best match data.}  After settling the notation
we establish a few simple properties of cBMGs and show that key problems
can be broken down to the connected components of 2-colored BMGs. These are
considered in detail in section~\ref{sect:2cBMG}. \NEW{The characterization
  of 2-BMGs is not a trivial task. Although the existence of at least one
  out-neighbor for each vertex is an obvious necessary condition, the
  example in Fig.\ \ref{fig:counter} shows that it is not sufficient.  In
  Section~\ref{sect:2cBMG} we} prove our main results on 2-cBMGs: the
existence of a unique least resolved tree that explains any given 2-cBMG
(Thm.~\ref{thm:lr-2}), a characterization in terms of informative triples
that can be extracted directly from the input graph (\NEW{Thm.}\
\ref{thm:2cbmg-triples}), and a characterization in terms of three simple
conditions on the out-neighborhoods (Thm.~\ref{thm:char2}).  In
section~\ref{sect:ncBMG} we provide a complete characterization of a
general cBMG: It is necessary and sufficient that the subgraph induced by
each pair of colors is a 2-cBMG and that the union of the triple sets of
their least resolved tree representations is consistent. After a brief
discussion of algorithmic considerations we close with a brief introduction
into questions for future research.

\section{Preliminaries} 

\subsection{Notation} 

Given a rooted tree $T=(V,E)$ with root $\rho$, we say that a vertex
$v\in V$ is an \emph{ancestor} of $u\in V$, in symbols $u\preceq v$, $v$
lies one the path from $\rho$ to $u$.  For an edge $e=uv$ in the rooted
tree $T$ we assume that $u$ is closer to the root of $T$ than $v$. In this
case, we call $v$ a child of $u$, and $u$ the parent of $v$ and denote with
$\child(u)$ the set of children of $u$.  Moreover, $e=uv$ is an \emph{outer
  edge} if $v\in L(T)$ and an \emph{inner edge} otherwise.  We write $T(v)$
for the subtree of $T$ rooted at $v$, $L(T')$ for the leaf set of some
subtree $T'$ and $\sigma(L')=\{\sigma(x)\mid x\in L'\}$.  To avoid dealing
with trivial cases we will assume that $\sigma(L)=S$ contains at least two
distinct colors. Furthermore, for $|S|=1$, the edge-less graphs are
explained by any tree. Hence, we will assume $|S|\ge2$ in the
following. Without loosing generality we may assume throughout this
contribution that all trees are phylogenetic, i.e., all inner vertices of
$T$ (except possibly the root) have at least two children. A tree is binary
if each inner vertex has exactly two children.

We follow the notation used e.g.\ in \cite{Semple:03} and say that $T'$ is
\emph{displayed} by $T$, in symbols $T'\le T$, if the tree $T'$ can be
obtained from a subtree of $T$ by contraction of edges.  In addition, we
will consider trees $T$ with a coloring map $\sigma: L(T)\to S$ of its
leaves, in short $(T,\sigma)$.  We say that $(T,\sigma)$ \emph{displays} or
\emph{is a refinement of} $(T',\sigma')$, whenever $T'\le T$ and
$\sigma(v)=\sigma'(v)$ for all $v\in L(T')$.

We write $T_{L'}$ for the \emph{restriction} of $T$ to a subset
$L'\subseteq L$. We denote by $\lca(A)$ the last common ancestor of all
elements of any set $A$ of vertices in $T$. For later reference we note
that $\lca(A\cup B)=\lca(\lca(A),\lca(B))$.  We sometimes write $\lca_T$
instead of $\lca$ to avoid ambiguities.  We will often write $A \preceq x$,
in case that $\lca(A)\preceq x$ and therefore, that $x$ is an ancestor of
all $a\in A$.

A binary tree on three leaves is called a \emph{triple}. In particular, we
write $xy|z$ for the triple on the leaves $x,y$ and $z$ if the path from
$x$ to $y$ does not intersect the path from $z$ to the root. We write
$r(T)$ for the set of all triples that are displayed by the tree $T$. In
particular, we call a triple set $R$ \emph{consistent} if there exists a
tree $T$ that displays $R$, i.e., $R\subseteq r(T)$. A rooted triple
$xy|z\in r(T)$ \emph{distinguishes} an edge $(u,v)$ in $T$ if and only if
$x$, $y$ and $z$ are descendants of $u$, $v$ is an ancestor of $x$ and $y$
but not of $z$, and there is no descendant $v'$ of $v$ for which $x$ and
$y$ are both descendants. In other words, the edge $(u,v)$ is distinguished
by $xy|z\in r(T)$ if $\lca(x,y)=v$ and $\lca(x,y,z)=u$.

By a slight abuse of notation we will retain the symbol $\sigma$ also for
the restriction of $\sigma$ to a subset $L'\subseteq L$. We write
$L[s]=\{x\in L \mid \sigma(x)=s\}$ for the color classes on the leaves of
$(T,\sigma)$ and denote by $\overline{\sigma(x)}=S\setminus \{\sigma(x)\}$
the set of colors different from the color of the leaf $x$.

All (di-)graphs considered here do not contain loops, i.e., there are no
arcs of the form $xx$.  For a given (di-)graph $G=(V,E)$ and a subset
$W\subseteq V$, we write $G[W]$ for the \emph{induced subgraph} of $G$ that
has vertex set $W$ and contains all edges $xy$ of $G$ for which $x,y\in W$.
A digraph $G=(V,E)$ is \emph{connected} if for any pairs of vertices
$x,y\in V$ there is a path $x=v_1-v_2-\dots-v_k=y$ such that (i)
$v_iv_{i+1} \in E$ or (ii) $v_{i+1}v_i \in E$, $1\leq i<k$.  The graph
$G(V,E)$ is strongly connected if for all $x,y\in V$ there is a sequence
$P_{xy}$ that always satisfies Condition (i). For a vertex $x$ in a digraph
$G$ we write $N(x)=\{z \mid xz \in E(G)\}$ and $N^-(x)=\{z \mid zx \in
E(G)\}$ for the out- and in-neighborhoods of $x$, respectively.
\NEW{For any set of vertices $A\subseteq L$ we write 
  $N(A):=\bigcup_{x\in A} N(x)$  and $N^-(A):=\bigcup_{x\in A} N^-(x)$.}

\subsection{Basic Properties of Best Match Relations}

The best match relation $\bmr$ is reflexive because
$\lca(x,x)=x\prec\lca(x,y)$ for all genes $y$ with $\sigma(x)=\sigma(y)$.
For any pair of distinct genes $x$ and $y$ with $\sigma(x)=\sigma(y)$ we
have $\lca(x,y)\notin\{x,y\}$, hence the relation $\bmr$ has off-diagonal
pairs only between genes from different species. There is still a 1-1
correspondence between cBMGs (Def.\ \ref{def:cBMG}) and best match
relations (Def.\ \ref{def:genclose}): In the cBMG the reflexive loops are
omitted, in the relation $\bmr$ they are added.

The tree $(G,\sigma)$ and the corresponding cBGM $G(T,\sigma)$ employ the
same coloring map $\sigma: L\to S$, i.e., our notion of isomorphy requires
the preservation of colors. The usual definition of isomorphisms of colored
graphs also allows an arbitrary bijection between the color sets. This is
not relevant for our discussion: if $(G',\sigma')$ and $G(T,\sigma)$ are
isomorphic in the usual sense then there is -- by definition -- a bijective
relabeling of the colors in $(G',\sigma')$ that makes them coincide with
the vertex coloring of $G(T,\sigma)$.  In other words, if $\varphi$ is an
isomorphism from $(G',\sigma')$ to $G(T,\sigma)$ we assume w.l.o.g.\ that
$\sigma'(x) = \sigma(\varphi(x))$, i.e., each vertex $x\in V(G')$ has the
same color as the vertex $\varphi(x)\in V(G)$.

\subsection{Thinness} 

In undirected graphs, equivalence classes of vertices that share the same
neighborhood are considered in the context of thinness of the graph
\cite{McKenzie:71,Sumner:73,Bull:89}. The concept naturally extends to
digraphs \cite{Hellmuth:15}. For our purposes the following variation on
the theme is most useful:
\begin{definition}\label{def:rthin}
  Two vertices $x,y\in L$ are in relation $\rthin$ if $N(x)=N(y)$ and
  $N^-(x)=N^-(y)$. 
\end{definition} 
For each $\rthin$ class $\alpha$ we have $N(x)=N(\alpha)$ and
$N^-(x)=N^-(\alpha)$ for all $x\in\alpha$. It is obvious, therefore, that
$\rthin$ is an equivalence relation on the vertex set of $G$. Moreover,
since we consider loop-free graphs, one can easily see that $G[\alpha]$ is
always edge-less.  We write $\mathcal{N}$ for the corresponding partition,
i.e., the set of $\rthin$ classes of $G$. Individual $\rthin$ classes will
be denoted by lowercase Greek letters. Moreover, we write $N_s(x)=\{z\mid
z\in N(x) \text{ and } \sigma(z)=s\}$ and $N^-_s(x)=\{z\mid z\in N^-(x)
\text{ and } \sigma(z)=s\}$ for the in- and out-neighborhoods of $x$
restricted to a color $s\in S$.  For the graphs considered here, we always
have $N_{\sigma(x)}(x) = N^-_{\sigma(x)}(x) = \emptyset$. When considering
sets $N_s(x)$ and $N^-_s(x)$ we always assume that $s\neq\sigma(x)$.
Furthermore, $\mathcal{N}_s$ denotes the set of $\rthin$ classes with color
$s$.

By construction, the function $N: V(G)\to$ \NEW{$\mathcal{P}(V(G))$, where
  $\mathcal{P}(V(G))$ is the power set of $V(G)$,} is isotonic, i.e.,
$A\subseteq B$ implies $N(A)\subseteq N(B)$. In particular, therefore, we
have for $\alpha,\beta\in\mathcal{N}$:
\begin{description}
\item[(i)] $\alpha \subseteq N(\beta)$ implies $N(\alpha) \subseteq
  N(N(\beta))$
\item[(ii)] $N(\alpha)\subseteq N(\beta)$ implies
  $N(N(\alpha))\subseteq N(N(\beta))$.
\end{description} 
These observations will be useful in the proofs below. 

By construction every vertex in a cBMG has at least one out-neighbor of
every color except its own, i.e., $|N(x)| \ge |S|-1$ holds for all $x$. In
contrast, $N^-(x)=\emptyset$ is possible.

\subsection{Some Simple Observations} 

The color classes $L[s]$ on the leaves of $T$ are independent sets in
$G(T,\sigma)$ since arcs in $G(T,\sigma)$ connect only vertices with
different colors. For any pair of colors $s,t\in S$, therefore, the induced
subgraph $G[L[s]\cup L[t]]$ of $G(T,\sigma)$ is bipartite.  Since the
definition of $x \bmr y$ does not depend on the presence or absence of
vertices $u$ with $\sigma(u)\notin\{\sigma(x),\sigma(y)\}$, we have
\begin{fact} \label{obs-1}
  Let $(G,\sigma)$ be a cBMG explained by $T$ and let $L' :=
  \bigcup_{s\in S'} L[s]$ be the subset of vertices with a restricted
  color set $S'\subseteq S$. Then the induced subgraph $(G[L'],\sigma)$
  is explained by the restriction $T_{L'}$ of $T$ to the leaf set $L'$. 
\end{fact}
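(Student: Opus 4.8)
The plan is to show that the arc sets of $G[L']$ and $G(T_{L'},\sigma)$ coincide; since both digraphs carry the vertex set $L'$ together with the inherited coloring $\sigma$, this equality immediately yields that $T_{L'}$ explains $(G[L'],\sigma)$. The argument rests on two observations. First, because $L'$ is a union of \emph{complete} color classes, for every $y\in L'$ the entire class $L[\sigma(y)]$ is contained in $L'$: indeed $\sigma(y)\in S'$ by the definition of $L'$, hence $L[\sigma(y)]\subseteq\bigcup_{s\in S'}L[s]=L'$. Consequently the set of leaves against which $y$ competes for being a best match of some $x$ is literally the same whether we work in $T$ or in $T_{L'}$, namely $L[\sigma(y)]$.

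Second, I would record that the restriction preserves the ancestor order of the last common ancestors of the retained leaves: for all $x,y,z\in L'$ one has $\lca_T(x,y)\preceq\lca_T(x,z)$ if and only if $\lca_{T_{L'}}(x,y)\preceq\lca_{T_{L'}}(x,z)$. This is a standard property of leaf-induced restrictions, equivalent to the statement that $r(T_{L'})$ consists precisely of those triples $ab|c\in r(T)$ with $a,b,c\in L'$. It follows from the fact that $T_{L'}$ is obtained from the minimal subtree of $T$ spanning $L'$ by suppression of degree-two vertices, an operation that alters neither the relative position of the branch points nor the triples induced on $L'$.

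Combining the two observations, fix $x,y\in L'$ with $x\neq y$. Then $xy\in E(G[L'])$ iff $xy\in E(G)$, i.e.\ iff $x\bmr y$ holds in $T$, which by Def.\ \ref{def:genclose} means $\lca_T(x,y)\preceq\lca_T(x,y')$ for all $y'\in L[\sigma(y)]$. By the first observation this quantifier already ranges over a subset of $L'$, and by the second observation each such inequality is equivalent to the corresponding inequality $\lca_{T_{L'}}(x,y)\preceq\lca_{T_{L'}}(x,y')$ evaluated in $T_{L'}$. The resulting condition is exactly $x\bmr y$ with respect to $(T_{L'},\sigma)$, i.e.\ $xy\in E(G(T_{L'},\sigma))$. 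Hence the two arc sets agree.

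The bookkeeping of the quantifier ranges is routine once the first observation is in place; the only point that requires genuine care is the order-preservation of the $\lca$'s under restriction. There the main obstacle is to handle the suppression of degree-two vertices cleanly, so that the ancestor comparisons transfer verbatim between $T$ and $T_{L'}$. Everything else reduces to matching the definitions of $\bmr$ in the two trees.
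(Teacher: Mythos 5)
Your proposal is correct and follows the same line of reasoning the paper uses: the paper states this as an observation justified by the single remark that the definition of $x\bmr y$ does not depend on vertices whose color lies outside $\{\sigma(x),\sigma(y)\}$, which is exactly your first observation (the competitor set $L[\sigma(y)]$ is wholly retained in $L'$) combined with the standard fact that leaf-restriction preserves the relative ancestor order of last common ancestors. You have merely written out in full the verification the paper leaves implicit.
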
 
It follows in particular that $G[L[s]\cup L[t]]$ is explained by the
restriction $T_{L[s]\cup L[t]}$ of $T$ to the colors $s$ and $t$.
Furthermore, $G$ is the edge-disjoint union of bipartite subgraphs
corresponding to color pairs, i.e., 
\begin{equation*}
  E(G) = \dot\bigcup_{\{s,t\}\in \binom{S}{2}} E(G_{s,t}).
\end{equation*}
In order to understand when arbitrary graphs $(G,\sigma)$ are cBMGs, it is
sufficient, therefore, to characterize 2-cBMGs. \NEW{A formal proof will be
  given later on in section \ref{sect:ncBMG}.}

\begin{figure}[htb]
\begin{center}
  \includegraphics[width=0.95\textwidth]{./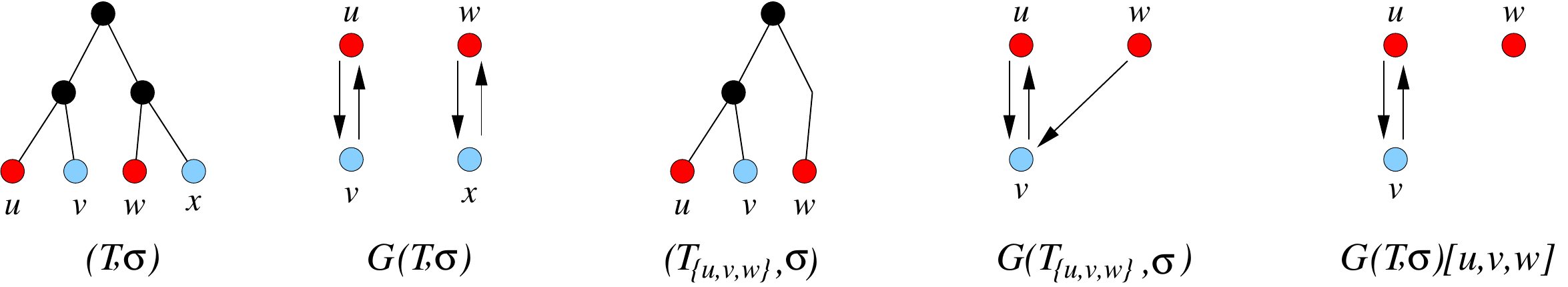} 
\end{center}
\caption{ $T_{\{u,v,w\}}$ is displayed by $T$ but $G(T_{\{u,v,w\}},\sigma)$
  is not isomorphic to the induced subgraph $G(T,\sigma)[\{u,v,w\}]$ of
  $G(T,\sigma)$, since $G(T_{\{u,v,w\}},\sigma)$ contains the additional arc
  $w\to v$. } 
\label{fig:counter1}
\end{figure}

Note the condition that ``$T$ explains $(G,\sigma)$'' does not imply that
$(T_{L'},\sigma)$ explains $(G[L'],\sigma)$ for arbitrary subsets of
$L'\subseteq L$. Fig.~\ref{fig:counter1} shows that, indeed, not every
induced subgraph of a cBMG is necessarily a cBMG.  However, we have the
following, weaker property:
\begin{lemma} \label{lem:sub}
  Let $(G,\sigma)$ be the cBMG explained by $(T,\sigma)$, let $T'=T_{L'}$
  and let $(G',\sigma)$ be the cBMG explained by $(T',\sigma)$. Then $u,v
  \in L'$ and $uv\in E(G)$ implies $uv\in E(G')$.  In other words,
  $(G[L'],\sigma)$ is always a subgraph of $(G'[L'],\sigma)$.
\end{lemma}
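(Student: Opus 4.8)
The plan is to reduce the claim to the fact that restricting a tree to a subset of its leaves preserves the rooted triples among the retained leaves, and then to read off the best-match condition from these triples. Concretely, the key tool is the standard identity
\[
  r(T_{L'}) = \{\, ab|c \in r(T) \mid a,b,c \in L' \,\},
\]
which says that for any three leaves $a,b,c \in L'$ we have $ab|c \in r(T)$ if and only if $ab|c \in r(T')$, where $T'=T_{L'}$. Equivalently, for $a,b,c \in L'$ the strict relation $\lca_T(a,b) \prec \lca_T(a,c)$ holds exactly when $\lca_{T'}(a,b) \prec \lca_{T'}(a,c)$ holds: a strict inequality between last common ancestors is precisely the statement that the corresponding triple is resolved, and the resolved triples on $L'$ are exactly those inherited by $T'$.

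With this in hand I would argue by contraposition. Assume $u,v \in L'$ and $uv \in E(G)$, so that $u \bmr v$ in $T$ and, since best matches are off-diagonal, $\sigma(u) \ne \sigma(v)$. Suppose, for contradiction, that $uv \notin E(G')$, i.e.\ $u \not\bmr v$ in $T'=T_{L'}$. By Def.~\ref{def:genclose} applied to $T'$ there is then a leaf $v' \in L'$ with $\sigma(v') = \sigma(v)$ for which $\lca_{T'}(u,v) \preceq \lca_{T'}(u,v')$ fails. As both $\lca_{T'}(u,v)$ and $\lca_{T'}(u,v')$ lie on the path from $u$ to the root of $T'$, they are comparable, so this failure forces the strict inequality $\lca_{T'}(u,v') \prec \lca_{T'}(u,v)$, which is exactly the resolved triple $uv'|v \in r(T')$.

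Since $u,v,v' \in L'$, the triple-preservation identity gives $uv'|v \in r(T)$, hence $\lca_T(u,v') \prec \lca_T(u,v)$. Because $\sigma(v') = \sigma(v)$, this says that $v'$ is a strictly closer relative of $u$ than $v$ is in $T$, contradicting $u \bmr v$ in $T$. Therefore $u \bmr v$ in $T'$, i.e.\ $uv \in E(G')$; as $uv$ was an arbitrary arc of $G$ with both endpoints in $L'$, this proves that $(G[L'],\sigma)$ is a subgraph of $(G'[L'],\sigma)$.

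The argument is short once triple preservation is available, and I expect the only point requiring care to be the justification of that identity together with the exact correspondence between strict $\lca$-inequalities and resolved triples: one must check that the suppression of degree-two vertices in forming $T_{L'}$ neither creates nor destroys a resolved triple on $L'$, and that ties among last common ancestors (which occur precisely at polytomies) are treated consistently in $T$ and $T'$. Note also that the set of competitors $v'$ available in $T'$ is only a subset of those available in $T$, so passing to the restriction can only make it \emph{easier} for $v$ to be a best match of $u$ -- which is exactly the one-sided inclusion asserted by the lemma, and explains why the reverse containment fails in general (cf.\ Fig.~\ref{fig:counter1}).
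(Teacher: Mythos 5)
Your proof is correct and rests on the same fact as the paper's own one-line argument: that $\lca$-comparisons among leaves retained in $L'$ are preserved when passing from $T$ to $T_{L'}$, combined with the observation that the set of competitors $L'[\sigma(v)]$ only shrinks. The paper simply asserts that the inequality $\lca_T(u,v)\preceq_T\lca_T(u,z)$ transfers to $T'$, whereas you justify the same transfer via the triple-restriction identity $r(T_{L'})=\{ab|c\in r(T)\mid a,b,c\in L'\}$ and package the argument as a contraposition — a more explicit but not materially different route.
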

\begin{proof} 
  If $uv\in E(G)$ then $\lca_T(u,v) \preceq_T \lca_T(u,z)$ for all $z\in
  L[\sigma(v)]$, and thus the inequality $\lca_{T'}(u,v)\preceq_{T'}
  \lca_{T'}(u,z)$ is true in particular for all $z\in L'\cap
  L[\sigma(v)]=L'[\sigma(v)]$.
\end{proof}

\subsection{Connectedness} 

We briefly present some results concerning the connectedness of cBMGs.  In
particular, it turns out that connected cBMGs have a simple
characterization in terms of their representing trees.

\begin{theorem} \label{thm:connected} Let $(T,\sigma)$ be a leaf-labeled
  tree and $G(T,\sigma)$ its cBMG. Then $G(T,\sigma)$ is connected if and
  only if there is a child $v$ of the root $\rho$ such that
  $\sigma(L(T(v)))\ne S$.  Furthermore, if $G(T,\sigma)$ is not connected,
  then for every connected component $C$ of $G(T,\sigma)$ there is a child
  $v$ of the root $\rho$ such that $V(C)\subseteq L(T(v))$.
\end{theorem}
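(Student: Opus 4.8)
The plan is to reduce the whole statement to a single structural observation about arcs whose endpoints lie in different subtrees hanging off the root. Throughout I assume the root $\rho$ has at least two children $v_1,\dots,v_k$ with $k\ge 2$; if it had only one child $v$, the edge $\rho v$ could be contracted without changing any $\lca$, hence without changing $G(T,\sigma)$, and the new root would have degree $\ge 2$. The subtrees $T(v_1),\dots,T(v_k)$ then partition $L$. The first thing I would record is a description of the ``crossing'' arcs. For $x\in L(T(v_i))$ and $y\in L(T(v_j))$ with $i\ne j$ we have $\lca_T(x,y)=\rho$, so $x\bmr y$ holds if and only if $\rho\preceq\lca(x,y')$ for every $y'$ with $\sigma(y')=\sigma(y)$, that is, if and only if no leaf of color $\sigma(y)$ occurs in $T(v_i)$. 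In short, $x$ has a crossing out-arc to a vertex of color $s$ exactly when $s\notin\sigma(L(T(v_i)))$, and in that case $x\bmr y$ for \emph{all} leaves $y$ of color $s$; dually, if $s\in\sigma(L(T(v_i)))$ then every color-$s$ best match of $x$ stays inside $T(v_i)$.

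For the direction ``some child is deficient $\Rightarrow$ $G$ is connected'' I would fix a child $v_1$ with $\sigma(L(T(v_1)))\ne S$ and a color $s\in S\setminus\sigma(L(T(v_1)))$. By the observation above, all leaves of color $s$ lie outside $T(v_1)$, and each $x_0\in L(T(v_1))$ satisfies $x_0\bmr y$ for every $y\in L[s]$; hence $L[s]$ together with $x_0$ lies in a single component $C^*$, so $L[s]\subseteq C^*$. What makes the argument short is that I never connect the subtrees to one another directly but instead funnel every vertex into the class $L[s]$. Any $w$ with $\sigma(w)=s$ is already in $C^*$, and any $w$ with $\sigma(w)\ne s$ has at least one best match $y_w$ of color $s$ (such a match exists because color $s$ occurs somewhere in $T$), which lies in $L[s]\subseteq C^*$; since $w\ne y_w$, the arc $w\bmr y_w$ is genuine, so $w\in C^*$. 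Thus $C^*=L$ and $G$ is connected.

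For the converse I would argue by contraposition. If every child is saturated, $\sigma(L(T(v_i)))=S$ for all $i$, then the crossing-arc criterion is never met, so there is no arc between distinct subtrees; as the $k\ge 2$ subtrees partition $L$ into nonempty blocks, $G$ is disconnected, which proves the ``only if'' direction. The ``furthermore'' then follows for free: if $G$ is disconnected, the already-established ``if'' direction forces every child to be saturated, so again there are no crossing arcs, and therefore every undirected path, and hence every connected component, is confined to a single $L(T(v_i))$.

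I expect the only real subtlety to be the crossing-arc observation itself: getting the direction of the ancestor order right (so that $\lca(x,y)=\rho$ is the \emph{maximal}, not minimal, possibility) and noting that a missing color $s$ produces arcs from $x$ to \emph{all} of $L[s]$ rather than to a single representative. Everything after that is bookkeeping, funnelled through the single color class $L[s]$. The degenerate root-of-degree-one case must also be dispatched at the outset, since the statement genuinely fails there: a two-leaf tree below a degree-one root yields a connected $G$ with no deficient child.
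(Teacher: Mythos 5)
Your proof is correct and follows essentially the same route as the paper's: the same observation that arcs crossing between subtrees of the root exist exactly when a color is missing from the source subtree, the same contrapositive for the ``only if'' direction and for the component statement, and the same idea of connecting every vertex to the component containing $L[s]$ (the paper merely spells out the explicit paths $y\to z\leftarrow x\to z'\leftarrow y'$ instead of your funneling argument). Your explicit dispatch of the degree-one-root case is a welcome extra precaution that the paper leaves implicit.
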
 
\begin{proof} 
  For convenience we write $L_v:=L(T(v))$.  Suppose $\sigma(L_v)=S$ holds
  for all children $v$ of the root. Then for any pair of colors $s,t\in S$
  we find for a leaf $x\in L_v$ with $\sigma(x)=s$ a leaf $y\in L_v$ with
  $\sigma(y)=t$ within $T(v)$; thus $\lca(x,y)$ is in $T(v)$ and thus
  $\lca(x,y)\prec \rho$. Hence, all best matching pairs are confined to the
  subtrees below the children of the root. The corresponding leaf sets are
  thus mutually disconnected in $G(T,\sigma)$.

  Conversely, suppose that one of the children $v$ of the root $\rho$
  satisfies $\sigma(L_v)\neq S$.  Therefore, there is a color $t\in S$ with
  $t\notin \sigma(L_v)$.  Then for every $x\in L_v$ there is an arc $x\bmr
  z$ for all $z\in L[t]$ since for all such $z$ we have $\lca(x,z)=\rho$.
  If $L[t] = L\setminus L_v$, we can conclude that $G(T,\sigma)$ is a
  connected digraph. Otherwise, every leaf $y\in L\setminus L_v$ with a
  color $\sigma(y)\neq t$ has an out-arc $y\bmr z$ to some $z\in L[t]$ and
  thus there is a path $y\rightarrow z \leftarrow x$ connecting $y\in
  L\setminus L_v$ to every $x\in L_v$. Finally, for any two vertices
  $y,y'\in L\setminus (L_v\cup L[t])$ there are vertices $z,z'\in L[t]$
  such that arcs exist that form a path $y\rightarrow z \leftarrow x
  \rightarrow z' \leftarrow y'$ connecting $z$ with $z'$ and both to any
  $x\in L_v$. In summary, therefore, $G(T,\sigma)$ is a connected digraph.

  For the last statement, we argue as above and conclude that if
  $\sigma(L_v) = S$ for all children $v$ of the root (or, equivalently, if
  $G(T,\sigma)$ is not connected), then all best matching pairs are
  confined to the subtrees below the children of the root $\rho$. Thus, the
  vertices of every connected component of $G(T,\sigma)$ must be leaves of
  a subtree $T(v)$ for some child $v$ of the root $\rho$.   
\end{proof}
 
The following result shows that cBMGs can be characterized by their
connected components: the disjoint union of vertex disjoint cBMGs is again
a cBMG if and only if they all share the same color set. It suffices
therefore, to consider each connected component separately.

\begin{proposition} 
  Let $(G_i,\sigma_i)$ be vertex disjoint cBMGs with vertex sets $L_i$ and
  color sets $S_i=\sigma_i(L_i)$ for $1\le i \le k$. Then the disjoint union
  $(G,\sigma):=\dot\bigcup_{i=1}^k(G_i,\sigma_i)$ is a cBMG if and only if
  all color sets are the same, i.e., $\sigma_i(L_i)=\sigma_j(L_j)$ for
  $1\le i,j\le k$.
\end{proposition}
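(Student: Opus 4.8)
The plan is to prove both directions of the biconditional, with the forward (necessity) direction being the one that requires real work.

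For the easy direction, suppose all the color sets coincide, say $S_i = S$ for all $i$. I would construct a tree $T$ explaining $(G,\sigma)$ as follows. Let $T_i$ be a tree explaining $(G_i,\sigma_i)$, which exists by hypothesis. Form $T$ by creating a new root $\rho$ and attaching each $T_i$ as a subtree below $\rho$ (i.e., the children of $\rho$ are the roots of the $T_i$). I then claim $G(T,\sigma) = (G,\sigma)$. The point is that for leaves $x,y$ in the same component $L_i$, their last common ancestor lies strictly below $\rho$, so the best-match relation restricted to $L_i$ is exactly the one computed in $T_i$, namely $G_i$; this is essentially Observation/Fact~\ref{obs-1} applied to $T(\rho_i) = T_i$, or can be argued directly since $\lca_T = \lca_{T_i}$ for leaves within one component. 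For leaves $x \in L_i$ and $y \in L_j$ with $i \neq j$, we have $\lca_T(x,y) = \rho$; but since $\sigma_j(L_j) = S$, the species $\sigma(y)$ already occurs in $L_j$ itself, so for $x$ there is some $z \in L_j$ with $\sigma(z) = \sigma(y)$ and $\lca_T(x,z) = \rho = \lca_T(x,y)$, and moreover no closer candidate exists in the other components. Thus $x \bmr y$ would require $\lca_T(x,y) \preceq \lca_T(x,y')$ for all same-colored $y'$; but any $z \in L_i$ of color $\sigma(y)$ gives $\lca_T(x,z) \prec \rho$, so $x \not\bmr y$. This is exactly the condition in Theorem~\ref{thm:connected}: every child $v$ of $\rho$ satisfies $\sigma(L(T(v))) = S$, so there are no inter-component arcs and the components of $G(T,\sigma)$ are precisely the $L_i$, each carrying the correct internal structure.

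For the harder direction (necessity), I would argue by contradiction: suppose the color sets are not all equal, yet $(G,\sigma)$ is a cBMG explained by some tree $T$. Since the $G_i$ are vertex-disjoint and $(G,\sigma)$ has them as its connected components (the disjoint union introduces no new arcs), Theorem~\ref{thm:connected} tells us that if $G(T,\sigma)$ is disconnected then each component sits below a single child of the root. Conversely I must pin down which children give which components; the key structural input is that every vertex in a cBMG has out-degree at least $|S|-1$, where here $S = \bigcup_i S_i$ is the full color set of $(G,\sigma)$. If some $S_j \subsetneq S$, pick a color $t \in S \setminus S_j$ and a vertex $x \in L_j$. In the explaining tree $T$, the vertex $x$ must have an out-arc to some $t$-colored leaf (since $|N(x)| \geq |S|-1$ and no such leaf lies in $L_j$), but that $t$-colored leaf lies in some other component $L_{j'}$, creating an arc of $(G,\sigma)$ that crosses between components --- contradicting that the $L_i$ are the connected components of the disjoint union.

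The main obstacle, and the step I would be most careful about, is the necessity direction's reliance on the implicit color set of the disjoint union. The subtlety is that a vertex in $G_j$ has all the out-neighbors it needs \emph{within} $G_j$ only if $S_j$ is the full color set; once we take the disjoint union, the ambient color set jumps to $S = \bigcup_i S_i$, and the cBMG axiom $|N(x)| \ge |S|-1$ must be read with respect to this larger $S$. So the crux is that any tree explaining $(G,\sigma)$ forces each leaf to have out-neighbors of every foreign color present \emph{anywhere} in $(G,\sigma)$, and these can only be supplied by leaves in other components --- which is incompatible with the components being disconnected. Making this precise requires observing that $\sigma(L) = \bigcup_i S_i$ and that the best-match definition is with respect to this union; the rest is then a clean application of the out-degree lower bound together with Theorem~\ref{thm:connected}.
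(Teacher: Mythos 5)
Your proposal is correct and follows essentially the same route as the paper: the same root-with-subtrees construction (justified via Theorem~\ref{thm:connected}) for sufficiency, and for necessity the same observation that a component missing a color $t$ from the ambient set $S=\bigcup_i S_i$ forces an out-arc to another component, contradicting disconnectedness. The ``subtlety'' you flag about reading $|N(x)|\ge|S|-1$ with respect to the union of the color sets is exactly the point the paper's proof also rests on.
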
 
\begin{proof}
  The statement is trivially fulfilled for $k=1$. For $k\ge 2$, the
  disjoint union $(G,\sigma)$ is not connected. Assume that
  $\sigma_i(L_i)=\sigma_j(L_j)$ for all $i,j$. Let $(T_i,\sigma_i)$ be
  trees explaining $(G_i,\sigma_i)$ for $1\le i \le k$. We construct a tree
  $(T,\sigma)$ as follows: Let $\rho$ be the root of $(T,\sigma)$ with
  children $r_1, \dots r_k$. Then we identify $r_i$ with the root of $T_i$
  and retain all leaf colors. In order to show that $(T,\sigma)$ explains
  $(G,\sigma)$ we recall from Thm.\ \ref{thm:connected} that all best
  matching pairs are confined to the subtrees below the children of the
  root and hence, each connected component of $(G,\sigma)$ forms a subset
  of one of the leaf sets $L_i$. Since each $(T_i,\sigma_i)$ explains
  $(G_i,\sigma_i)$, we conclude that the cBMG explained by $(T,\sigma)$ is
  indeed the disjoint union of the $(G_i,\sigma_i)$, i.e.,
  $(G,\sigma)$. Thus $(G,\sigma)$ is a cBMG.

  Conversely, assume that $(G,\sigma)$ is a cBMG but
  $\sigma_i(L_i)\neq\NEW{\sigma_k(L_k)}$ for some $\NEW{k}\neq i$. By
  construction, $\sigma(L_i) = \sigma_i(L_i)$ and
  $\sigma(\NEW{L_k})=\NEW{\sigma_k(L_k)}$.  In particular, for every color
  $t\notin \sigma(L_i)$ and every vertex $x\in L_i$, there is a $j\neq i$
  with $t\in \sigma(L_j)$ such that there exists an outgoing arc form $x$
  to some vertex $y\in L_j$ with color $\sigma(y)=t$. Thus $(x,y)$ is an
  arc connecting $L_i$ with some $L_j$, $j\ne i$, contradicting the
  assumption that each $L_i$ forms a connected component of
  $(G,\sigma)$. Hence, the color sets cannot differ between connected
  components.   
\end{proof}

The example $(G(T_{\{u,v,w\}}),\sigma)$ in Fig.~\ref{fig:counter1} already
shows however that $G(T,\sigma)$ is not necessarily strongly connected.

\section{Two-Colored Best Match Graphs (2-cBMGs)} 
\label{sect:2cBMG}

\emph{\textbf{Through this section we assume that $\sigma(L)=\{s,t\}$
    contains exactly two colors.}}

\marginpar{\color{blue}\scriptsize All graphs in this section are assumed
  to be sink-free.}
\subsection{Thinness Classes} 

A connected 2-cBMG contains at least two $\rthin$ classes, since all in-
and out-neighbors $y$ of $x$ by construction have a color $\sigma(y)$
different from $\sigma(x)$. Consequently, a 2-cBMG is
bipartite. Furthermore, if $\sigma(x)\ne\sigma(y)$ then
$N(x)\cap N(y)=\emptyset$. Since $N(x)\ne\emptyset$ and all members of
$N(x)$ have the same color, we observe that $N(x)=N(y)$ implies
$\sigma(x)=\sigma(y)$. \NEW{By a slight abuse of notation we will often
  write $\sigma(x)=\sigma(\alpha)$ for an element $x$ of some $\rthin$
  class $\alpha$.} Two leaves $x$ and $y$ of the same color that have the
same last common ancestor with all other leaves in $T$, i.e., that satisfy
$\lca(x,u)=\lca(y,u)$ for all $u\in L\setminus\{x,y\}$ by construction have
the same in-neighbor and the same out-neighbors in $G(T,\sigma)$; hence
$x \rthin y$.

\begin{fact}\label{fact:class-species}
  Let $(G,\sigma)$ be a connected 2-cBMG and $\alpha\in\mathcal{N}$
  be a $\rthin$ class.  Then, $\sigma(x) =\sigma(y)$ for any
  $x,y\in \alpha$.
\end{fact}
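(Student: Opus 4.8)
The plan is to show that any two vertices in the same $\rthin$ class must share the same color, using the structural facts about 2-cBMGs established just before the statement. First I would recall the key observation that in a connected 2-cBMG every vertex has a nonempty out-neighborhood, and that all out-neighbors of a vertex $x$ have the single color $\overline{\sigma(x)}$ (the color distinct from $\sigma(x)$), since arcs only connect vertices of different colors and there are only two colors available. This immediately tells us that the color of $N(x)$ is determined by, and in fact determines, the color of $x$: if $\sigma(x)=s$ then every element of $N(x)$ has color $t$, and vice versa.

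The core argument is then a short contradiction. Suppose $x,y\in\alpha$ for some $\rthin$ class $\alpha$ but $\sigma(x)\neq\sigma(y)$; without loss of generality $\sigma(x)=s$ and $\sigma(y)=t$. By definition of $\rthin$ (Def.~\ref{def:rthin}) we have $N(x)=N(y)$. Since $N(x)\neq\emptyset$, pick any $z\in N(x)=N(y)$. On one hand $z\in N(x)$ forces $\sigma(z)=t$ (the color opposite to $x$), while on the other hand $z\in N(y)$ forces $\sigma(z)=s$ (the color opposite to $y$). As $s\neq t$, this is a contradiction, so no such pair $x,y$ can exist and all elements of $\alpha$ carry the same color.

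I do not expect a genuine obstacle here, since the statement is essentially a restatement of the remark immediately preceding it, namely that ``$N(x)=N(y)$ implies $\sigma(x)=\sigma(y)$'' in the two-color setting. The only point requiring mild care is the implicit use of connectedness to guarantee $N(x)\neq\emptyset$; in fact the excerpt already notes that every vertex of a cBMG has at least $|S|-1\geq 1$ out-neighbors, so nonemptiness of $N(x)$ holds regardless of connectedness, and the argument goes through for any 2-cBMG. Thus the proof reduces to invoking the disjointness of out-neighborhoods of differently colored vertices together with $N(x)=N(y)\neq\emptyset$.
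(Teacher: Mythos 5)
Your proof is correct and follows exactly the argument the paper gives in the prose immediately preceding the observation: since $N(x)\ne\emptyset$ and, in the two-color setting, every out-neighbor of $x$ has the color opposite to $\sigma(x)$, the equality $N(x)=N(y)$ forces $\sigma(x)=\sigma(y)$. Your remark that connectedness is not actually needed (only $|N(x)|\ge|S|-1\ge 1$) is also accurate.
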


The following result shows that the out-neighborhood of any $\rthin$ class
is a disjoint union of $\rthin$ classes.

\begin{lemma}\label{lem:rthin-prop}
  Let $(G,\sigma)$ be a connected 2-cBMG. Then any two $\rthin$
  classes $\alpha,\beta\in\mathcal{N}$ satisfy
  \begin{description} 
    \item[\AX{(N0)}] $\beta\subseteq N(\alpha)$ or 
                $\beta\cap N(\alpha)=\emptyset$.
  \end{description}
\end{lemma}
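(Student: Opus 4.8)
The plan is to show that membership of a vertex in $N(\alpha)$ depends only on its $\rthin$ class, so that $N(\alpha)$ is automatically a union of $\rthin$ classes; the dichotomy (N0) is then immediate. The key structural input is the identity recorded just above the statement: for every $\rthin$ class $\gamma$ and every $x\in\gamma$ one has $N(x)=N(\gamma)$ and $N^-(x)=N^-(\gamma)$. Applied to $\alpha$, this means that $y\in N(\alpha)$ is equivalent to the arc $x\to y$ holding for \emph{every} $x\in\alpha$ (and not merely for some representative), which is what lets us avoid tracking which vertex of $\alpha$ witnesses a given arc.

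First I would dispose of the alternative $\beta\cap N(\alpha)=\emptyset$, which is one of the two cases and needs no argument. So assume $\beta\cap N(\alpha)\neq\emptyset$ and fix a vertex $y\in\beta\cap N(\alpha)$ together with an arbitrary $x\in\alpha$. Since $N(x)=N(\alpha)\ni y$, we have $x\to y$, i.e.\ $x\in N^-(y)$; and because $y\in\beta$ gives $N^-(y)=N^-(\beta)$, this says $x\in N^-(\beta)$. Next I would propagate this to all of $\beta$: for an arbitrary $y'\in\beta$ we have $N^-(y')=N^-(\beta)\ni x$, hence $x\to y'$ and therefore $y'\in N(x)=N(\alpha)$. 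As $y'$ was arbitrary, $\beta\subseteq N(\alpha)$, and combining the two cases yields (N0).

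I do not expect a genuine obstacle here: the statement is a formal consequence of the definition of $\rthin$, and in fact neither bipartiteness nor connectedness of the 2-cBMG nor Fact~\ref{fact:class-species} is actually needed—only that $\rthin$ equates both in- and out-neighborhoods. The single point requiring care is the quantifier bookkeeping: one must invoke that all representatives of $\alpha$ share the out-neighborhood $N(\alpha)$, so that the witnessing arc $x\to y$ is available for every $x\in\alpha$ and can then be ``re-emitted'' toward every $y'\in\beta$ through the common in-neighborhood $N^-(\beta)$.
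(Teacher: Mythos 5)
Your proof is correct and follows essentially the same route as the paper's: the dichotomy follows immediately from the fact that all elements of $\beta$ share the in-neighborhood $N^-(\beta)$, so membership of $y\in\beta$ in $N(\alpha)$ is independent of the choice of $y$. Your observation that connectedness, bipartiteness, and Observation~\ref{fact:class-species} are not needed is also accurate.
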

\begin{proof}
  For any $y\in \beta$, the definition of $\rthin$ classes implies that
  $y\in N(\alpha)$ if and only if $\beta\subseteq N(\alpha)$. Hence,
  either all or none of the elements of $\beta$ are contained in
  $N(\alpha)$.
 
\end{proof}

The connection between the $\rthin$ classes of $G(T,\sigma)$ and the tree
$(T,\sigma)$ is captured by identifying an internal node in $T$ that is, as
we shall see, in a certain sense characteristic for a given equivalence
class.
\begin{definition}
  The \emph{root $\rho_{\alpha}$ of the $\rthin$ class $\alpha$} is 
  \begin{equation*}
    \rho_\alpha=\max_{\substack{x\in\alpha \\ y\in N(\alpha)}} \lca(x,y).
  \end{equation*}
\end{definition}

\begin{corollary}\label{cor:rthin-def}
  Let $\rho_\alpha$ be the root of a $\rthin$ class $\alpha$. Then, for any
  $y\in N(\alpha)$ holds
 \begin{equation*}
  \rho_\alpha=\max_{x\in \alpha} \lca(x,y).
 \end{equation*}
In particular, $\lca(x,y) = \lca(x,z)$ for all $y,z\in  N(\alpha)$.
\end{corollary}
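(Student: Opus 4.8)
The plan is to reduce the corollary to a single observation: for a fixed $x\in\alpha$ the vertex $\lca(x,y)$ does not depend on which out-neighbor $y\in N(\alpha)$ is chosen. Once this is available, both assertions follow by rewriting the double maximum in the definition of $\rho_\alpha$ as an iterated maximum.

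First I would prove this observation, which is at the same time the ``in particular'' claim. Since $\alpha$ is a $\rthin$ class we have $N(x)=N(\alpha)$ for every $x\in\alpha$, and because $(G,\sigma)$ is two-colored every out-neighbor of $x$ carries the color different from $\sigma(x)$; hence all elements of $N(\alpha)$ share a single color. Thus for any $y,z\in N(\alpha)=N(x)$ we have $\sigma(y)=\sigma(z)$ and both $x\bmr y$ and $x\bmr z$ hold. Applying Def.~\ref{def:genclose} to $x\bmr y$ with $y'=z$ yields $\lca(x,y)\preceq\lca(x,z)$, and symmetrically $x\bmr z$ yields $\lca(x,z)\preceq\lca(x,y)$; antisymmetry of the ancestor order forces $\lca(x,y)=\lca(x,z)$. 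I denote this common vertex by $v_x$.

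It then remains to assemble the two maxima. I would first record that both are genuine greatest elements rather than mere tree joins: for a fixed $y$ every $\lca(x,y)$ with $x\in\alpha$ is an ancestor of $y$, so these vertices all lie on the path from $y$ to $\rho$ and form a chain, which therefore has a maximum. By the observation the inner maximization over $y\in N(\alpha)$ in the definition is vacuous, so that
\[
\rho_\alpha=\max_{\substack{x\in\alpha\\ y\in N(\alpha)}}\lca(x,y)=\max_{x\in\alpha}v_x ,
\]
while for any fixed $y\in N(\alpha)$ one likewise has $\max_{x\in\alpha}\lca(x,y)=\max_{x\in\alpha}v_x$. The two right-hand sides coincide, which is exactly the claimed identity.

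I do not anticipate a real obstacle: the statement is largely a bookkeeping consequence of how $\rho_\alpha$ is defined. The only point needing genuine care is the key observation, and specifically its reliance on $N(\alpha)$ being monochromatic --- this is precisely where the standing two-color assumption of the section is used, since it is what licenses feeding $y'=z$ into the best-match inequality of Def.~\ref{def:genclose}. Checking that the maxima exist as greatest elements (not merely as suprema in the ancestor order) is the one other place where an explicit line of reasoning is warranted.
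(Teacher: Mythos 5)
Your proposal is correct and follows essentially the same route as the paper: both prove the ``in particular'' claim first by applying the best-match inequality of Def.~\ref{def:genclose} in both directions to two out-neighbors $y,z\in N(\alpha)$ (using that $N(\alpha)$ is monochromatic in the two-color setting) to conclude $\lca(x,y)=\lca(x,z)$, and then observe that the inner maximum over $y$ in the definition of $\rho_\alpha$ is vacuous. Your added remark that the vertices $\lca(x,y)$ for fixed $y$ lie on a chain, so the maximum is a genuine greatest element, is a minor extra precaution the paper leaves implicit.
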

\begin{proof}
  For any $y\in N(\alpha)$ it holds by definition of $N(\alpha)$ that
  $\lca(x,y)\preceq\lca(x,z)$ for $x\in \alpha$ and any $z$ with
  $\sigma(z)=\sigma(y)$. This together with Observation
  \ref{fact:class-species} implies that $\lca(x,y)=\lca(x,z)$ for any two
  $y,z\in N(\alpha)$ and $x\in\alpha$.   
\end{proof}

\begin{figure}
      \begin{center}
        \includegraphics[width=0.2\textwidth]{./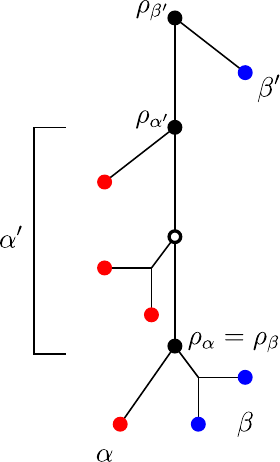}
      \end{center}
      \caption[]{Relationship between $\rthin$ classes and their roots. A
        tree with two colors (red and blue) and four $\rthin$ classes
        $\alpha$, $\alpha'$ (red) and $\beta$, $\beta'$ (blue) together
        with their corresponding roots $\rho_\alpha$, $\rho_{\alpha'}$,
        $\rho_\beta$ and $\rho_{\beta'}$ are shown. }
      \label{fig:rthin_root}
\end{figure}

The following lemma collects some simple properties of the roots of
$\rthin$ classes that will be useful for the proofs of the main results.

\begin{lemma}
  \label{lem:prp1}
  Let $(G,\sigma)$ be a connected 2-cBMG explained by $(T,\sigma)$ and let
  $\alpha$, $\beta$ be $\rthin$ classes with roots $\rho_{\alpha}$ and
  $\rho_{\beta}$, respectively. Then the following statements hold
  \begin{itemize}
    \setlength{\itemindent}{0.15in}
  \item[(i)] $\rho_{\alpha}\preceq \lca(\alpha,\beta)$ and
    $\rho_\beta\preceq\lca(\alpha,\beta)$; equality holds for at least one
    of them if and only if $\rho_\alpha, \rho_\beta$ are comparable, i.e.,
    $\rho_\alpha \preceq \rho_\beta$ or $\rho_\beta \preceq \rho_\alpha$.
  \item[(ii)] The subtree $T(\rho_{\alpha})$ contains leaves of both
    colors.
  \item[(iii)] $N(\alpha)\preceq \rho_{\alpha}$. 
  \item[(iv)] If $\beta\subseteq N(\alpha)$ then $\rho_{\beta}\preceq
    \rho_{\alpha}$.  
  \item[(v)]If $\rho_\alpha=\rho_\beta$ and $\alpha\neq\beta$, then
    $\sigma(\alpha)\neq\sigma(\beta)$.
  \item[(vi)] $N(\alpha)=\{y \mid y\in L(T(\rho_\alpha)) \textrm{ and }
    \sigma(y)\neq\sigma(\alpha)\}$
  \item[(vii)] $N(N(\alpha))\preceq \rho_\alpha$.
   \end{itemize}	
\end{lemma}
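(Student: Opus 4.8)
The plan is to isolate two reusable tools and then dispatch the seven items in a dependency-respecting order, roughly (iii) $\to$ (ii) $\to$ (vi) $\to$ (vii), (iv) $\to$ (i) $\to$ (v), with the genuinely combinatorial work confined to (i) and (v). The first tool is a distinguished representative: since $\rho_\alpha=\max_{x\in\alpha}\lca(x,y)$ for any fixed $y\in N(\alpha)$ by Corollary~\ref{cor:rthin-def}, I would fix $x^*\in\alpha$ attaining this maximum; for $x^*$ the best-match condition says that $\rho_\alpha$ is \emph{exactly} the minimum of $\lca(x^*,z)$ over all leaves $z$ of the opposite color. The second tool is a monochromatic-subtree lemma: if a subtree $T(v)$ contains no leaf of the color opposite to $\alpha$, then all leaves of $T(v)$ of $\alpha$'s color form a single $\rthin$ class, because for any two such leaves $b_1,b_2$ and any $z\notin L(T(v))$ one has $\lca(z,b_1)=\lca(z,v)=\lca(z,b_2)$, so $b_1$ and $b_2$ have identical $\lca$-profiles to every opposite-colored leaf and hence identical in- and out-neighborhoods.

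With these in hand the routine items fall quickly. For (iii) I note $y\preceq\lca(x,y)\preceq\rho_\alpha$ for $x\in\alpha$, $y\in N(\alpha)$; (ii) then follows because $\alpha$ and the nonempty $N(\alpha)$ both sit in $T(\rho_\alpha)$ with opposite colors. For (vi) the inclusion ``$\subseteq$'' is (iii) together with the color constraint, while ``$\supseteq$'' uses $x^*$: any opposite-colored $y\in L(T(\rho_\alpha))$ satisfies $\lca(x^*,y)\preceq\rho_\alpha$, but $\rho_\alpha$ is the minimum $\lca$-value for $x^*$, forcing $\lca(x^*,y)=\rho_\alpha$ and thus $y\in N(x^*)=N(\alpha)$. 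Both (vii) and (iv) rest on one observation: for $w\in N(\alpha)$ any $a\in\alpha$ gives $\lca(w,a)\preceq\rho_\alpha$, so every best match of $w$ is at least as close and hence lies in $T(\rho_\alpha)$. This yields $N(N(\alpha))\preceq\rho_\alpha$ directly, and for (iv) it gives $N(\beta)\subseteq L(T(\rho_\alpha))$, which combined with $\beta\subseteq N(\alpha)\subseteq L(T(\rho_\alpha))$ forces $\rho_\beta=\lca(\beta\cup N(\beta))\preceq\rho_\alpha$.

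The crux is (i), where I would assume $\alpha\neq\beta$ (the statement is false when $\alpha=\beta$, since $\rho_\alpha\ne\lca(\alpha)$ in general). Both $\rho_\alpha$ and $\lca(\alpha,\beta)$ are ancestors of $\lca(\alpha)$, hence comparable, so it suffices to exclude $\lca(\alpha,\beta)\prec\rho_\alpha$. In that case $\alpha\cup\beta$ lies in a single child subtree $T(v)$ of $\rho_\alpha$; applying $x^*$ shows $T(v)$ carries no opposite-colored leaf, so by the monochromatic lemma all of $\alpha\cup\beta$ lies in one $\rthin$ class, contradicting $\alpha\neq\beta$. This gives $\rho_\alpha\preceq\lca(\alpha,\beta)$ and, symmetrically, $\rho_\beta\preceq\lca(\alpha,\beta)$. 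The equality-iff-comparable claim then follows from containment: if, say, $\rho_\alpha\preceq\rho_\beta$ then $\alpha\cup\beta\subseteq L(T(\rho_\beta))$, so $\lca(\alpha,\beta)\preceq\rho_\beta$, forcing equality for the higher root; conversely any equality makes the two roots comparable.

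Finally (v) is the other delicate point. Given $\sigma(\alpha)=\sigma(\beta)$ and $\rho_\alpha=\rho_\beta=:\rho$, I would pick maximizers $x^*\in\alpha$ and $y^*\in\beta$; by the argument above each lies in a child subtree of $\rho$ that is monochromatic in $\alpha$'s color. If the two subtrees coincide, the monochromatic lemma immediately gives $x^*\rthin y^*$; if they differ, then every opposite-colored leaf $z$ lies in neither subtree, so $\lca(z,x^*)=\lca(z,y^*)$ (both equal $\rho$ when $z\in T(\rho)$, and $\lca(z,\rho)$ otherwise), and equal $\lca$-profiles again yield $x^*\rthin y^*$. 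Either way $\alpha=\beta$, which is the contrapositive of (v). I expect the main obstacle to be precisely the bookkeeping behind the fact that within a single $\rthin$ class the value $\lca(x,y)$ need \emph{not} be constant over $x\in\alpha$ (it is constant only over $y\in N(\alpha)$, by Corollary~\ref{cor:rthin-def}); this is why every argument above is routed through the maximizer $x^*$ rather than an arbitrary member of $\alpha$, and why the monochromatic-subtree lemma, which certifies $\rthin$-equivalence without computing individual $\lca$ values, is doing the real work in (i) and (v).
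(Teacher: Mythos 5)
Your proof is correct, and on the two substantive items it takes a genuinely different route from the paper's. The routine parts (ii), (iii), (iv), (vi), (vii) match the paper's arguments up to reordering: the paper proves (ii) first and derives (iii) from it, while you prove (iii) directly from the definition of $\rho_\alpha$ and get (ii) as a corollary, and your direct containment argument for (iv) replaces the paper's derivation of (iv) from (i) and (iii). The real divergence is in (i) and (v). The paper proves (i) by the \AX{(N0)} dichotomy ($\beta\subseteq N(\alpha)$ versus $\beta\cap N(\alpha)=\emptyset$) with explicit best-match inequalities, and proves (v) by computing $N(\alpha)=N(\beta)$ and $N^-(\alpha)=N^-(\beta)$ outright; you instead route everything through a maximizer $x^*\in\alpha$, for which $\rho_\alpha$ is exactly the minimum of $\lca(x^*,\cdot)$ over opposite-colored leaves, together with a monochromatic-subtree lemma certifying $\rthin$-equivalence from equal $\lca$-profiles (a sharpening, to opposite-colored leaves only, of the paper's unproved remark preceding Observation~\ref{fact:class-species}). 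Your version buys two things: it covers the case $\sigma(\alpha)=\sigma(\beta)$ of (i) uniformly, whereas the paper's step ``$\beta\cap N(\alpha)=\emptyset$ implies $\lca(\alpha,y)\succ\rho_\alpha$ for all $y\in\beta$'' is only licensed by the best-match definition when the colors differ; and it makes explicit that (i) must be read with $\alpha\ne\beta$ (for a singleton class $\alpha=\beta$ one has $\lca(\alpha,\beta)=\lca(\alpha)\prec\rho_\alpha$), a restriction the paper leaves implicit. What the paper's approach buys in exchange is that its case analysis of (i) directly feeds the four-case classification of Lemma~\ref{lem:rthin-cases}. Your own caveat is the right one: every appeal to ``$\rho_\alpha$ is a minimum'' must pass through $x^*$ rather than an arbitrary member of $\alpha$, and your write-up is consistent about this.
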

\begin{proof}
  (i) By Condition \AX{(N0)} in Lemma \ref{lem:rthin-prop} we have either
  $\beta\subseteq N(\alpha)$ or $\beta\cap N(\alpha)=\emptyset$.  By
  definition of $N(\beta)$, we have $\lca(x',y)\preceq \lca(x,y)$ where
  $y\in \beta$, $x'\in N(\beta)$, and $x\in \alpha$.
  Therefore, if $\beta\subseteq N(\alpha)$, then
  $\rho_\beta=\max_{x'\in N(\beta)}\lca(x',\beta)\preceq
  \max_{x\in\alpha}\lca(x,\beta)=\lca(\alpha,\beta)$.  Moreover, Cor.\
  \ref{cor:rthin-def} implies
  $\rho_\alpha=\max_{y\in N(\alpha)} \lca(\alpha,y)= \max_{y \in
    \beta}\lca(\alpha,y)= \lca(\alpha,\beta)$.
	
  If $\beta\cap N(\alpha)=\emptyset$, then
  $\lca(\alpha,y)\succ\max_{y'\in N(\alpha)}\lca(\alpha,y')=\rho_\alpha$
  for all $y\in\beta$, i.e., $\lca(\alpha,\beta)\succ \rho_\alpha$.
  Moreover, by definition of $\rho_\beta$, we have $\rho_\beta=\max_{x\in
    N(\beta)}\lca(x,\beta)\preceq \max_{x\in
    \alpha}\lca(x,\beta)=\lca(\alpha,\beta)$.

  Now assume that $\rho_\alpha$ and $\rho_\beta$ are comparable. W.l.o.g.\ 
  we assume $\rho_\alpha \succeq \rho_\beta$. Since 
  $\alpha\preceq \rho_\alpha$ and  $\beta\preceq \rho_\beta$ is true by 
  definition, we obtain $\lca(\alpha,\beta)=\rho_\alpha\succeq\rho_\beta$. 
  Conversely, if $\rho_\alpha =\lca(\alpha,\beta)\succeq \rho_\beta$, 
  then $\rho_\alpha$ and $\rho_\beta$ are necessarily comparable.

  \smallskip\par\noindent(ii) As argued above, $N(x)\neq \emptyset$ for
  all vertices $x$.  Let $x\in \alpha$ and $y\in N(x)$ such that
  $\rho_\alpha = \lca(x,y)$.  By definition, $\sigma(x)\neq
  \sigma(y)$. Since $\rho_\alpha$ is an ancestor of both $x$ and $y$, the
  statement follows.

  \smallskip\par\noindent(iii) 
  Since $T(\rho_{\alpha})$ contains leaves of both colors, there is
  in particular a leaf $y$ with $\sigma(y)\ne\sigma(x)$ within
  $T(\rho_{\alpha})$. It satisfies $\lca(x,y)\preceq \rho_{\alpha}$ and
  thus all arcs going out from $x\in\alpha$ are confined to leaves of
  $T(\rho_{\alpha})$, i.e., $N(\alpha)\preceq\rho_{\alpha}$.

  \smallskip\par\noindent(iv) 
  is a direct consequence of (i) and (iii).

  \smallskip\par\noindent(v) Assume for contradiction that
  $\sigma(\alpha)=\sigma(\beta)$.  There is some $y\in N(\alpha)$ with
  $\lca(\alpha,y)=\rho_\alpha$.  Since
  $\rho_\alpha=\rho_\beta=\lca(\alpha,\beta)$ by (i), we have
  $\lca(\alpha,y)\succeq\lca(\beta,y)$. By definition of $\rho_\beta$,
  there is a $z\in N(\beta)$ such that $\lca(\beta,z)=\rho_\beta$.  Thus,
  $\lca(\beta,y)\preceq\lca(\beta,z)$, which implies that $y$ is a best
  match of $\beta$, i.e., $y\in N(\beta)$. Hence, $N(\alpha)=N(\beta)$. On
  the other hand, since $\lca(\alpha,\beta)=\rho_\alpha$, we have
  $\lca(\alpha,y)=\lca(\beta,y)$ for any $y$ with
  $\lca(\alpha,y)\succeq\rho_\alpha$. As a consequence, since
  $\rho_\alpha\preceq\lca(\alpha,y')$ for all $y'\in N^-(\alpha)$, it is
  true that $\lca(y',\beta)=\lca(y',\alpha)\preceq\lca(y',z)$, for all $z$
  with $\sigma(z)=\sigma(\alpha)$. Hence $y\in N^-(\alpha)$ if and only if
  $y\in N^-(\beta)$. It follows that $\alpha=\beta$, a contradiction.

  \smallskip\par\noindent(vi) 
  Let $y\in N(\alpha)$, then $\sigma(y)\neq\sigma(\alpha)$ by 
  definition. In addition, we have $y\preceq\rho_\alpha$ by (iii). 
  Conversely, suppose that $y\in L(T(\rho_\alpha))$ and 
  $\sigma(y)\neq\sigma(\alpha)$. Since $y\in L(T(\rho_\alpha))$, it is true 
  that $y,\alpha\preceq\rho_\alpha$ and therefore, 
  $\lca(\alpha,y)\preceq\rho_\alpha$. By definition of the root of
  $\alpha$, there exist $x'\in\alpha$ and $y'\in N(\alpha)$ such that 
  $\rho_\alpha=\lca(x',y')\preceq\lca(x',z)$ for all $z$ with 
  $\sigma(z)=\sigma(y')$. Since $\lca(\alpha,y)\preceq\rho_\alpha$, 
  this implies $y\in N(\alpha)$.

  \smallskip\par\noindent(vii) 
  Lemma \ref{lem:rthin-prop} and (iv) imply that $N(\alpha)$ 
  is a disjoint union of $\rthin$ classes $\gamma$ with 
  $\rho_\gamma\preceq\rho_\alpha$ and $\sigma(\gamma)\neq\sigma(\alpha)$. 
  Thus, $N(N(\alpha))=
  \bigcup_{\substack{\gamma\in\mathcal{N}\\ 
      \gamma \subseteq N(\alpha)}}  N(\gamma)= 
  N(\bigcup_{\substack{\gamma\in\mathcal{N}\\ 
      \gamma \subseteq N(\alpha)}} \gamma)$. 
  By (iii) and (iv), we have $N(\gamma)\preceq \rho_\alpha$ for any 
  such $\gamma$, thus $N(N(\alpha))\preceq \rho_\alpha$. 
   
\end{proof}

\AX{(N0)} implies that there are four distinct ways in which two $\rthin$
classes $\alpha$ and $\beta$ with distinct colors can be related to each
other. These cases distinguish the relative location of their roots
$\rho_\alpha$ and $\rho_\beta$:

\begin{lemma} 
  \label{lem:rthin-cases} 
  If $(G,\sigma)$ is a connected 2-cBMG, and $\alpha$, $\beta$ are $\rthin$
  classes with $\sigma(\alpha)\ne\sigma(\beta)$.  Then exactly one of the
  following four cases is true
  \begin{itemize}
    \setlength{\itemindent}{0.15in} 
  \item[(i)] $\alpha\subseteq N(\beta)$ and $\beta\subseteq N(\alpha)$.
    In this case $\rho_{\alpha}=\rho_{\beta}$.
  \item[(ii)] $\alpha\subseteq N(\beta)$ and $\beta\cap
    N(\alpha)=\emptyset$.
    In this case $\rho_{\alpha}\prec\rho_{\beta}$.
  \item[(iii)] $\beta\subseteq N(\alpha)$ and $\alpha\cap
    N(\beta)=\emptyset$.
    In this case $\rho_{\beta}\prec\rho_{\alpha}$.
  \item[(iv)] $\alpha\cap N(\beta)=\beta\cap N(\alpha)=\emptyset$.
    In this case $\rho_{\alpha}$ and $\rho_{\beta}$ are not 
      $\preceq$-comparable.
  \end{itemize}
\end{lemma}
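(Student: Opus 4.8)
The plan is to recognize that the four cases are precisely the four combinations of the two binary alternatives supplied by Condition \AX{(N0)} of Lemma \ref{lem:rthin-prop}. Since $\sigma(\alpha)\ne\sigma(\beta)$, the two classes may legitimately appear in each other's out-neighborhoods, and (N0) forces each of the dichotomies ``$\beta\subseteq N(\alpha)$ versus $\beta\cap N(\alpha)=\emptyset$'' and ``$\alpha\subseteq N(\beta)$ versus $\alpha\cap N(\beta)=\emptyset$'' to resolve in exactly one way. Their Cartesian product yields exactly the four listed configurations, which are therefore automatically mutually exclusive and exhaustive; no separate argument is required for the ``exactly one'' claim. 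What remains is to verify the stated relation between the roots $\rho_\alpha$ and $\rho_\beta$ in each case.

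Most of this follows from the monotonicity statement Lemma \ref{lem:prp1}(iv). In case (i) I would note that $\beta\subseteq N(\alpha)$ gives $\rho_\beta\preceq\rho_\alpha$ while $\alpha\subseteq N(\beta)$ gives $\rho_\alpha\preceq\rho_\beta$, so $\rho_\alpha=\rho_\beta$. In cases (ii) and (iii) the same lemma immediately supplies the weak inequality $\rho_\alpha\preceq\rho_\beta$ (respectively $\rho_\beta\preceq\rho_\alpha$), so that only \emph{strictness} is left to establish.

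The delicate part, and the step I expect to be the main obstacle, is ruling out equality in (ii)/(iii) and ruling out comparability in (iv). For both I would lean on the explicit description of out-neighborhoods in Lemma \ref{lem:prp1}(vi), namely $N(\alpha)=\{y\in L(T(\rho_\alpha))\mid\sigma(y)\neq\sigma(\alpha)\}$. In case (ii), suppose toward a contradiction that $\rho_\alpha=\rho_\beta$. Since $\beta\preceq\rho_\beta=\rho_\alpha$, every element of the nonempty class $\beta$ is a leaf of $T(\rho_\alpha)$ of color $\sigma(\beta)\neq\sigma(\alpha)$, and (vi) then forces $\beta\subseteq N(\alpha)$, contradicting $\beta\cap N(\alpha)=\emptyset$. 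Hence $\rho_\alpha\prec\rho_\beta$, and case (iii) is handled symmetrically.

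For case (iv), I would suppose toward a contradiction that $\rho_\alpha$ and $\rho_\beta$ are comparable, say $\rho_\beta\preceq\rho_\alpha$. By Lemma \ref{lem:prp1}(i), comparability forces $\lca(\alpha,\beta)=\rho_\alpha$, so in particular $\beta\preceq\rho_\alpha$; arguing exactly as above through (vi) yields $\beta\subseteq N(\alpha)$, again contradicting $\beta\cap N(\alpha)=\emptyset$. The symmetric assumption $\rho_\alpha\preceq\rho_\beta$ contradicts $\alpha\cap N(\beta)=\emptyset$ in the same way. Thus $\rho_\alpha$ and $\rho_\beta$ must be $\preceq$-incomparable, which completes the case analysis.
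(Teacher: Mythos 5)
Your proof is correct, but it runs the case analysis in the opposite direction from the paper. The paper splits on the four mutually exclusive and exhaustive \emph{tree-side} configurations ($\rho_\alpha=\rho_\beta$, $\rho_\alpha\succ\rho_\beta$, $\rho_\alpha\prec\rho_\beta$, or incomparable) and derives the corresponding neighborhood conditions from each, using Lemma~\ref{lem:prp1}(i), (ii) and (vi) --- in particular, it needs prp1(ii) to produce a leaf of the right color inside $T(\rho_\beta)$ in order to show $\alpha\cap N(\beta)=\emptyset$ when $\rho_\alpha\succ\rho_\beta$. You instead split on the four \emph{graph-side} configurations, observing that \AX{(N0)} together with the nonemptiness of $\rthin$ classes makes the two dichotomies genuine, so their Cartesian product handles the ``exactly one'' claim for free; you then derive the root relations from Lemma~\ref{lem:prp1}(iv) for the weak inequalities and from prp1(vi) for strictness and incomparability. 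Both directions are legitimate precisely because each side's four cases are exhaustive and pairwise exclusive, so the implications established in either direction are automatically equivalences. Your route is arguably slightly cleaner in that it never needs prp1(ii), and it makes the exclusivity/exhaustiveness argument explicit (the paper leaves the tree-side trichotomy-plus-incomparability as self-evident); the paper's route keeps the tree picture in the foreground, which is the form in which the lemma is actually applied later (e.g.\ in Lemma~\ref{lem:weird} and the cRBMG corollary). One small remark: in case (iv) you invoke Lemma~\ref{lem:prp1}(i) to get $\lca(\alpha,\beta)=\rho_\alpha$ from $\rho_\beta\preceq\rho_\alpha$, but you only need $\beta\preceq\rho_\beta\preceq\rho_\alpha$, which follows directly from the definition of $\rho_\beta$; this does not affect correctness.
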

\begin{proof}
  Set $\sigma(\alpha)=s$ and $\sigma(\beta)=t$, $s\ne t$, and consider the
  roots $\rho_\alpha$ and $\rho_\beta$ of the two $\rthin$ classes. Then,
  there are exactly four cases:

  (i) For $\rho_\alpha=\rho_\beta$, Lemma \ref{lem:prp1}(i) implies
  $\rho_\alpha=\rho_\beta=\lca(\alpha,\beta)$. By definition of
  $\rho_\alpha$, $y\in N(\alpha)$ for all $y\in L(T(\rho_\alpha))$ with 
  $\sigma(y)\neq\sigma(\alpha)$ by Lemma \ref{lem:prp1}(vi).
  A similar result holds for $\rho_\beta$. It follows immediately that
  $\alpha\subseteq N(\beta)$ and $\beta\subseteq N(\alpha)$.

  (ii) In the case $\rho_\alpha\succ\rho_\beta$, Lemma \ref{lem:prp1}(i) 
  implies $\rho_\alpha=\lca(\alpha,\beta)$ and thus, similarly to case (i), 
  $\beta\subseteq N(\alpha)$. On the other hand, by Lemma
  \ref{lem:prp1}(ii) and $\rho_\alpha\succ\rho_\beta$, there is a leaf 
  $x'\in L(T(\rho_\beta))\setminus \alpha$ with $\sigma(x')=s$. 
  Hence, $\lca(x',\beta)\prec\rho_\alpha=\lca(\alpha,\beta)$, which 
  implies $\alpha\cap N(\beta)=\emptyset$.

  (iii) The case $\rho_\alpha\prec\rho_\beta$ is symmetric to (ii).

  (iv) If $\rho_\alpha,\rho_\beta$ are incomparable, it yields
  $\rho_\alpha, \rho_\beta\neq\rho$ and $\lca(\alpha,\beta)=\rho$, where
  $\rho$ denotes the root of $T$. Since $\beta\preceq \rho_\beta$, 
  Lemma \ref{lem:rthin-prop} implies $\beta\cap  N(\alpha)=\emptyset$. 
  Similarly, $\alpha\cap N(\beta)=\emptyset$. 
   
\end{proof}

\subsection{Least resolved Trees}

In general, there are many trees that explain the same 2-cBMG. We next show
that there is a unique ``smallest'' tree among them, which we will call the
least resolved tree for $(G,\sigma)$. Later-on, we will derive a hierarchy
of leaf sets from $(G,\sigma)$ whose tree representation coincides with
this least resolved tree. We start by introducing a systematic way of
simplifying trees. Let $e$ be an interior edge of $(T,\sigma)$. Then the
tree $T_e$ obtained by contracting the edge $e=uv$ is derived by
identifying $u$ and $v$. Analogously, we write $T_A$ for the tree obtained
by contracting all edges in $A$.

\begin{definition} 
  Let $(G,\sigma)$ be a cBMG and let $(T,\sigma)$ be a tree explaining
  $(G,\sigma)$. An interior edge $e$ in $(T,\sigma)$ is \emph{redundant} if
  $(T_e,\sigma)$ also explains $(G,\sigma)$. Edges that are not redundant
  are called \emph{relevant}.
\end{definition}

The next two results characterize redundant edges and show that such edges
can be contracted in an arbitrary order.

\begin{lemma}\label{lem:red}
  Let $(T,\sigma)$ be a tree that explains a connected 2-cBMG
  $(G,\sigma)$. Then, the edge $e=uv$ is redundant if and only if $e$ is an
  inner edge and there exists no $\rthin$ class $\alpha$ such that
  $v=\rho_\alpha$.
\end{lemma}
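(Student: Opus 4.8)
The plan is to prove the biconditional in Lemma~\ref{lem:red} by establishing the two directions separately, and for each to analyze what happens to the best match relation $\bmr$ when we contract the inner edge $e=uv$ (where $v$ is the child). The crucial observation is that contracting $e$ can only \emph{add} arcs to the cBMG (never remove them): by Lemma~\ref{lem:sub} applied with $T'=T_e$, every arc of $G(T,\sigma)$ survives in $G(T_e,\sigma)$. Hence $e$ is redundant if and only if contracting it creates no \emph{new} arc, and the entire proof reduces to controlling which new best matches $x\bmr y$ can appear.

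\textbf{Forward direction (redundant $\Rightarrow$ no $\rthin$ class has $v$ as root).} I would argue by contraposition: assume $v=\rho_\alpha$ for some $\rthin$ class $\alpha$ and show that contracting $e=uv$ destroys the structure of $(G,\sigma)$, i.e.\ $e$ is relevant. By Lemma~\ref{lem:prp1}(vi), $N(\alpha)$ is exactly the set of off-color leaves in $L(T(\rho_\alpha))=L(T(v))$. Contracting $e$ merges $v$ into $u$, so after contraction the leaves hanging directly off $u$ (those in $L(T(u))\setminus L(T(v))$) become siblings of the subtrees under $v$. The point is to exhibit a specific leaf $x\in\alpha$ and an off-color leaf $y\in L(T(u))\setminus L(T(v))$ whose last common ancestor drops from $u$ down to the merged node, turning $\lca_T(x,y)=u$ into $\lca_{T_e}(x,y)=u{=}v$, thereby creating a new arc $x\bmr y$ that witnesses $G(T_e,\sigma)\ne G(T,\sigma)$. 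Here I would use Lemma~\ref{lem:prp1}(ii), which guarantees that $T(v)$ contains both colors, so $\alpha$ genuinely ``sees'' the boundary at $v$; the fact that $v$ is the root of $\alpha$ means the out-neighbors of $\alpha$ reach exactly up to $v$ and not beyond, so any leaf newly pulled to the same level as $v$ is a candidate new best match.

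\textbf{Converse direction (no $\rthin$ class has $v$ as root $\Rightarrow$ redundant).} Suppose no $\alpha$ satisfies $v=\rho_\alpha$. I must show that contracting $e$ creates no new arc, i.e.\ $N_{T_e}(x)=N_T(x)$ for every leaf $x$. Since arcs are only ever added (by Lemma~\ref{lem:sub}), it suffices to show no leaf $x$ gains a new out-neighbor. The only pairs whose $\lca$ changes under contraction are those $x,y$ with $\lca_T(x,y)=v$ (these now have $\lca=u$) versus those with $\lca_T(x,y)=u$; the merge only affects relative comparisons $\lca(x,y)$ vs.\ $\lca(x,y')$ when exactly one of $y,y'$ sits in $T(v)$. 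A new arc $x\bmr y$ can arise only if some previously-strictly-closer competitor $y'$ (with $\lca_T(x,y')=v\prec u=\lca_T(x,y)$) loses its strict advantage after $v$ and $u$ are identified. I would show that such a configuration forces some $\rthin$ class to have root $v$: the leaf $x$ together with its nearest off-color relatives sitting exactly at level $v$ would pin the root of $x$'s class (or a related class) to $v$, contradicting the hypothesis. Concretely, using Lemma~\ref{lem:prp1}(vi) and the characterization of roots, the assumption ``$v$ is nobody's root'' means that for every leaf $x$, the node $v$ is not the maximal $\lca$ realizing any out-neighbor, so identifying $v$ with $u$ changes no realized maximum and hence no out-neighborhood.

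\textbf{The main obstacle} I anticipate is the bookkeeping in the converse direction: I must verify that \emph{no} leaf of \emph{any} color gains a new out-neighbor, and the danger is an indirect effect where contracting $e$ changes the comparison $\lca(x,y)\preceq\lca(x,y')$ for a vertex $x$ whose own root lies above $v$, merely because a competitor $y'$ lived below $v$. The clean way to handle this is to phrase everything through the roots of $\rthin$ classes: translate ``$x$ gains out-neighbor $y$'' into a statement about $\rho_\alpha$ for $x$'s class $\alpha$ moving or a neighbor-class boundary shifting, and then invoke Lemma~\ref{lem:prp1}(vi)--(vii) to show any such shift would require $v$ to be a root. I would also need to confirm the easy structural facts that $e$ being an \emph{inner} edge is necessary (contracting an outer edge is not even defined as a simplification here, and the hypothesis implicitly restricts to inner edges), which follows directly from the definition of redundant edges operating only on interior edges.
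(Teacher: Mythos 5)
Your overall framework is sound and close in spirit to the paper's: you reduce redundancy of an inner edge to the question of whether contraction creates a new arc. That reduction is correct (contraction is arc-monotone), although Lemma~\ref{lem:sub} does not literally give it to you --- that lemma concerns the restriction $T_{L'}$ to a leaf subset, not edge contraction; you need the analogous one-line argument that the contraction map is order-preserving on $V(T)$, so $\lca_T(x,y)\preceq_T\lca_T(x,y')$ survives contraction. Your converse direction is essentially the paper's: if $v$ is the root of no $\rthin$ class, then $L(T(\rho_\alpha))$ is unchanged for every $\alpha$, so by Lemma~\ref{lem:prp1}(vi) no out-neighborhood (hence no in-neighborhood) changes.

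The genuine gap is in the forward direction. You propose to witness relevance by an \emph{off-color} leaf $y\in L(T(u))\setminus L(T(v))$ that becomes a new out-neighbor of some $x\in\alpha$. Nothing guarantees such a leaf exists: $L(T(u))\setminus L(T(v))$ may consist entirely of leaves of color $\sigma(\alpha)$. In that case no new out-arc from $\alpha$ is created --- indeed each $x\in L(T(u))\setminus L(T(v))$ already satisfies $N(x)=N(\alpha)$ --- and the new arcs are instead \emph{in}-arcs to such an $x$, which after contraction is absorbed into the thinness class $\alpha$ even though $x\notin\alpha$ beforehand. Concretely, take the root $u$ with children $v$ and a leaf $x$ of color $s$, where $v$ has children $a$ (color $s$) and $b$ (color $t$): here $\alpha=\{a\}$, $\rho_\alpha=v$, the set $L(T(u))\setminus L(T(v))=\{x\}$ is monochromatic of color $s$, and contracting $uv$ creates only the new arc $b\to x$ while leaving every out-neighborhood of a leaf of color $s$ unchanged. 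Your sketch produces no witness in this situation; the paper treats it as a separate second case, arguing that $N^-(x)\ne N^-(\alpha)$ before contraction but $N^-(x)=N^-(\alpha)$ afterwards, so the explained graph changes. You would need to add this case, and in your first case you should also make explicit that $x\in\alpha$ must be chosen to realize the maximum $\lca(x,N(\alpha))=\rho_\alpha=v$ (you gesture at this but do not pin it down); for other members of $\alpha$ the candidate $y$ does not become a best match.
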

\begin{proof}
  First we note that $e=uv$ must be an inner edge. Otherwise, i.e., if $e$
  is an outer edge, then $v\notin L(T_e)$ and thus, $(T_e,\sigma)$ does not
  explain $(G,\sigma)$.  Now suppose that $e$ is an inner edge, which in
  particular implies $L(T_e)=L(T)$, \NEW{ and that $e$ is redundant. Assume
    for contradiction that} there is a $\rthin$ class $\alpha$ such that
  $v=\rho_\alpha$. Since $(T,\sigma)$ is phylogenetic, $T(u)\setminus T(v)$
  has to be non-empty.  If there is a leaf $y\in L(T(u)\setminus T(v))$
  with $\sigma(y)\neq\sigma(\alpha)$ in $(T,\sigma)$, then
  $y\notin N(\alpha)$ by Lemma \ref{lem:prp1}(vi). But then, contraction of
  $e$ implies $y\in T(\rho_\alpha)$ and therefore $y\in N(\alpha)$, thus
  $(T_e,\sigma)$ does not explain $(G,\sigma)$.  Consequently,
  $T(u)\setminus T(v)$ can only contain leaves $x$ with
  $\sigma(x)=\sigma(\alpha)$. Indeed, for any $y' \in T(v)$ it is true that
  $v=\rho_\alpha=\lca(\alpha,y')\prec \lca(x,y')$, i.e.,
  $N^-(x)\neq N^-(\alpha)$ and thus $x\notin \alpha$. By contracting $e$,
  we obtain $\lca(x,z)\succeq uv=\rho_\alpha$ which implies
  $N(x)=N(\alpha)$ and $N^-(x)=N^-(\alpha)$, and therefore $x\in
  \alpha$. Hence, $(T_e,\sigma)$ does not explain $(G,\sigma)$.

  Conversely, assume that $e$ is an inner edge and there is no $\rthin$
  class $\alpha$ such that $v=\rho_{\alpha}$, i.e., \NEW{for each
    $\alpha\in\mathcal{N}$ it either holds (i) $v\prec \rho_\alpha$, (ii)
    $v\succ \rho_\alpha$, or (iii) $v$ and $\rho_\alpha$ are
    incomparable. In the first and second case,} contraction of $e$ implies
  either $v\preceq \rho_\alpha$ or $v\succeq \rho_\alpha$. Thus, \NEW{since
    $L(T(w))=L(T_e(w))$ is clearly satisfied if $w$ and $v$ are
    incomparable,} we have $L(T(w))=L(T_e(w))$ for every $w\neq
  v$. Moreover,
  $N(\alpha)=\{y \mid y\in L(T(\rho_\alpha)),\sigma(y)\neq\sigma(\alpha)\}$
  by Lemma \ref{lem:prp1}(vi). Together these facts imply for every
  $\rthin$ class $\alpha$ with $\rho_\alpha\neq v$ that $N(\alpha)$ remains
  unchanged in $(T_e,\sigma)$ after contraction of $e$. Since the
  out-neighborhoods of all $\rthin$ classes are unaffected by contraction
  of $e$, all in-neighborhoods also remain the same in
  $(T_e,\sigma)$. Therefore, $(T,\sigma)$ and $(T_e,\sigma)$ explain the
  same graph $(G,\sigma)$.   
\end{proof}

\begin{lemma}\label{lem:red2}
  Let $(T,\sigma)$ be a tree that explains a connected 2-cBMG $(G,\sigma)$
  and let $e$ be a redundant edge. Then the edge $f\neq e$ is redundant in
  $(T_e,\sigma)$ if and only if $f$ is redundant in $(T,\sigma)$. Moreover,
  if two edges $e\ne f$ are redundant in $(T,\sigma)$, then
  $((T_e)_f,\sigma)$ also explains $(G,\sigma)$.
\end{lemma}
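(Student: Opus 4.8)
The plan is to reduce the entire statement to the first equivalence, since the ``moreover'' part then follows at once: once we know that $f$ is redundant in $(T_e,\sigma)$, the definition of redundancy guarantees that $((T_e)_f,\sigma)$ explains the same graph as $(T_e,\sigma)$, and the latter is $(G,\sigma)$ because $e$ is redundant. The equivalence itself I would establish through the characterization of redundant edges in Lemma~\ref{lem:red}: an inner edge $g=xy$ is redundant precisely when its lower endpoint $y$ is not the root $\rho_\alpha$ of any $\rthin$ class $\alpha$. Since $e$ is redundant, $(T_e,\sigma)$ explains $(G,\sigma)$, so the partition $\mathcal{N}$ into $\rthin$ classes and all neighbourhoods $N(\alpha)$ are literally the same sets whether computed in $T$ or in $T_e$. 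What changes is only the location of the roots $\rho_\alpha$ inside the tree, so the task is to control how these roots, and the property of being an inner edge, behave under contraction of $e$.

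The key technical step is to track the roots under contraction. Let $\phi\colon V(T)\to V(T_e)$ be the contraction map, which identifies the two endpoints $u,v$ of $e$ to a single vertex $w$ and fixes every other vertex. Two elementary facts about $\phi$ drive the argument: $\phi$ is order preserving for the ancestor orders, and it is compatible with last common ancestors, i.e.\ $\lca_{T_e}(a,b)=\phi(\lca_T(a,b))$ for all leaves $a,b$ (recall $L(T)=L(T_e)$). Using $\rho_\alpha=\max_{x\in\alpha,\,y\in N(\alpha)}\lca(x,y)$ together with monotonicity of $\phi$, the image $\phi(\rho_\alpha^{T})$ is simultaneously attained by and an upper bound for the family $\{\phi(\lca_T(x,y))\}=\{\lca_{T_e}(x,y)\}$; hence $\rho_\alpha^{T_e}=\phi(\rho_\alpha^{T})$ for every class $\alpha$. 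In other words, the set of roots in $T_e$ is exactly the $\phi$-image of the set of roots in $T$.

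With this in hand the equivalence is short. Because $e$ is redundant, Lemma~\ref{lem:red} tells us that $v=\rho_\alpha^{T}$ holds for no class $\alpha$, so every root of $T$ lies in $V(T)\setminus\{v\}$; and $\phi$ is injective on $V(T)\setminus\{v\}$, as it only identifies $u$ with $v$. The lower endpoint $y$ of $f$ satisfies $y\neq v$ (otherwise $f=uv=e$), so $y\in V(T)\setminus\{v\}$ as well. Injectivity of $\phi$ on this set then gives that $\phi(y)$ coincides with some root $\phi(\rho_\alpha^{T})$ of $T_e$ if and only if $y=\rho_\alpha^{T}$ for some $\alpha$; that is, $\phi(y)$ is a root in $T_e$ exactly when $y$ is a root in $T$. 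Finally, contracting the inner edge $e$ cannot turn the non-leaf $y$ into a leaf nor vice versa, so $f$ is an inner edge of $T$ iff it is an inner edge of $T_e$. Invoking Lemma~\ref{lem:red} once in each tree now yields that $f$ is redundant in $(T,\sigma)$ if and only if it is redundant in $(T_e,\sigma)$, which is the desired equivalence and, as noted, also establishes the ``moreover'' claim.

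I expect the main obstacle to be the identity $\rho_\alpha^{T_e}=\phi(\rho_\alpha^{T})$: one must make sure that, under contraction, the root of each class moves exactly along $\phi$ and in particular never collapses onto the vanished vertex. This is precisely where the redundancy of $e$ (no class has root $v$) and the $\lca$-compatibility of $\phi$ are indispensable; the remaining bookkeeping about inner edges and injectivity is routine.
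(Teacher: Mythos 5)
Your proposal is correct and follows essentially the same route as the paper: both reduce the claim to showing that the roots of the $\rthin$ classes are preserved under contraction of the redundant edge $e$ (with the merged vertex inheriting the role of $u$) and then invoke the characterization of redundant edges in Lemma~\ref{lem:red}. Your treatment via the contraction map $\phi$ and its $\lca$-compatibility merely spells out in more detail what the paper's proof asserts directly.
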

\begin{proof}
  Let $e=uv$ be a redundant edge in $(T,\sigma)$. Then, for any vertex
  $w\neq u,v$ in $(T,\sigma)$ it is true that $w$ is the root of a $\rthin$
  class $\alpha$ in $(T_e,\sigma)$ if and only if $w$ is the root of
  $\alpha$ in $(T,\sigma)$. In particular, the vertex $uv$ in
  $(T_e,\sigma)$ is the root of a $\rthin$ class $\alpha'$ if and only if
  $u=\rho_{\alpha'}$ in $(T,\sigma)$. Consequently, $f$ is redundant in
  $(T,\sigma)$ if and only if $f$ is redundant in $(T_e,\sigma)$.   
\end{proof}
As an immediate consequence, contraction of edges is commutative, i.e., the
order of the contractions is irrelevant. We can therefore write $T_A$ for
the tree obtained by contracting all edges in $A$ in arbitrary order:
\begin{corollary}\label{cor:redset}
  Let $(T,\sigma)$ be a tree that explains a 2-cBMG $(G,\sigma)$ and let
  $A$ be a set of redundant edges of $(T,\sigma)$. Then, $(T_A,\sigma)$
  explains $(G,\sigma)$. In particular, $((T_A)_B,\sigma)$ explains
  $(G,\sigma)$ if and only if $B$ is a set of redundant edges of
  $(T,\sigma)$.
\end{corollary}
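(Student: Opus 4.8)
The plan is to prove the substantive claim first — that contracting an arbitrary set $A$ of redundant edges yields a tree still explaining $(G,\sigma)$ — by induction on $|A|$, and then to deduce the ``if and only if'' statement from it together with Lemma~\ref{lem:red2}. To make the induction go through I would phrase the inductive hypothesis so that it quantifies over \emph{all} trees explaining $(G,\sigma)$: for every such tree $(T',\sigma)$ and every set $A$ of redundant edges of $(T',\sigma)$ with $|A|\le n$, the contracted tree $(T'_A,\sigma)$ explains $(G,\sigma)$.

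For the base cases $n\in\{0,1\}$ there is nothing to do: $T'_\emptyset=T'$ explains $(G,\sigma)$ by assumption, and for a single redundant edge this is exactly the definition of redundancy. In the inductive step I would pick any $e\in A$ and set $A'=A\setminus\{e\}$. Since $e$ is redundant, $(T'_e,\sigma)$ explains $(G,\sigma)$; and because $e$ is redundant, Lemma~\ref{lem:red2} tells me that each edge of $A'$ is redundant in $(T'_e,\sigma)$ precisely because it is redundant in $(T',\sigma)$. Thus $A'$ is a set of $|A|-1$ redundant edges of the explaining tree $(T'_e,\sigma)$, and the inductive hypothesis yields that $((T'_e)_{A'},\sigma)$ explains $(G,\sigma)$. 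Since contraction of edges is commutative (as noted just before the statement), $(T'_e)_{A'}=T'_A$, which closes the induction and establishes the first assertion. The forward direction of the ``in particular'' part is then immediate: if $B$ is a set of redundant edges of $(T,\sigma)$, iterating Lemma~\ref{lem:red2} over the redundant edges of $A$ shows that every edge of $B$ is also redundant in $(T_A,\sigma)$, and applying the first assertion to the explaining tree $(T_A,\sigma)$ with the set $B$ gives that $((T_A)_B,\sigma)$ explains $(G,\sigma)$.

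The converse direction is where the real work lies, and it is the step I expect to be the main obstacle, because Lemma~\ref{lem:red2} only transfers redundancy \emph{under the hypothesis} that the contracted edges are themselves redundant — exactly what we are trying to conclude. To break this circularity I would first establish a monotonicity lemma not present in the excerpt: contracting any interior edge of a tree explaining $(G,\sigma)$ can only \emph{add} arcs to the best match graph, never remove them. This follows because the edge-contraction map $\phi$ that identifies the two endpoints of $e$ is order preserving and satisfies $\lca_{T_e}(x,y)=\phi(\lca_T(x,y))$; hence $\lca_T(x,y)\preceq_T\lca_T(x,z)$ implies $\lca_{T_e}(x,y)\preceq_{T_e}\lca_{T_e}(x,z)$, so every arc $x\bmr y$ of $G(T,\sigma)$ survives in $G(T_e,\sigma)$, while the leaf set and colors are untouched.

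Granting this, suppose $((T_A)_B,\sigma)$ explains $(G,\sigma)$ and contract the edges of $B$ one at a time, $T_A=T_0\to T_1\to\dots\to T_m=(T_A)_B$. First, no edge of $B$ can be an outer edge, as that would change the leaf set and preclude explaining $(G,\sigma)$, so every step is an interior contraction. By monotonicity the arc sets satisfy $E(G(T_0))\subseteq E(G(T_1))\subseteq\dots\subseteq E(G(T_m))$; since the first and last trees of the chain both explain $(G,\sigma)$, all inclusions collapse to equalities, so each intermediate contraction preserves $(G,\sigma)$ and hence each contracted edge is redundant at its stage. A final pass of Lemma~\ref{lem:red2} transports these redundancies back to $(T_A,\sigma)$ and then to $(T,\sigma)$, showing that $B$ is a set of redundant edges of $(T,\sigma)$, as required.
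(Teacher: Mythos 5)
Your proof is correct, and it is worth noting that the paper gives no proof of this corollary at all: it is presented as an ``immediate consequence'' of Lemma~\ref{lem:red2}, with the commutativity of contractions justifying the notation $T_A$. For the first assertion and the ``if'' half of the equivalence your induction on $|A|$, repeatedly invoking Lemma~\ref{lem:red2} to transfer redundancy of the remaining edges into the contracted tree, is exactly the iteration the authors have in mind. Where you genuinely add something is the ``only if'' half. You correctly identify the circularity problem (Lemma~\ref{lem:red2} transfers redundancy only of edges already known to be redundant), and your resolution via the monotonicity observation --- that contracting an interior edge satisfies $\lca_{T_e}(x,y)=\phi(\lca_T(x,y))$ with $\phi$ order-preserving, so arcs of the best match graph can only be gained, never lost --- is sound; the resulting sandwich $E(G(T_0))\subseteq\dots\subseteq E(G(T_m))$ with equal endpoints forces every intermediate contraction to preserve the graph, so each edge of $B$ is redundant at its stage and Lemma~\ref{lem:red2} carries this back to $(T,\sigma)$. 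The paper could alternatively reach the same conclusion through the explicit characterization of redundant edges in Lemma~\ref{lem:red} (an edge $uv$ is redundant iff $v$ is not the root $\rho_\alpha$ of any $\rthin$ class), which makes redundancy a property of the vertex $v$ rather than of the particular explaining tree and thus sidesteps the transfer issue; your route is more elementary in that it uses only the definition of redundancy and a general fact about contractions, at the cost of one extra lemma not stated in the paper. Either way, your argument is complete and fills a step the paper leaves implicit.
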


\begin{definition}
  Let $(G,\sigma)$ be a cBMG explained by $(T,\sigma)$. We say that
  $(T,\sigma)$ is \emph{least resolved} if $(T_A,\sigma)$ does not explain
  $(G,\sigma)$ for any non-empty set $A$ of interior edges of $(T,\sigma)$.
\end{definition} 
We are now in the position to formulate the main result of this section:

\begin{theorem}\label{thm:lr-2}
  For any connected 2-cBMG $(G,\sigma)$, there exists a unique least
  resolved tree $(T',\sigma)$ that explains $(G,\sigma)$. $(T',\sigma)$ is
  obtained by contraction of all redundant edges in an arbitrary tree
  $(T,\sigma)$ that explains $(G,\sigma)$. The set of all redundant edges
  in $(T,\sigma)$ is given by
  \begin{equation*}
    \mathfrak{E}_T=\{e=uv \mid v\notin L(T) \text{ and there is no } 
    \rthin \text{ class } \alpha \text{ such that } v=\rho_\alpha \}.
  \end{equation*}
  Moreover, $(T',\sigma)$ is displayed by $(T,\sigma)$.
\end{theorem}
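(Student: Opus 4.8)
The plan is to split the statement into two essentially independent parts: the constructive claim (that contracting exactly the edges of $\mathfrak{E}_T$ in an arbitrary explaining tree yields a least resolved tree displayed by $T$) and the uniqueness claim.

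For the constructive part I would first note that, by Lemma~\ref{lem:red}, the set $\mathfrak{E}_T$ is precisely the set of redundant edges of $(T,\sigma)$. Corollary~\ref{cor:redset} then immediately gives that $(T',\sigma):=(T_{\mathfrak{E}_T},\sigma)$ explains $(G,\sigma)$, and since $T'$ is obtained from $T$ by a sequence of edge contractions we have $T'\le T$ with leaf colors preserved, so $(T',\sigma)$ is displayed by $(T,\sigma)$. It remains to see that $T'$ is least resolved. For this I would use Lemma~\ref{lem:red2} (iterated, together with the commutativity furnished by Corollary~\ref{cor:redset}) to conclude that the edges surviving the contraction, namely the relevant edges of $T$, remain relevant in $T'$; hence $T'$ has no redundant edge. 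Applying Corollary~\ref{cor:redset} now to $T'$, the only set $B$ of redundant edges of $T'$ is $B=\emptyset$, so no non-empty set of interior edges can be contracted without destroying the explanation, which is exactly the definition of least resolved.

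For uniqueness the plan is to show that any least resolved tree explaining $(G,\sigma)$ is forced to carry the same cluster system $\{L(T(v))\mid v\in V(T)\}$, after which the standard fact that a phylogenetic tree is determined by its clusters finishes the argument. The first step identifies the inner vertices: in a least resolved tree every inner edge $uv$ satisfies $v=\rho_\alpha$ for some $\rthin$ class $\alpha$ by Lemma~\ref{lem:red}, while conversely each $\rho_\alpha$ is an inner vertex by Lemma~\ref{lem:prp1}(ii); thus the non-root inner vertices are exactly the distinct roots $\rho_\alpha$, and by Lemma~\ref{lem:prp1}(v) two classes share a root only if they carry different colors. The second step pins down $L(T(\rho_\alpha))$ from $(G,\sigma)$ alone: its leaves of color $\neq\sigma(\alpha)$ are exactly $N(\alpha)$ by Lemma~\ref{lem:prp1}(vi), and the ancestor order among the $\rho_\alpha$ is recovered from neighborhood containments (via the case analysis of Lemma~\ref{lem:rthin-cases} for differently colored classes).

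The main obstacle I expect is the placement of the same-color leaves, i.e.\ deciding which $x$ with $\sigma(x)=\sigma(\alpha)$ lie in $L(T(\rho_\alpha))$. Here the out-neighborhood alone does not suffice: one can have $\sigma(x)=\sigma(\alpha)$ and $N(x)=N(\alpha)$ while $x$ sits strictly above $\rho_\alpha$, so the naive condition $N(x)\subseteq N(\alpha)$ fails to characterize membership. The forward implication $x\preceq\rho_\alpha\Rightarrow N(x)\subseteq N(\alpha)$ is easy, since any best match of such an $x$ lies below $\rho_\alpha$ by Corollary~\ref{cor:rthin-def} together with Lemma~\ref{lem:prp1}(iii),(vi); but the converse breaks down precisely in the borderline case $N(x)=N(\alpha)$. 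I would resolve this by bringing in the in-neighborhood: placing such an ambiguous $x$ below rather than beside $\rho_\alpha$ turns $x$ into a best match of some leaf of the other color in $T(\rho_\alpha)$, thereby enlarging $N^{-}(x)$, so for a \emph{fixed} graph $(G,\sigma)$ the position of $x$ is forced. Turning this observation into an intrinsic description of $L(T(\rho_\alpha))$ in terms of $N$, $N^{-}$ and the recovered order on the $\rho_\alpha$ is the technical heart of the proof; once it is in place, the cluster systems of any two least resolved trees coincide and the trees are isomorphic as leaf-colored trees.
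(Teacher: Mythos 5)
Your constructive half is correct and matches the paper: Lemma \ref{lem:red} identifies $\mathfrak{E}_T$ as the redundant edges, Cor.~\ref{cor:redset} and Lemma \ref{lem:red2} give commutativity and the fact that relevant edges stay relevant, so $T_{\mathfrak{E}_T}$ is a least resolved tree displayed by $T$. The uniqueness half, however, has a genuine gap, and it sits exactly where you flag ``the technical heart.'' Your plan is to recover the cluster system $\{L(T'(\rho_\alpha))\}$ of any least resolved tree intrinsically from $(G,\sigma)$; the color-$\ne\sigma(\alpha)$ part of each cluster is indeed $N(\alpha)$ by Lemma \ref{lem:prp1}(vi), but the same-color part is not determined by the mechanism you propose. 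You argue that placing an ambiguous $x$ (with $\sigma(x)=\sigma(\alpha)$ and $N(x)=N(\alpha)$) below $\rho_\alpha$ enlarges $N^{-}(x)$ and hence is detectable. This only works when $N^{-}(\alpha)\ne\emptyset$ and, more importantly, it does not separate \emph{distinct} classes that already share the same in-neighborhood: the paper's Fig.~\ref{fig:RvsR'} exhibits classes $\alpha_1,\alpha_5,\alpha_6$ with $N^{-}(\alpha_1)=N^{-}(\alpha_5)=N^{-}(\alpha_6)$ and nested out-neighborhoods, where the naive reachable-set hierarchy collapses them onto one node and produces a tree that does \emph{not} explain $(G,\sigma)$. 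Resolving this requires the auxiliary sets $Q(\alpha)$ of Eq.~(\ref{eq:def-Q}) and the extended reachable sets $R'(\alpha)$, whose hierarchy property is itself a nontrivial argument (Lemma \ref{lem:hierarchy-R'}) resting on \AX{(N1)}--\AX{(N3)}. Until you supply that intrinsic description and prove it coincides with $\{L(T'(v))\}$ for \emph{every} least resolved $T'$, the uniqueness claim is not established.

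For comparison, the paper proves uniqueness by a quite different route: a minimal counterexample argument on $|L|$. It deletes a leaf $v$, shows that the two restricted least resolved trees explain the same graph by explicitly characterizing the arcs gained upon deletion (they are exactly $uc$ for $c$ in a specific class $\gamma$ located just above $v$), invokes minimality to equate the restricted trees, and then reconstructs the local structure around $v$ (Fig.~\ref{fig:lr_unique}) to show both trees agree there too. Your cluster-system strategy is viable in principle --- it is essentially what Lemma \ref{lem:hierarchy-R'} and Theorem \ref{thm:char2} later accomplish --- but as written it is a program rather than a proof, and the one concrete idea you offer for the hard step is insufficient for the configurations that actually cause the difficulty.
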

\begin{proof}
  Any edge in a least resolved tree $(T',\sigma)$ is non-redundant and
  therefore, as a consequence of Cor. \ref{cor:redset}, $(T',\sigma)$ is
  obtained from $(T,\sigma)$ by contraction of all redundant edges of
  $(T,\sigma)$. According to Lemma \ref{lem:red}, the set of redundant
  edges is exactly $\mathfrak{E}_T$. Since the order of contracting the
  edges in $\mathfrak{E}_T$ is arbitrary, there is a least resolved tree
  for every given tree $(T,\sigma)$.

  Now assume for contradiction that there exist colored digraphs that are
  explained by two distinct least resolved trees. Let $(G,\sigma)$ be a
  minimal graph (w.r.t.\ the number of vertices) that is explained by two
  distinct least resolved trees $(T_1,\sigma)$ and $(T_2,\sigma)$ and let
  $v\in L$ with $\sigma(v)=s$. By construction, the two trees
  $(T_1',\sigma')$ and $(T_2',\sigma')$ with
  $T_1':=T_1{_{|L\setminus\{v\}}}$, $T_2':=T_2{_{|L\setminus\{v\}}}$ and
  leaf labeling $\sigma':=\sigma_{|L\setminus \{v\}}$, each explain a
  unique graph, which we denote by $(G_1',\sigma')$ and $(G_2',\sigma')$,
  respectively.  Lemma \ref{lem:sub} implies that
  $(G',\sigma'):=(G[L\setminus \{v\}],\sigma')$ is a subgraph of both
  $(G_1',\sigma')$ and $(G_2',\sigma')$.

  We next show that $(G_1',\sigma')$ and $(G_2',\sigma')$ are equal by
  characterizing the additional edges that are inserted in both graphs
  compared to $(G',\sigma')$. Assume that there is an additional edge $uy$
  in one of the graphs, say $(G_1',\sigma)$. Since $uy$ is not an edge in
  $(G,\sigma)$, we have $\lca_T(u,y)\succ_T \lca_T(u,y')$ for some $y'\in
  L(T)$ with $\sigma(y)=\sigma(y')$.  However, $uy\in E(G_1')$ implies that
  $\lca_{T_1}(u,y)\preceq_{T_1}\lca_{T_1}(u,y'')$ for all $y''\in
  L\setminus \{v\}$ with $\sigma(y)=\sigma(y')$.  Since $T_1':=T_1\setminus
  \{v\}$, we obtain $\lca_T(u,y') \prec_T \lca_T(u,y) \preceq_{T}
  \lca_{T}(u,y'')$, which implies that $y'=v$ and, in particular, $uv\in
  E(G)$ and $N(u) = \{v\}$.
	
  In particular, we have $\sigma(u)=t\neq s$. In this case, $u$ has no
  out-neighbors in $(G',\sigma')$ but it has outgoing arcs in
  $(G_1',\sigma')$ and $(G_2',\sigma')$. In order to determine these
  outgoing arcs explicitly, we will reconstruct the local structure of
  $(T_1,\sigma)$ and $(T_2,\sigma)$ in the vicinity of the leaf $v$. The
  following argumentation is illustrated in Fig.\ \ref{fig:lr_unique}.

  \begin{figure}[h]
    \begin{center} 
          \includegraphics[width=0.2\textwidth]{./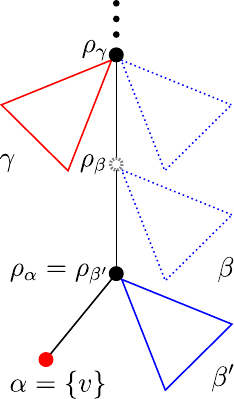}
          \caption[]{Illustration of the proof of Theorem~\ref{thm:lr-2},
            showing the local subtrees of $(T_1,\sigma)$ and
            $(T_2,\sigma)$, immediately above $\alpha=\{v\}$. The relevant
            portion extends to the root $\rho_{\gamma}$ of the $\rthin$
            class $\gamma$ that is located immediately above of $\alpha$
            and has the same color as $\alpha$, here red. Clearly, the
            deletion of $\alpha$ can affect only pairs of vertices $x,y$
            with $\lca(x,y)$ below $\rho_{\gamma}$.  Triangles denote the
            subtree that consists of all leaves of the corresponding class
            which are attached to the root of the class by an outer
            edge. Dashed triangles and nodes denote subtrees which may or
            may not be present in $(T_1,\sigma)$ and $(T_2,\sigma)$.}
          \label{fig:lr_unique} 
    \end{center}
  \end{figure}    
    
  Since $N(u)=\{v\}$, there is a $\rthin$ class $\alpha=\{v\}$. Let $\beta$
  be the $\rthin$ class of $(G,\sigma)$ to which $u$ belongs. It satisfies
  $N(\beta)=\{v\}$. Therefore, $L(T_1(\rho_\beta))\cap L[s]=\{v\}$ and
  $L(T_2(\rho_\beta))\cap L[s]=\{v\}$. In particular, this implies
  $L(T_1(\rho_\alpha))\cap L[s]=\{v\}$ and $L(T_2(\rho_\alpha))\cap
  L[s]=\{v\}$. The children of $\rho_\alpha$ in both $T_1$ and $T_2$ must
  be leaves: otherwise, Lemma \ref{lem:prp1}(ii) would imply that there are
  inner vertices $\rho_{\alpha'}$ and $\rho_{\beta'}$ below $\rho_\alpha$,
  which in turn would contradict to $L(T_1(\rho_\alpha))\cap L[s]=\{v\}$
  and $L(T_2(\rho_\alpha))\cap L[s]=\{v\}$.  

  Moreover, the subtrees $T_1(\rho_\alpha)$ and $T_2(\rho_\alpha)$ must
  contain leaves of both colors. Thus there exists a $\rthin$ class
  $\beta'$ with color $t$ whose root $\rho_{\beta'}$ coincides with
  $\rho_\alpha$ in both $(T_1,\sigma)$ and $(T_2,\sigma)$. More precisely,
  we have $\child(\rho_\alpha)=\alpha\cup \beta'$. We now distinguish two
  cases:
    
  \par\noindent(i)\quad If $N^-(\beta)\cap\{v\}\neq \emptyset$ in
  $(G,\sigma)$, we have $\rho_\beta=\rho_\alpha$, i.e., $\beta=\beta'$.
    
  \par\noindent (ii)\quad Otherwise if $N^-(\beta)\cap\{v\}= \emptyset$, 
  then $\lca(v,\beta')\prec \lca(v,\beta)$, hence $\rho_\beta\succ
  \rho_\alpha$. In particular, since $N(\beta)=\{v\}$, Lemma
  \ref{lem:prp1}(vi) implies that there cannot be any other $\rthin$ class
  $\alpha'\neq \alpha$ of $(G,\sigma)$ with color $s$ and
  $\rho_\beta\succeq\rho_{\alpha'}$. Moreover, there cannot be any other
  class $\beta''$ of color $t$ \NEW{such that $\rho_{\beta''}$ is
    contained} in the unique path from $\rho_\beta$ to
  $\rho_\alpha$, otherwise it holds $N(\beta'')=N(\beta)$ and
  $N^-(\beta'')=N^-(\beta)$ by Lemma \ref{lem:prp1}(vi), i.e.,
  $\beta''\rthin\beta$. Therefore, we conclude that $\rho_\beta \rho_\alpha
  \in E(T_1)$ as well as $\rho_\beta \rho_\alpha \in E(T_2)$.
  
  \par\noindent If $v$ is the only leaf of color $s$ in $(G,\sigma)$,
  it follows from (i) and (ii) that
  $(\NEW{T_1'},\sigma')=(T_1(\rho_\beta),\sigma')=(T_2(\rho_\beta),\sigma')
  =(\NEW{T_2'},\sigma')$; a contradiction\NEW{, hence there is a unique
    tree representation for $(G,\sigma)$ if $|L[s]|=1$.}.

  \par\noindent Now suppose that $L[s]>1$. \NEW{Then, both in case (i) and
    case (ii) there is} a parent \NEW{of} $\parent(\rho_\beta)$,
  \NEW{because} otherwise $(G_1',\sigma')$ and $(G_2',\sigma')$ would not
  contain color $s$. \NEW{In either case} the parent of $\rho_\beta$ is an
  inner node of the least resolved tree $(T_1,\sigma')$ and
  $(T_2,\sigma')$\NEW{, respectively}. We claim that $\parent(\rho_\beta)$
  is the root of $\rthin$ class $\gamma$ of color $s$. Suppose this is not
  the case, i.e., $\sigma(\gamma)=t$ and there is no other
  $\gamma'\in\mathcal{N}$ such that $\sigma(\gamma')=s$ and
  $\parent(\rho_\beta)=\rho_{\gamma'}$. Then $N(\gamma)=N(\beta)$ and
  $N^-(\gamma)=N^-(\beta)$ by Lemma \ref{lem:prp1}(vi), which implies that
  $\beta\rthin \gamma$ and $\rho_\beta$ is not the root of $\beta$; a
  contradiction.

  We therefore conclude that the local subtrees of $(T_1,\sigma')$ and
  $(T_2,\sigma')$ immediately above $\alpha$\NEW{, that is
    $(T_1(\rho_\gamma), \sigma'_{|L(T_1(\rho_\gamma))})$ and
    $(T_2(\rho_\gamma), \sigma'_{|L(T_2(\rho_\gamma))})$,}
	as indicated in Fig. \ref{fig:lr_unique},
  are identical.  Moreover, it follows that
  $\lca(u,\gamma)\preceq \lca(u,w)$ for any $w\in L[s]\setminus \{v\}$.
  Hence, the additionally inserted edges in $(G_1',\sigma)$ and
  $(G_2',\sigma)$ are exactly the edges $uc$ for all $c\in\gamma$. We
  therefore conclude that $(G_1',\sigma)=(G_2',\sigma)$, which implies
  $(T_1',\sigma')=(T_2',\sigma')$. Since $v$ has been chosen arbitrarily,
  this implies $(T_1,\sigma)=(T_2,\sigma)$; a contradiction.   
\end{proof}

Finally, we consider a few simple properties of least resolved trees that
will be useful in the following sections.

\begin{corollary}\label{cor:rel_root}
  Let $(G,\sigma)$ be a connected 2-cBMG that is explained by a least
  resolved tree $(T,\sigma)$. Then all elements of $\alpha\in\mathcal{N}$
  are attached to $\rho_\alpha$, i.e., $\rho_\alpha a\in E(T)$ for all
  $a\in \alpha$.
\end{corollary}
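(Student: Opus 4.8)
The plan is to show that in a least resolved tree, every element $a$ of a $\rthin$ class $\alpha$ is connected to $\rho_\alpha$ by an outer edge, i.e., $\rho_\alpha$ is the parent of $a$. First I would recall that by definition of $\rho_\alpha$ and Lemma~\ref{lem:prp1}(iii), we have $N(\alpha)\preceq\rho_\alpha$, and that $a\preceq\rho_\alpha$ for every $a\in\alpha$ since $\rho_\alpha=\max_{x\in\alpha,\,y\in N(\alpha)}\lca(x,y)\succeq\lca(a,y)\succeq a$. Thus each $a\in\alpha$ lies in the subtree $T(\rho_\alpha)$, and it remains to show that no inner vertex sits strictly between $a$ and $\rho_\alpha$.

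The key idea is a proof by contradiction using the characterization of redundant edges from Lemma~\ref{lem:red}. Suppose some $a\in\alpha$ is not directly attached to $\rho_\alpha$, so that the path from $a$ to $\rho_\alpha$ passes through at least one intermediate inner vertex. Let $e=uv$ be the outer edge incident to $a$ (so $v=a$), or rather consider the inner edge $e=uv$ immediately above $a$ where $u=\parent(a)\ne\rho_\alpha$. Since $(T,\sigma)$ is least resolved, this inner edge $e$ cannot be redundant; by Lemma~\ref{lem:red} this forces the existence of some $\rthin$ class $\delta$ with $v=\rho_\delta$, i.e., the lower endpoint $v$ of $e$ must be the root of some $\rthin$ class. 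I would then examine what this class $\delta$ can be and derive a contradiction with $L(T(\rho_\alpha))\cap L[\sigma(\alpha)]$ containing the relevant leaves, or with the fact that $a\in\alpha$ forces $v$ to carry the same out- and in-neighborhoods as $\alpha$.

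The main obstacle will be cleanly ruling out the intermediate vertices. The cleanest route is to argue directly on $N(\alpha)$ and $\rho_\alpha$: by Lemma~\ref{lem:prp1}(vi), $N(\alpha)=\{y\in L(T(\rho_\alpha))\mid\sigma(y)\ne\sigma(\alpha)\}$, so $N(\alpha)$ is determined by $\rho_\alpha$ alone. If an inner vertex $w$ lies strictly between $a$ and $\rho_\alpha$, consider the edge $e=uv$ on the path from $a$ to $\rho_\alpha$ with $v$ the child of $\rho_\alpha$ (or the relevant inner edge). Because $(T,\sigma)$ is least resolved, $e$ is relevant, so by Lemma~\ref{lem:red} there is a $\rthin$ class $\mu$ with $v=\rho_\mu$. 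By Lemma~\ref{lem:prp1}(iv), since everything in $T(v)$ that has color $\ne\sigma(\mu)$ lies in $N(\mu)$, and $a\preceq v=\rho_\mu$, I can compare the neighborhoods of $\mu$ and $\alpha$; the subtree $T(v)$ being a proper subtree of $T(\rho_\alpha)$ would make $N(\mu)\subsetneq N(\alpha)$ or force $\sigma(\mu)=\sigma(\alpha)$ with $a$ having strictly smaller $\lca$ values, contradicting $a\in\alpha$ via $N^-(a)\ne N^-(\alpha)$ or $N(a)\ne N(\alpha)$.

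I would finish by noting that the contradiction shows no inner vertex can lie strictly between any $a\in\alpha$ and $\rho_\alpha$, so $\rho_\alpha$ is the parent of $a$ and $\rho_\alpha a\in E(T)$ is an outer edge for every $a\in\alpha$. An alternative, possibly slicker, formulation is to invoke Theorem~\ref{thm:lr-2} directly: the least resolved tree is obtained by contracting every inner edge $e=uv$ whose lower endpoint $v$ is not the root of any $\rthin$ class. Any inner vertex strictly below $\rho_\alpha$ and strictly above some $a\in\alpha$ would, if it were the root of a $\rthin$ class $\mu$, have to satisfy $T(\rho_\mu)\cap L[\sigma(\alpha)]$ not separating $a$ from $\alpha$, which one checks is impossible; and if it were not such a root, its incoming edge would be redundant and hence already contracted, contradicting that $(T,\sigma)$ is least resolved. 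Either way the conclusion $\rho_\alpha a\in E(T)$ follows.
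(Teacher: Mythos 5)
Your proposal is correct and follows essentially the same route as the paper: assume some $a\in\alpha$ is not attached to $\rho_\alpha$, use Theorem~\ref{thm:lr-2} (equivalently Lemma~\ref{lem:red}) to conclude that the intermediate inner vertex on the path from $\rho_\alpha$ to $a$ must be the root of some $\rthin$ class, and then use the fact that its subtree contains leaves of both colors (Lemma~\ref{lem:prp1}(ii)) to produce a leaf $c$ with $\sigma(c)\neq\sigma(a)$ and $\lca(a,c)\prec\rho_\alpha$, contradicting the definition of $\rho_\alpha$ via Lemma~\ref{lem:prp1}(vi). The paper's proof is just a tightened version of your sketch, working with the child $v$ of $\rho_\alpha$ on that path rather than the edge immediately above $a$.
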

\begin{proof}
  Assume that $\rho_\alpha a \notin E(T)$. Since by definition $\alpha
  \prec \rho_\alpha$, there exists an inner node $v$ with $\rho_\alpha v
  \in E(T)$ such that $v$ lies in the unique path from $\rho_\alpha$ to
  $a$. In particular $v\neq a$. Theorem \ref{thm:lr-2} implies that each
  inner vertex (except possibly the root) of the least resolved tree
  $(T,\sigma)$ must be the root of some $\rthin$ class of
  $(G,\sigma)$. Hence, there is a $\rthin$ class $\beta\in\mathcal{N}$ with
  $\rho_\beta=v$. According to Lemma \ref{lem:prp1}(ii), the subtree $T(v)$
  contains leaves of both colors, i.e., there exists some leaf $c\in
  L(T(v))$ with $\sigma(c)\neq \sigma(a)$. It follows that $\lca(a,c)\prec
  \rho_\alpha$, which contradicts the definition of $\rho_\alpha$.
  
\end{proof}	
This result remains true also for 2-cBMGs that are not connected.

\subsection{Characterization of 2-cBMGs}

We will first establish necessary conditions for a colored digraph to be a
2-cBMG. The key construction for this purpose is the reachable set of a
$\rthin$ class, that is, the set of all leaves that can be reached from
this class via a path of directed edges in $(G,\sigma)$. Not unexpectedly,
the reachable sets should forms a hierarchical structure. However, this
hierarchy does not quite determine a tree that explains $(G,\sigma)$. We
shall see, however, that the definition of reachable sets can be modified
in such a way that the resulting hierarchy defines the unique least
resolved tree w.r.t.\ $(G,\sigma)$.

\subsubsection{Necessary Conditions} 

We start by deriving some graph properties of 2-cBMGs. We shall see later
that these are in fact sufficient to characterize 2-cBMGs.

\begin{theorem} \label{thm:N+} Let $(G,\sigma)$ be a connected 2-cBMG.
  \marginpar{\color{blue}\scriptsize $G$ is sink free iff\\ \AX{(N4)}
    $N(\alpha)\ne\emptyset$\\ for all $\alpha$.}
  Then, for any two $\rthin$ classes $\alpha$ and $\beta$ of $G$ holds
  \begin{itemize}
    \setlength{\itemindent}{0.15in}
    \item[\AX{(N1)}] $\alpha\cap N(\beta)=\beta\cap N(\alpha)=\emptyset$
      implies \\
      $N(\alpha) \cap N(N(\beta))=N(\beta)  \cap N(N(\alpha))=\emptyset$.
    \item[\AX{(N2)}] $N(N(N(\alpha))) \subseteq N(\alpha)$
    \item[\AX{(N3)}] 
      $\alpha\cap N(N(\beta))=\beta\cap N(N(\alpha))=\emptyset$ and 
      $N(\alpha)\cap N(\beta)\neq \emptyset$ implies 
      $N^-(\alpha)=N^-(\beta)$ and 
      $N(\alpha)\subseteq N(\beta)$ or 
      $N(\beta)\subseteq N(\alpha)$. 
    \end{itemize}
  \end{theorem}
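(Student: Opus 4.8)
The plan is to fix, once and for all, a tree that explains $(G,\sigma)$ and to read off all three conditions from the geometry of the roots $\rho_\alpha$ of the $\rthin$ classes. Since \AX{(N1)}--\AX{(N3)} are statements about $(G,\sigma)$ alone, I am free to choose the explaining tree, and I would take $(T,\sigma)$ to be the least resolved tree guaranteed by Theorem~\ref{thm:lr-2}. The reason is Corollary~\ref{cor:rel_root}: there every element of a class $\alpha$ is attached directly to $\rho_\alpha$, so that for any class $\delta$ one gets the clean equivalence $\alpha\subseteq L(T(\rho_\delta))$ iff $\rho_\alpha\preceq\rho_\delta$ (using that each $\rho_\delta$ is an inner vertex by Lemma~\ref{lem:prp1}(ii)). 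Combined with Lemma~\ref{lem:prp1}(vi), which says $N(\delta)$ is exactly the set of opposite-color leaves below $\rho_\delta$, this yields two working formulas I would establish first: $\gamma\subseteq N(\delta)$ iff $\rho_\gamma\preceq\rho_\delta$ and $\sigma(\gamma)\neq\sigma(\delta)$, and dually $N^-(\alpha)=\bigcup\{\delta: \rho_\alpha\preceq\rho_\delta,\ \sigma(\delta)\neq\sigma(\alpha)\}$. I would also record the two-color bookkeeping: $N(\alpha)$ has the color opposite to $\alpha$, $N(N(\alpha))$ the same color as $\alpha$, and so on.

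With these in hand, \AX{(N2)} and \AX{(N1)} are short. For \AX{(N2)}, Lemma~\ref{lem:prp1}(vii) gives $N(N(\alpha))\preceq\rho_\alpha$; every class $\gamma\subseteq N(N(\alpha))$ therefore satisfies $\rho_\gamma\preceq\rho_\alpha$ and has $\sigma(\gamma)=\sigma(\alpha)$, so by Lemma~\ref{lem:prp1}(vi) its image $N(\gamma)$ consists of opposite-color leaves below $\rho_\gamma\preceq\rho_\alpha$, hence lies in $N(\alpha)$; the union over all such $\gamma$ gives $N(N(N(\alpha)))\subseteq N(\alpha)$. For \AX{(N1)} I would split on colors. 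If $\sigma(\alpha)=\sigma(\beta)$ both conclusions hold for free, since $N(\alpha)$ and $N(N(\beta))$ (respectively $N(\beta)$ and $N(N(\alpha))$) carry different colors. If $\sigma(\alpha)\neq\sigma(\beta)$, the hypothesis $\alpha\cap N(\beta)=\beta\cap N(\alpha)=\emptyset$ is precisely case~(iv) of Lemma~\ref{lem:rthin-cases}, so $\rho_\alpha$ and $\rho_\beta$ are incomparable; then $T(\rho_\alpha)$ and $T(\rho_\beta)$ have disjoint leaf sets, while $N(\alpha)\subseteq L(T(\rho_\alpha))$ and $N(N(\beta))\preceq\rho_\beta$ by Lemma~\ref{lem:prp1}(iii),(vii), forcing $N(\alpha)\cap N(N(\beta))=\emptyset$, and symmetrically for the other intersection.

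The real work is \AX{(N3)}, and I expect the in-neighborhood equality to be the crux. First, $N(\alpha)\cap N(\beta)\neq\emptyset$ forces $\sigma(\alpha)=\sigma(\beta)=:s$, since out-neighborhoods of differently colored classes carry different colors. If $\alpha=\beta$ everything is trivial, so assume $\alpha\neq\beta$; by Lemma~\ref{lem:prp1}(v) this gives $\rho_\alpha\neq\rho_\beta$, and the common out-neighbor (a color-$t$ leaf below both roots) makes $\rho_\alpha,\rho_\beta$ comparable, say $\rho_\beta\prec\rho_\alpha$ w.l.o.g., the conclusion being symmetric. The inclusion part is then immediate: $T(\rho_\beta)\subseteq T(\rho_\alpha)$ gives $N(\beta)\subseteq N(\alpha)$ via Lemma~\ref{lem:prp1}(vi). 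For $N^-$, the formula above yields $N^-(\alpha)\subseteq N^-(\beta)$ directly from $\rho_\beta\prec\rho_\alpha$. The reverse inclusion is the hard step, and it is exactly here that the hypothesis $\beta\cap N(N(\alpha))=\emptyset$ is used: take a color-$t$ class $\delta\subseteq N^-(\beta)$, i.e.\ with $\rho_\beta\preceq\rho_\delta$, and suppose $\delta\not\subseteq N^-(\alpha)$; since $\rho_\alpha$ and $\rho_\delta$ are both ancestors of $\rho_\beta$ and hence comparable, this forces $\rho_\delta\prec\rho_\alpha$, whence $\delta\subseteq N(\alpha)$ and $\beta\subseteq N(\delta)$, so $\beta\subseteq N(N(\alpha))$ --- a contradiction. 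Thus $N^-(\beta)\subseteq N^-(\alpha)$ and equality holds. The mirror hypothesis $\alpha\cap N(N(\beta))=\emptyset$ is automatic once $\rho_\beta\prec\rho_\alpha$ is fixed (the elements of $\alpha$ lie outside $T(\rho_\beta)$) and is only needed to cover the opposite w.l.o.g.\ choice. The main obstacle is therefore the conceptual one of recognizing that \AX{(N3)} is governed entirely by the relative position of $\rho_\alpha$ and $\rho_\beta$, and that the $N(N(\cdot))$ hypotheses are precisely what forbids an intervening class of the opposite color strictly between the two roots.
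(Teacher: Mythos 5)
Your proof is correct and follows essentially the same route as the paper's: both arguments reduce \AX{(N1)}--\AX{(N3)} to the relative position of the roots $\rho_\alpha$, $\rho_\beta$ via Lemma~\ref{lem:prp1} (incomparability of the roots for \AX{(N1)}, the bound $N(N(N(\alpha)))\preceq\rho_\alpha$ for \AX{(N2)}, and the exclusion of an opposite-colored class strictly between the two roots for \AX{(N3)}). The only difference is that you first normalize to the least resolved tree via Corollary~\ref{cor:rel_root} to obtain clean ``if and only if'' formulas for membership in $N(\cdot)$ and $N^-(\cdot)$, whereas the paper argues directly in an arbitrary explaining tree; this is a harmless (and non-circular, since Theorem~\ref{thm:lr-2} precedes this result) convenience rather than a different method.
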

\begin{proof}
  \AX{(N1)} 
  For $\sigma(\alpha)=\sigma(\beta)$ this is trivial, thus suppose 
  $\sigma(\alpha)\neq\sigma(\beta)$. By Lemma \ref{lem:prp1}(vi), 
  $\alpha$ is not contained in the subtree $T(\rho_\beta)$ and $\beta$ 
  is not contained in the subtree $T(\rho_\alpha)$. Therefore, 
  $\rho_\alpha$ and $\rho_\beta$ must be incomparable. 
  Since $N(\alpha), N(N(\alpha))\preceq \rho_\alpha$ and 
  $N(\beta), N(N(\beta))\preceq \rho_\beta$ by 
  Lemma \ref{lem:prp1}(iii) and (vii), we conclude that 
  $N(\alpha) \cap N(N(\beta))=
   N(\beta) \cap N(N(\alpha))=\emptyset$.

  \smallskip
  \par\noindent\AX{(N2)} For contradiction, assume that there is $q\in
  N(N(N(\alpha)))\setminus N(\alpha)$. Since
  $\sigma(q)=\sigma(u)\ne\sigma(x)$ for all $x\in \alpha$ and $u\in
  N(\alpha)$, any such $q$ must satisfy $\lca(x,q)\succ \lca(x,u)$ for
  all $x\in\alpha$ and $u\in N(\alpha)$. Otherwise it would be contained
  in $N(\alpha)$.  Since $N(x)\preceq \rho_{\alpha}$ by Lemma
  \ref{lem:prp1}(iii), the definition of $\rho_{\alpha}$ implies that there
  is some pair $x\in\alpha$ and $y\in\beta\subseteq N(\alpha)$ with
  $\lca(x,y)=\rho_{\alpha}$. Therefore $\lca(x,q)\succ \rho_{\alpha}$.

  Now consider $\beta\subseteq N(\alpha)$. Since
  $\sigma(\beta)\ne\sigma(\alpha)$ and $\lca(\alpha,\beta)\preceq
  \rho_{\alpha}$, we infer that $N(N(\alpha)) \preceq\rho_{\alpha}$.
  Repeating the argument yields $N(N(N(\alpha)))\preceq\rho_{\alpha}$
  and thus there cannot be a pair of leaves $x\in\alpha$ and $q\in
  N(N(N(\alpha)))$ with $\lca(x,q)\succ \rho_{\alpha}$.

  \smallskip
  \par\noindent\AX{(N3)} We first note that \AX{(N3)} is trivially
    true for $\alpha=\beta$. Hence, assume $\alpha\ne\beta$ and 
  suppose $N(\alpha)\cap N(\beta)\neq\emptyset$. 
  Since $T$ is a tree, Lemma \ref{lem:prp1}(vi) implies that either 
  $N(\alpha)\subseteq N(\beta)$ or $N(\beta)\subseteq N(\alpha)$. 
  Assume $N(\beta)\subseteq N(\alpha)$. Hence, 
  $\rho_\beta\preceq\rho_\alpha$. Consequently, for any 
  $\gamma\subseteq N^-(\alpha)$ holds 
  $\lca(\gamma,\beta)\preceq\lca(\gamma,\alpha)\preceq \lca(\gamma,x)$ 
  for all $x$ with $\sigma(x)=\sigma(\alpha)$ and therefore, 
  $N^-(\alpha)\subseteq N^-(\beta)$. 
  Assume for contradiction that there is a 
  $\gamma'\subseteq N^-(\beta)\setminus N^-(\alpha)$. 
  By definition, we have 
  $\rho_\alpha\succeq\lca(\gamma',\beta)\succeq\rho_\beta$ in this case. 
  But then, Lemma \ref{lem:prp1}(vi) implies 
  $N(\gamma')\subseteq N(\alpha)$ and 
  $\beta\subseteq N(\gamma')\subseteq N(N(\alpha))$; a contradiction. 
   
\end{proof}

\begin{definition}
  For any digraph $(G,\sigma)$ we define the \emph{reachable set}
  $R(\alpha)$ for a $\rthin$ class $\alpha$ by
\begin{equation}
  R(\alpha) = N(\alpha) \cup N(N(\alpha)) 
                \cup N(N(N(\alpha))) \cup ...  
\end{equation}
Moreover, we write 
$\mathcal{W}:=\{\alpha\in\mathcal{N} \mid N^-(\alpha)=\emptyset\}$ for the 
set of $\rthin$ classes without in-neighbors.
\end{definition}

As we shall see below, technical difficulties arise for distinct $\rthin$
classes that share the same set of in-neighbors. Hence, we briefly consider
the classes in $\mathcal{W}$. An example is shown Fig.\
\ref{fig:weirdTree}.

\begin{figure}[t]
  \begin{center}
    \begin{tabular}{lcr}
      \begin{minipage}{0.37\textwidth}
        \includegraphics[width=\textwidth]{./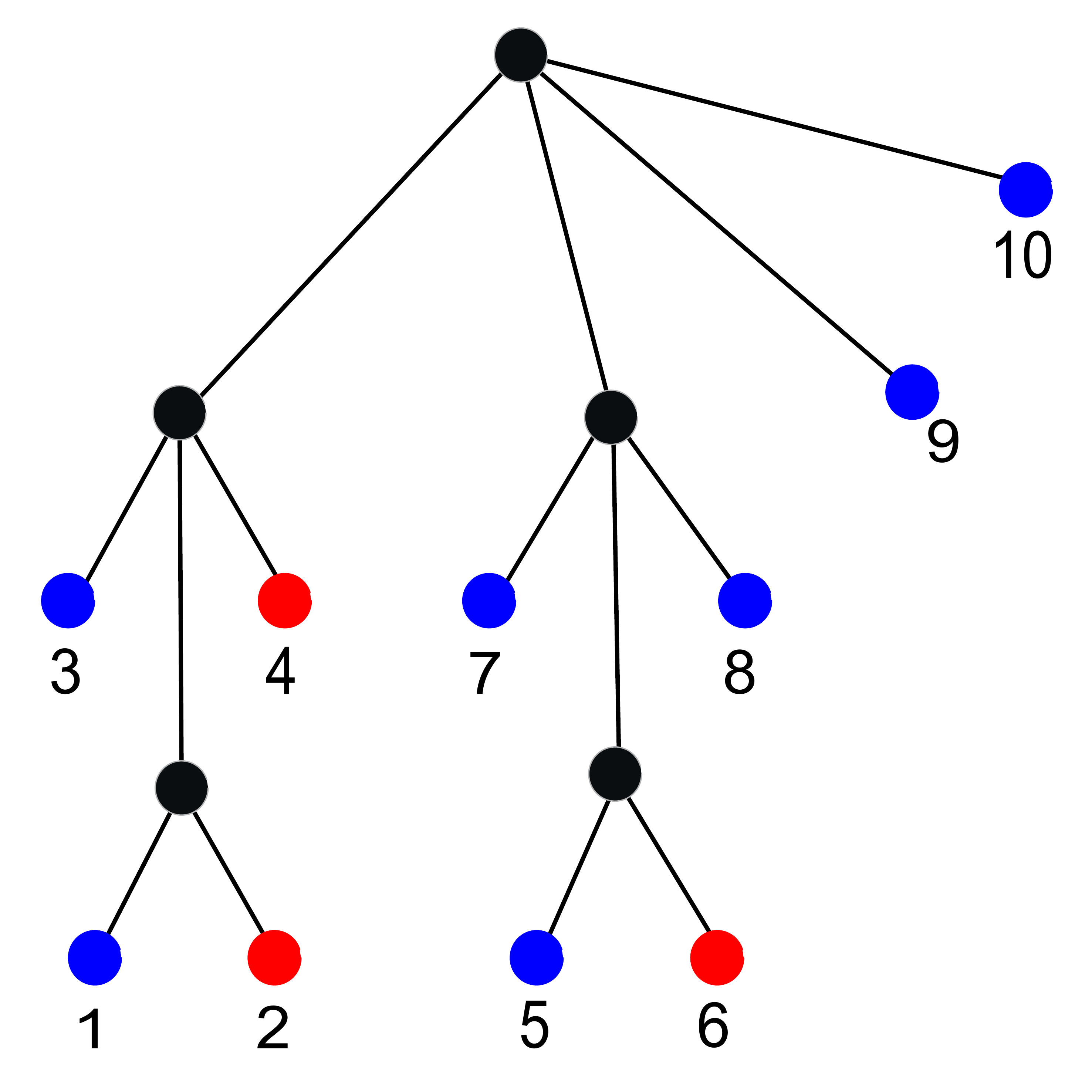}
      \end{minipage} 
      & & 
      \begin{minipage}{0.57\textwidth}
        \includegraphics[width=\textwidth]{./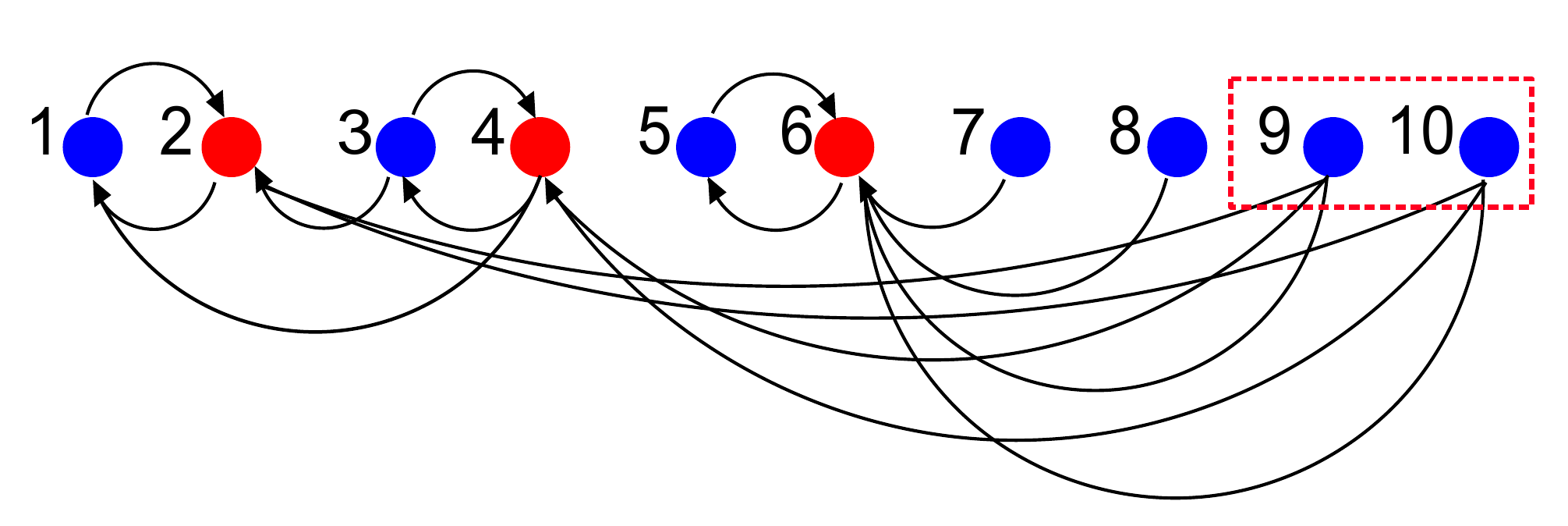} 
      \end{minipage}
      \end{tabular}
  \end{center}
  \caption[]{A 2-cBMG with $|\mathcal{W}|>1$ and its least resolved
    tree. The $\rthin$ class $\alpha=\{9,10\}$ consists of children of the
    root without in-neighbors. There is a second $\rthin$ class without
    in-neighbors, namely $\beta=\{7,8\}$. Hence
    $\mathcal{W}=\{\alpha,\beta\}$,
    $R(\alpha)=\{1,\dots,6\}=L\setminus(\alpha\cup\beta)$, while
    $R(\beta)=\{5,6\}$.}
\label{fig:weirdTree}
\end{figure}

\begin{lemma}
  \label{lem:weird}
  Let $G(T,\sigma)$ be a connected 2-cBMG explained by a tree $(T,\sigma)$.
  Then all $\rthin$ classes in $\mathcal{W}$ have the same color and the
  cardinality of $\mathcal{W}$ distinguishes three types of roots as
  follows:
  \begin{description} 
  \item[(i)] $\mathcal{W}=\emptyset$ if and only if
    $\rho_T=\rho_{\alpha}=\rho_{\beta}$ for two distinct $\rthin$ classes
    $\alpha$ and $\beta$.
  \item[(ii)] $|\mathcal{W}|>1$ if and only if there is a unique $\rthin$ class
    $\alpha^*\in \mathcal{W}$ that is characterized by 
    $\displaystyle R(\alpha^*) = 
      L\setminus\bigcup_{\beta\in\mathcal{W}} \beta$. 
    Furthermore,  $\rho_{\alpha^*}=\rho_T$.
  \item[(iii)] If $\mathcal{W}=\{\alpha\}$, then $\rho_{\alpha}=\rho_T$ and
    $R(\alpha)=L\setminus \alpha$.
  \end{description}
\end{lemma}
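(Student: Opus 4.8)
The plan is to translate every statement about $\mathcal{W}$ into a statement about the relative positions of the roots $\rho_\gamma$ of the $\rthin$ classes in the explaining tree $(T,\sigma)$, and then read off the three cases. The workhorse is Lemma~\ref{lem:prp1}(vi), which says that $N(\gamma)$ consists exactly of the opposite-colored leaves of $T(\rho_\gamma)$, together with part~(iv), which gives $\rho_\beta\preceq\rho_\alpha$ whenever $\beta\subseteq N(\alpha)$. Combining these, I would first record the key equivalence: for an $\rthin$ class $\alpha$ one has $N^-(\alpha)=\emptyset$ if and only if there is no $\rthin$ class $\gamma$ of the opposite color with $\rho_\gamma\succeq\rho_\alpha$. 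Indeed, $\delta\in N^-(\alpha)$ with $\delta\in\gamma$ means $\alpha\subseteq N(\gamma)$, which by (vi) is the same as $\alpha\subseteq L(T(\rho_\gamma))$ with $\sigma(\gamma)\ne\sigma(\alpha)$, and by (iv) this forces $\rho_\alpha\preceq\rho_\gamma$; conversely any opposite-colored $\gamma$ with $\rho_\gamma\succeq\rho_\alpha$ yields $\alpha\subseteq N(\gamma)$ by (vi). Thus membership in $\mathcal{W}$ is governed entirely by which colors occur as $\rho_\gamma=\rho_T$.

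Say that a color $c$ \emph{reaches the root} if some class of color $c$ has root $\rho_T$. Since $(G,\sigma)$ is connected, Thm~\ref{thm:connected} provides a child $v$ of $\rho_T$ with $\sigma(L(T(v)))$ monochromatic; every leaf of that subtree has all its opposite-colored best matches at $\lca=\rho_T$, so its $\rthin$ class is rooted at $\rho_T$, and hence at least one color reaches the root. Next I would prove the ``same color'' claim and case~(i) together: if a class $\delta$ of color $s$ has $\rho_\delta=\rho_T$, then by (vi) every $t$-colored class $\gamma$ satisfies $\gamma\subseteq N(\delta)$, so $\delta\in N^-(\gamma)$ and no $t$-class lies in $\mathcal{W}$. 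Consequently all members of $\mathcal{W}$ share the single color that reaches the root. If \emph{both} colors reach the root there are distinct classes $\alpha,\beta$ (of different colors, by Lemma~\ref{lem:prp1}(v)) with $\rho_\alpha=\rho_\beta=\rho_T$, and the argument just given shows that every class then has an in-neighbor, i.e.\ $\mathcal{W}=\emptyset$; conversely, if only one color reached the root, the corresponding root-class would itself lie in $\mathcal{W}$. This is exactly~(i).

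For (ii) and (iii) I would assume $\mathcal{W}\ne\emptyset$, so exactly one color, say $s$, reaches the root; by Lemma~\ref{lem:prp1}(v) there is a \emph{unique} class $\alpha^*$ with $\rho_{\alpha^*}=\rho_T$, it has color $s$, and the key equivalence places it in $\mathcal{W}$. The heart of the argument is the computation of $R(\alpha^*)$: here I would use that $\bigcup_{\gamma\in\mathcal{W}}\gamma$ is precisely the set of leaves with empty in-neighborhood (all of color $s$), so that $N(\alpha^*)=L[t]$ and $N(N(\alpha^*))=N(L[t])=\{w\mid N^-(w)\ne\emptyset\}=L[s]\setminus\bigcup_{\gamma\in\mathcal{W}}\gamma$. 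Since every vertex of a reachable set has a predecessor, $R(\alpha^*)$ cannot meet $\bigcup_{\gamma\in\mathcal{W}}\gamma$, which together with the first two layers gives $R(\alpha^*)=L\setminus\bigcup_{\gamma\in\mathcal{W}}\gamma$. When $\mathcal{W}=\{\alpha\}$ this reads $R(\alpha)=L\setminus\alpha$ with $\rho_\alpha=\rho_T$, which is~(iii). When $|\mathcal{W}|>1$ I would finally show uniqueness of $\alpha^*$: any other $\beta\in\mathcal{W}$ has $\rho_\beta\prec\rho_T$, its reachable set stays inside $L(T(\rho_\beta))$ (by iterating Lemma~\ref{lem:prp1}(iii),(iv), or directly from \AX{(N2)} of Thm~\ref{thm:N+}), and not all $t$-leaves lie below $\rho_\beta$ (otherwise $N(\beta)=L[t]$ would force $\rho_\beta=\rho_T$), so $R(\beta)\subsetneq L\setminus\bigcup_{\gamma\in\mathcal{W}}\gamma$. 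Hence $\alpha^*$ is the unique maximizer, giving~(ii); the three cases being exhaustive supplies the remaining ``only if'' implications. I expect the main obstacle to be the bookkeeping in this reachable-set computation — cleanly identifying $\bigcup_{\gamma\in\mathcal{W}}\gamma$ with the empty-in-neighborhood leaves and arguing that the later layers of $R$ contribute nothing new — rather than the root-position translation, which is immediate from Lemma~\ref{lem:prp1}.
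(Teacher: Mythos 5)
Your argument follows essentially the same route as the paper: use Theorem~\ref{thm:connected} to locate a monochromatic child subtree of the root, deduce that all of $\mathcal{W}$ carries one color, compute $R(\alpha^*)=N(\alpha^*)\cup N(N(\alpha^*))=L\setminus\bigcup_{\gamma\in\mathcal{W}}\gamma$, and settle uniqueness via the thinness relation. Your reorganization around the equivalence ``$\alpha\in\mathcal{W}$ iff no opposite-colored class $\gamma$ has $\rho_\gamma\succeq\rho_\alpha$'' is a clean packaging of Lemma~\ref{lem:prp1}(iv) and (vi), and your observation that every element of a reachable set has an in-neighbor lets you bypass \AX{(N2)}, which the paper invokes explicitly.

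The one step whose justification does not hold up as written is the parenthetical in your uniqueness argument: ``not all $t$-leaves lie below $\rho_\beta$ (otherwise $N(\beta)=L[t]$ would force $\rho_\beta=\rho_T$).'' Read as a statement about last common ancestors, the implication is false: one can have all of $L[t]$ and all of $\beta$ strictly below an inner vertex $w\prec\rho_T$, in which case $\max_{x,y}\lca(x,y)=w\ne\rho_T$ even though $N(\beta)=L[t]$. What actually rescues the conclusion is the thinness relation: $N(\beta)=L[t]=N(\alpha^*)$ together with $N^-(\beta)=\emptyset=N^-(\alpha^*)$ forces $\beta\rthin\alpha^*$ and hence $\beta=\alpha^*$ (so that, a posteriori, $\rho_\beta=\rho_T$). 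This is exactly how the paper closes the uniqueness argument, and it is a one-line repair; but as stated your reason is not a valid deduction, and the detour through $L(T(\rho_\beta))$ is unnecessary once you argue via $\rthin$ directly.
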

\begin{proof}
  By Thm.\ \ref{thm:connected} there is at least one child $v$ of the root
  $\rho_T$ of $T$ that itself is the root of a subtree with a single leaf
  color, i.e., $\sigma(L(T(v)))=\{s\}$. Assume for contradiction that there
  are two $\rthin$ classes $\alpha,\beta\in\mathcal{W}$ with
  $s=\sigma(\alpha)\ne\sigma(\beta)=t$. Then by definition
  $\lca(v,x)=\rho_T$ for all $x\in\beta$, and furthermore, $ux \in E(G)$
  for all $u\in L(T(v))$. Since $x\in\beta$ has an in-arc,
  $\beta\not\in\mathcal{W}$, a contradiction. All leaves in $\mathcal{W}$
  therefore have the same color.
    
  For the remainder of the proof we fix such a child $v$ of the root $\rho_T$.
  By construction all leaves below it belong to the same $\rthin$ class, which
  we denote by $\omega=L(T(v))$. W.l.o.g.\ we assume $\sigma(v)=s$.  Since
  $\rho_\omega=\rho_T$ by construction, we have
  $N(\omega)=L[t]$.
    
  \smallskip\noindent(i)\quad 
  Suppose $\mathcal{W}=\emptyset$. Then there is a $\beta\in\mathcal{N}_t$
  such that $\beta\NEW{\subseteq} N^-(\omega)$. For all $b\in\beta$ we have
  $\lca(b,\omega)\leq\lca(b,x)$ for all $x\in L[s]$. Since
  $\lca(b,\omega)=\rho_T$ we conclude $\rho_\beta=\rho_T=\rho_\omega$.

  Conversely, suppose $\alpha$ and $\beta$ are two distinct $\rthin$
  classes such that $\rho_\alpha=\rho_\beta=\rho_T$. By Lemma
  \ref{lem:prp1}(v), $\sigma(\alpha)\neq\sigma(\beta)$. W.l.o.g.\ assume
  $\sigma(\alpha)=s$ and $\sigma(\beta)=t$. Since
  $L(T(\rho_\alpha))=L(T(\rho_T)=L$, Lemma \ref{lem:prp1}(vi) implies that
  $N(\alpha)= L[t]$ and $N(\beta)=L[s]$. Therefore, $\alpha\in N^-(\gamma)$
  for all $\gamma\in \mathcal{N}_t$ and $\beta\in N^-(\gamma)$ for all
  $\gamma\in \mathcal{N}_s$. Hence $\mathcal{W}=\emptyset$.

  \smallskip\noindent(ii)\quad 
  If $\mathcal{W}\ne\emptyset$, (i) implies $\rho_\beta \neq \rho_T$ for
  all $\beta\in\mathcal{N}_t$, and hence $\rho_\beta \prec \rho_T$.  Thus,
  there is no $\beta\in\mathcal{N}_t$ with $\omega\subseteq N(\beta)$,
  i.e., $N^-(\omega)=\emptyset$ and thus $\omega\in\mathcal{W}$.
    
  Consider $\gamma\in\mathcal{N}_s$. We have $N^-(\gamma)\ne\emptyset$ if
  and only if there is $\zeta\in\mathcal{N}_t$ such that
  $\gamma\NEW{\subseteq} N(\zeta)$, i.e., if and only if
  $\gamma\subseteq N(L[t])$.  Since $N(\omega)=L[t]$ we have
  $\gamma\notin\mathcal{W}$ if and only if $\gamma\subseteq
  N(N(\omega))$. In other words,
  $N(N(\omega))=L[s]\setminus \bigcup_{\beta\in\mathcal{W}}\beta$.  Using
  \AX{(N2)} we have
  $$R(\omega)= N(\omega)\cup N(N(\omega)) = L[t]\cup
  \bigcup\{\gamma\in\mathcal{N}_s|N^-(\gamma)\ne\emptyset\} = L\setminus
  \bigcup_{\gamma\in\mathcal{W}} \gamma\,.$$
  Now suppose there is another $\alpha\in\mathcal{W}$ with
  $R(\alpha)=L\setminus \bigcup_{\gamma\in\mathcal{W}} \gamma$. We already
  know that $\sigma(\alpha)=s$ since all classes in $\mathcal{W}$ must have
  the same color. Hence $L[t]\subseteq R(\alpha)$. Consequently, $\zeta\in
  N(\omega)$ if and only if $\zeta\in N(\alpha)$ and thus
  $N(\alpha)=N(\omega)$. Since $\alpha,\omega\in\mathcal{W}$ implies
  $N^-(\alpha)=N^-(\omega)=\emptyset$, $\alpha$ and $\omega$ share both in-
  and out-neighbors, and thus $\alpha=\omega$. Therefore $\omega$ is
  unique.
	  
  \smallskip\noindent(iii)\quad From the proof of (ii), we know that if
  $|\mathcal{W}|=1$, then the unique member of
  $\mathcal{W}$ is $\omega$. We already know that $\rho_\omega=\rho_T$.
   
\end{proof}

\subsubsection{Sufficient Conditions}

We now turn to showing that the properties obtained in Theorem \ref{thm:N+}
are already sufficient for the characterization of 2-cBMGs. For this we show
that the extended reachable sets form a hierarchy whenever $(G,\sigma)$
satisfies the properties \AX{(N1)}, \AX{(N2)}, and \AX{(N3)}. 

Recall that a set system $\mathscr{H}\subseteq 2^L$ is a \emph{hierarchy}
on $L$ if (i) for all $A,B\in\mathscr{H}$ holds $A\subseteq B$,
$B\subseteq A$, or $A\cap B=\emptyset$ and (ii) $L\in \mathscr{H}$.

The following simple property we will be used throughout this section:
\begin{lemma} \label{lem:N++} If $G$ is a connected two-colored digraph
  satisfying \AX{(N1)}, then for any two $\rthin$ classes $\alpha$ and
  $\beta$ holds
  \begin{equation}
    N(\alpha) \cap N(\beta)=\emptyset 
    \quad\textrm{implies}\quad 
    N(N(\alpha)) \cap N(N(\beta))=\emptyset
  \end{equation}
  If $G$ satisfies \AX{(N2)}, then $R(\alpha) = N(\alpha) \cup N(N(\alpha))$. 
\end{lemma}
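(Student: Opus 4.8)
The plan is to treat the two assertions separately, since they rest on different hypotheses. I would dispose of the identity $R(\alpha)=N(\alpha)\cup N(N(\alpha))$ first, as it is the cleaner consequence of \AX{(N2)} together with the isotonicity of $N$ and the identity $N(A\cup B)=N(A)\cup N(B)$ recorded earlier. The inclusion ``$\supseteq$'' is immediate because $N(\alpha)$ and $N(N(\alpha))$ are literally the first two terms of the defining union of $R(\alpha)$. For ``$\subseteq$'' I would show that $U:=N(\alpha)\cup N(N(\alpha))$ absorbs every further iterate: using \AX{(N2)} in the form $N(N(N(\alpha)))\subseteq N(\alpha)$ one gets $N(U)=N(N(\alpha))\cup N(N(N(\alpha)))\subseteq N(N(\alpha))\cup N(\alpha)=U$, so $U$ is mapped into itself by $N$. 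Since $N(\alpha)\subseteq U$, a one-line induction then puts the $k$-fold image of $\alpha$ inside $U$ for every $k\ge 1$, and summing over $k$ gives $R(\alpha)\subseteq U$. This step is routine and, notably, never uses the two-coloring.

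For the first assertion I would split on the colors of $\alpha$ and $\beta$. If $\sigma(\alpha)\neq\sigma(\beta)$ the claim is free: as arcs join only differently-colored vertices, $N$ reverses the color, so $N(N(\alpha))$ carries color $\sigma(\alpha)$ and $N(N(\beta))$ carries color $\sigma(\beta)$, and the two sets are disjoint irrespective of the hypothesis. The substance is the case $\sigma(\alpha)=\sigma(\beta)=s$, where both double-neighborhoods lie in $L[s]$. Here I would argue by contradiction: pick $z\in N(N(\alpha))\cap N(N(\beta))$ and, by \AX{(N0)}, choose $\rthin$ classes $\gamma\subseteq N(\alpha)$ and $\delta\subseteq N(\beta)$ (both of color $t$) with $z\in N(\gamma)\cap N(\delta)$; note $\gamma\cap\delta=\emptyset$ since $N(\alpha)\cap N(\beta)=\emptyset$. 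Because $\gamma\subseteq N(\alpha)$ and $\delta\subseteq N(\beta)$ are disjoint from $N(\beta)$ and $N(\alpha)$ respectively, the cross conditions $\gamma\cap N(\beta)=\emptyset$ and $\delta\cap N(\alpha)=\emptyset$ hold automatically. These are exactly half of the hypotheses that fire \AX{(N1)} on the pairs $(\gamma,\beta)$ and $(\delta,\alpha)$: if $\beta\cap N(\gamma)=\emptyset$ then \AX{(N1)} on $(\gamma,\beta)$ yields $N(\gamma)\cap N(N(\beta))=\emptyset$, contradicting the choice of $z$; symmetrically $\alpha\cap N(\delta)=\emptyset$ contradicts via $(\delta,\alpha)$.

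The hard part will be the one remaining configuration, in which both $\beta\subseteq N(\gamma)$ and $\alpha\subseteq N(\delta)$, so that $\alpha\to\gamma\to\beta\to\delta\to\alpha$ forms a directed four-cycle. Here every pair one would like to feed into \AX{(N1)} fails the containment-freeness half of its hypothesis, so \AX{(N1)} alone does not appear to exclude this pattern. I would break it with \AX{(N2)}: from $\beta\subseteq N(\gamma)\subseteq N(N(\alpha))$ and isotonicity one gets $N(\beta)\subseteq N(N(N(\alpha)))\subseteq N(\alpha)$ by \AX{(N2)}, and symmetrically, from $\alpha\subseteq N(\delta)\subseteq N(N(\beta))$, that $N(\alpha)\subseteq N(\beta)$; hence $N(\alpha)=N(\beta)$. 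Since $N(\alpha)\neq\emptyset$ for every class, this contradicts $N(\alpha)\cap N(\beta)=\emptyset$, so the four-cycle cannot occur and the same-color case is settled. I expect this to be the delicate point: it is precisely where \AX{(N1)} is genuinely too weak by itself and one must draw on \AX{(N2)}, which is in force as a standing hypothesis throughout this subsection.
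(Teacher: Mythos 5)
Your handling of the second assertion matches the paper's (which dismisses it as "an immediate consequence of \AX{(N2)}") and is fine. For the first assertion you take a genuinely different --- and in fact more careful --- route than the paper. The published proof applies \AX{(N1)} to the pairs $(\gamma,\beta)$ with $\gamma\subseteq N(\alpha)$ (and symmetrically to $(\gamma',\alpha)$), using that $\gamma\cap N(\beta)=\emptyset$ follows from $N(\alpha)\cap N(\beta)=\emptyset$, but it never verifies the other half of the premise of \AX{(N1)}, namely $\beta\cap N(\gamma)=\emptyset$. The failure mode is exactly the directed four-cycle you isolate, so your extra case is not an overcomplication: it locates a real gap in the paper's argument. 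Your suspicion that \AX{(N1)} alone is "genuinely too weak" is also correct. Take five vertices $a,b,z$ of one color and $c,d$ of the other, with arcs $a\to c$, $c\to b$, $c\to z$, $b\to d$, $d\to a$, $d\to z$: all thinness classes are singletons, the graph is connected and two-colored, and \AX{(N1)} holds for every pair (for same-colored pairs both the premise and the conclusion are automatic from the coloring, and for differently colored pairs the premise fails), yet $N(a)\cap N(b)=\emptyset$ while $z\in N(N(a))\cap N(N(b))$. So the first implication is false under \AX{(N1)} alone, and your appeal to \AX{(N2)} is not a convenience but a necessary repair. Be aware, though, that \AX{(N2)} is not a hypothesis of the first sentence of the lemma as stated, so strictly you prove a corrected statement rather than the literal one; since the lemma is only ever invoked (in Lemmas \ref{lem:hierarchy} and \ref{lem:hierarchy-R'}) for graphs satisfying \AX{(N1)}--\AX{(N3)} simultaneously, nothing downstream is affected. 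One small cosmetic point: $N(\alpha)\ne\emptyset$ "for every class" is not guaranteed by the stated hypotheses, but in your four-cycle case you have $\gamma\subseteq N(\alpha)$ and $\delta\subseteq N(\beta)$ explicitly, which is all the nonemptiness you need.
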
 
\begin{proof} 
  For any $\gamma \subseteq N(\alpha)$ and any $\gamma' \subseteq
  N(\beta)$, \AX{(N1)} implies $N(\gamma)\cap N(N(\beta))=N(\gamma')\cap
  N(N(\alpha))= \emptyset$.  Recall that \AX{(N0)} holds by definition of
  $\rthin$ classes. Hence, $N(\alpha)$ is the disjoint union of $\rthin$
  classes, i.e., $N(\alpha) = \bigcup_{\gamma \subseteq N(\alpha)} \gamma$.
  Thus, $N(N(\alpha)) \cap N(N(\beta))=(\bigcup_{\gamma \subseteq
    N(\alpha)} N(\gamma)) \cap N(N(\beta)) =\emptyset$. The equation
  $R(\alpha) = N(\alpha) \cup N(N(\alpha))$ is an immediate consequence of
  \AX{(N2)}.   
\end{proof} 

\begin{lemma} 
  \label{lem:hierarchy} 
  Let $(G,\sigma)$ be a connected two-colored digraph satisfying properties
  \AX{(N1)}, \AX{(N2)}, and \AX{(N3)}. 
  \marginpar{\color{blue}\scriptsize The lemma also requires \AX{(N4)}.}
  Then, $\mathscr{H}:=\{R(\alpha)\mid
  \alpha\in\mathcal{N}\}$ is a hierarchy on $L\setminus
    \bigcup_{\alpha\in\mathcal{W}}\alpha$.
  \end{lemma}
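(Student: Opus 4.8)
The plan is to verify the two defining properties of a hierarchy for $\mathscr{H}=\{R(\alpha)\mid \alpha\in\mathcal{N}\}$: the laminarity condition (any two members are nested or disjoint) and the requirement that the ground set $L^*:=L\setminus\bigcup_{\alpha\in\mathcal{W}}\alpha$ itself belongs to $\mathscr{H}$. Throughout I would use that \AX{(N2)} gives $R(\alpha)=N(\alpha)\cup N(N(\alpha))$ (Lemma \ref{lem:N++}), that \AX{(N0)} makes every out-neighbourhood, and hence $N(N(\alpha))$, a disjoint union of $\rthin$ classes, and that $N$ is isotonic. The last two facts let me upgrade any nonempty intersection of a single class with an out-neighbourhood to containment.

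For laminarity I would split according to whether $\alpha$ and $\beta$ share a colour. If $\sigma(\alpha)\ne\sigma(\beta)$, then $N(\alpha)$ and $N(\beta)$ have different colours, so $N(\alpha)\cap N(\beta)=\emptyset$ automatically, and the only possible overlaps inside $R(\alpha)\cap R(\beta)$ are $N(\alpha)\cap N(N(\beta))$ and $N(N(\alpha))\cap N(\beta)$. When $\alpha\cap N(\beta)=\beta\cap N(\alpha)=\emptyset$, \AX{(N1)} kills exactly these two intersections and yields $R(\alpha)\cap R(\beta)=\emptyset$; when instead $\beta\subseteq N(\alpha)$ (the only alternative for a class meeting $N(\alpha)$, by \AX{(N0)}), isotonicity gives $N(\beta)\subseteq N(N(\alpha))$ and \AX{(N2)} gives $N(N(\beta))\subseteq N(N(N(\alpha)))\subseteq N(\alpha)$, so $R(\beta)\subseteq R(\alpha)$ (and symmetrically for $\alpha\subseteq N(\beta)$). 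If $\sigma(\alpha)=\sigma(\beta)$, then $N(\alpha),N(\beta)$ share a colour; when they are disjoint, Lemma \ref{lem:N++} forces $N(N(\alpha))\cap N(N(\beta))=\emptyset$ and hence $R(\alpha)\cap R(\beta)=\emptyset$, while if they meet I would distinguish whether $\alpha\subseteq N(N(\beta))$ (or $\beta\subseteq N(N(\alpha))$), in which case \AX{(N2)} and isotonicity again give nesting, from the remaining case, where the hypotheses of \AX{(N3)} are met and deliver $N(\alpha)\subseteq N(\beta)$ or $N(\beta)\subseteq N(\alpha)$, whence nesting of the $R$'s by isotonicity.

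For the ground-set condition I would first observe that $\bigcup_{\alpha}R(\alpha)=L^*$: a class $\gamma\notin\mathcal{W}$ has an in-neighbour, so $\gamma\subseteq N(\delta)\subseteq R(\delta)$ for some $\delta$, whereas a class in $\mathcal{W}$ has no in-arcs and hence lies in no $N(\delta)$ and in no $N(N(\delta))$. Since $\mathscr{H}$ is laminar, its inclusion-maximal members partition $L^*$, and the crux is to show there is exactly one. I would prove two claims for a maximal member $M=R(\omega)$: (a) if a class $\gamma\subseteq M$ then $N(\gamma)\subseteq M$, by splitting on $\gamma\subseteq N(\omega)$ versus $\gamma\subseteq N(N(\omega))$ and invoking \AX{(N2)}; and (b) if $\gamma\subseteq M$, $\delta\notin\mathcal{W}$ and $\gamma\subseteq N(\delta)$, then $\delta\subseteq M$, because $\delta$ lies in some maximal block $M'$, claim (a) applied inside $M'$ gives $N(\delta)\subseteq M'$ and hence $\gamma\subseteq M'$, so $M'$ and $M$ share $\gamma$ and therefore coincide. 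Claims (a) and (b) say that no arc with both endpoints in $L^*$ joins two distinct maximal blocks, while each vertex of $\mathcal{W}$ has no in-arc and sends all its out-arcs into the single block containing its reachable set. Connectedness of $(G,\sigma)$ then forces a single maximal block, which equals $L^*$; hence $L^*\in\mathscr{H}$.

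The main obstacle I anticipate is precisely the uniqueness of the maximal reachable set in the last step: the laminarity computations are a routine, if careful, case analysis driven by \AX{(N1)}--\AX{(N3)} and isotonicity, but establishing a single top block requires combining the ``arcs stay inside a block'' claims with the special role of the in-neighbourless classes in $\mathcal{W}$ and invoking connectedness. The subtlety is that a class need not lie in its own reachable set, so one must always track a class through a reachable set that genuinely contains it when arguing that both endpoints of an arc fall in the same block.
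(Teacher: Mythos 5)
Your proposal is correct and follows essentially the same route as the paper's proof: laminarity via a case analysis driven by \AX{(N0)}--\AX{(N3)}, isotonicity of $N$ and Lemma~\ref{lem:N++} (you organize the cases by colour where the paper organizes them by containment in $R(\alpha)$, but the content is identical), and the ground-set condition via uniqueness of the inclusion-maximal reachable set, obtained by showing that two distinct maximal blocks could not be joined by any arc, contradicting connectedness. Your claims (a) and (b) in the final step are in fact a more careful rendering of the paper's rather terse disconnection argument, including the correct handling of the classes in $\mathcal{W}$.
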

  
\begin{proof}
  First we note that $R(\alpha) = N(\alpha) \cup N(N(\alpha))$ by property
  \AX{(N2)}. Furthermore, using \AX{(N0)}, we observe that $\beta\cap
  N(\alpha)\neq\emptyset$ implies $\beta\subseteq N(\alpha)$ for all
  $\rthin$ classes $\alpha$ and $\beta$. In particular, therefore,
  $N(\alpha)$ is a disjoint union of $\rthin$ classes, and thus
  $N(N(\alpha))=\bigcup_{\beta\subseteq N(\alpha)} N(\beta)$ is again a
  disjoint union of $\rthin$ classes. Hence, for any $\rthin$ class
  $\beta\neq\alpha$, we have either $\beta\subseteq R(\alpha)$ or
  $\beta\cap R(\alpha)=\emptyset$.  Note that the case $\alpha=\beta$ is
  trivial.

  Suppose first $\beta\subseteq R(\alpha)$. If $\beta\subseteq
  N(\alpha)$, then $R(\beta) = N(\beta) \cup N(N(\beta)) \subseteq 
  N(N(\alpha)) \cup N(N(N(\alpha))) \subseteq
  N(N(\alpha))\cup N(\alpha)$.  On the other hand, $\beta\subseteq
  N(N(\alpha))$ yields $R(\beta) \subseteq N(N(N(\alpha))) \cup
  N(N(N(N(\alpha))) \subseteq N(\alpha)\cup N(N(\alpha))$.
  Thus, $R(\beta)\subseteq R(\alpha)$.

  Exchanging the roles of $\alpha$ and $\beta$, the same argument shows
  that $\alpha\subseteq R(\beta)$ implies $R(\alpha)\subseteq R(\beta)$.
	
  Now suppose that neither $\alpha\subseteq R(\beta)$ nor $\beta\subseteq
  R(\alpha)$ and thus, by the arguments above, that $\alpha\cap
  R(\beta)=\beta\cap R(\alpha)=\emptyset$.  In particular, therefore,
  $\alpha\cap N(\beta)=\beta\cap N(\alpha)=\emptyset$ and thus property
  \AX{(N1)} implies $R(\alpha)\cap R(\beta)=(N(\alpha)\cap
  N(\beta))\cup(N(N(\alpha))\cap N(N(\beta)))$. If $N(\alpha)\cap
  N(\beta)=\emptyset$, then $R(\alpha)\cap R(\beta)=\emptyset$ by Lemma
  \ref{lem:N++}. If $N(\alpha)\cap N(\beta)\ne\emptyset$, then property
  \AX{(N3)} and $\alpha\cap R(\beta)=\beta\cap R(\alpha)=\emptyset$ implies
  either $N(\alpha)\subseteq N(\beta)$ or $N(\beta)\subseteq N(\alpha)$.
  Isotony of $N$ thus implies $N(N(\alpha))\subseteq N(N(\beta))$ or
  $N(N(\beta))\subseteq N(N(\alpha))$, respectively. Hence we have either
  $R(\alpha)\subseteq R(\beta)$ or $R(\beta)\subseteq R(\alpha)$. Therefore
  $\mathscr{H}$ is a hierarchy.

  Finally, we proceed to show that there is a unique set $R(\alpha^*)$ that
  is maximal w.r.t.\ inclusion and in particular, satisfies
  $R(\alpha^*)=L\setminus \bigcup_{\alpha\in\mathcal{W}} \alpha$.

  Assume, for contradiction, that there are two distinct elements
  $R(\alpha), R(\alpha^*)\in \mathscr{H}$ that are both maximal w.r.t.\
  inclusion.  Thus, $R(\alpha)\cap R(\alpha^*)=\emptyset$ and
  $\alpha\neq\alpha^*$. Moreover, since $\mathscr{H}$ is a hierarchy, for
  each $\beta\in \mathcal{N}$ with $R(\beta)\subseteq R(\alpha)$, we must
  have $R(\beta)\cap R(\alpha^*)=\emptyset$.
  \marginpar{\color{blue}\scriptsize $\beta\subseteq R(\alpha)$ requires
  $R(\beta)\ne\emptyset$ and thus \AX{(N4)}.}
  In particular, this implies
  $\beta\subseteq R(\alpha)$ for any $\beta\in \mathcal{N}$ with $R(\beta)
  \subseteq R(\alpha)$. As a consequence there is no $\beta\subseteq
  R(\alpha)$ and $\beta'\subseteq R(\alpha^*)$ such that $\beta\subseteq
  N(\alpha^*)$ and $\beta'\subseteq N(\alpha)$, respectively. Therefore,
  $R(\alpha)$ and $R(\alpha^*)$ are not connected; a contraction to the
  connectedness of $G$. Hence, $R(\alpha) = R(\alpha^*)$, i.e., the there
  is a unique set $R(\alpha^*)$ in $\mathscr{H}$ that is maximal w.r.t.\
  inclusion. It contains all $\rthin$ classes of $G$ that have non-empty
  in-neighborhood. Since by definition, all vertices of $G$ are assigned to
  exactly one $\rthin$ class, we conclude that $R(\alpha^*)=L\setminus
  \bigcup_{\alpha\in\mathcal{W}} \alpha$.   
\end{proof}

\begin{figure}[t]
\begin{center}
  \includegraphics[width=0.8\textwidth]{./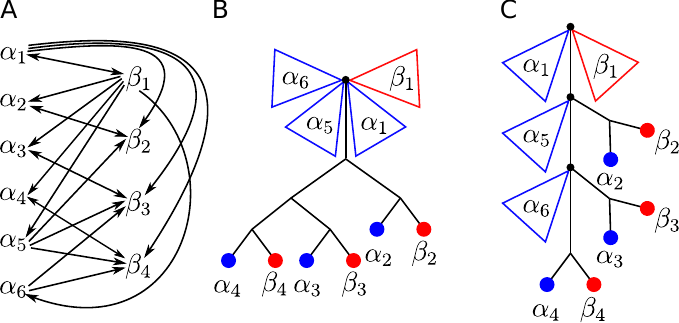}
\end{center}
\caption[]{(A) The two-colored digraph $(G,\sigma)$ satisfies \AX{(N1)},
  \AX{(N2)} and \AX{(N3)}. All $\alpha_i$ are $\rthin$ classes of
  $(G,\sigma)$ and belong to color ``blue'', the $\rthin$ classes $\beta_j$
  form the ``red'' color classes. Red (blue) triangles indicate subtrees
  that only contain red (blue) leaves. Note that
  $N^-(\alpha_1)=N^-(\alpha_5)=N^-(\alpha_6)$.  (B) The tree obtained from
  the hierarchy $\mathscr{H}=\{R(\alpha)\mid \alpha\in\mathcal{N}\}$ by
  attaching to the corresponding tree the elements of $\alpha$ as leaves to
  $R(\alpha)$ does not explain $(G,\sigma)$. It would imply
  $N^-(\alpha_1)=N^-(\alpha_5)=N^-(\alpha_6)$ and
  $N(\alpha_1)=N(\alpha_5)=N(\alpha_6)$, i.e.,
  $\alpha_1\rthin \alpha_5\rthin\alpha_6$. (C) The tree defined by the
  hierarchy $\mathscr{H'}=\{R'(\alpha)\mid \alpha\in\mathcal{N}\}$ with
  elements of $\alpha$ attached as leaves to $R'(\alpha)$ is the unique
  least resolved tree that explains $G$ (cf.\ Lemma
  \ref{lem:hierarchy-R'}).}
\label{fig:RvsR'}
\end{figure}

\NEW{Note that while $R(\alpha)$ is unique for a given $\rthin$ class
  $\alpha$, there may exist more than one $\rthin$ class that have the same
  reachable set} (see for instance $\alpha_2$ and $\beta_2$ in Fig.\
\ref{fig:RvsR'}(C)). In particular, there may even be $\rthin$ classes with
different color giving rise to the same element of $\mathscr{H}$. More
generally, we have $R(\alpha)=R(\beta)$ for $\alpha\ne\beta$ if and only if
$\alpha\in R(\beta)$ and $\beta\in R(\alpha)$.

A hierarchy $\mathscr{H}$ corresponds to a unique tree $T(\mathscr{H})$
defined as the Hasse diagram of $\mathscr{H}$, i.e., the vertices of
$T(\mathscr{H})$ are sets of $\mathscr{H}$, and $R_2$ is a child of $R_1$
iff $R_2\subset R_1$ and there is no $R_3$ such that
$R_2\subset R_3\subset R_1$. In particular, thus, two $\rthin$ classes
belong to the same interior vertex if $R(\alpha)=R(\beta)$. It is tempting
to use this tree to construct a tree $T$ explaining $(G,\sigma)$ by
attaching the elements of $\alpha$ as leaves to the node $R(\alpha)$ in
$T(\mathscr{H})$. The example in Fig.\ \ref{fig:RvsR'}(A) and (B) shows,
however, that this simply does not work.  The key issue arises from groups
of distinct $\rthin$ classes that share the same in-neighborhood because
they will in general be attached to the same node in $T(\mathscr{H})$,
i.e., they are indistinguishable. We therefore need a modification of the
definition of reachable sets that properly distinguishes such $\rthin$
classes in order to construct a hierarchy with the appropriate resolution
for the least resolved tree specified in Theorem \ref{thm:lr-2}. To this
end we define for every $\rthin$ class the auxiliary leaf set
\begin{equation} 
  \label{eq:def-Q}
    Q(\alpha) = \{x\in L \,\mid\, \exists \beta \in \mathcal{N}:\, 
      x\in\beta,\, N^-(\beta)=N^-(\alpha) \text{ and }
      N(\beta)\subseteq N(\alpha)\} 
\end{equation}
Note that $\alpha\subseteq Q(\alpha)$. For later reference we list several 
simple properties of $Q$. 
\begin{lemma}\label{lem:Q}
  \begin{description}
  \item[(i)]   $\beta\subseteq Q(\alpha)$ implies  
    $\sigma(\beta)=\sigma(\alpha)$.
  \item[(ii)]  $\beta\subseteq Q(\alpha)$ implies  
    $Q(\beta)\subseteq Q(\alpha)$.
  \item[(iii)] $\beta\subseteq Q(\alpha)$ implies  
    $R(\beta)\subseteq R(\alpha)$.
  \item[(iv)]  $\alpha\cap N(\beta)=\emptyset$ implies 
    $Q(\alpha)\cap\ N(\beta)=\emptyset$.
  \item[(v)]   $\alpha\cap N(N(\beta))=\emptyset$ implies 
    $Q(\alpha)\cap\ N(N(\beta))=\emptyset$.
  \end{description}
\end{lemma}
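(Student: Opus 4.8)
The plan is to exploit two structural facts that reduce every part to a short manipulation. First, whether a vertex $x$ lies in $Q(\alpha)$ depends only on the $\rthin$ class of $x$, so $Q(\alpha)$ is a disjoint union of $\rthin$ classes; hence for a class $\beta$ the phrase ``$\beta\subseteq Q(\alpha)$'' means exactly $N^-(\beta)=N^-(\alpha)$ and $N(\beta)\subseteq N(\alpha)$, and I may argue class by class throughout. Second, I would record a reciprocity between out- and in-neighbourhoods at the level of classes: for $\rthin$ classes $\gamma,\delta$ one has $\gamma\subseteq N(\delta)$ if and only if $\delta\subseteq N^-(\gamma)$. This follows because in- and out-neighbourhoods are unions of whole $\rthin$ classes (immediate from Def.~\ref{def:rthin} together with \AX{(N0)}): if some $b\in\delta$ lies in $N^-(c)$ for a $c\in\gamma$, then, since $N^-(c)=N^-(\gamma)$ is a union of classes, all of $\delta$ lies in $N^-(\gamma)$, and conversely. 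The same decomposition shows that $N(N(\beta))$ is again a union of $\rthin$ classes.

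For (i) I would use that out-neighbourhoods are nonempty and, in a two-coloured graph, monochromatic of the complementary colour: if $\beta\subseteq Q(\alpha)$ then $\emptyset\neq N(\beta)\subseteq N(\alpha)$, so a vertex $z\in N(\beta)\subseteq N(\alpha)$ satisfies $\sigma(z)\neq\sigma(\beta)$ and $\sigma(z)\neq\sigma(\alpha)$, which with only two colours forces $\sigma(\alpha)=\sigma(\beta)$. Part (ii) is just transitivity of the defining relation: for $\gamma\subseteq Q(\beta)$ and $\beta\subseteq Q(\alpha)$ one gets $N^-(\gamma)=N^-(\beta)=N^-(\alpha)$ and $N(\gamma)\subseteq N(\beta)\subseteq N(\alpha)$, so $\gamma\subseteq Q(\alpha)$. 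Part (iii) follows from isotony of $N$: from $N(\beta)\subseteq N(\alpha)$ one obtains $N^k(\beta)\subseteq N^k(\alpha)$ for every $k\ge1$ by iterating, and taking the union over $k$ yields $R(\beta)\subseteq R(\alpha)$ directly (no appeal to \AX{(N2)} is needed here).

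The substance lies in (iv) and (v), where the reciprocity observation does the work. For (iv), suppose $\alpha\cap N(\beta)=\emptyset$ but some class $\gamma\subseteq Q(\alpha)$ meets $N(\beta)$; by \AX{(N0)} then $\gamma\subseteq N(\beta)$, so reciprocity gives $\beta\subseteq N^-(\gamma)$, and $N^-(\gamma)=N^-(\alpha)$ yields $\beta\subseteq N^-(\alpha)$, i.e.\ $\alpha\subseteq N(\beta)$ --- contradicting $\alpha\cap N(\beta)=\emptyset$ since $\alpha\neq\emptyset$. Part (v) is the same argument one level out: as $N(N(\beta))$ is a union of classes, a class $\gamma\subseteq Q(\alpha)$ meeting it satisfies $\gamma\subseteq N(\delta)$ for some class $\delta\subseteq N(\beta)$, whence $\delta\subseteq N^-(\gamma)=N^-(\alpha)$, so $\alpha\subseteq N(\delta)\subseteq N(N(\beta))$, again a contradiction. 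I expect the only delicate point to be the clean statement and justification of the reciprocity $\gamma\subseteq N(\delta)\Leftrightarrow\delta\subseteq N^-(\gamma)$ and the repeated use that neighbourhoods decompose into whole $\rthin$ classes; once these are in place, (i)--(v) are each one- or two-line deductions.
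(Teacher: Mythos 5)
Your proof is correct and follows essentially the same route as the paper's: parts (i)--(iii) are the same short deductions from the definition of $Q$, and (iv)--(v) are the paper's contradiction arguments, with the in/out reciprocity $\gamma\subseteq N(\delta)\Leftrightarrow\delta\subseteq N^-(\gamma)$ that the paper uses implicitly made explicit. The only (harmless) variation is in (iii), where you iterate isotony over all $N^k$ rather than invoking the two-term form $R=N\cup N(N)$; both work.
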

\begin{proof}
  \par\noindent(i) 
  follows directly from the definition.
  \par\noindent (ii) 
  Let $\beta\subseteq Q(\alpha)$, $\gamma\in\mathcal{N}$ and 
  $\gamma\subseteq Q(\beta)$. Then,
  $N^-(\gamma)=N^-(\beta)=N^-(\alpha)$ and $N(\gamma)\subseteq
  N(\beta)\subseteq N(\alpha)$, hence $\gamma\subseteq Q(\alpha)$ and 
  therefore $Q(\beta)\subseteq Q(\alpha)$.
  \par\noindent (iii) 
  By definition, $N(\beta)\subseteq N(\alpha)$. Monotonicity of $N$
  implies $N(N(\beta))\subseteq N(N(\alpha)$) and therefore,
  $R(\beta)\subseteq R(\alpha)$.
  \par\noindent (iv) 
  Assume that $\alpha\cap N(\beta)=\emptyset$, but 
  $\gamma\subseteq Q(\alpha)\cap\  N(\beta)\neq\emptyset$. 
  Thus, $\beta\subseteq N^-(\gamma)=N^-(\alpha)$,
  i.e., $\alpha\subseteq N(\beta)$; a contradiction.
  \par\noindent (v) 
  Assume that $\alpha\cap N(N(\beta))=\emptyset$, but 
  $\gamma\subseteq Q(\alpha)\cap\ N(N(\beta))\neq \emptyset$.
  Thus, there is a $\rthin$ class $\xi\subseteq N(\beta)$ such that
  $\xi\subseteq N^-(\gamma)=N^-(\alpha)$ and therefore, $\alpha\subseteq
  N(N(\beta))$; a contradiction.
   
\end{proof}

Finally we define, for any two-colored digraph $(G,\sigma)$, its
\emph{extended reachable set} as 
\begin{equation}
  \label{eq:def-R'}
  R'(\alpha):= R(\alpha)\cup Q(\alpha).
\end{equation}
Note that $\alpha\in R'(\alpha)$. Furthermore, the extended reachable set
$R'(\alpha)$ contains vertices with both colors for every $\rthin$ class
$\alpha$. Thus $|R'(\alpha)|>1$.
\marginpar{\color{blue}\scriptsize $|R'(\alpha)|>1$\\ requires \AX{(N4)}.}
We show next that for any 2-cBMG the
extended reachable sets form the hierarchy that yields the desired
least resolved tree.

\begin{lemma} 
  \label{lem:hierarchy-R'} 
  Let $(G,\sigma)$ be a connected two-colored digraph satisfying
  properties \AX{(N1)}, \AX{(N2)}, and \AX{(N3)}.
\marginpar{\color{blue}\scriptsize The lemma also requires \AX{(N4)}.}
  Then,
  $\mathscr{H'}:=\{R'(\alpha)\mid \alpha\in\mathcal{N}\}$ is a hierarchy on
  $L$.
\end{lemma}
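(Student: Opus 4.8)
The plan is to verify the three defining features of a hierarchy on $L$ in turn: that $\mathscr{H'}$ is closed downward under the relation ``$R'(\beta)\subseteq R'(\alpha)$'', that any two of its members are nested or disjoint, and that $L\in\mathscr{H'}$. Throughout I would exploit that $\mathscr{H}=\{R(\alpha)\}$ is already a hierarchy (Lemma~\ref{lem:hierarchy}) and that, by \AX{(N2)}, $R(\alpha)=N(\alpha)\cup N(N(\alpha))$, so every member of $\mathscr{H'}$ is a union of whole $\rthin$ classes. This lets me reason class-by-class, since a class $\beta$ contained in such a union lies entirely inside one of its constituent pieces.

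First I would establish the key monotonicity statement: $\beta\subseteq R'(\alpha)$ implies $R'(\beta)\subseteq R'(\alpha)$. Since $R'(\alpha)=R(\alpha)\cup Q(\alpha)$, it suffices to treat $\beta\subseteq Q(\alpha)$ and $\beta\subseteq R(\alpha)$ separately. The first is immediate from Lemma~\ref{lem:Q}(ii) and (iii), which give $Q(\beta)\subseteq Q(\alpha)$ and $R(\beta)\subseteq R(\alpha)$. For the second, $R(\beta)\subseteq R(\alpha)$ is the downward closure of $R$ already shown inside the proof of Lemma~\ref{lem:hierarchy}, so it remains to check $Q(\beta)\subseteq R(\alpha)$. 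Here I would use in-neighbourhoods: if $\beta\subseteq N(\alpha)$ then $\alpha\subseteq N^-(\beta)$, and every $\gamma\subseteq Q(\beta)$ has $N^-(\gamma)=N^-(\beta)\ni\alpha$, hence $\gamma\subseteq N(\alpha)$; if instead $\beta\subseteq N(N(\alpha))$, choose $\delta\subseteq N(\alpha)$ with $\beta\subseteq N(\delta)$, so $\delta\subseteq N^-(\beta)=N^-(\gamma)$ and thus $\gamma\subseteq N(\delta)\subseteq N(N(\alpha))$. Either way $Q(\beta)\subseteq R(\alpha)$, and combining gives $R'(\beta)\subseteq R'(\alpha)$.

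With monotonicity in hand I would prove laminarity. Given classes $\alpha,\beta$, if $\alpha\subseteq R'(\beta)$ or $\beta\subseteq R'(\alpha)$ then monotonicity yields nesting; so assume $\alpha\cap R'(\beta)=\beta\cap R'(\alpha)=\emptyset$ and aim for $R'(\alpha)\cap R'(\beta)=\emptyset$. Expanding, this splits into $R(\alpha)\cap R(\beta)$, the mixed terms $R(\alpha)\cap Q(\beta)$ and $Q(\alpha)\cap R(\beta)$, and $Q(\alpha)\cap Q(\beta)$. The mixed terms vanish directly from Lemma~\ref{lem:Q}(iv),(v) applied to $\beta\cap N(\alpha)=\beta\cap N(N(\alpha))=\emptyset$ and its symmetric counterpart. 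The other two reduce to whether $N(\alpha)\cap N(\beta)=\emptyset$: once that holds, $R(\alpha)\cap R(\beta)=\emptyset$ follows from \AX{(N1)} (which kills the cross terms $N(\alpha)\cap N(N(\beta))$ and $N(\beta)\cap N(N(\alpha))$) together with Lemma~\ref{lem:N++}, while $Q(\alpha)\cap Q(\beta)=\emptyset$ because a common class $\gamma$ would force $\emptyset\ne N(\gamma)\subseteq N(\alpha)\cap N(\beta)$. The step I expect to be the main obstacle is precisely excluding $N(\alpha)\cap N(\beta)\ne\emptyset$: under the reduction hypothesis $\alpha\cap N(N(\beta))=\beta\cap N(N(\alpha))=\emptyset$, a nonempty overlap is exactly the premise of \AX{(N3)}, which returns $N^-(\alpha)=N^-(\beta)$ and $N(\alpha)\subseteq N(\beta)$ or $N(\beta)\subseteq N(\alpha)$; since the overlap forces $\sigma(\alpha)=\sigma(\beta)$, the definition of $Q$ then gives $\alpha\subseteq Q(\beta)$ or $\beta\subseteq Q(\alpha)$, contradicting the reduction. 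Getting the colour bookkeeping right and matching each disjointness relation to the correct hypothesis is the delicate part.

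Finally I would show $L\in\mathscr{H'}$. Rather than invoking Lemma~\ref{lem:weird} (which presupposes an explaining tree, not available here), I would argue from connectedness: by the previous steps $\mathscr{H'}$ is laminar, and since $\alpha\subseteq Q(\alpha)\subseteq R'(\alpha)$ its members cover $L$, so its maximal elements partition $L$. By monotonicity every class $\gamma$ inside a maximal element $M$ satisfies $N(\gamma)\subseteq R(\gamma)\subseteq R'(\gamma)\subseteq M$, so no arc joins two distinct maximal elements; connectedness of $G$ then forces a single maximal element, which must be $L$. This completes the proof that $\mathscr{H'}$ is a hierarchy on $L$.
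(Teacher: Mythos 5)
Your proof is correct and follows essentially the same route as the paper's: the same decomposition into a monotonicity step ($\beta\subseteq R'(\alpha)\Rightarrow R'(\beta)\subseteq R'(\alpha)$, handled via Lemma~\ref{lem:Q} and the two cases $\beta\subseteq Q(\alpha)$, $\beta\subseteq R(\alpha)$) and a disjointness step (via Lemma~\ref{lem:Q}(iv),(v), \AX{(N1)}, Lemma~\ref{lem:N++} and \AX{(N3)}), followed by a connectedness argument for $L\in\mathscr{H'}$. The only cosmetic difference is that in the sub-case $N(\alpha)\cap N(\beta)\neq\emptyset$ you derive a contradiction from $\beta\subseteq Q(\alpha)\subseteq R'(\alpha)$, whereas the paper directly concludes that the two extended reachable sets are nested; both are valid.
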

\begin{proof}
  Consider two distinct $\rthin$ classes $\alpha,\beta\in\mathcal{N}$.  By
  definition $Q(\alpha)$ is the disjoint union of $\rthin$ classes.  The
  same is true for $R(\alpha)$ as argued in the proof of \NEW{Lemma}
  \ref{lem:hierarchy}, hence $R'(\alpha)=R(\alpha)\cup Q(\alpha)$ is also
  the disjoint union of $\rthin$ classes. Thus we have either
  $\beta\subseteq R'(\alpha)$ or $\beta\cap R'(\alpha)=\emptyset$.

  First assume $\beta\subseteq R'(\alpha)$. Thus we have
  $\beta\subseteq R(\alpha)$ or $\beta\subseteq Q(\alpha)$.
  If $\beta\subseteq Q(\alpha)$, i.e., $N(\beta)\subseteq
  N(\alpha)$ and consequently $R(\beta)\subseteq R(\NEW{\alpha})$, then Lemma
  \ref{lem:Q}(ii)+(iii) implies that $R'(\beta)\subseteq R'(\alpha)$.  If
  $\beta\subseteq R(\alpha)$ then $R(\beta)\subseteq R(\alpha)\subseteq
  R'(\alpha)$, shown as in the proof of Lemma \ref{lem:hierarchy}. It
  remains to show that $Q(\beta)\subseteq R'(\alpha)$. By definition, we
  have $N^-(\gamma)=N^-(\beta)$ for any $\gamma\subseteq Q(\beta)$.
  Therefore, $\beta\subseteq N(\alpha) \cup N(N(\alpha))$ implies
  $\gamma\subseteq N(\alpha)\cup N(N(\alpha))$. Hence, $\gamma\subseteq
  R(\alpha)\subseteq R'(\alpha)$.  In summary, for all $\beta\subseteq
  R'(\alpha)$ we have $R'(\beta)\subseteq R'(\alpha)$.

  The implication ``$\alpha\subseteq R'(\beta) \implies 
  R'(\alpha)\subseteq R'(\beta)$'' follows by exchanging $\alpha$ and $\beta$
  in the previous paragraph.
  
  Now suppose $\beta\cap R'(\alpha)=\alpha\cap R'(\beta)=\emptyset$. In
  particular, it then holds $\alpha\cap N(\beta)=\beta\cap
  N(\alpha)=\emptyset$ and $\alpha\cap N(N(\beta))=\beta\cap
  N(N(\alpha))=\emptyset$. Applying property \AX{(N1)} and Lemma
  \ref{lem:Q}(iv)+(v) yields $R'(\alpha)\cap R'(\beta)=
  \big(N(\alpha)\cap N(\beta)\big)\cup \big(N(N(\alpha))\cap
  N(N(\beta))\big) \cup \big(Q(\alpha)\cap Q(\beta)\big)$. First, let
  $N(\alpha)\cap N(\beta)=\emptyset$. This immediately implies
  $Q(\alpha)\cap Q(\beta)=\emptyset$ and from Lemma \ref{lem:N++} follows
  $N(N(\alpha))\cap N(N(\beta))=\emptyset$. Hence, $R'(\alpha)\cap
  R'(\beta)=\emptyset$. Now assume $N(\alpha)\cap
  N(\beta)\neq\emptyset$. By property \AX{(N3)} we conclude
  $N^-(\alpha)=N^-(\beta)$ and either $N(\alpha)\subseteq N(\beta)$ or
  $N(\beta)\subseteq N(\alpha)$. Consequently, either
  $N(N(\alpha))\subseteq N(N(\beta))$ and $Q(\alpha)\subseteq
  Q(\beta)$, or $N(N(\beta))\subseteq N(N(\alpha))$ and
  $Q(\beta)\subseteq Q(\alpha)$. Hence, it must either hold
  $R'(\alpha)\subseteq R'(\beta)$ or $R'(\beta)\subseteq R'(\alpha)$.

  It remains to show that $L\in \mathscr{H'}$. Similar arguments as in
  the proof of Lemma \ref{lem:hierarchy} can be applied in order to show
  that there is a unique element $R'(\alpha^*)$ that is maximal w.r.t.\
  inclusion in $\mathscr{H'}$. Since for any $\alpha\in \mathcal{N}$ it is
  true that $\alpha\in R'(\alpha)$, every $\rthin$ class of $G$ is
  contained in at least one element of $\mathscr{H'}$. Moreover, any vertex
  of $G$ is contained in exactly one $\rthin$ class. Hence,
  $L=R'(\alpha^*)\in\mathscr{H'}$.  
\end{proof}

Since $\mathscr{H'}$ is a hierarchy, its Hasse diagram is a tree
$T(\mathscr{H'})$. Its vertices are by construction exactly the extended
reachable sets $R'(\alpha)$ of $(G,\sigma)$. Starting from
$T(\mathscr{H'})$, we construct the tree $T^*(\mathscr{H'})$ by attaching
the vertices $x\NEW{\in \alpha}$ to the vertex $R'(\alpha)$ of
$T(\mathscr{H'})$. The tree $T^*(\mathscr{H'})$ has leaf set $L$. Since
$|R'(\alpha)|>1$ as noted below Equ.(\ref{eq:def-R'}), $T^*(\mathscr{H'})$
is a phylogenetic tree.

\begin{theorem}
  Let $(G,\sigma)$ be a connected 2-colored digraph. Then there exists a
  tree $T$ explaining $(G,\sigma)$ if and only if $G$ satisfies properties
  \AX{(N1)}, \AX{(N2)}, and \AX{(N3)}.
\marginpar{\color{blue}\scriptsize The theorem also requires \AX{(N4)}.}
  The tree $T^*(\mathscr{H'})$ is the
  unique least resolved tree that explains $(G,\sigma)$.
  \label{thm:char2}
\end{theorem}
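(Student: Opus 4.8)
The statement bundles two claims, and the plan is to treat them in turn. Necessity of \AX{(N1)}, \AX{(N2)}, \AX{(N3)} is already Theorem \ref{thm:N+}, so I would only need to argue sufficiency together with the identification of $T^*(\mathscr{H'})$. Assuming \AX{(N1)}, \AX{(N2)}, \AX{(N3)}, Lemma \ref{lem:hierarchy-R'} tells me that $\mathscr{H'}=\{R'(\alpha)\mid\alpha\in\mathcal{N}\}$ is a hierarchy on $L$; hence its Hasse diagram $T(\mathscr{H'})$ is a tree whose vertices are the distinct sets $R'(\alpha)$, and attaching each $x\in\alpha$ to the vertex $R'(\alpha)$ produces a phylogenetic tree $T^*:=T^*(\mathscr{H'})$ with leaf set $L$ (phylogenetic because $|R'(\alpha)|>1$). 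The whole theorem then reduces to the single claim that $G(T^*,\sigma)=(G,\sigma)$, since once this is established $(G,\sigma)$ is a 2-cBMG and the uniqueness of its least resolved tree is delivered by Theorem \ref{thm:lr-2}.

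To prove $G(T^*,\sigma)=(G,\sigma)$ I would fix a $\rthin$ class $\alpha$ of $G$, set $s=\sigma(\alpha)$ and let $t$ be the other color, and determine the out-neighborhood of an arbitrary $x\in\alpha$ in $T^*$. Two structural observations drive the argument. First, by construction of the Hasse diagram the subtree of $T^*$ rooted at the vertex $R'(\alpha)$ has leaf set exactly $R'(\alpha)$, and each $x\in\alpha$ is a child of this vertex. Second, the leaves of color $t$ lying in $R'(\alpha)$ are precisely $N(\alpha)$: by Lemma \ref{lem:Q}(i) the set $Q(\alpha)$ contains only leaves of color $s$, while $R(\alpha)=N(\alpha)\cup N(N(\alpha))$ by \AX{(N2)} (Lemma \ref{lem:N++}), and here $N(\alpha)$ carries color $t$ whereas $N(N(\alpha))$ carries color $s$. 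Combining the two, for $y\in N(\alpha)$ I get $\lca_{T^*}(x,y)=R'(\alpha)$ (as $x$ is a child of $R'(\alpha)$ and $y$ sits in the subtree below it), while for every color-$t$ leaf $y'\notin N(\alpha)$ one has $y'\notin R'(\alpha)$, so $\lca_{T^*}(x,y')\succ R'(\alpha)$. Thus $\lca_{T^*}(x,y)\preceq\lca_{T^*}(x,y')$ for all $y\in N(\alpha)$ and all color-$t$ leaves $y'$, which by Definition \ref{def:genclose} says that the best matches of $x$ in $T^*$ are exactly $N(\alpha)=N(x)$. As a digraph is determined by its out-neighborhoods, this would give $G(T^*,\sigma)=(G,\sigma)$.

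For the least-resolved claim I would reuse the same computation: since $\lca_{T^*}(x,y)=R'(\alpha)$ for all $x\in\alpha$ and $y\in N(\alpha)$, the root $\rho_\alpha$ of $\alpha$ in $T^*$ equals the vertex $R'(\alpha)$. Because the inner vertices of $T^*$ are exactly the distinct sets $R'(\alpha)$, every non-root inner vertex is the root of some $\rthin$ class, so by Lemma \ref{lem:red} no inner edge of $T^*$ is redundant and $T^*$ is least resolved. The uniqueness then follows verbatim from Theorem \ref{thm:lr-2}.

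The hard part is the second paragraph, i.e.\ the verification $G(T^*,\sigma)=(G,\sigma)$. The two delicate ingredients are the color bookkeeping that pins the opposite-color leaves below $R'(\alpha)$ to $N(\alpha)$ (this is exactly where \AX{(N2)}, Lemma \ref{lem:N++}, and Lemma \ref{lem:Q}(i) enter) and the fact that every vertex has a non-empty out-neighborhood, so that $N(\alpha)\neq\emptyset$ and the strict inequality $\lca_{T^*}(x,y')\succ R'(\alpha)$ genuinely rules out the non-neighbors $y'$; this non-emptiness is recorded in the remark preceding the theorem, where it is noted that each $R'(\alpha)$ contains leaves of both colors. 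A minor point to check is that distinct classes sharing the same extended reachable set, which by Lemma \ref{lem:prp1}(v) must then differ in color, are attached to a common vertex of $T^*$ without interfering with the per-class computation, since each vertex $x$ is handled solely through its own class $\alpha$.
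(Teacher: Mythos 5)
Your proposal follows the paper's own route: necessity is delegated to Theorem \ref{thm:N+}, sufficiency is obtained from Lemma \ref{lem:hierarchy-R'} by verifying that the out-neighborhood of each leaf in $T^*(\mathscr{H'})$ coincides with $N(x)$ (your color bookkeeping via \AX{(N2)}, Lemma \ref{lem:N++} and Lemma \ref{lem:Q}(i) just makes explicit the step the paper compresses into ``by \AX{(N2)} this is the case if and only if $y\in N(x)$''), and least-resolvedness is settled through Theorem \ref{thm:lr-2}, with your appeal to Lemma \ref{lem:red} merely spelling out the paper's ``by construction''. The argument is correct and essentially identical in structure to the paper's proof.
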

\begin{proof} 
  The ``only if''-direction is an immediate consequence of Lemma
  \ref{lem:rthin-prop} and Theorem \ref{thm:N+}. For the ``if''-direction
  we employ Lemma \ref{lem:hierarchy-R'} and show that the tree
  $T^*(\mathscr{H'})$ constructed from the hierarchy $\mathscr{H'}$
  explains $(G,\sigma)$.
  
  Let $x\in L$ and $\alpha$ be the $\rthin$ class of $(G,\sigma)$ to which
  $x$ \NEW{belongs}. Denote by $\tilde N(x)$ the out-neighbors of $x$ in
  the graph explained by $T^*(\mathscr{H'})$. Therefore $y\in \tilde N(x)$
  if and only if $\sigma(y)\ne\sigma(x)$ and
  $\lca_{T^*(\mathscr{H}')}(x,y)$ is the interior node to which $x$ is
  attached in $T(\mathscr{H'})$, i.e., $R'(\alpha)$. Therefore,
  $y\in \tilde N(x)$ if and only if $\sigma(y)\ne\sigma(x)$ and
  $y\in R'(\alpha)$. By \AX{(N2)} this \NEW{is} the case if and only if
  $y\in N(x)$. Thus $\tilde N(x)=N(x)$. Since two digraphs are identical
  whenever all their out-neighborhoods are the same, the tree
  $T^*(\mathscr{H'})$ indeed explains $(G,\sigma)$.

  By construction and Theorem~\ref{thm:lr-2}, $(T^*(\mathscr{H'}),\sigma)$
  is a least resolved tree.   
\end{proof}

\subsection{Informative Triples}

An inspection of induced three-vertex subgraphs of a 2-cBMG $(G,\sigma)$
shows that several local configurations derive only from specific types of
trees.  More precisely, certain induced subgraphs on three vertices are
associated with uniquely defined triples displayed by the least resolved
tree $(T,\sigma)$ introduced in the previous section. Other induced
subgraphs on three vertices, however, may derive from two or three distinct
triples. The importance of triples derives from the fact that a
phylogenetic tree can be reconstructed from the triples that it displays by
a polynomial time algorithm traditionally referred to as \texttt{BUILD}
\cite{Semple:03}.

\NEW{\texttt{BUILD} makes use of a simple graph representation of certain
  subsets of triples: Given a triple set $R$ and a subset of leaves
  $L'\subseteq L$, the \emph{Aho-graph} $[R,L']$ has vertex set $L'$ and
  there is an edge between two vertices $x,y\in L'$ if and only if there
  exists a triple $xy|z\in R$ with $z\in L'$ \cite{Aho:81}. It is well
  known that $R$ is consistent if and only if $[R,L']$ is disconnected for
  every subset $L'\subseteq L$ with $|L'|>1$ \cite{Bryant:95}.
  \texttt{BUILD} uses Aho-graphs in a top-down recursion: First, $[R,L]$
  is computed and a tree $T$ consisting only of the root $\rho_T$ is
  initialized. If $[R,L]$ is connected and $|L|>1$, then \texttt{BUILD}
  terminates and returns \emph{``$R$ is not consistent''}.  Otherwise,
  \texttt{BUILD} adds the connected components $C_1,\dots,C_k$ of $[R,L]$
  as vertices to $T$ and inserts the edges $(\rho_T,C_i)$, $1\leq i\leq k$.
  \texttt{BUILD} recurses on the Aho-graphs $[R,C_i]$ (where vertex $C_i$
  in $T$ plays the role of $\rho_T$) until it arrives at single-vertex
  components. \texttt{BUILD} either returns the tree $T$ or identifies the
  triple set $R$ as ``not consistent''.  Since the Aho-graphs $[R,L']$ and
  their connected components are uniquely defined in each step of
  $\texttt{BUILD}$, the tree $T$ is uniquely defined by $R$ whenever it
  exists. $T$ is known as the \emph{Aho tree} and will be denoted by
  $\Aho(R)$.}

\NEW{It is natural to ask whether the triples that can be inferred directly
  from $(G,\sigma)$ are sufficient to (a) characterize 2-cBMGs and (b) to
  completely determine the least resolved tree $(T,\sigma)$ explaining
  $(G,\sigma)$.}

\begin{definition} 
\label{def:inftriples}
  Let $(G,\sigma)$ be a two-colored digraph. We say that a triple $ab|c$
  is \emph{informative} (for $(G,\sigma)$) if the three distinct vertices
  $a,b,c\in L$ induce a colored subgraph $G[a,b,c]$ isomorphic (in the
  usual sense, i.e., with recoloring) to the graphs $X_1$, $X_2$, $X_3$, or
  $X_4$ shown in Fig.~\ref{fig:triples}. The set of informative triples is
  denoted by $\mathscr{R}(G,\sigma)$.
\end{definition} 
  
\begin{figure}[htbp]
\begin{center}
  \includegraphics[width=0.85\textwidth]{./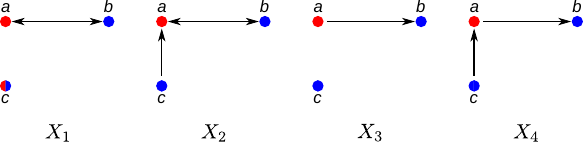}
\end{center}
\caption[]{Each of the three-vertex induced subgraphs $X_1$, $X_2$, $X_3$
  and $X_4$ gives a triple $ab|c$. If vertex $c$ i	n the drawing has two
  colors, then the color $\sigma(c)$ does not matter.  }
\label{fig:triples}
\end{figure}

\begin{lemma}
  \label{lem:consistent} 
  If $(G,\sigma)$ is a connected 2-cBMG, then each triple in
  $\mathscr{R}(G,\sigma)$ is displayed by any tree $T$ that explains
  $(G,\sigma)$.
\end{lemma}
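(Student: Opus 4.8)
The plan is to fix an arbitrary tree $(T,\sigma)$ explaining $(G,\sigma)$ together with an informative triple $ab|c\in\mathscr{R}(G,\sigma)$, and to show directly that $\lca_T(a,b)\prec\lca_T(a,c)=\lca_T(b,c)$, which is exactly the assertion that $T$ displays $ab|c$. Since $a,b,c$ are three leaves of $T$, their induced topology is one of $ab|c$, $ac|b$, $bc|a$, or the star in which all three pairwise last common ancestors coincide; I will rule out every possibility except $ab|c$. The entire argument rests on one elementary consequence of Definition~\ref{def:genclose}, which I would isolate first and then apply to each of the configurations $X_1,\dots,X_4$ of Figure~\ref{fig:triples}.

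The key sub-claim is: \emph{if $\sigma(x)=\sigma(z)\ne\sigma(y)$, the arc $yx\in E(G)$ is present, and the arc $yz\notin E(G)$ is absent, then $T$ displays $xy|z$.} To prove it, note that $y\bmr x$ together with Definition~\ref{def:genclose} yields $\lca_T(x,y)\preceq\lca_T(y,z')$ for every leaf $z'$ with $\sigma(z')=\sigma(x)=\sigma(z)$; in particular $\lca_T(x,y)\preceq\lca_T(y,z)$. On the other hand, $yz\notin E(G)$ with $\sigma(y)\ne\sigma(z)$ means $z$ is not a best match of $y$, so there is a leaf $z'$ of color $\sigma(z)$ with $\lca_T(y,z')\prec\lca_T(y,z)$ (the two are comparable as ancestors of $y$). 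Combining gives $\lca_T(x,y)\preceq\lca_T(y,z')\prec\lca_T(y,z)$, hence the strict inequality $\lca_T(x,y)\prec\lca_T(y,z)$. Finally, among the three pairwise last common ancestors of any three leaves of a tree the two largest coincide and dominate the third; since $\lca_T(x,y)$ now lies strictly below $\lca_T(y,z)$, it cannot be one of the two largest, forcing $\lca_T(x,z)=\lca_T(y,z)\succ\lca_T(x,y)$. This is precisely the statement that $T$ displays $xy|z$.

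With this certificate in hand, it remains to check that each of $X_1,X_2,X_3,X_4$ contains such a pattern oriented so as to group $a$ and $b$. For the configurations in which $\sigma(a)=\sigma(c)\ne\sigma(b)$ (those in which the color of $c$ is forced), the distinguishing arcs are a present arc from $b$ into one of $\{a,c\}$ and an absent arc from $b$ into the other; applying the sub-claim with the appropriate instance of $(x,y,z)$ yields $ab|c$ at once, and the remaining ``don't-care'' arcs among the three vertices play no role, which is why several configurations $X_1,X_2,X_3$ collapse to the same conclusion. For the reciprocal configuration $X_4$, where $a\leftrightarrow b$ and the color of $c$ is unconstrained, I would split on $\sigma(c)$: if $\sigma(c)=\sigma(a)$ the certificate uses the present arc $ba$ and the absent arc $bc$, while if $\sigma(c)=\sigma(b)$ it uses the present arc $ab$ and the absent arc $ac$; in either case the sub-claim delivers $ab|c$. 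This confirms that the color of $c$ genuinely does not matter, as indicated in Figure~\ref{fig:triples}.

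The only real obstacle is bookkeeping rather than mathematics. One must read off each of the four induced subgraphs in Figure~\ref{fig:triples} correctly and verify in each case that the certificate pair it contains—a present arc into one leaf and an absent arc into a second leaf of the \emph{same} color—is oriented so that it groups $a$ with $b$ and not some other pair, and for $X_4$ one must dispatch both admissible colors of $c$. I expect no difficulty in the tree-theoretic step, since the three-leaf argument is purely order-theoretic and uniform across all cases; the care lies entirely in matching each $X_i$ to the correct instance $(x,y,z)$ of the sub-claim.
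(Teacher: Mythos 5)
Your proposal is correct and follows essentially the same route as the paper: the paper's proof consists precisely of your key sub-claim (a present arc $a\bmr b$ together with an absent arc to a leaf $c$ of color $\sigma(b)$ forces $\lca(a,b)\prec\lca(a,c)$, hence the triple $ab|c$), applied to each configuration $X_1,\dots,X_4$. You merely spell out the order-theoretic step on three leaves in more detail than the paper does, and the remaining case-matching against Fig.~\ref{fig:triples} is the same routine bookkeeping the paper also dispatches with ``by the same arguments''.
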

\begin{proof}
  Let $(T,\sigma)$ be a tree that explains $(G,\sigma)$.  Assume that there
  is an induced subgraph $X_1$ in $(G,\sigma)$. W.l.o.g.\ let $\sigma(c) =
  \sigma(b)$.  Since there is no arc $(a,c)$ but an arc $(a,b)$, we have
  $\lca(a,b)\prec \lca(a,c)$, which implies that $T$ must display the
  triple $ab|c$.  By the same arguments, if $X_2$, $X_3$ or $X_4$ is an
  induced subgraph in $(G,\sigma)$, then $T$ must display the triple
  $ab|c$.   
\end{proof}

In particular, therefore, if $(G,\sigma)$ is 2-cBMG, then
$\mathscr{R}(G,\sigma)$ is consistent. It is tempting to conjecture that
consistency of the set $\mathscr{R}(G,\sigma)$ of informative triples is
already sufficient to characterize a 2-cBMG. The example in
Fig.~\ref{fig:counter-triples} shows, however, that this is not the case.

\begin{figure}[tbp]
  \begin{center}
    \includegraphics[width=0.8\textwidth]{./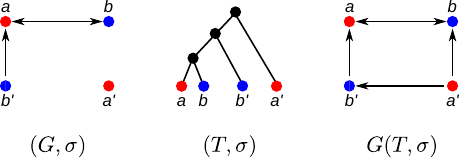}
  \end{center}
  \caption[]{The four-vertex graph $(G,\sigma)$ on the l.h.s.\  cannot be a 
    2-cBMG because there is no out-arc from $a'$. The four induced
    subgraphs are of type $X_1$, $X_2$, $X_3$ (with red and blue 
    exchanged) and arc-less, respectively resulting in the set 
    $R(G,\sigma) = \{ab|b',ab|a',ab'|a'\}$ of informative triples.
    This set is consistent and displayed by the Aho tree $T$ shown in the 
    middle. It is not difficult to check that every edge of $T$ is 
    distinguished by one informative triple. Therefore $R(G,\sigma)$ 
    identifies the leaf-colored tree $(T,\sigma)$ \cite{GSS:07}. 
    However, the graph $G(T,\sigma)$ explained by the tree $(T,\sigma)$ is 
    not isomorphic to the graph $(G,\sigma)$ from which the triples 
    were inferred.
  }
  \label{fig:counter-triples}
\end{figure}

\begin{lemma}\label{lem:edge-dist}
  Let $(T,\sigma)$ be a least resolved tree explaining a connected
  2-cBMG $(G,\sigma)$. Then every inner edge of $T$ is distinguished by at
  least one triple in $\mathscr{R}(G,\sigma)$.
\end{lemma}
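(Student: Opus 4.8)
The plan is to treat each inner edge separately and, for each, exhibit three leaves whose induced subgraph is one of $X_1,\dots,X_4$ and whose triple pins down exactly that edge. Let $(u,v)$ be an inner edge with $u=\parent(v)$. Since $(T,\sigma)$ is least resolved, Thm~\ref{thm:lr-2} guarantees that the inner vertex $v$ is the root $\rho_\alpha$ of some $\rthin$ class $\alpha$; write $\sigma(\alpha)=s$ and let $t$ be the other color. By Cor~\ref{cor:rel_root} every element of $\alpha$ is a leaf-child of $v$, and by Lemma~\ref{lem:prp1}(ii) the subtree $T(v)$ also contains a leaf of color $t$. For any $a\in\alpha$ and any $t$-leaf $b\in L(T(v))$ we then have $\lca(a,b)=v$ and, by Lemma~\ref{lem:prp1}(vi), the arc $a\to b$. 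Because $T$ is phylogenetic, $L(T(u))\setminus L(T(v))\neq\emptyset$; any $c$ chosen there satisfies $\lca(a,b,c)=\lca(v,c)=u$, so the triple $ab|c$ already distinguishes $(u,v)$ in the sense of the preliminaries. It remains only to choose $a,b,c$ so that $G[a,b,c]$ is informative.

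First I would dispose of the easy case in which $L(T(u))\setminus L(T(v))$ contains a leaf $c$ of color $t$. Then $c\notin L(T(v))$, so $c\notin N(\alpha)$ by Lemma~\ref{lem:prp1}(vi) and hence $a\not\to c$; the induced subgraph on $\{a,b,c\}$ has $\sigma(b)=\sigma(c)=t$, $a\to b$ and $a\not\to c$, i.e.\ it is of type $X_1$, so $ab|c\in\mathscr{R}(G,\sigma)$ and this edge is handled.

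The remaining, harder case is that every leaf of $L(T(u))\setminus L(T(v))$ has color $s$. Here I cannot use a same-colored pair $b,c$; instead I would aim for a type $X_2/X_3$ triple, which needs a $t$-leaf $b$ with a \emph{reciprocal} arc $b\to a$ into $\alpha$. By Lemma~\ref{lem:rthin-cases} this reciprocity ($\alpha\subseteq N(\beta)$ and $\beta\subseteq N(\alpha)$) holds precisely when $b$ lies in a $\rthin$ class $\beta$ with $\rho_\beta=\rho_\alpha=v$. The crux is therefore to show that such a $t$-class rooted at $v$ must exist, and this is the step I expect to be the main obstacle. I would argue by contradiction: assuming no $t$-class has root $v$, I first note that then $\alpha$ receives no $t$-colored in-arc (a $t$-leaf $y$ satisfies $\alpha\subseteq N(y)$ only if $\rho_\beta=v$ for its class), so $N^-(\alpha)=\emptyset$. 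Picking any $c\in L(T(u))\setminus L(T(v))$, which is of color $s$, a short analysis of where the closest $t$-leaves to $c$ can sit — using that no $t$-leaf lies in $L(T(u))\setminus L(T(v))$ — shows $N(c)=N(\alpha)$, and a parallel analysis forces $N^-(c)=\emptyset$ as well, since any $t$-leaf $y$ with $y\to c$ would either be a forbidden $t$-leaf in $L(T(u))\setminus L(T(v))$ or would put $a\in N(y)$, contradicting $N^-(\alpha)=\emptyset$. But $N(c)=N(\alpha)$ together with $N^-(c)=N^-(\alpha)=\emptyset$ gives $c\rthin\alpha$, hence $c\in\alpha\subseteq L(T(v))$, contradicting $c\notin L(T(v))$. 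Equivalently, under this assumption contracting $(u,v)$ leaves every out-neighborhood unchanged, so the edge would be redundant, contradicting least resolvedness via Lemma~\ref{lem:red}.

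Once a $t$-class $\beta$ with $\rho_\beta=v$ is secured, I finish the hard case: by Cor~\ref{cor:rel_root} its elements are leaf-children of $v$, so for $a\in\alpha$, $b\in\beta$ we have $\lca(a,b)=v$ and, since $\alpha\subseteq N(\beta)$, the arc $b\to a$; moreover $c\notin L(T(v))=L(T(\rho_\beta))$ gives $c\notin N(\beta)$, i.e.\ $b\not\to c$. The induced subgraph on $\{a,b,c\}$ thus has two $s$-vertices $a,c$ and a $t$-vertex $b$ with $b\to a$ but $b\not\to c$; this is of type $X_2/X_3$, so $ab|c\in\mathscr{R}(G,\sigma)$ and it distinguishes $(u,v)$. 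As the edge was arbitrary, every inner edge of $T$ is distinguished by an informative triple. Throughout, the verification that the displayed configurations really are isomorphic to $X_1,\dots,X_4$ is immediate from Fig.~\ref{fig:triples} and mirrors the forcing arguments already used in Lemma~\ref{lem:consistent}.
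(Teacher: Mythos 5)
Your construction is sound in outline and reaches the right conclusion, but it is organized differently from the paper's proof. Both arguments start identically: least resolvedness gives $v=\rho_\alpha$ for some $\rthin$ class $\alpha$ (Thm.~\ref{thm:lr-2}), Cor.~\ref{cor:rel_root} makes the elements of $\alpha$ children of $v$, and Lemma~\ref{lem:prp1}(ii) supplies an opposite-colored leaf in $T(v)$. The paper then splits on whether the $t$-class $\beta$ inside $T(v)$ has $\rho_\beta=\rho_\alpha$ or $\rho_\beta\prec\rho_\alpha$, and finds the outgroup $c$ either in a $\rthin$ class rooted at $u$ or below another inner child of $u$; this yields four subcases in each of which the induced subgraph is pinned down exactly as one of $X_1,\dots,X_4$. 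You instead always take $c\in L(T(u))\setminus L(T(v))$ and split on whether a $t$-colored $c$ is available. This is more economical, but it has two costs. First, you leave the arcs $b\to a$ and $c\to a$ (easy case) and $c\to b$ (hard case) undetermined, so the assertions ``it is of type $X_1$'' and ``$X_2/X_3$'' are not literally correct; the argument survives only because every completion of the partial configurations you establish is isomorphic to one of $X_1,\dots,X_4$ and yields the triple $ab|c$, and that should be said explicitly. Second, your hard case forces you to prove an existence statement (a $t$-class rooted exactly at $v$) that the paper's choice of $c$ sidesteps entirely.

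That existence step contains the one genuine error: from ``no $t$-class has root $v$'' you infer $N^-(\alpha)=\emptyset$. This inference is invalid. By Lemma~\ref{lem:prp1}(vi), any $t$-class $\beta$ with $\rho_\beta\succeq u$ (for instance one rooted at $\rho_T$) satisfies $\alpha\subseteq N(\beta)$, because the elements of $\alpha$ are children of $v\prec\rho_\beta$; so $N^-(\alpha)$ need not be empty, and the parallel claim $N^-(c)=\emptyset$ inherits the same flaw. What is true, and suffices, is that under your hypotheses every $t$-class contributing to $N^-(\alpha)$ or to $N^-(c)$ must be rooted at or above $u$: it cannot be rooted at $v$ by assumption, and it cannot be rooted at $u$ or inside $T(u)\setminus T(v)$ because its elements (leaf-children of its root, by Cor.~\ref{cor:rel_root}) would then be $t$-leaves of $L(T(u))\setminus L(T(v))$, which the hard case forbids. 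Hence $N^-(c)=N^-(\alpha)$ rather than both being empty; combined with $N(c)=N(\alpha)$ this still gives $c\rthin\alpha$, hence $c\in\alpha\subseteq L(T(v))$, the desired contradiction. Your parenthetical alternative --- that under these hypotheses contracting $uv$ changes no out-neighborhood, so the edge would be redundant, contradicting Lemma~\ref{lem:red} --- is the cleaner route and is correct, provided you also note that the out-neighborhoods of the $t$-leaves are preserved (immediate, since no $t$-class is rooted at $v$ and contraction only alters the leaf set below $v$). With these repairs the proof is complete.
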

\begin{proof}
  Let $(T,\sigma)$ be a least resolved tree w.r.t.\ to $(G,\sigma)$ and
  $e=uv$ be an inner edge of $T$. Since $(T,\sigma)$ is least resolved for
  $(G,\sigma)$, Thm.\ \ref{thm:lr-2} implies that the edge $e$ is relevant,
  and hence, there exists a $\alpha\in\mathcal{N}$ such that
  $v=\rho_\alpha$. By Cor. \ref{cor:rel_root}, we have $a\in \child(v)$ for
  any $a\in\alpha$. Lemma \ref{lem:prp1}(ii) implies that $T(v)$ contains a
  $\rthin$ class $\beta$ with $\sigma(\alpha)\neq \sigma(\beta)$ and
  $b\in\beta$.
	
  \textit{Case A:} Suppose that $\rho_\beta=\rho_\alpha$ and therefore,
  $ab, ba \in E(G)$. If $u$ is the root of some $\rthin$ class with
  $c\in\gamma$, then Lemma \ref{lem:prp1}(vi) implies $ca\in E(G)$,
  $cb\notin E(G)$ for $\sigma(c)=\sigma(b)$ and $cb\in E(T)$, $ca\notin
  E(T)$ for $\sigma(c)=\sigma(a)$. In all cases, we have neither $bc\in
  E(G)$ nor $ac\in E(G)$, since $ab, ba \in E(G)$.  Therefore, we always
  obtain a 3-vertex induced subgraph that is isomorphic to $X_2$ (see Fig.\
  \ref{fig:triples}) and $ab|c \in \mathscr{R}(G,\sigma)$. On the other
  hand, if there is no $\rthin$ class $\gamma$ such that $u=\rho_\gamma$,
  then $u$ is the root of $(T,\sigma)$ by Cor. \ref{cor:rel_root}. Since
  $(T,\sigma)$ is phylogenetic and $u$ is no root of any $\rthin$ class,
  there must be an inner vertex $w\in \child(u)\setminus \NEW{\{v\}}$ such that
  $w=\rho_\gamma$ for some $\gamma \in \mathcal{N}$. Since $T(\rho_\gamma)$
  contains leaves of both colors by Lemma \ref{lem:prp1}(ii), for any leaf
  $c\in L(T(\rho_\gamma))$ there is no edge between $c$ and $b$ as well as
  between $c$ and $a$. Taken together, we obtain the induced subgraph $X_1$
  and the triple $ab|c$.
	
  \textit{Case B:} Now assume $\rho_\beta\prec\rho_\alpha$ and there is no
  other $\beta'\in\mathcal{N}$ with $\sigma(\beta')=\sigma(\beta)$ and
  $\rho_\alpha=\rho_{\beta'}$. By definition of $\rho_\beta$, we have
  $\lca(b,a')\prec \lca(b,a)$ for some $a'$ with $\sigma(a)=\sigma(a')$,
  i.e., $ba\notin E(G)$. Moreover, Lemma \ref{lem:prp1}(vi) implies $b\in
  N(a)$, thus $ab\in E(G)$. Similar to Case A, first suppose that $u$ is
  the root of some $\rthin$ class of $(G,\sigma)$. Since $e$ is relevant,
  there is a $\gamma\in \mathcal{N}$ with $u=\rho_\gamma$ and
  $\sigma(\gamma)\neq \sigma(\alpha)$. Otherwise, if $\sigma(\gamma)=
  \sigma(\alpha)$ and there is no other $\gamma'\in \mathcal{N}$ with
  $u=\rho_{\gamma'}$, Lemma \ref{lem:prp1}(vi) implies
  $N(\alpha)=N(\gamma)$ and $N^-(\alpha)=N^-(\gamma)$, i.e., $\alpha$ and
  $\gamma$ belong to the same $\rthin$ class with root $u$. Hence, $v$ is
  not the root of any $\rthin$ class; a contradiction. Consequently, we
  have $\sigma(\gamma)\neq \sigma(\alpha)$, thus $ca\in E(G)$ by Lemma
  \ref{lem:prp1}(vi) but $ac\notin E(G)$. This yields  the triple
  $ab|c$ that is derived from the subgraph $X_4$. If $u$ is no root of any
  $\rthin$ class, analogous arguments as in \textit{Case A} show that there
  is an inner vertex $w\in \child(u)\setminus v$ such that the tree $T(w)$
  contains leaves of both colors. In particular, there exists a leaf $c\in
  L(T(w))$ and since $u$ is not the root of $\alpha$, $\beta$ or the
  $\rthin$ class that $c$ belongs to, there is no arc between $c$ and $a$
  or $b$ in $(G,\sigma)$.  Hence, we again obtain the triple $ab|c$ which
  in this case is derived from $X_3$.
	
  In every case we have $v=\lca(a,b)\prec \lca(a,c)=u$, i.e., the triple
  $ab|c$ distinguishes $uv$.  
   
\end{proof}

Lemma~\ref{lem:edge-dist} suggests that the leaf-colored Aho tree
$(\Aho(\mathscr{R}(G,\sigma)),\sigma)$ of the set of informative triples
$\mathscr{R}(G,\sigma)$ explains a given 2-cBMG $(G,\sigma)$. The following
result shows that this is indeed the case and sets the stage for the main
result of this section, a characterization of 2-cBMGs in terms of
informative triples.

\begin{theorem}
  Let $(G,\sigma)$ be a connected 2-cBMG. Then $(G,\sigma)$ is explained by
  the Aho tree of the set of informative triples, i.e.,
  $(G,\sigma)=G(\Aho(\mathscr{R}(G,\sigma)),\sigma)$.
\label{thm:Aho1} 
\end{theorem}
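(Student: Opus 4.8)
The plan is to prove the theorem by identifying $\Aho(\mathscr{R}(G,\sigma))$ with the least resolved tree of $(G,\sigma)$ and then using that the latter explains the graph. First I would invoke Thm.~\ref{thm:lr-2} to fix the unique least resolved tree $(T,\sigma)$ explaining the given 2-cBMG. By Lemma~\ref{lem:consistent}, every informative triple is displayed by $T$, so $\mathscr{R}(G,\sigma)\subseteq r(T)$. In particular $\mathscr{R}(G,\sigma)$ is consistent, the Aho tree $\Aho(\mathscr{R}(G,\sigma))$ is well defined on the leaf set $L$, and by the correctness of \texttt{BUILD} \cite{Semple:03} it itself displays $\mathscr{R}(G,\sigma)$, i.e.\ $\mathscr{R}(G,\sigma)\subseteq r(\Aho(\mathscr{R}(G,\sigma)))$.

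The next step is to compare the two trees. Both carry leaf set $L$ and the coloring $\sigma$, so it suffices to show they agree as trees on $L$; equality of colorings is then automatic. Since $T$ displays the consistent set $\mathscr{R}(G,\sigma)$, the standard property of the Aho construction \cite{Semple:03} — that $\Aho(R)$ is the \emph{coarsest} tree displaying $R$ — gives $\Aho(\mathscr{R}(G,\sigma))\le T$. Equivalently, $\Aho(\mathscr{R}(G,\sigma))$ is obtained from $T$ by contracting some set $A$ of edges, which are necessarily inner edges because both trees share the leaf set $L$. It therefore remains to prove $A=\emptyset$.

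The heart of the argument, and the step I expect to be the main obstacle, is ruling out any nontrivial contraction; here Lemma~\ref{lem:edge-dist} supplies exactly what is needed. Every inner edge $e=uv$ of $T$ is distinguished by some informative triple $ab|c\in\mathscr{R}(G,\sigma)$, meaning $\lca_T(a,b)=v$ and $\lca_T(a,b,c)=u$. If some $e\in A$ were contracted, then in $\Aho(\mathscr{R}(G,\sigma))=T_A$ the images of $\lca(a,b)$ and $\lca(a,b,c)$ coincide, so $a,b,c$ form a fan and none of the three triples on $\{a,b,c\}$ is displayed; in particular $ab|c$ would be lost, contradicting $\mathscr{R}(G,\sigma)\subseteq r(\Aho(\mathscr{R}(G,\sigma)))$. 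Hence $A=\emptyset$ and $\Aho(\mathscr{R}(G,\sigma))=(T,\sigma)$. This is precisely the ``identification'' phenomenon of \cite{GSS:07}: a triple set contained in $r(T)$ that distinguishes every inner edge of $T$ pins $T$ down uniquely as its Aho tree. Since $(T,\sigma)$ explains $(G,\sigma)$ by construction, so does $\Aho(\mathscr{R}(G,\sigma))$, which is the assertion. The only genuinely delicate point is keeping the direction of refinement straight: because the Aho tree is the coarsest tree displaying the triples one has $\Aho(\mathscr{R}(G,\sigma))\le T$, so the distinguishing triples act to block any further collapse of $T$ rather than to force additional resolution.
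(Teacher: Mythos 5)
Your overall strategy --- identifying $\Aho(\mathscr{R}(G,\sigma))$ with the least resolved tree $(T,\sigma)$ via Lemmas \ref{lem:consistent} and \ref{lem:edge-dist} --- is genuinely different from the paper's proof (which proceeds by a minimal-counterexample induction, deleting a leaf $v$ and tracking exactly which arcs are created or destroyed), but the step you yourself single out as the delicate one is where it breaks. There is no ``standard property'' that $\Aho(R)$ is the coarsest tree displaying $R$, and $\Aho(\mathscr{R}(G,\sigma))\le T$ does not follow from the fact that $T$ displays $\mathscr{R}(G,\sigma)$. Concretely, take $R=\{ab|e,\,cd|e\}$ on $L=\{a,b,c,d,e\}$: the caterpillar $T=((((a,b),c),d),e)$ displays $R$, but $\Aho(R)$ is the tree whose root has the three children $\{a,b\}$, $\{c,d\}$, $\{e\}$; its cluster $\{c,d\}$ is not a cluster of $T$, so $\Aho(R)\not\le T$. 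The correct general statement --- the identification theorem of \cite{GSS:07} that the paper itself alludes to in the caption of Fig.~\ref{fig:counter-triples} --- points the \emph{other} way: if $R\subseteq r(T)$, every leaf occurs in some triple of $R$, and every inner edge of $T$ is distinguished by a triple of $R$, then every tree displaying $R$ is a \emph{refinement} of $T$, i.e.\ $T\le\Aho(R)$. Hence Lemma \ref{lem:edge-dist} cannot play the role you assign to it (blocking contractions of $T$ inside an allegedly coarser $\Aho(\mathscr{R}(G,\sigma))$); it is instead the hypothesis forcing $\Aho(\mathscr{R}(G,\sigma))$ to be \emph{at least} as resolved as $T$.

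The route can probably be completed, but two further ingredients are needed that your write-up does not supply. First, having obtained $T\le\Aho(\mathscr{R}(G,\sigma))$ from the identification theorem, one must rule out a \emph{strict} refinement --- this is not automatic, since a proper refinement of $T$ generally explains a different best match graph --- and this requires the minimality property of the Aho tree (contracting any edge of $\Aho(R)$ destroys some triple of $R$, cf.\ the paper's appeal to \citet[Prop.\ 4.1]{SEMPLE2003489} in the proof of Theorem \ref{thm:ncBMG}): since $T$ is obtained from $\Aho(\mathscr{R}(G,\sigma))$ by contractions yet still displays all of $\mathscr{R}(G,\sigma)$ by Lemma \ref{lem:consistent}, no contraction can occur and the two trees coincide. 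Second, the identification theorem requires every leaf of $L$ to appear in some informative triple, which fails e.g.\ when $(T,\sigma)$ is the star tree and may fail for leaves attached directly to the root; these cases need a separate (if easy) argument. As written, the proof rests on a false lemma about the Aho construction and therefore has a genuine gap, which the paper avoids entirely by its leaf-deletion induction.
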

\begin{proof}
  Let $(\tilde T, \sigma)$ be the unique least resolved tree that explains
  $(G,\sigma)$. For a fixed vertex $v\in L$ we write
  $(G',\sigma')=(G\setminus \{v\},\sigma_{|L\setminus\{v\}})$. Let
  $(\tilde T', \sigma')$ be the unique least resolved tree that explains
  $(G',\sigma')$ and let
  $(T',\sigma'):=(\Aho(\mathscr{R}(G',\sigma')),\sigma')$ be the
  leaf-colored Aho tree of the informative triples of $(G',\sigma')$.
			
  First consider the case $L=\{x,y\}$. Since $(G,\sigma)$ is a connected
  2-cBMG, we have $\sigma(x)\neq \sigma(y)$ and $xy,yx \in E(G)$.  It is
  easy to see that both the least resolved tree w.r.t.\ $(G,\sigma)$ and
  $\Aho(\mathscr{R}(G,\sigma))$ correspond to the path $x-\rho_T-y$ with
  end points $x$ and $y$. Thus
  $(G,\sigma)=G(\Aho(\mathscr{R}(G,\sigma)),\sigma)$.

  Now let $|L|>2$ and assume that the statement of the proposition is
  false.  Then there is a minimal graph $(G,\sigma)$ such that
  $(G,\sigma)\neq G(T,\sigma)$, i.e., $(G',\sigma')=G(T',\sigma')$ holds
  for every choice of $v\in V(G)$. Since $(G,\sigma)$ is connected, Theorem
  \ref{thm:connected} implies that there is a $\rthin$ class $\alpha$ of
  $(G,\sigma)$ such that $\rho_\alpha=\rho_{\tilde T}$. We fix a vertex $v$
  in this class $\alpha$ and proceed to show that $(G,\sigma)=G(T,\sigma)$,
  a contradiction. Let $\sigma(\alpha)=s$ and let $(\tilde T-v,\sigma')$ be
  the tree that is obtained by removing the leaf $v$ and its incident edge
  from $(\tilde T,\sigma)$. Clearly, the out-neighborhood of every leaf of
  color $s$ is still the same in $(\tilde T-v,\sigma')$ compared to
  $(\tilde T, \sigma)$. Moreover, Lemma \ref{lem:prp1}(vi) implies that
  $N(x)$ remains unchanged in $(\tilde T-v,\sigma')$ for any
  $x\in L[t]\setminus \{v\}$ that belongs to a $\rthin$ class $\beta$ with
  $\rho_\beta \neq \rho_{\tilde T}$. If $\rho_\beta = \rho_{\tilde T}$,
  then $N(x)=L[s]$ in $(\tilde T,\sigma)$ by Lemma \ref{lem:prp1}(vi) and
  thus $N(x)=L[s]\setminus \{v\}$ in $(\tilde T-v,\sigma')$. We can
  therefore conclude that $(\tilde T-v,\sigma')$ explains the induced
  subgraph $(G',\sigma')$ of $(G,\sigma)$.

  Now, we distinguish two cases:

  \smallskip\par\noindent\textit{Case A:} 
  Let $|\child(\rho_{\tilde T})\cap L|>1$, which implies
  $|\child(\rho_{\tilde T-v})\cap L|\ge 1$. Hence, the root of
  $(\tilde T-v, \sigma')$ has at least two children and, in particular,
  $G(\tilde T-v, \sigma')$ is connected by Theorem \ref{thm:connected}.
  Since $(\tilde T,\sigma)$ is least resolved, Theorem \ref{thm:lr-2}
  implies that any inner edge of $(\tilde T-v,\sigma')$ is non-redundant,
  and hence $(\tilde T',\sigma')=(\tilde T-v,\sigma')$. Consequently, we
  can recover $(\tilde T, \sigma)$ from $(\tilde T,\sigma')$ by inserting
  the edge $\rho_{\tilde T'}v$. If $N^-(\alpha)=\emptyset$, then
  $vx\in E(G)$ but $xv\notin E(G)$ for any $x\in L[t]$. Hence, any
  informative triple that contains $v$ is induced by $X_2$ or $X_4$, and is
  thus of the form $xy|v$ with $\sigma(x)\neq\sigma(y)$. This implies
  $v\in \child(\rho_T)$.  On the other hand, if there is a
  $\beta\in \mathcal{N}$ with $\sigma(\beta)=t$ and
  $\rho_\beta=\rho_{\tilde T}$, we have $vu\in E(G)$ and $uv\in E(G)$ with
  $u\in L[t]$ if and only if $u\in\beta$ by Lemma
  \ref{lem:rthin-cases}(i). Then, there is no 3-vertex induced subgraph of
  $(G,\sigma)$ of the form $X_1$, $X_2$, $X_3$, or $X_4$ that contains both
  $u$ and $v$, and any informative triple that contains either $u$ or $v$
  is again of the form $xy|v$ and $xy|v$ respectively. As before, this
  implies $v\in \child(\rho_T)$. Hence, $(T,\sigma)$ is obtained from
  $(T',\sigma')$ by insertion of the edge $\rho_{T'}v$. Since
  $(G',\sigma')=G(T',\sigma')$, we conclude that $(T,\sigma)$ explains
  $(G,\sigma)$, and arrive to the desired contradiction.

  \smallskip\par\noindent\textit{Case B:} 
  If $|\child(\rho_{\tilde T})\cap L|=1$, then $(\tilde T-v, \sigma')$ is
  not least resolved since either (a) the root is of degree 1 or (b) there
  exists no $u\in\child(\rho_{\tilde T})\setminus \{v\}$ such that
  $\sigma(u)\neq \{s,t\}$ (see Theorem \ref{thm:connected}). In the latter
  case, the graph $(G',\sigma')$ is not connected. To convert
  $(\tilde T-v,\sigma')$ into the least resolved tree
  $(\tilde T',\sigma')$, we need to contract all edges $\rho_{\tilde T}u$
  with $u\in\child(\rho_{T'})\setminus\{v\}$.  Clearly, we can recover
  $(G,\sigma)$ from $(G',\sigma')$ by reverting the prescribed steps.
  Analogous arguments as in \textit{Case A} show that again any informative
  triple in $\mathscr{R}(G,\sigma)$ that contains $v$ is of the form $xy|v$
  with $\sigma(x)\neq\sigma(y)$. If $(G'\sigma')$ is connected, then any
  triple in $\mathscr{R}(G,\sigma)\setminus \mathscr{R}(G',\sigma')$ is of
  this form and hence as above, we conclude that $v\in \child(\rho_T)$ and
  $(G,\sigma)=G(T,\sigma)$. If $(G'\sigma')$ is not connected, then
  $\mathscr{R}(G,\sigma)\setminus \mathscr{R}(G',\sigma')$ contains also
  all triples $xy|z$ induced by $X_1$ and $X_3$ that emerged from
  connecting all components of $(G',\sigma')$ by insertion of $v$. However,
  since $\lca(x,y,z)=\rho_{\tilde T}$, we conclude that
  $v\in \child(\rho_T)$ and thus $(G,\sigma)=G(T,\sigma)$ again yields the
  desired contradiction.   
\end{proof}
\NEW{We finally arrive at the main result of this section.}
\begin{theorem}
  A connected 2-colored digraph $(G,\sigma)$ is a 2-cBMG if and only if
    $(G,\sigma)=G(\Aho(\mathscr{R}(G,\sigma)),\sigma)$.
\label{thm:2cbmg-triples}
\end{theorem}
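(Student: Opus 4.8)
The plan is to recognize that this theorem is essentially a repackaging of Theorem~\ref{thm:Aho1} together with Definition~\ref{def:cBMG}, so that both implications become short once the earlier machinery is invoked. I would therefore not attempt any new combinatorial argument; instead I would split into the two directions and cite the appropriate established results.

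For the ``only if'' direction I would assume that the connected 2-colored digraph $(G,\sigma)$ is a 2-cBMG. Then Theorem~\ref{thm:Aho1} applies verbatim, since its hypothesis is exactly that $(G,\sigma)$ is a connected 2-cBMG, and it yields $(G,\sigma)=G(\Aho(\mathscr{R}(G,\sigma)),\sigma)$. There is nothing further to verify in this direction.

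For the ``if'' direction I would start by reading the stated equality carefully: the right-hand side $G(\Aho(\mathscr{R}(G,\sigma)),\sigma)$ is only meaningful when the Aho tree $\Aho(\mathscr{R}(G,\sigma))$ exists, i.e.\ when the set of informative triples $\mathscr{R}(G,\sigma)$ is consistent, so the hypothesis implicitly presupposes this. Writing $T:=\Aho(\mathscr{R}(G,\sigma))$, the coloring $\sigma$ turns $(T,\sigma)$ into a leaf-colored tree with two colors, and the assumed equality says precisely that $(G,\sigma)$ is isomorphic to the cBMG $G(T,\sigma)$. By Definition~\ref{def:cBMG} this means that $(T,\sigma)$ explains $(G,\sigma)$, whence $(G,\sigma)$ is a 2-cBMG.

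I do not expect a genuine obstacle here: the substantive content has already been carried out in Theorem~\ref{thm:Aho1} (and, upstream of it, in Lemma~\ref{lem:edge-dist} and the necessary conditions of Theorem~\ref{thm:N+}). The only point requiring a moment's care is the well-definedness of the Aho tree in the ``if'' direction, which is dispatched by observing that the stated equality can only hold if $\mathscr{R}(G,\sigma)$ is consistent; in the 2-cBMG case consistency is in any event guaranteed by Lemma~\ref{lem:consistent}.
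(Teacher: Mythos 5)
Your proposal is correct and matches the paper's argument in substance: the ``only if'' direction is Theorem~\ref{thm:Aho1} verbatim, and the ``if'' direction is immediate from Definition~\ref{def:cBMG} once the Aho tree exists (the paper states this direction as a contrapositive, but the content is identical). Your remark on the implicit well-definedness of $\Aho(\mathscr{R}(G,\sigma))$ is a fair reading of the same point the paper handles by noting that inconsistency of $\mathscr{R}(G,\sigma)$ is one of the two ways the equality can fail.
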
 
\begin{proof}
  If $(G,\sigma)$ is a 2-cBMG, then Theorem \ref{thm:Aho1} guarantees that
  $(G,\sigma)=G(\Aho(\mathscr{R}(G,\sigma)),\sigma)$.  If $(G,\sigma)$ is
  not a 2-cBMG, then either $\mathscr{R}(G,\sigma)$ is inconsistent or its
  Aho tree $\Aho(\mathscr{R}(G,\sigma))$ explains a different graph
  $G(T,\sigma)\ne(G,\sigma)$ because by assumption $(G,\sigma)$ cannot be
  explained by any tree.   
\end{proof}

If $(G,\sigma)$ is not connected, then the informative triples of
Definition \ref{def:inftriples} are not sufficient by themselves to infer a
tree that explains $(G,\sigma)$. However, it follows from Theorems
\ref{thm:connected} and \ref{thm:2cbmg-triples}, that the desired tree
$(T,\lambda)$ can be obtained by attaching the Aho trees of the connected
components as children of the root of $(T,\lambda)$. It can be understood
as the Aho tree of the triple set
\begin{equation}
  \mathscr{R}(G,\sigma) = \bigcup_i \mathscr{R}(G_i,\sigma_i) \cup 
  \mathscr{R}_C(G,\sigma)
\end{equation}
where the $\mathscr{R}(G_i,\sigma_i)$ are the sets of informative triples
of the connected components and $\mathscr{R}_C(G,\sigma)$ consists of all
triples of the form $xy|z$ with $x,y\in L(G_i)$ and $z\in L(G_j)$ for all
pairs $i\ne j$. The triple set $\mathscr{R}_C(G,\sigma)$ simply specifies
the connected components of $(G,\sigma)$. Note that with this augmented
definition of $\mathscr{R}$, Thm.~\ref{thm:2cbmg-triples} remains true also
for 2-cBMGs that are not connected.

\section{\textbf{\emph{n}}-colored Best Match Graphs}
\label{sect:ncBMG}

In this section we generalize the results about 2-cBMGs to an arbitrary
number of colors.  As in the two-color case, we write $x\rthin y$ if and
only if $x$ and $y$ have the same in- and out-neighbors. Moreover, for
given colors $r,s,t\in S$ we write $(G_{st},\sigma_{st}):=G[L[s]\cup L[t]]$
and $(G_{rst},\sigma_{rst}):=G[L[r]\cup L[s]\cup L[t]]$ for the respective
induced subgraphs.  Since $G$ is multipartite \NEW{and every vertex has at
  least one out-neighbor of each color except its own}, we can conclude
also for general cBMGs that $x\rthin y$ implies $\sigma(x)=\sigma(y)$.
Denote by $x\rthin_{st} y$ the thinness relation of Definition
\ref{def:rthin} on $(G_{st},\sigma_{st}):=G[L[s]\cup L[t]]$.
\begin{fact}\label{obs:rthin-n}
  If $\sigma(x)=\sigma(y)=s$, then $x\rthin y$ holds if and only if
  $x\rthin_{st} y$ for all $t\ne s$.
\end{fact}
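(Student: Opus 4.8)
The plan is to reduce the equivalence to a purely set-theoretic statement about how neighborhoods split by color. First I would record the two elementary decompositions that hold in any cBMG. Since arcs never join vertices of equal color, a vertex $x$ of color $s$ satisfies $N_s(x)=N^-_s(x)=\emptyset$, and consequently its full out- and in-neighborhoods split as the disjoint unions $N(x)=\dot\bigcup_{t\ne s} N_t(x)$ and $N^-(x)=\dot\bigcup_{t\ne s} N^-_t(x)$, where $N_t(x)$ and $N^-_t(x)$ are the color-$t$ restrictions introduced in the subsection on thinness.

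Next I would identify the neighborhoods taken in the induced subgraph $G_{st}=G[L[s]\cup L[t]]$ with these color-restricted sets. By definition the out-neighborhood of $x$ in $G_{st}$ is $\{z\in L[s]\cup L[t]\mid xz\in E(G)\}$; since every out-arc of $x$ lands on a vertex whose color differs from $s$, this set equals $N_t(x)$, and analogously the in-neighborhood of $x$ in $G_{st}$ equals $N^-_t(x)$. Hence $x\rthin_{st} y$ holds precisely when $N_t(x)=N_t(y)$ and $N^-_t(x)=N^-_t(y)$.

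With these two observations in place the equivalence is immediate. Because the color classes partition $N(x)$ and $N(y)$, the equality $N(x)=N(y)$ is equivalent to $N_t(x)=N_t(y)$ holding for every $t\ne s$, and likewise $N^-(x)=N^-(y)$ is equivalent to $N^-_t(x)=N^-_t(y)$ for every $t\ne s$ (note that $x$ and $y$ share the color $s$, so the same index set $\{t\mid t\ne s\}$ governs both). Combining these, the condition $x\rthin y$, i.e. $N(x)=N(y)$ together with $N^-(x)=N^-(y)$, is equivalent to the conjunction over all $t\ne s$ of $N_t(x)=N_t(y)$ and $N^-_t(x)=N^-_t(y)$, which by the previous paragraph is exactly the statement that $x\rthin_{st} y$ for all $t\ne s$.

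The argument is essentially bookkeeping, so I do not expect a substantial obstacle; the only point that requires a little care is the identification in the second step, namely that passing to the subgraph on colors $s$ and $t$ neither adds nor removes any arc incident to $x$ among color-$t$ vertices. This is guaranteed precisely because $G_{st}$ is an \emph{induced} subgraph, so an arc $xz$ with $z\in L[t]$ is present in $G_{st}$ if and only if it is present in $G$.
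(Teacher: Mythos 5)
Your proof is correct, and it makes explicit exactly the bookkeeping the paper leaves implicit: the paper states this as an unproved Observation, relying on the facts that arcs never join equally colored vertices (so $N(x)$ and $N^-(x)$ partition into the color-restricted sets $N_t(x)$, $N^-_t(x)$) and that the neighborhoods in the induced subgraph $G_{st}$ coincide with these restrictions. Nothing is missing.
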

We can therefore think of the relation $\rthin$ as the common refinement of
the relations $\rthin_{st}$ based on the induced 2-cBMGs for all colors
$s,t$.  In particular, therefore, all elements of a $\rthin$ class of an
$n$-cBMG appear as sibling leaves in the different least resolved trees,
each explaining one of the induced 2-cBMGs. Next we generalize the
notion of roots.
\begin{definition}
  Let $(G,\sigma)$ be an $n$-cBMG and suppose $\sigma(\alpha)=r\neq
  s$. Then the \emph{root $\rho_{\alpha}$ of the $\rthin$ class $\alpha$
    with respect to color $s$} is
  \begin{equation*}
    \rho_{\alpha,s}=
    \max_{\substack{x\in\alpha \\ y\in N_s(\alpha)}} \lca(x,y).
  \end{equation*}
\end{definition}

\begin{fact}\label{obs:2-colorISdisplayed}
  Consider an $n$-cBMG $(G,\sigma)$ that is explained by a tree
  $(T,\sigma)$.  By observation \ref{obs-1}, the subgraph
  $(G_{st},\sigma_{st})$ induced by any two distinct colors $s,t\in S$ is a
  2-BMG and thus explained by a corresponding least resolved tree
  $(T_{st},\sigma_{st})$. Uniqueness of this least resolved tree implies
  that the tree $(T,\sigma)$ must display $(T_{st},\sigma_{st})$. In other
  words, $(T,\sigma)$ is a refinement of $(T_{st},\sigma_{st})$.
\end{fact}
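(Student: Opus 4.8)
The plan is to chain together three facts already at hand: that restricting to a colour pair produces the two-coloured induced subgraph together with an explaining tree, that every tree explaining a 2-cBMG displays its unique least resolved tree, and that restriction is itself a special case of displaying.

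First I would apply Observation \ref{obs-1} with the restricted colour set $S'=\{s,t\}$ and $L'=L[s]\cup L[t]$. This immediately gives that the induced subgraph $(G_{st},\sigma_{st})=G[L[s]\cup L[t]]$ is explained by the restriction $(T_{L[s]\cup L[t]},\sigma)$ of $(T,\sigma)$; in particular $(G_{st},\sigma_{st})$ is a 2-cBMG. By Theorem \ref{thm:lr-2} it therefore has a unique least resolved tree $(T_{st},\sigma_{st})$, and the final clause of that theorem states that this least resolved tree is displayed by \emph{any} tree explaining the same 2-cBMG. Applying this to the explaining tree $(T_{L[s]\cup L[t]},\sigma)$ obtained in the previous sentence yields $T_{st}\le T_{L[s]\cup L[t]}$.

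Second I would observe that the restriction $T_{L[s]\cup L[t]}$ is itself displayed by $T$. This is immediate from the definition of $\le$: the restriction $T_{L'}$ is obtained from the minimal subtree of $T$ spanning $L'$ by suppressing degree-two vertices, and each such suppression is the contraction of an incident edge; hence $T_{L[s]\cup L[t]}\le T$, with colours inherited from the common map $\sigma$ so that the coloured refinement condition holds on $L(T_{L[s]\cup L[t]})$. Transitivity of the displays relation then gives $T_{st}\le T_{L[s]\cup L[t]}\le T$, i.e.\ $(T,\sigma)$ displays $(T_{st},\sigma_{st})$, which is exactly the assertion that $(T,\sigma)$ is a refinement of $(T_{st},\sigma_{st})$.

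The step I expect to require the most care is the bookkeeping when $(G_{st},\sigma_{st})$ is disconnected, since Theorem \ref{thm:lr-2} is phrased for connected 2-cBMGs. Here I would treat each connected component by Theorem \ref{thm:lr-2}, assemble the component trees beneath a common root as in the disjoint-union construction for cBMGs, and invoke the extension noted after Corollary \ref{cor:rel_root} to guarantee that the assembled tree is again the unique least resolved tree; the colour-consistency of $\sigma$ under both restriction and contraction is then a routine check against the conventions fixed in the Notation subsection. Everything else is a direct appeal to Observation \ref{obs-1} and Theorem \ref{thm:lr-2} together with transitivity of $\le$.
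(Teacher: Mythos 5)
Your argument is correct and follows essentially the same route as the paper, which states this observation with only an inline justification: Observation~\ref{obs-1} gives the restriction $T_{L[s]\cup L[t]}$ as an explaining tree, the final clause of Theorem~\ref{thm:lr-2} gives $T_{st}\le T_{L[s]\cup L[t]}$, and transitivity of $\le$ finishes. You are in fact somewhat more careful than the paper, which glosses over both the transitivity step and the case where $(G_{st},\sigma_{st})$ is disconnected (formally covered only later by Theorem~\ref{thm:uT-conn}); your explicit treatment of these points is a welcome addition rather than a deviation.
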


\begin{fact}\label{obs:2-color}
  Let $(G,\sigma)$ be an $n$-cBMG that is explained by a tree $(T,\sigma)$,
  and $a,b,c \in L$ leaves of three distinct colors. Then the 3-cBMG
  $(G(T_{\{a,b,c\}}),\sigma)$ is the complete graph on $\{a,b,c\}$ with
  bidirectional edges.
\end{fact}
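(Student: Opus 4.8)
The plan is to argue directly from the definition of the best match relation (Def.~\ref{def:genclose}) applied to the three-leaf tree $T_{\{a,b,c\}}$, exploiting the fact that the three leaves carry pairwise distinct colors. The key observation is that in $T_{\{a,b,c\}}$ each color occurs on exactly one leaf, so the minimization over leaves of a fixed color that defines a best match becomes vacuous.

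First I would fix an arbitrary ordered pair of distinct leaves $x,y\in\{a,b,c\}$. Since $a,b,c$ have three distinct colors, $\sigma(x)\ne\sigma(y)$, and $y$ is the only leaf of $T_{\{a,b,c\}}$ with color $\sigma(y)$. Hence the only leaf $y'$ with $\sigma(y')=\sigma(y)$ is $y'=y$ itself, so the defining inequality $\lca(x,y)\preceq\lca(x,y')$ of Def.~\ref{def:genclose} reduces to $\lca(x,y)\preceq\lca(x,y)$, which holds trivially. Therefore $x\bmr y$, i.e., $xy$ is an arc of $G(T_{\{a,b,c\}},\sigma)$.

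Next, since the pair $x,y$ was arbitrary and the argument is symmetric in $x$ and $y$, every ordered pair of distinct leaves yields an arc in both directions. By Def.~\ref{def:cBMG} the reflexive loops are omitted, so $G(T_{\{a,b,c\}},\sigma)$ contains precisely the six arcs between the three vertices, i.e., it is the complete graph on $\{a,b,c\}$ with bidirectional edges, as claimed.

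I do not anticipate a genuine obstacle here: the statement is essentially a direct consequence of the fact that the restriction $T_{\{a,b,c\}}$ has pairwise distinct leaf colors. The only subtlety worth flagging is that the claim concerns the cBMG $G(T_{\{a,b,c\}},\sigma)$ \emph{explained by} the restricted tree, not the induced subgraph $G[\{a,b,c\}]$ of the original cBMG; as Fig.~\ref{fig:counter1} shows, these need not coincide, but it is exactly the former for which the best-match conditions become vacuous and the clean complete-bidirectional structure follows.
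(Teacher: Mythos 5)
Your proof is correct and is exactly the (implicit) argument the paper relies on: the paper states this as an unproved Observation precisely because, in $T_{\{a,b,c\}}$, each color occurs on a unique leaf, so the best-match condition of Def.~\ref{def:genclose} is vacuous and all six arcs follow immediately. Your closing remark distinguishing $G(T_{\{a,b,c\}},\sigma)$ from the induced subgraph $G[\{a,b,c\}]$ is a correct and worthwhile clarification.
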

Therefore, no further refinement can be obtained from triples of three
different colors. Thus, the two-colored triples inferred from the induced
2-cBMGs for all color pairs may already be sufficient to construct
$(T,\sigma)$. This suggests, furthermore, that every $n$-cBMG is explained
by a unique least resolved tree.  An important tool for addressing this
conjecture is the following generalization of condition (vi) of Lemma
\ref{lem:prp1}.
\begin{lemma}\label{lem:N-n}
  Let $(G,\sigma)$ be a (not necessarily connected) $n$-cBMG explained
  by $(T,\sigma)$ and let $\alpha$ be a $\rthin$ class of
  $(G,\sigma)$. Then $N_s(\alpha)=L(T(\rho_{\alpha,s}))\cap L[s]$ for all
  $s\in S\setminus \{\sigma(\alpha)\}$.
\end{lemma}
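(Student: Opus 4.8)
The plan is to establish the two inclusions $N_s(\alpha)\subseteq L(T(\rho_{\alpha,s}))\cap L[s]$ and $L(T(\rho_{\alpha,s}))\cap L[s]\subseteq N_s(\alpha)$ directly from Definition~\ref{def:genclose}, working inside the full tree $T$ rather than passing to a two-colored restriction. The reason for avoiding the restriction is that Lemma~\ref{lem:prp1}(vi), the two-colored ancestor of this statement, is only available for \emph{connected} 2-cBMGs, whereas here $(G,\sigma)$ is explicitly allowed to be disconnected. The point to exploit is that the whole argument uses only the best-match definition together with the defining property $N_s(x)=N_s(\alpha)$ of a $\rthin$ class (which holds because $N(x)=N(\alpha)$ for every $x\in\alpha$, intersected with $L[s]$), and neither of these is sensitive to connectedness.

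First I would record the $n$-colored analogue of Corollary~\ref{cor:rthin-def}. Fix $x\in\alpha$ and let $y,z\in N_s(\alpha)=N_s(x)$. Applying the best-match inequality $\lca(x,y)\preceq\lca(x,y')$ once with $y'=z$ and once, for $z$ in place of $y$, with $y'=y$ yields $\lca(x,y)=\lca(x,z)$. Hence $\lca(x,\cdot)$ is constant on $N_s(\alpha)$; call this value $m_x$. Since each $m_x$ is an ancestor of a fixed $y\in N_s(\alpha)$, all the $m_x$ lie on a single root-path and are therefore pairwise $\preceq$-comparable, so the maximum $\rho_{\alpha,s}=\max_{x\in\alpha}m_x$ is well defined and is realized by a witnessing pair $x_0\in\alpha$, $y_0\in N_s(\alpha)$ with $\rho_{\alpha,s}=\lca(x_0,y_0)$; in particular $x_0\preceq\rho_{\alpha,s}$.

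For the inclusion ``$\subseteq$'' I would take $z\in N_s(\alpha)=N_s(x_0)$. By the constancy just established, $\lca(x_0,z)=\lca(x_0,y_0)=\rho_{\alpha,s}$, so $z\preceq\rho_{\alpha,s}$, and since $\sigma(z)=s$ we conclude $z\in L(T(\rho_{\alpha,s}))\cap L[s]$. For the reverse inclusion I would take $y\preceq\rho_{\alpha,s}$ with $\sigma(y)=s$. Because also $x_0\preceq\rho_{\alpha,s}$, we have $\lca(x_0,y)\preceq\rho_{\alpha,s}$. Using that $y_0\in N_s(x_0)$ is a best match of $x_0$, i.e.\ $\rho_{\alpha,s}=\lca(x_0,y_0)\preceq\lca(x_0,w)$ for every $w\in L[s]$, I obtain $\lca(x_0,y)\preceq\rho_{\alpha,s}\preceq\lca(x_0,w)$ for all $w\in L[s]$. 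Thus $y$ is itself a best match of $x_0$, i.e.\ $y\in N_s(x_0)=N_s(\alpha)$, completing the proof.

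The only genuinely delicate point, and the step I expect to require care, is the legitimacy of the maximum defining $\rho_{\alpha,s}$ together with the guarantee that the witnessing leaf $x_0$ is a descendant of $\rho_{\alpha,s}$; once the constancy of $\lca(x,\cdot)$ on $N_s(\alpha)$ is in place, this reduces to the observation that the candidate ancestors all sit on one root-path, so the comparability needed for a well-defined maximum is automatic. Everything else is a transcription of the best-match definition, and, crucially, connectedness of $(G,\sigma)$ is never invoked, which is precisely why the statement holds in the stated generality.
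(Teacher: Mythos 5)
Your proof is correct and follows essentially the same route as the paper's: both inclusions are read off directly from Definition~\ref{def:genclose} and the definition of $\rho_{\alpha,s}$, using a witnessing pair $x_0\in\alpha$, $y_0\in N_s(\alpha)$ with $\lca(x_0,y_0)=\rho_{\alpha,s}$. The only difference is that you make explicit the constancy of $\lca(x,\cdot)$ on $N_s(\alpha)$ and the resulting comparability that makes the maximum well defined, points the paper's (much terser) proof takes for granted.
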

\begin{proof}
  The definition of $\rho_{\alpha,s}$ implies 
  $N_s(\alpha)\subseteq L(T(\rho_{\alpha,s}))\cap L[s]$. In particular, 
  there is a leaf $y\in N_s(\alpha)$ such that 
  $\lca(y,\alpha)=\rho_{\alpha,s}$. Now consider an arbitrary leaf 
  $x\in L(T(\rho_{\alpha,s}))\cap L[s]\setminus N_s(\alpha)$. 
  By construction we have 
  $\lca(x,\alpha)\preceq \rho_{\alpha,s}=\lca(y,\alpha)$ and therefore
  $x\in N_s(\alpha)$.
   
\end{proof}

We are now in the position to characterize the redundant edges.
\begin{lemma}\label{lem:red-n}
  Let $(G,\sigma)$ be a (not necessarily connected) $n$-cBMG explained by
  $(T,\sigma)$. Then the edge $e=uv$ is redundant in $(T,\sigma)$ if and
  only if (i) $e$ is an inner edge of $T$ and (ii) for every color
  $s\in\sigma(L(T(u)\setminus T(v)))$, there is no $\rthin$ class
  $\alpha\in\mathcal{N}$ with $v=\rho_{\alpha,s}$.
\end{lemma}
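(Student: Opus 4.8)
The plan is to mirror the proof of the two-colour case (Lemma~\ref{lem:red}) but to track the out-neighbourhoods one colour at a time, using the characterisation $N_s(\alpha)=L(T(\rho_{\alpha,s}))\cap L[s]$ from Lemma~\ref{lem:N-n}. The argument rests on two elementary facts about contracting $e=uv$: it identifies $u$ and $v$ into a single node $w$ with $L(T_e(w))=L(T(u))$ and leaves every other subtree's leaf set unchanged; and it changes the last common ancestor of a pair $a,b$ only when $\lca_T(a,b)\in\{u,v\}$, in which case $\lca_{T_e}(a,b)=w$, all other lca's being untouched. Since a digraph is determined by its out-neighbourhoods, it suffices to decide for every $\rthin$ class $\alpha$ and every colour $s\ne\sigma(\alpha)$ whether $N_s(\alpha)$ is preserved. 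Throughout I use that $e$ must be an inner edge: if $v$ is a leaf then $v\notin L(T_e)$ and $T_e$ cannot explain a graph with the same vertex set, which forces condition~(i) in the forward direction and is hypothesis~(i) in the converse.

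First I would isolate the single case in which $N_s(\alpha)$ can change. Fix $\alpha$ and $s\ne\sigma(\alpha)$, set $p:=\rho_{\alpha,s}$, so that $N_s(\alpha)=L(T(p))\cap L[s]$ and $\alpha\preceq p$. If $u\preceq p$, the contraction happens inside $T(p)$; the leaf set $L(T(p))$ is unchanged, and since $\alpha\preceq p$ the closest $s$-leaves remain exactly the $s$-leaves of $T(p)$, so $N_s(\alpha)$ is unaffected. If $p\prec v$, or $p$ is incomparable to $v$, or $p$ does not lie below $u$ at all, then every lca involved in defining $p$ (all of which are $\preceq p$, hence different from $u$ and $v$) survives the contraction, while any $s$-leaf outside $T(p)$ has its lca with $\alpha$ either unchanged or moved only from $u,v$ to $w\succ p$; thus no outside leaf becomes a closest $s$-leaf and $N_s(\alpha)$ is again unchanged. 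The sole remaining possibility is $p=v$: here the pairs realising $\rho_{\alpha,s}=v$ acquire lca $w$, every $s$-leaf of $T(u)\setminus T(v)$ now also attains lca $w$ with $\alpha$, and $N_s(\alpha)$ grows from $L(T(v))\cap L[s]$ to $L(T(u))\cap L[s]$. Hence $N_s(\alpha)$ changes precisely when $\rho_{\alpha,s}=v$ \emph{and} $s\in\sigma(L(T(u)\setminus T(v)))$.

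With this dichotomy both directions are short. For the converse, assume (i) and (ii). For each colour $s\in\sigma(L(T(u)\setminus T(v)))$ condition~(ii) forbids any $\alpha$ with $\rho_{\alpha,s}=v$; for each colour $s\notin\sigma(L(T(u)\setminus T(v)))$ the set $T(u)\setminus T(v)$ contributes no $s$-leaf, so even a class with $\rho_{\alpha,s}=v$ keeps $N_s(\alpha)=L(T(v))\cap L[s]=L(T(u))\cap L[s]$. By the dichotomy no out-neighbourhood changes, so $T_e$ explains $(G,\sigma)$ and $e$ is redundant. For the forward direction, let $e$ be redundant; then $e$ is inner as noted, giving (i). If (ii) failed there would be a colour $s\in\sigma(L(T(u)\setminus T(v)))$ and a class $\alpha$ with $\rho_{\alpha,s}=v$; choosing any $x\in\alpha\subseteq L(T(v))$ and any $s$-leaf $y\in L(T(u)\setminus T(v))$, Lemma~\ref{lem:N-n} gives $y\notin N(x)$ in $T$, while $\lca_{T_e}(x,y)=w$ forces $y\in N(x)$ in $T_e$, so the contraction alters $N(x)$, contradicting redundancy.

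The step I expect to be the main obstacle is the verification, in the second paragraph, that in the cases $u\preceq p$ and $p\not\succeq v$ the contraction neither creates nor destroys best matches, i.e.\ that moving lca's only at $u,v$ cannot promote an $s$-leaf outside $T(\rho_{\alpha,s})$ to a closest one nor demote one inside it. This is precisely where the exact contraction behaviour of $\lca$ must be combined with $\alpha\preceq p$ to pin the closest $s$-leaves down to $L(T(\rho_{\alpha,s}))\cap L[s]$; I would be careful to argue this directly from the definition of best match rather than by re-applying Lemma~\ref{lem:N-n} to $T_e$, since the graph explained by $T_e$ is exactly what is under determination. Everything else reduces to bookkeeping with Lemma~\ref{lem:N-n} and the fact that a digraph is fixed by its out-neighbourhoods.
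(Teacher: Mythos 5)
Your proof is correct and takes essentially the same route as the paper: both directions reduce to tracking each $N_s(\alpha)=L(T(\rho_{\alpha,s}))\cap L[s]$ (Lemma~\ref{lem:N-n}) under the contraction, and your explicit dichotomy is a more carefully spelled-out version of the paper's case distinction --- in particular you handle the corner case $\rho_{\alpha,s}=v$ with $s\notin\sigma(L(T(u)\setminus T(v)))$, which the paper's ``i.e.'' glosses over. One small repair in the forward direction: ``any $x\in\alpha$'' is too strong, since for an $x$ with $\lca_T(x,N_s(\alpha))\prec v$ the leaf $y$ does \emph{not} become a best match of $x$ in $T_e$; you must choose an $x$ (and partner in $N_s(\alpha)$) attaining the maximum in the definition of $\rho_{\alpha,s}=v$, after which the contradiction goes through exactly as you describe.
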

\begin{proof}
  Let $(T_e,\sigma)$ be the tree that is obtained from $(T,\sigma)$ by
  contraction of the edge $e=uv$ and assume that $(T_e,\sigma)$ explains
  $(G,\sigma)$. First we note that $e$ is an inner edge and thus, in
  particular, $L(T_e)=L(T)$. Otherwise, i.e., if $e$ is an outer edge, then
  $v\notin L(T_e)$; $(T_e,\sigma)$ does not explain $(G,\sigma)$.  Now
  consider an inner edge $e$.  Since $(T,\sigma)$ is phylogenetic, there
  exists a leaf $y\in L(T(u)\setminus T(v))$ \NEW{of some color
    $s\in\sigma(L(T(u)\setminus T(v)))$}. Assume that there is a $\rthin$
  class $\alpha$ of $G$ such that $v=\rho_{\alpha,s}$. Note that
  $s\neq\sigma(\alpha)$ by definition of $\rho_{\alpha,s}$. Lemma
  \ref{lem:N-n} implies that $y\notin N(\alpha)$ in $(G,\sigma)$.  After
  contraction of $e$, we have $\lca(\alpha,y)=\rho_{\alpha,s}$, thus
  $y\in N(\alpha)$ by Lemma \ref{lem:N-n}. Hence, $(T_e,\sigma)$ does not
  explain $G$; a contradiction.

  Conversely, assume that $e$ is an inner edge and for every
  $s\in\sigma(L(T(u)\setminus T(v)))$, there is no $\alpha\in\mathcal{N}$
  such that $v=\rho_{\alpha,s}$, i.e., for every $\alpha\in \mathcal{N}$
  and every color $s\neq \sigma(\alpha)$ we \NEW{either have (i)
    $v\succ \rho_{\alpha,s}$, (ii) $v\prec \rho_{\alpha,s}$, or (iii) $v$
    and $\rho_{\alpha,s}$ are incomparable. In the first two cases,}
  contraction of $e$ implies $v\succeq \rho_{\alpha,s}$ or
  $v\preceq \rho_{\alpha,s}$ in $(T_e,\sigma)$,
  respectively. \NEW{Therefore, since $L(T(w))=L(T_e(w))$ for any $w$
    incomparable to $v$, we have} $L(T(w))=L(T_e(w))$ for any node
  $w\neq v$. Moreover, it follows from Lemma \ref{lem:N-n} that
  $N_s(\alpha)=\{y\mid y\in L(T(\rho_{\alpha,s})),\sigma(y)=s\}$. This
  implies that the set $N_s(\alpha)$ remains unchanged after contraction of
  $e$ for all $\rthin$ classes $\alpha$ and all color $s\in S$. In other
  words, the in- and out-neighborhood of any leaf remain the same in
  $(T_e,\sigma)$. Hence, we conclude that $(T,\sigma)$ and $(T_e,\sigma)$
  explain the same graph $(G,\sigma)$.   
\end{proof}

Before we consider the general case, we show that 3-cBMGs like 2-cBMGs are
explained by unique least resolved trees.

\begin{lemma}\label{lem:lr-3}
  Let $(G,\sigma)$ be a connected 3-cBMG. Then there exists a unique least
  resolved tree $(T,\sigma)$ that explains $(G,\sigma)$.
\end{lemma}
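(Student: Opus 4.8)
The plan is to prove existence by a minimality argument and uniqueness by forcing any least resolved tree to coincide with the Aho tree of a fixed triple set read off from the three induced 2-cBMGs. For existence I would simply pick, among the finitely many trees explaining $(G,\sigma)$, one with the fewest interior edges. Contracting any nonempty set $A$ of interior edges strictly lowers the interior-edge count, so by minimality $(T_A,\sigma)$ cannot explain $(G,\sigma)$; hence the chosen tree is least resolved. The substance of the lemma is therefore uniqueness.

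For uniqueness let $(T_1,\sigma)$ and $(T_2,\sigma)$ be least resolved trees explaining $(G,\sigma)$, and let $r,s,t$ denote the three colors. I would work with the combined triple set $R := \mathscr{R}(G_{rs},\sigma)\cup \mathscr{R}(G_{rt},\sigma)\cup \mathscr{R}(G_{st},\sigma)$, the union of the informative triples of the three induced 2-cBMGs. First I would record that $R\subseteq r(T_i)$ for $i=1,2$: by Fact~\ref{obs:2-colorISdisplayed} each $T_i$ displays the least resolved tree $(T_{xy},\sigma)$ of $(G_{xy},\sigma)$ for every color pair $\{x,y\}$, and by Lemma~\ref{lem:consistent} we have $\mathscr{R}(G_{xy},\sigma)\subseteq r(T_{xy})\subseteq r(T_i)$. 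In particular $R$ is consistent and $\Aho(R)$ exists.

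The heart of the argument is to show that every inner edge of $T_i$ is distinguished by a triple in $R$. Fix an inner edge $e=uv$ of $T_i$. Since $T_i$ is least resolved, $e$ is relevant, so Lemma~\ref{lem:red-n} supplies a color $s\in\sigma(L(T(u)\setminus T(v)))$ and a $\rthin$ class $\alpha$ with $v=\rho_{\alpha,s}$; set $r=\sigma(\alpha)$. By definition of $\rho_{\alpha,s}$ there exist $a\in\alpha$ and $b\in N_s(\alpha)$ with $\lca(a,b)=v$, and the maximality defining $\rho_{\alpha,s}=v$ forces $\alpha\cup N_s(\alpha)\subseteq L(T(v))$. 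Choosing a leaf $c\in L(T(u)\setminus T(v))$ with $\sigma(c)=s$, I would compute $\lca(a,c)=\lca(b,c)=u$, so $ab|c$ distinguishes $e$. It then remains to see that $ab|c\in R$: since $\sigma(b)=\sigma(c)=s$ and, by Lemma~\ref{lem:N-n}, $N_s(\alpha)=L(T(v))\cap L[s]$, we have $ab\in E(G)$ but $ac\notin E(G)$, while $b$ and $c$ share a color; hence, whatever the arcs $ba$ and $ca$ (the only variable arcs on $\{a,b,c\}$), the induced subgraph $G[\{a,b,c\}]$ is isomorphic to one of $X_1,\dots,X_4$, so $ab|c\in\mathscr{R}(G_{rs},\sigma)\subseteq R$.

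Finally, since $R\subseteq r(T_i)$ and every inner edge of $T_i$ is distinguished by a triple of $R$, the tree $(T_i,\sigma)$ is identified by $R$ in the sense of \cite{GSS:07}, so $T_i=\Aho(R)$; applying this to both $i=1,2$ gives $T_1=\Aho(R)=T_2$. I expect the main obstacle to be the case bookkeeping in the third paragraph: verifying that the triple $ab|c$ extracted from a relevant edge is genuinely informative (checking that each of the four arc configurations falls under $X_1,\dots,X_4$) and that it distinguishes $e$ in $T_i$ itself rather than only in the restricted two-colored tree $T_{rs}$. A secondary point requiring care is the clean invocation of the identification result, which needs both $R\subseteq r(T_i)$ and the edge-distinguishing property to hold simultaneously.
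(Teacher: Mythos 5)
Your proposal is correct, but it takes a genuinely different route from the paper. The paper proves Lemma~\ref{lem:lr-3} by a minimal-counterexample argument on the number of vertices: it deletes a leaf $v$, invokes Lemma~\ref{lem:sub}, and painstakingly reconstructs the local structure of the two candidate trees around $v$ (the classes $\alpha,\beta,\beta',\gamma,\delta$ and their roots) to show that the two restricted trees explain the same graph and hence coincide --- essentially replaying the proof of Theorem~\ref{thm:lr-2} with a third color added. You instead prove that every inner edge of \emph{any} least resolved tree is distinguished by an informative triple of one of the induced two-colored subgraphs: Lemma~\ref{lem:red-n} hands you a color $s$ and a class $\alpha$ with $v=\rho_{\alpha,s}$, Lemma~\ref{lem:N-n} gives $ab\in E(G)$ and $ac\notin E(G)$ with $\sigma(b)=\sigma(c)=s$, and the four residual arc configurations for $(b,a)$ and $(c,a)$ are exactly $X_1,\dots,X_4$, so $ab|c\in\mathscr{R}(G_{rs},\sigma_{rs})\subseteq R\subseteq r(T_i)$; the identification theorem of \cite{GSS:07} then forces $T_1=T_2$. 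I checked the configuration bookkeeping you flagged as the main risk, and it goes through; your triple also distinguishes $e$ in $T_i$ itself, since $\lca_{T_i}(a,b)=v$ and $\lca_{T_i}(a,b,c)=u$ by construction. What your approach buys is considerable: it is shorter, it avoids the case analysis entirely, and nothing in it uses $|S|=3$, so it in fact yields the uniqueness statement of Theorem~\ref{thm:lr-n} for arbitrary $n$ in one stroke (it is the $n$-color analogue of Lemma~\ref{lem:edge-dist} combined with identification). The price is reliance on the external result that a displayed triple set distinguishing every inner edge identifies the tree --- but the paper itself invokes exactly this fact (caption of Fig.~\ref{fig:counter-triples} and the proof of Theorem~\ref{thm:lr-n}), so this is within the paper's toolkit. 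Two small polish points: the final step is cleaner without the Aho tree (since $R$ identifies $T_1$ and $T_2$ displays $R$, $T_2$ refines $T_1$, and symmetrically, so $T_1=T_2$; the claim $T_i=\Aho(R)$ needs the additional, true but nontrivial, fact that no edge of $\Aho(R)$ can be contracted without losing a triple of $R$), and you should note that Lemma~\ref{lem:consistent}, though stated for connected 2-cBMGs, applies to the possibly disconnected $(G_{st},\sigma_{st})$ because its proof is purely local.
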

\begin{proof}
  This proof uses arguments very similar to those in the proof of
  uniqueness result for 2-cBMGs. In particular, as in the proof of Theorem
  \ref{thm:lr-2}, we assume for contradiction that there exist 3-colored
  digraphs that are explained by two distinct least resolved trees. Let
  $(G,\sigma)$ be a minimal graph (w.r.t.\ the number of vertices) that is
  explained by the two distinct least resolved trees $(T_1,\sigma)$ and
  $(T_2,\sigma)$. W.l.o.g.\ we can choose a vertex $v$ and assume that its
  color is $r\in S$, i.e., $v\in L[r]$.  Using the same notation as in the
  proof of Theorem \ref{thm:lr-2}, we write $(T'_1,\sigma')$ and
  $(T'_2,\sigma')$ for the trees that are obtained by deleting $v$ from
  $(T,\sigma)$. These trees explain the uniquely defined graphs
  $(G'_1,\sigma')$ and $(G'_2,\sigma')$, respectively. Again, Lemma
  \ref{lem:sub} implies that $(G',\sigma'):=(G[L\setminus\{v\}],\sigma')$
  is a subgraph of both $(G'_1,\sigma')$ and $(G'_2,\sigma')$. Similar to
  the case of 2-cBMGs, we characterize the additional edges that are
  inserted into $(G'_1,\sigma')$ and $(G'_2,\sigma')$ compared to
  $(G',\sigma')$ in order to show that
  $(G'_1,\sigma')=(G'_2,\sigma')$. Assume that $uy$ is an edge in
  $(G'_1,\sigma')$ but not in $(G',\sigma')$. By analogous arguments as in
  the proof of Theorem \ref{thm:lr-2}, we find that $uv\in E(G)$ and in
  particular $N_r(u)=\{v\}$, i.e., $u$ has no out-neighbors of color $r$ in
  $(G',\sigma')$.

  Moreover, we have $u\in L[s]$, where $s\in S\setminus \{r\}$. Similar to
  the 2-color case, we now determine the outgoing arcs of $u$ in
  $(G'_1,\sigma')$ and $(G'_2,\sigma')$ by reconstructing the local
  structure of $(T_1,\sigma)$ and $(T_2,\sigma)$ in the vicinity of $v$.

  Observation \ref{obs-1} implies that the least resolved tree
  $(T_{rs},\sigma_{rs})$ explaining $(G_{rs},\sigma_{rs})$ is displayed by
  both $(T_1,\sigma)$ and $(T_2,\sigma)$. The local structure of
  $(T_{rs},\sigma_{rs})$ around $v$ is depicted in Fig.\
  \ref{fig:lr_unique}. Using the notation in the figure, $\{v\}$ is a
  $\rthin$ class by itself, $\alpha=\{v\}$, there is a $\rthin$ class
  $\beta'\subseteq L[s]$ with $N_r(\beta')=\{\alpha\}$ and
  $N_s(\alpha)=\{\beta'\}$, and there may or may not exist a
  $\beta\subseteq L[s]$ with $N_r(\beta)=N_r(\beta')=\{\alpha\}$ and
  $N_s(\alpha)\cap\beta'=\emptyset$. In addition, we have $\gamma\subseteq
  L[r]$, which is the $\preceq$-minimal $\rthin$ class of color $r$ such
  that $\rho_\gamma\succ \rho_\beta, \rho_{\beta'}$.  Recall that $uc$ with
  $c\in\gamma$ are all the edges on $L[r]\times L[s]$ that have been
  additionally inserted in both $(G'_1,\sigma')$ and
  $(G'_2,\sigma')$. Since every $\rthin$ class has at least one
  out-neighbor of each color and given the relationship between $\alpha$
  and $\beta'$, there exists a $\rthin$ class $\delta\subseteq L[t]$, where
  $t\in S\setminus\{r,s\}$, with $\alpha\subseteq N_r(\delta)$ and
  $\beta'\subseteq N_s(\delta)$ such that there is no other
  $\delta'\subseteq L[t]$ with $\rho_{\delta'}\prec\rho_\delta$.  If
  $N_r(\delta)\setminus \{\alpha\}\neq \emptyset$, then $\rho_\delta\succeq
  \rho_\gamma$ by Lemma \ref{lem:N-n}, and in particular there is no
  additional edge of the form $wa$ with $w\in L[t]$ and $a\in L[r]$ that is
  contained in $(G'_1,\sigma')$ and/or $(G'_2,\sigma')$ but not in
  $(G',\sigma')$. Therefore, only edges of the form $uc$ with $c\in\gamma$
  are additionally inserted into $(G'_1,\sigma')$ and $(G'_2,\sigma')$, and
  we conclude that $(G'_1,\sigma')=(G'_2,\sigma')$, which implies
  $(T'_1,\sigma')=(T'_2,\sigma')$ and therefore, since $v$ was arbitrary,
  $(T_1,\sigma')=(T_2,\sigma')$; a contradiction.

  Now consider the case $N_r(\delta)\setminus \{\alpha\}= \emptyset$.
  Since $\gamma\notin N_r(\delta)$, Lemma \ref{lem:N-n} ensures that
  $\rho_\delta \not\succeq \rho_\gamma$. The roots $\rho_{\gamma}$ and
  $\rho_{\delta}$ are comparable since $\alpha$ is an out-neighbor of both
  $\gamma$ and $\delta$.  Thus $\rho_\delta\prec \rho_\gamma$ and hence
  $N_r(\delta)=\{\gamma\}$ in $(T'_1,\sigma')$ as well as in
  $(T'_2,\sigma')$ after deletion of $v$. We still need to distinguish two
  cases: either we have $N_s(\delta)=\{\beta'\}$ or
  $N_s(\delta)=\{\beta',\beta\}$. In the first case, we have
  $\rho_\delta=\rho_{\beta'}=\rho_\alpha$ in $(T'_1,\sigma')$ as well as in
  $(T'_2,\sigma')$. In the second case, we obtain
  $\rho_\delta=\rho_{\beta}$, again this holds for both $(T'_1,\sigma')$
  and $(T'_2,\sigma')$. As before, we can conclude that
  $(T'_1,\sigma')=(T'_2,\sigma')$ and therefore
  $(T_1,\sigma')=(T_2,\sigma')$; a contradiction.   
\end{proof}

If $(G,\sigma)$ is not connected, we can build a least resolved tree
$(T,\sigma)$ analogously to the case of 2-cBMGs: we first construct the
unique least resolved tree $(T_i,\sigma_i)$ for each component
$(G_i,\sigma_i)$. Using Theorem \ref{thm:connected} we then insert an
additional root for $(T,\sigma)$ to which the roots of the $(G_i,\sigma_i)$
are attached as children. We proceed by showing that this construction
corresponds to the unique least resolved tree.

\begin{theorem} \label{thm:uT-conn} Let $(G,\sigma)$ be a (not necessarily
  connected) $n$-cBMG with $n\in \{2,3\}$. Then there exists a unique least
  resolved tree $(T,\sigma)$ that explains $(G,\sigma)$.
\end{theorem}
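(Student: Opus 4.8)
The plan is to reduce everything to the connected case, which is already settled by Theorem~\ref{thm:lr-2} (for $n=2$) and Lemma~\ref{lem:lr-3} (for $n=3$), and then to analyse a disconnected $(G,\sigma)$ through its connected components $(G_1,\sigma_1),\dots,(G_k,\sigma_k)$. The first step is to record that every $G_i$ is itself a connected $n$-cBMG. That $G_i$ uses all $n$ colors is immediate, since every vertex of a cBMG has an out-neighbor of each color but its own and such out-neighbors stay in the same component, so $\sigma(L(C_i))=S$. That $G_i$ is a cBMG follows from an explaining tree $(T,\sigma)$ of $G$: as $G$ is disconnected, Theorem~\ref{thm:connected} gives $\sigma(L(T(v)))=S$ for every child $v$ of the root $\rho$, so for $x,y\in L(T(v))$ the constraints imposed by leaves $z\notin L(T(v))$ are vacuous (they satisfy $\lca_T(x,z)=\rho$); hence $T(v)$ explains $G[L(T(v))]$ exactly, and descending into the tree until the leaf set coincides with a single component yields a tree explaining $G_i$.

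For existence I would glue the component trees. By the connected case each $(G_i,\sigma_i)$ has a unique least resolved tree $(T_i,\sigma_i)$ with root $r_i$; I form $(T,\sigma)$ by attaching the $r_i$ to a fresh root $\rho$. As in the proposition on disjoint unions of cBMGs, $(T,\sigma)$ explains $(G,\sigma)$. To see that $(T,\sigma)$ is least resolved, I contract an arbitrary non-empty set $A$ of interior edges and show the result no longer explains $G$. If $A$ contains a top edge $(\rho,r_i)$, then, since $G_i$ is connected, Theorem~\ref{thm:connected} applied to $T_i$ produces a child subtree of $r_i$ missing some color $s$, hence a $\rthin$ class $\alpha\subseteq L(C_i)$ with $\rho_{\alpha,s}=r_i$ all of whose $s$-best-matches have $\lca$ exactly $r_i$; contracting $(\rho,r_i)$ then brings in the $s$-colored leaves of the other components, so by Lemma~\ref{lem:N-n} the set $N_s(\alpha)$ strictly grows and $T_A$ fails to explain $G$. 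Therefore $A$ lies entirely inside the $T_i$; since the top structure of $T$ is unchanged, $T_A$ explains $\dot\bigcup_i G((T_i)_{A_i},\sigma_i)$, and the least-resolvedness of each $T_i$ forces $A_i=\emptyset$ for all $i$, contradicting $A\neq\emptyset$.

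For uniqueness the crux --- and the step I expect to be the main obstacle --- is to show that in \emph{any} least resolved tree $(T,\sigma)$ explaining a disconnected $G$, each child subtree of the root holds exactly one component. Theorem~\ref{thm:connected} already places every component inside one child subtree, so I must exclude a child $v$ whose subtree contains two components $C_1,C_2$. The argument is that every $\rthin$ class $\alpha\subseteq L(C_1)$ has $N_s(\alpha)\subseteq L(C_1)$ for each color $s$ (best matches never leave a component), whereas $L(T(v))\cap L[s]$ also contains the $s$-leaves of $C_2$; by Lemma~\ref{lem:N-n} this rules out $\rho_{\alpha,s}=v$ for every class $\alpha$ and every color $s$. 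Since some component lies outside $T(v)$ (otherwise $v=\rho$), we have $\sigma(L(T(\rho)\setminus T(v)))=S$, so Lemma~\ref{lem:red-n} declares the edge $(\rho,v)$ redundant, contradicting least-resolvedness.

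It follows that the children of $\rho$ are in bijection with the components $C_1,\dots,C_k$, with $L(T(v_i))=L(C_i)$. Each child subtree $T(v_i)$ explains $G_i$ exactly (by the same vacuous-constraint computation used in the first paragraph) and is least resolved for $G_i$ (a non-empty contraction inside it would be a non-empty contraction of $T$ still explaining $G$). Uniqueness of the connected least resolved trees (Theorem~\ref{thm:lr-2}, Lemma~\ref{lem:lr-3}) then identifies $T(v_i)$ for each component, and since any two least resolved trees for $G$ share the same root-children-to-component assignment, they coincide. Together with the existence construction this proves the theorem.
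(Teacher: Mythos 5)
Your proof is correct and takes essentially the same route as the paper's: reduce to the connected case (Theorem~\ref{thm:lr-2}, Lemma~\ref{lem:lr-3}), then use Theorem~\ref{thm:connected} together with Lemmas~\ref{lem:red-n} and~\ref{lem:N-n} to show that in any least resolved tree each child subtree of the root carries exactly one connected component, so the tree must be the component trees glued to a common root. The only cosmetic difference is direction --- the paper argues that relevance of each root edge forces the corresponding child subtree to induce a \emph{connected} subgraph, whereas you argue the contrapositive (two components under one child would make that edge redundant) and verify the existence half a bit more explicitly.
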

\begin{proof}
  Denote by $(G_i,\sigma_i)$ the connected components of $(G,\sigma)$.  By
  Theorem \ref{thm:lr-2} and Lemma \ref{lem:lr-3} there is a unique least
  resolved tree $(T_i,\sigma_i)$ that explains $(G_i,\sigma_i)$.  Hence, if
  $(G,\sigma)$ is connected, we are done.
	
  Now assume that there are at least two connected components.  Let
  $(T,\sigma)$ be a least resolved tree that explains $(G,\sigma)$.
  Theorem \ref{thm:connected} implies that there is a vertex
  $u\in \child(\rho_T)$ such that $L(G_i)\subseteq L(T(u))$ for each
  connected component $(G_i,\sigma_i)$. Hence, the subtree
  $(T(u),\sigma_{L(T(u))})$ displays the least resolved tree
  $(T_i,\sigma_i)$ explaining $(G_i,\sigma_i)$. Moreover, since
  $(T,\sigma)$ is least resolved, $\rho_Tu$ is a relevant edge, i.e., there
  must be a color $s\in \sigma(L(T\setminus T(u)))$ and a $\rthin$ class
  $\alpha$ such that $u=\rho_{\alpha,s}$ by Lemma \ref{lem:red-n}.

  This implies in particular that there exists a leaf
  $x\in L(T(u))\cap L[s]$. Lemma \ref{lem:N-n} now implies that the
  elements of $\alpha$ are connected to any element of color $s$ in the
  subtree $(T(u),\sigma_{L(T(u))})$. Furthermore, any leaf $y\in L(T(u))$
  has at least one out-neighbor of color $s$ in $L(T(u))$.  Hence, we can
  conclude that the graph $G(T(u),\sigma_{L(T(u))})$ induced by the subtree
  $(T(u),\sigma_{L(T(u))})$ is connected.

  Since $L(G_i)\subseteq L(T(u))$ and $(T(u),\sigma_{L(T(u))})$ explains
  the \emph{maximal connected} subgraph $(G_i,\sigma_i)$, we conclude that
  $G(T(u),\sigma_{L(T(u))}) = (G_i,\sigma_i)$.  By construction, both
  $(T(u),\sigma_{L(T(u))})$ and $(T_i,\sigma_i)$ are least resolved trees
  explaining the same graph, hence Theorem \ref{thm:lr-2} and Lemma
  \ref{lem:lr-3} imply $(T(u),\sigma_{L(T(u))})=(T_i,\sigma_i)$. In
  particular, thus, $\rho_{T_i}=u$.
	
  As a consequence, any least resolved tree $(T,\sigma)$ that explains
  $(G,\sigma)$ must be composed of the disjoint trees $(T_i,\sigma_i)$ that
  are linked to the root via the relevant edge $\rho_T\rho_{T_i}$.  Since
  every $(T_i,\sigma_i)$ and the construction of the edges
  $\rho_T\rho_{T_i}$ is unique, $(T,\sigma)$ is unique.   
\end{proof}

The characterization of redundant edges in trees explaining 2-cBMGs
together with the uniqueness of the least resolved trees for 3-cBMGs can be
used to characterize redundant edges in the general case, thereby
establishing the existence of a unique least resolved tree for $n$-cBMGs.

\begin{theorem}\label{thm:lr-n}
  For any connected $n$-cBMG $(G,\sigma)$, there exists a unique least
  resolved tree $(T',\sigma)$ that explains $(G,\sigma)$. The tree
  $(T',\sigma)$ is obtained by contraction of all redundant edges in an
  arbitrary tree $(T,\sigma)$ that explains $(G,\sigma)$. The set of all
  redundant edges in $(T,\sigma)$ is given by
  \begin{equation*}
    \mathfrak{E}_T= \left\{e=uv \mid v\notin L(T), v\ne\rho_{\alpha,s}
      \text{ for all } 
      s\in\sigma(L(T(u)\setminus T(v))) \text{ and }
      \alpha\in\mathcal{N} \right\}.
  \end{equation*}
  Moreover, $(T',\sigma)$ is displayed by $(T,\sigma)$.
\end{theorem}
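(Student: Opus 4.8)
The plan is to split the statement into an existence/formula part and a uniqueness part, treating uniqueness as the genuine difficulty. For existence and the explicit description of $\mathfrak{E}_T$, I would first invoke Lemma~\ref{lem:red-n}, which already shows that an inner edge $e=uv$ is redundant precisely when $v\neq\rho_{\alpha,s}$ for every color $s\in\sigma(L(T(u)\setminus T(v)))$ and every $\alpha\in\mathcal{N}$; thus the redundant edges of any explaining tree $(T,\sigma)$ are exactly the members of $\mathfrak{E}_T$. The next step is to lift the commutativity of contraction (Lemma~\ref{lem:red2} and Corollary~\ref{cor:redset}) to the $n$-colored setting: contracting one redundant edge $e=uv$ leaves the property ``$w=\rho_{\alpha,s}$'' unchanged for every vertex $w\neq u,v$ (since $L(T(w))=L(T_e(w))$ and the relevant $\lca$'s are preserved) and transfers $u$'s role to the merged node, so the redundancy status of every other edge is preserved. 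Hence all edges in $\mathfrak{E}_T$ may be contracted in any order, yielding a well-defined tree $(T',\sigma)$ all of whose edges are relevant, i.e.\ least resolved; since $T'$ arises from $T$ by contractions it is displayed by $(T,\sigma)$. This settles existence, the formula for $\mathfrak{E}_T$, and the displayed-by claim.

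The remaining and principal task is uniqueness, which I would establish by a minimal-counterexample argument on the number of vertices, in direct analogy with Theorem~\ref{thm:lr-2} and Lemma~\ref{lem:lr-3}. Assume $(G,\sigma)$ is a smallest $n$-cBMG explained by two distinct least resolved trees $(T_1,\sigma)$ and $(T_2,\sigma)$, fix a leaf $v$ of color $r$, and let $(G_1',\sigma')$, $(G_2',\sigma')$ be the graphs explained by $T_1-v$ and $T_2-v$. By Lemma~\ref{lem:sub}, $(G[L\setminus\{v\}],\sigma')$ is a subgraph of both, and exactly as in the two- and three-color proofs any additional arc $uy\in E(G_i')$ forces $N_r(u)=\{v\}$ and $\sigma(u)=s\neq r$. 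The goal is to show $(G_1',\sigma')=(G_2',\sigma')$: then both reductions to least resolved form explain the same smaller graph and are equal by minimality, and re-inserting $v$ yields $T_1=T_2$, a contradiction.

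To pin down the additional arcs I would reconstruct the local structure around $v$ using only two and three colors. The backbone is the unique least resolved tree $(T_{rs},\sigma_{rs})$ of the induced $2$-cBMG $(G_{rs},\sigma_{rs})$, which both $T_1$ and $T_2$ display (Observation~\ref{obs:2-colorISdisplayed}); this fixes the relative positions of $\alpha=\{v\}$, the $s$-class $\beta'$ with $N_s(\alpha)=\{\beta'\}$, and the $\preceq$-minimal $r$-class $\gamma$ above $\beta'$, exactly as in Fig.~\ref{fig:lr_unique}. Whether the newly created arcs are precisely the $uc$ with $c\in\gamma$ is then decided inside a single three-color restriction $(G_{rst},\sigma_{rst})$: choosing $t\neq r,s$ and a $t$-class $\delta$ with $\alpha\subseteq N_r(\delta)$ and $\beta'\subseteq N_s(\delta)$, the uniqueness of the least resolved tree for $3$-cBMGs (Theorem~\ref{thm:uT-conn}, used via Lemma~\ref{lem:N-n}) forces identical local subtrees in $T_1$ and $T_2$, so the inserted arcs coincide. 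The point that makes this three-color analysis sufficient for arbitrary $n$ is Observation~\ref{obs:2-color}: triples spanning three distinct colors induce complete bidirectional graphs and hence impose no further resolution, so all information needed to place $v$ already lives in the two- and three-color restrictions. The main obstacle is precisely this local re-insertion step, where potentially many colors could interact; the reduction to three colors, legitimized by Observation~\ref{obs:2-color}, is the device that tames it.
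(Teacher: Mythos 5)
Your existence part is fine and matches the paper: Lemma~\ref{lem:red-n} gives the formula for $\mathfrak{E}_T$, and the commutativity of contractions lifts from the two-color case exactly as you describe. The problem is the uniqueness argument. The paper does \emph{not} generalize the leaf-deletion induction of Theorem~\ref{thm:lr-2} and Lemma~\ref{lem:lr-3}; instead it proves $r(T_1)=r(T_2)$ directly: for a triple $xy|z\in r(T_1)$ distinguished at an edge $uv$, relevance of $uv$ yields a color pair $p,q$ and a class $\beta$ with $v=\rho_{\beta,p}$; one then shows $uv$ is already an edge of the unique two-colored least resolved tree $(T_{pq},\sigma_{pq})$, and since $T_2$ also refines $(T_{pq},\sigma_{pq})$ and the trees $(T_{pqr}),(T_{pqs}),(T_{pqt})$, the ancestor relations forcing $xy|z$ transfer to $T_2$. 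Uniqueness follows because a tree is determined by its triple set.

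The gap in your route is the claim that the re-insertion of $v$ is ``decided inside a single three-color restriction,'' justified by Observation~\ref{obs:2-color}. That observation only says that three leaves of three distinct colors induce a complete bidirectional digraph, i.e.\ that such triples carry no triple information; it says nothing about the local tree topology of $T_1$ and $T_2$ near $v$, which is what determines the arcs created by deleting $v$. Concretely, the new out-neighborhood $N_r(u)$ of a vertex $u$ with $N_r(u)=\{v\}$ consists of the $r$-leaves minimizing $\lca_{T_i}(u,\cdot)$ over $L[r]\setminus\{v\}$ --- a ``second-best match'' that is not determined by $(G,\sigma)$ but by how $T_i$ refines the two- and three-colored least resolved trees near $v$. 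Uniqueness of the least resolved tree for $(G_{rst},\sigma_{rst})$ only guarantees that both $T_1$ and $T_2$ \emph{display} it; they may refine it differently near $v$ using the remaining $n-3$ colors, since a node of $T_i$ between $\rho_\beta$ and $\rho_\gamma$ may be $\rho_{\zeta,p}$ for some color $p\notin\{r,s,t\}$ and is then invisible in the $rst$-restriction. Moreover, for $n\ge 4$ there may be classes with $N_r(\cdot)=\{v\}$ of several colors $s$, and auxiliary classes $\delta_t$ for several third colors $t$, whose roots' relative positions are not controlled by any single three-color restriction. So the case analysis of Lemma~\ref{lem:lr-3} does not localize as claimed; making your induction work would require controlling all colors simultaneously, which is precisely what the paper's triple-transfer argument is designed to do.
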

\begin{proof}
  Using arguments analogous to the 2-color case one shows that there is a
  least resolved tree $(T',\sigma)$ that can be obtained from $(T,\sigma)$
  by contraction of all redundant edges.  The set of redundant edges is
  given by $\mathfrak{E}_T$ by Lemma \ref{lem:red-n}. By construction,
  $(T',\sigma)$ is displayed by $(T,\sigma)$.  It remains to show that
  $(T',\sigma)$ is unique. Observation \ref{obs-1} implies that for any
  pair of distinct colors $s$ and $t$ the corresponding unique least
  resolved tree $(T_{st},\sigma_{st})$ is displayed by $(T',\sigma)$.  The
  same is true for the least resolved tree $(T_{rst},\sigma_{rst})$ for any
  three distinct colors $r,s,t\in S$. Since for any 2-cBMG as well as for
  any 3-cBMG, the corresponding least resolved tree is unique (see Theorem
  \ref{thm:lr-2} and Lemma \ref{lem:lr-3}), it follows for any three
  distinct leaves $x,y,z\in L[r]\cup L[s]\cup L[t]$ that there is either a
  unique triple that is displayed by $(T_{rst},\sigma_{rst})$ or the least
  resolved tree $(T_{rst},\sigma_{rst})$ contains no triple on
  $x,y,z$. Note that we do not require that the colors $r,s,t$ are pairwise
  distinct. Instead, we use the notation $(T_{rst},\sigma_{rst})$ to also
  include the trees explaining the induced 2-cBMGs.  Observation
  \ref{obs-1} then implies that $\mathscr{R^*}:=\bigcup_{r,s,t \in S}
  r(T_{rst})\subseteq r(T')$.  Now assume that there are two distinct least
  resolved trees $(T_1,\sigma)$ and $(T_2,\sigma)$ that explain
  $(G,\sigma)$.  In the following we show that any triple displayed by
  $T_1$ must be displayed by $T_2$ and thus, $r(T_1)=r(T_2)$.

  Fig.\ \ref{fig:not-dspl} shows that there may be triples $xy|z \in
  r(T_1)\setminus \mathscr{R^*}$. Assume, for contradiction, 
  that $xy|z \notin r(T_2)\setminus\mathscr{R^*}$.  Fix the notation such 
  that $z\in\alpha$, $\sigma(x)=r$, $\sigma(y)=s$, and $\sigma(z)=t$. 
  We do not assume here that $r,s,t$ are necessarily pairwise distinct.
	
  In the remainder of the proof, we will make frequent use of the
  following\par\noindent\textbf{Observation:} \emph{If the tree $T$ is a
    refinement of $T'$, then we have $u\preceq_{T'} v$ if and only if
    $u\preceq_T v$ for all $u,v\in V(T')$.}
  \par\noindent In particular, $u\prec_{T'} v$ 
  (i.e., $u\preceq_{T'} v$ and $u\neq v$) implies $u\prec_{T} v$. The
  converse of the latter statement is still true if $u$ is a leaf in $T'$
  but not necessarily for arbitrary inner vertices $u$ and $v$.
	
  Let $u=\lca_{T_1}(x,y,z)$. The assumption $xy|z \in r(T_1)$ implies that
  there is a vertex $v\in \child(u)$ such that $v\succeq \lca_{T_1}(x,y)$. 
  Since $(T_1,\sigma)$ is least resolved the characterization of
  relevant edges ensures that there is a color $p\in \sigma(L(T_1(u)\setminus
  T_1(v)))$ and a $\rthin$ class $\beta$ with $\sigma(\beta)=q$ such that
  $v=\rho_{\beta,p}$.  In particular, there must be leaves $a\in L(T_1(v))$
  and $a^*\in L(T_1(u)\setminus T_1(v))$ with
  $\sigma(a)=\sigma(a^*)=p$. As a consequence we know that
  $a^* \notin N_p(b)$ for any $b\in \beta$.

  We continue to show that the edge $uv$ must also be contained in the
  least resolved tree $(T_{pq},\sigma_{pq})$ that explains the (not
  necessarily connected) graph $(G_{pq},\sigma_{pq})$. By Thm.\
  \ref{thm:uT-conn}, $(T_{pq},\sigma_{pq})$ is unique. Assume, for
  contradiction, that $uv$ is not an edge in $T_{pq}$. Recalling the
  arguments in Observation \ref{obs:2-colorISdisplayed}, the tree
  $(T_1,\sigma)$ must display $(T_{pq},\sigma_{pq})$. Thus, if $uv$ is not
  an edge in $T_{pq}$, then $v^*\coloneqq u=v$ in $T_{pq}$. By
  construction, we therefore have $v^*=\rho_{\beta,p}$ in
  $(T_{pq},\sigma_{pq})$. Since $(T_{pq},\sigma_{pq})$ is least resolved,
  it follows from Cor.  \ref{cor:rel_root} that $b\in \child(v^*)$ for all
  $b\in \beta$ in $(T_{pq},\sigma_{pq})$. The latter, together with
  $a,a^*\preceq_{T_{pq}} v^*$, implies that $\lca_{T_{pq}}(a,\beta) =
  \lca_{T_{pq}}(a^*,\beta)= v^*$.  However, this implies $a^* \in
  N_p(\beta)$, a contradiction.

  To summarize, the edge $uv$ must be contained in the least resolved tree
  $(T_{pq},\sigma_{pq})$. Moreover, by Observation
  \ref{obs:2-colorISdisplayed}, $(T_{pqo},\sigma_{pqo})$ is a refinement of
  $(T_{pq},\sigma_{pq})$ for every color $o\in S$. Hence, we have
  $v\prec_{T_{pqo}} u$, which is in particular true for the color $o\in
  \{r,s,t\}$.  Moreover, we know that $x \prec_{T_{pqr}} v$ and $y
  \prec_{T_{pqs}} v$ because $(T_1,\sigma)$ is a refinement of both
  $(T_{pqr},\sigma_{pqr})$ and $(T_{pqs},\sigma_{pqs})$.

  Since $(T_2,\sigma)$ is also a refinement of both
  $(T_{pqr},\sigma_{pqr})$ and $(T_{pqs},\sigma_{pqs})$, we have
  $x,y\prec_{T_2} v \prec_{T_2} u$.  Furthermore, $v \prec_{T_1}
  \lca_{T_1}(v,z)=u$ and $z \not\preceq_{T_1}$ implies that
  $z\prec_{T_{pqt}} u$ and $z \not\preceq_{T_{pqt}} v$. Therefore, $z
  \prec_{T_2} u$ and $z \not\preceq_{T_2} v$. Combining these facts about
  partial order of the vertices $v,u,x,y$ and $z$ in $T_2$, we obtain 
  $xy|z\in r(T_2)$; a contradiction.  
		
  Hence, $r(T_1) = r(T_2)$. Since $r(T_1)$ uniquely identifies the
  structure of $T_1$ (cf.\ \citet[Thm.\ 6.4.1]{Semple:03}), we conclude
  that $(T_1,\sigma)=(T_2,\sigma)$.  The least resolved tree explaining
  $(G,\sigma)$ is therefore unique.
       
\end{proof}

\begin{figure}[]
  \begin{center} 
        \includegraphics[width=0.4\textwidth]{./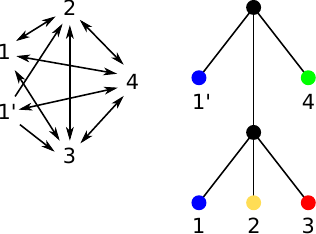}
        \caption[]{A connected graph $(G,\sigma)$ and the corresponding
          least resolved tree $(T,\sigma)$ on five vertices of four colors:
          blue (1 and 1'), yellow (2), red (3), and green (4). The triple
          $23|4$ is displayed by $(T,\sigma)$ but it is not displayed by
          the least resolved tree $(T',\sigma')$ that explains the induced
          subgraph $(G',\sigma')$ with $V(G')=\{2,3,4\}$ since
          $(T',\sigma')$ is simply the star tree on $\{2,3,4\}$. Hence,
          $23|4\notin \mathscr{R}^*=\bigcup_{r,s,t\in S}r(T_{rst})$.}
        \label{fig:not-dspl} 
  \end{center}
\end{figure}

\begin{corollary} 
  Every $n$-cBMG $(G,\sigma)$ is explained by the unique least resolved
  tree $(T,\sigma)$ consisting of the least resolved trees $(T_i,G_i)$
  explaining the connected components $(G_i,\sigma_i)$ and an additional root
  $\rho_T$ to which the roots of the $(T_i,G_i)$ are attached as children.
\end{corollary}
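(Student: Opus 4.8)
The plan is to mirror the proof of Theorem~\ref{thm:uT-conn}, replacing the uniqueness results for $2$- and $3$-cBMGs by the general statement of Theorem~\ref{thm:lr-n}. First I would record a structural consequence of $(G,\sigma)$ being a cBMG: by the Proposition characterizing disjoint unions (a disjoint union of cBMGs is a cBMG if and only if all components share the same color set), every connected component $(G_i,\sigma_i)$ satisfies $\sigma_i(L_i)=S$. By Theorem~\ref{thm:lr-n} each $(G_i,\sigma_i)$ has a unique least resolved tree $(T_i,\sigma_i)$, and we form $(T,\sigma)$ by attaching the roots $\rho_{T_i}$ as the children of a fresh root $\rho_T$. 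To see that $(T,\sigma)$ explains $(G,\sigma)$, I would note that each child $\rho_{T_i}$ of $\rho_T$ satisfies $\sigma(L(T(\rho_{T_i})))=\sigma_i(L_i)=S$. Theorem~\ref{thm:connected} then forces $G(T,\sigma)$ to be disconnected with every best-matching pair confined to a single subtree $T(\rho_{T_i})$; since $(T_i,\sigma_i)$ explains $(G_i,\sigma_i)$, the graph explained by $(T,\sigma)$ is exactly the disjoint union of the $(G_i,\sigma_i)$, i.e.\ $(G,\sigma)$.

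The first genuine step is to verify that $(T,\sigma)$ is least resolved. Inner edges lying strictly inside some $T_i$ are relevant because $(T_i,\sigma_i)$ is least resolved, so only the bridging edges $\rho_T\rho_{T_i}$ need attention, and here I would apply Lemma~\ref{lem:red-n}. Because $(G,\sigma)$ is disconnected and all components carry every color, $\sigma(L(T(\rho_T)\setminus T(\rho_{T_i})))=S$. Since $(G_i,\sigma_i)$ is connected, Theorem~\ref{thm:connected} provides a child $w$ of $\rho_{T_i}$ with $\sigma(L(T_i(w)))\neq S$; fixing a color $s\in S\setminus\sigma(L(T_i(w)))$ and a $\rthin$ class $\alpha\subseteq L(T_i(w))$, one checks that for any $x\in\alpha$ the nearest $s$-leaf lies outside $T_i(w)$ and therefore sits at $\lca=\rho_{T_i}$ in the standalone tree, hence also in $(T,\sigma)$ (the $s$-leaves of other components have $\lca=\rho_T\succ\rho_{T_i}$). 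Thus $\rho_{\alpha,s}=\rho_{T_i}$ in $(T,\sigma)$, so condition (ii) of Lemma~\ref{lem:red-n} fails and the edge $\rho_T\rho_{T_i}$ is relevant. With no inner edge redundant, $(T,\sigma)$ is least resolved.

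Uniqueness I would obtain essentially verbatim from Theorem~\ref{thm:uT-conn}: for any least resolved tree $(T^*,\sigma)$ explaining $(G,\sigma)$, Theorem~\ref{thm:connected} yields a child $u$ of its root with $L(G_i)\subseteq L(T^*(u))$; relevance of the inner edge $\rho_{T^*}u$ together with Lemma~\ref{lem:red-n} produces a color $s$ and a class $\alpha$ with $u=\rho_{\alpha,s}$, and Lemma~\ref{lem:N-n} then forces $G(T^*(u),\sigma_{L(T^*(u))})$ to be connected and, by maximality of the component $(G_i,\sigma_i)$, equal to it. Theorem~\ref{thm:lr-n} gives $(T^*(u),\sigma_{L(T^*(u))})=(T_i,\sigma_i)$ and $u=\rho_{T_i}$, so $(T^*,\sigma)$ must be assembled from the $(T_i,\sigma_i)$ via the bridging edges exactly as $(T,\sigma)$, whence $(T^*,\sigma)=(T,\sigma)$.

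The main obstacle is the least-resolvedness of the construction, namely pinning down a $\rthin$ class whose $s$-root coincides with $\rho_{T_i}$ so that the bridging edge survives contraction; this is where the connectedness characterization of Theorem~\ref{thm:connected} is indispensable, since it guarantees a color missing from some child subtree of $\rho_{T_i}$. The uniqueness half, by contrast, is a routine transcription of Theorem~\ref{thm:uT-conn} with Theorem~\ref{thm:lr-n} supplying the connected-case uniqueness in place of the $n\in\{2,3\}$ results.
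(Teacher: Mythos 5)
Your proposal is correct and follows essentially the same route as the paper, whose proof simply notes that the construction explains $(G,\sigma)$ and that uniqueness parallels the argument of Theorem~\ref{thm:uT-conn} with Theorem~\ref{thm:lr-n} supplying the connected-case uniqueness. The only difference is that you additionally verify relevance of the bridging edges $\rho_T\rho_{T_i}$ explicitly via Lemma~\ref{lem:red-n} and Theorem~\ref{thm:connected}, a step the paper leaves implicit in its uniqueness argument; this is a sound and welcome elaboration rather than a divergence.
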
 
\begin{proof} 
  It is clear from the construction that $(T,\sigma)$ explains
  $(G,\sigma)$.  The proof that his is the only least resolved tree
  parallels the arguments in the proof of Theorem \ref{thm:uT-conn} for
  2-cBMGs and 3-cBMGs.
\end{proof}

Since a tree is determined by all its triples, it is clear now that the
construction of a tree that explains a connected $n$-cBMG is essentially a
supertree problem: it suffices to find a tree, if it exists, that displays
the least resolved trees explaining the induced subgraphs on 3 colors.
\NEW{In the following, we write
  $$R:=\bigcup_{s,t \in S}r(T^*_{s,t})$$ 
  for the union of all triples in the least resolved trees
  $(T^*_{st},\sigma_{st})$ explaining the 2-colored subgraphs
  $(G_{st},\sigma_{st})$ of $(G,\sigma)$. In contrast, the set of all
  \emph{informative} triples of $(G,\sigma)$, as specified in Def.\
  \ref{def:inftriples}, is denoted by $\mathscr{R}(G,\sigma)$. As an
  immediate consequence of Lemma \ref{lem:consistent} we have
  \begin{equation} 
    \mathscr{R}(G,\sigma)\subseteq R
  \end{equation} 
}

\begin{theorem}
  A connected colored digraph $(G,\sigma)$ is an $n$-cBMG if and only if
  (i) all induced subgraphs $(G_{st},\sigma_{st})$ on two colors are
  2-cBMGs and (ii) the union $R$ of \NEW{all triples} obtained from their
  least resolved trees $(T_{st},\sigma_{st})$ forms a consistent set.
  \marginpar{\color{blue}\scriptsize
    The theorem also needs the condition\\
    (iii) $(G(\Aho(R),\sigma))=(G,\sigma)$.}
  In particular, $\Aho(R)$ is the unique least resolved tree that explains
  $(G,\sigma)$.
\label{thm:ncBMG} 
\end{theorem}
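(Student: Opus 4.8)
The plan is to prove both implications and then read off the ``in particular'' claim from the uniqueness result of Theorem~\ref{thm:lr-n}. For the ``only if'' direction, suppose $(G,\sigma)$ is explained by a tree $(T,\sigma)$. Observation~\ref{obs-1} immediately gives (i): each $(G_{st},\sigma_{st})=G[L[s]\cup L[t]]$ is explained by the restriction $T_{|L[s]\cup L[t]}$ and is therefore a $2$-cBMG. For (ii), Observation~\ref{obs:2-colorISdisplayed} shows that the unique least resolved tree $(T^*_{st},\sigma_{st})$ of each two-colored subgraph is displayed by $(T,\sigma)$, so $r(T^*_{st})\subseteq r(T)$ for every pair and hence $R=\bigcup_{s,t}r(T^*_{st})\subseteq r(T)$. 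A single tree $T$ thus displays all of $R$, which is exactly the statement that $R$ is consistent.

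For the ``if'' direction assume (i) and (ii). Consistency of $R$ guarantees that $\Aho(R)$ exists and displays $R$; set $T:=\Aho(R)$. I first record that (i) forces every arc of $G$ to join vertices of distinct colors, since an arc inside $L[s]$ would survive in the bipartite $2$-cBMG $G_{st}$, which is impossible. Hence $E(G)=\dot\bigcup_{\{s,t\}}E(G_{st})$, and $E(G(T,\sigma))$ likewise splits over color pairs, so it suffices to prove $G(T,\sigma)[L[s]\cup L[t]]=(G_{st},\sigma_{st})$ for every pair $\{s,t\}$. By Observation~\ref{obs-1} the left-hand side equals $G(T_{st},\sigma)$ with $T_{st}:=T_{|L[s]\cup L[t]}$. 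Since $r(T^*_{st})\subseteq R\subseteq r(T)$, the tree $T^*_{st}$ is displayed by $T$ and therefore by its restriction $T_{st}$; thus $T_{st}$ is a refinement of the least resolved tree $T^*_{st}$ of $G_{st}$, and the task reduces to showing that this particular refinement still explains $G_{st}$.

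The main obstacle is precisely this reduction, because a refinement of a least resolved tree can in general gain or lose arcs. One half I would dispose of cleanly: passing from $T^*_{st}$ to $T_{st}$ cannot create a new arc, for if $xy\notin E(G_{st})$ there is $y'$ with $\lca_{T^*_{st}}(x,y)\succ\lca_{T^*_{st}}(x,y')$, i.e.\ $T^*_{st}$ displays $xy'|y$, and displayed triples are preserved under refinement, so $\lca_{T_{st}}(x,y)\succ\lca_{T_{st}}(x,y')$ and $xy\notin E(G(T_{st},\sigma))$. Conversely, an arc $xy\in E(G_{st})$ can be destroyed only if some co-optimal best match $y'$ of $x$ (one with $\lca_{T^*_{st}}(x,y)=\lca_{T^*_{st}}(x,y')=\rho_{\alpha}$, where $\alpha$ is the $\rthin$ class of $x$; all out-neighbours are co-optimal by Corollary~\ref{cor:rthin-def}) becomes strictly closer to $x$, i.e.\ $T$ displays $xy'|y$. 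When $\lca_{T^*_{st}}(y,y')\prec\rho_\alpha$ this is immediate, since then $yy'|x\in r(T^*_{st})\subseteq r(T)$ is preserved and is incompatible with $xy'|y$ on the three leaves $x,y,y'$. The genuinely hard case is the ``fan'' $\lca_{T^*_{st}}(x,y)=\lca_{T^*_{st}}(x,y')=\lca_{T^*_{st}}(y,y')=\rho_\alpha$, where no two-colored triple of $R$ constrains $x,y,y'$; here I would exclude $xy'|y\in r(T)$ by tracking the BUILD recursion, arguing that any chain of triples of $R$ separating $\{x,y'\}$ from $y$ must route through a third color $u$ and thereby force, via $T^*_{su}$ and $T^*_{tu}$, a configuration contradicting the consistency of $R$ together with Lemma~\ref{lem:N-n}. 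This global-consistency argument is where I expect the real work to lie.

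Once $T=\Aho(R)$ is known to explain $(G,\sigma)$, the remaining assertions follow from Theorem~\ref{thm:lr-n}. That theorem supplies a unique least resolved tree $(T',\sigma)$ for the $n$-cBMG $(G,\sigma)$ with $R\subseteq r(T')$, displayed by every explaining tree; in particular $T'$ is displayed by $T$. On the other hand $\Aho(R)$ is displayed by every tree that displays $R$ (a standard property of the BUILD construction), so $R\subseteq r(T')$ shows that $T=\Aho(R)$ is displayed by $T'$. The two displaying relations force $T=T'$, identifying $\Aho(R)$ as the unique least resolved tree explaining $(G,\sigma)$ and completing the proof.
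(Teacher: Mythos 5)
Your overall architecture matches the paper's: both directions route through $\Aho(R)$; your ``only if'' argument via Observations \ref{obs-1} and \ref{obs:2-colorISdisplayed} is correct and essentially identical to the paper's; and your reduction of the ``if'' direction to showing that the two-colored restrictions of $\Aho(R)$ still explain the graphs $(G_{st},\sigma_{st})$ is exactly the paper's strategy. The problem is that you stop at the step that carries the entire weight of the theorem. In the ``fan'' case --- $x\bmr y$ and $x\bmr y'$ with $\lca_{T^*_{st}}(x,y)=\lca_{T^*_{st}}(x,y')=\lca_{T^*_{st}}(y,y')$ --- you say you ``would exclude $xy'|y\in r(T)$ by tracking the BUILD recursion'' and that this is ``where the real work lies,'' but you never carry that argument out. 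As your own discussion correctly observes, a refinement of a least resolved tree can in general destroy arcs, so without this step nothing is proved; deferring the central claim to an unexecuted plan is a genuine gap. The paper closes it by arguing that $(T^*_{st},\sigma_{st})\le\Aho(R)$ forces the out-neighborhoods $N_s(\alpha)$ of every $\rthin$ class $\alpha$ to coincide in $(T^*_{st},\sigma_{st})$ and in $\Aho(R)$ (resting on Lemma \ref{lem:N-n} and on the fact that $R$ contains only two-colored triples, so BUILD never separates co-optimal out-neighbours of a class); your sketch gestures at the same mechanism without establishing it.

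There is a second, independent defect in your final paragraph: the assertion that ``$\Aho(R)$ is displayed by every tree that displays $R$'' is not a standard property of BUILD and is false in general. For $R=\{ab|c\}$ on leaf set $\{a,b,c,d\}$, the tree $((a(bd))c)$ displays $R$ but does not display $\Aho(R)$, since $\{a,b\}$ is a cluster of $\Aho(R)$ but not of that tree. Hence your ``two displaying relations force $T=T'$'' step does not go through as written. The paper takes a different and sound route: contracting any edge of $\Aho(R)$ leaves some triple of $R$ undisplayed, hence some $(T^*_{st},\sigma_{st})$ is no longer displayed, hence by Observation \ref{obs:2-colorISdisplayed} the contracted tree cannot explain $(G,\sigma)$; therefore $\Aho(R)$ has no redundant edges, and Theorem \ref{thm:lr-n} identifies it as the unique least resolved tree. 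You should replace your concluding argument with one of this form.
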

\begin{proof}
  Let $(G,\sigma)$ be an $n$-cBMG that is explained by a tree
  $(T,\sigma)$. Moreover, let $s$ and $t$ be two distinct colors of $G$ and
  let $L':= L[s]\cup L[t]$ be the subset of vertices with color $s$ and
  $t$, respectively.  Observation \ref{obs-1} states that the induced
  subgraph $(G[L'],\sigma)$ is a 2-cBMG that is explained by
  $(T_{L'},\sigma')$. In particular, the least resolved tree
  $(T^*_{L'},\sigma')$ of $(T_{L'},\sigma')$ also explains $(G[L'],\sigma)$
  and $T^*_{L'} \le T_{L'}\le T$ by Theorem \ref{thm:lr-n}, i.e.,
  $r(T^*_{L'})\subseteq r(T)$. Since this holds for all pairs of two
  distinct colors, the union of the triples obtained from the set of all
  least resolved 2-cBMG trees
  $R$ is displayed by
  $T$. In particular, therefore, $R$ is consistent.

  Conversely, suppose that $(G[L'],\sigma)$ is a 2-cBMG for any two
  distinct colors $s,t$ and $R$ is consistent. Let
  $\Aho(R)$ be the tree that is constructed by \texttt{BUILD} for
  the input set $R$. This tree displays $R$ and is a
  least resolved tree \cite{Aho:81} in the sense that we cannot contract
  any edge in $\Aho(R)$ without loosing a triple from
  $R$.  By construction, any triple that is displayed by
  $(T_{st},\sigma_{st})$ is also displayed by $\Aho(R)$,
  i.e. $(T_{st},\sigma_{st})\le \Aho(R)$.
\marginpar{\color{blue}\scriptsize This statement is incorrect.}
  {\color{red}Hence, for any
  $\alpha\in\mathcal{N}$ and any color $s\neq\sigma(\alpha)$ the
  out-neighborhood $N_s(\alpha)$ is the same w.r.t.\ $(T_{st},\sigma_{st})$
  and w.r.t.\ $\Aho(R)$.}
\begingroup\color{gray}
  Since this is true for any $\rthin$
  class of $G$, also all in-neighborhoods are the same in
  $\Aho(R)$ and the corresponding
  $(T_{st},\sigma_{st})$. Therefore, we conclude that $\Aho(R)$
  explains $(G,\sigma)$, i.e., $(G,\sigma)$ is an $n$-cBMG. 

  In order to see that $\Aho(R)$ is a least resolved tree
  explaining $(G,\sigma)$, we recall that the contraction of an edge leaves
  at least on triple unexplained, see \citet[Prop.\
  4.1]{SEMPLE2003489}. Since $R$ consists of all the triples
  $r(T_{st})$ that in turn uniquely identify the structure of
  $(T_{st},\sigma_{st})$ (cf.\ \citet[Thm.\ 6.4.1]{Semple:03}), none of
  these triples is dispensable. The contraction of an edge in
  $\Aho(R)$ therefore yields a tree that no longer displays
  $(T_{st},\sigma_{st})$ for some pair of colors $s,t$ and thus no longer
  explains $(G,\sigma)$. Thus, $\Aho(R)$ contains no redundant
  edges and we can apply Theorem \ref{thm:lr-n} to conclude that
  $\Aho(R)$ is the unique least resolved tree that explains
  $(G,\sigma)$.
  \endgroup
  \marginpar{\color{blue}\scriptsize See Corrigendum for the proof of the
    corrected statement.}
\end{proof}

\begin{figure}[htb]
\begin{center}
  \includegraphics[width=0.95\textwidth]{./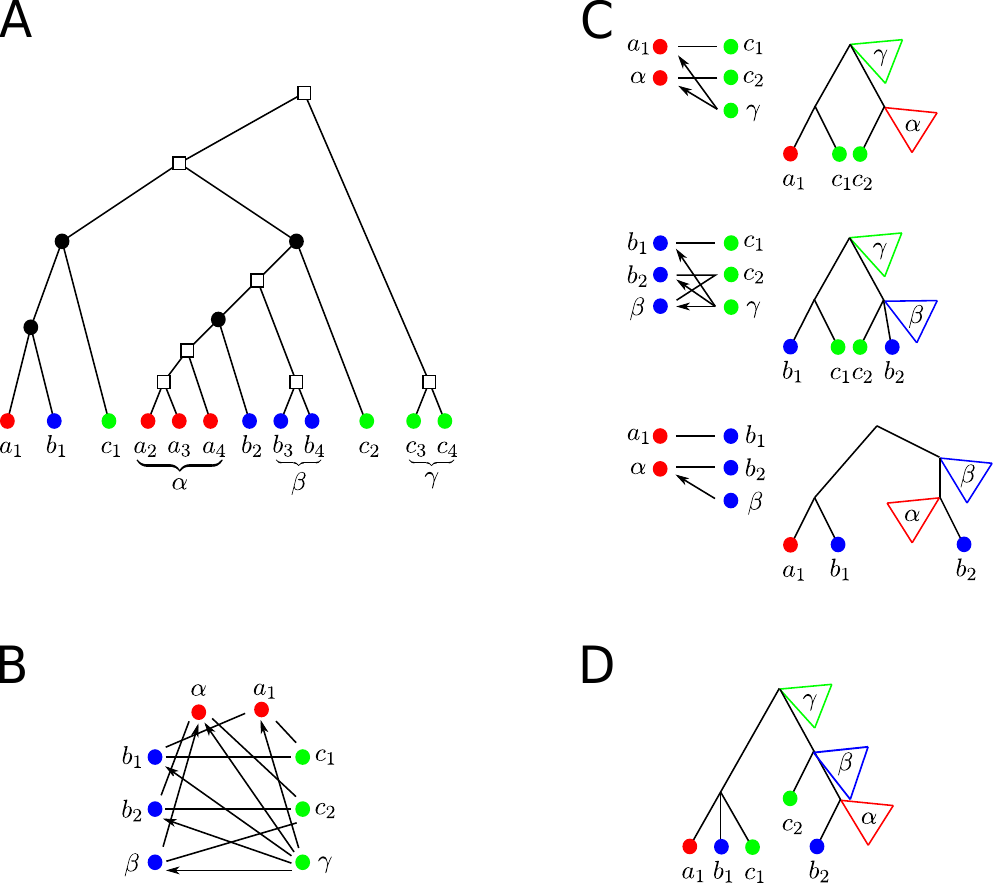} 
\end{center}
\caption[]{Construction of the least resolved tree explaining the colored
  best match graph. \NEW{Panel (A) recalls the event-labeled gene tree of
    the evolutionary scenario shown in Fig.\ \ref{fig:exmpl-distance}.}
  There are three $\rthin$ classes with more than one element:
  $\alpha=\{a_2,a_3,a_4\}$, $\beta=\{b_3,b_4\}$ and $\gamma=\{c_3,c_4\}$ in
  the 3-cBMG graph $(G,\sigma)$ shown in panel (B).  For simplicity of
  presentation, the $\rthin$ classes are already collapsed into single
  vertices. Panel (C) lists the three induced subgraphs of $(G,\sigma)$ on
  two colors together with their least resolved trees. By construction,
  $(G,\sigma)$ is the union of the three subgraphs on two colors. (D) The
  Aho-Tree for the set of all triples obtained from the least resolved
  trees shown in (C). This tree explains the graph $(G,\sigma)$ and is the
  unique least resolved tree w.r.t.\ $(G,\sigma)$.}
\label{fig:n-color}
\end{figure}

Fig.\ \ref{fig:n-color} summarizes the construction of the least resolved
tree from the 3-colored digraph $(G,\sigma)$ shown \NEW{in} Fig.\
\ref{fig:n-color}(B). For simplicity we assume that we already know that
$(G,\sigma)$ is indeed a 3-cBMG. For each of the three colors the example
has four genes. In addition to singleton there are three non-trivial $\rthin$
classes $\alpha=\{a_2,a_3,a_4\}$, $\beta=\{b_3,b_4\}$ and $\gamma=\{c_3$,
$c_4\}$.  Following Theorem \ref{thm:ncBMG}, we extract for each of the
three pairs of colors the induced subgraphs $(G_{st},\sigma_{st})$ and
construct the least resolved trees that explain them (Fig.\
\ref{fig:n-color}(C)).  Extracting all triples from these least resolved
trees on two colors yields the triple set $\mathscr{R}$, which in this case
is consistent. Theorem \ref{thm:ncBMG} implies that the tree
$\Aho(\mathscr{R})$ (shown in the lower right corner) explains $(G,\sigma)$
and is in particular the unique least resolved tree w.r.t.\ $(G,\sigma)$.

  We close this section by showing that in fact the informative triples of
  all $(G_{st},\sigma_{st})$ are already sufficient to decide whether
  $(G,\sigma)$ is an $n$-cBMG or not. More precisely, we show
  \begin{lemma}
    If $(G,\sigma)$ is an $n$-cBMG  then
      $\Aho(\mathscr{R}(G,\sigma))=\Aho(R)$.
  \end{lemma}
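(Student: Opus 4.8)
I want to show that for an $n$-cBMG $(G,\sigma)$ the Aho tree built from the informative triples equals the Aho tree built from the full triple set $R = \bigcup_{s,t}r(T^*_{st})$. Since $\mathscr{R}(G,\sigma)\subseteq R$ already holds (the displayed inclusion just before the statement), and both triple sets are consistent because $(G,\sigma)$ is an $n$-cBMG, the two Aho trees are well-defined. The cleanest route is to prove that the two triple sets, although not necessarily equal, have the same Aho tree. For this I would invoke Theorem~\ref{thm:2cbmg-triples} together with the augmented-triple remark at the end of Section~\ref{sect:2cBMG}, which states that for each pair of colors the informative triples $\mathscr{R}(G_{st},\sigma_{st})$ (augmented by the connectedness triples $\mathscr{R}_C$ when $G_{st}$ is disconnected) already identify the least resolved tree $(T^*_{st},\sigma_{st})$ via $\texttt{BUILD}$.

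**Key steps.** First I would argue color pair by color pair: Observation~\ref{obs-1} tells me that each $(G_{st},\sigma_{st})$ is itself a (possibly disconnected) 2-cBMG, so by Theorem~\ref{thm:2cbmg-triples} (in its augmented form) we have $T^*_{st}=\Aho(\mathscr{R}(G_{st},\sigma_{st}))$, and hence $r(T^*_{st})\subseteq r\big(\Aho(\mathscr{R}(G_{st},\sigma_{st}))\big)$ with the Aho tree reconstructible from the \emph{informative} triples alone. The crucial consequence is that $\Aho(\mathscr{R}(G_{st},\sigma_{st}))$ and the tree carrying the full set $r(T^*_{st})$ coincide, so passing from $R$ down to $\mathscr{R}(G,\sigma)=\bigcup_{s,t}\mathscr{R}(G_{st},\sigma_{st})$ loses no structural information at the level of each two-colored restriction. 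Second, I would lift this to the global tree: since $\mathscr{R}(G,\sigma)\subseteq R$ and $R$ is consistent (by Theorem~\ref{thm:ncBMG}, because $(G,\sigma)$ is an $n$-cBMG), $\Aho(\mathscr{R}(G,\sigma))$ is defined and displays every triple in each $\mathscr{R}(G_{st},\sigma_{st})$, hence displays each $(T^*_{st},\sigma_{st})$, hence displays all of $R$. Therefore $R\subseteq r\big(\Aho(\mathscr{R}(G,\sigma))\big)$, which means $\Aho(R)\le\Aho(\mathscr{R}(G,\sigma))$; conversely $\mathscr{R}(G,\sigma)\subseteq R$ gives $\Aho(\mathscr{R}(G,\sigma))\le\Aho(R)$. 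The two displaying relations force $\Aho(\mathscr{R}(G,\sigma))=\Aho(R)$.

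**The main obstacle.** The delicate point is the second direction, namely showing that the smaller set $\mathscr{R}(G,\sigma)$ already forces $\Aho$ to resolve every edge that $R$ resolves. The inclusion $\mathscr{R}(G,\sigma)\subseteq R$ trivially gives $\Aho(\mathscr{R})\le\Aho(R)$, but the reverse refinement is what needs real work: I must verify that every inner edge distinguished by some triple in $R$ is in fact distinguished by an \emph{informative} triple. This is exactly where Lemma~\ref{lem:edge-dist} does the heavy lifting in the two-color case — every inner edge of the least resolved $T^*_{st}$ is distinguished by an informative triple of $(G_{st},\sigma_{st})$ — and I would need to confirm that these per-color-pair guarantees assemble correctly into a statement about the edges of the global $\Aho(R)$. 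Concretely, the risk is a triple $xy|z\in R\setminus\mathscr{R}^*$ of the kind exhibited in Fig.~\ref{fig:not-dspl}, which is displayed by the global tree but \emph{not} by any two-colored restriction; I must check that the presence of such triples does not let $\Aho(R)$ be strictly more resolved than $\Aho(\mathscr{R}(G,\sigma))$. The resolution is that such a triple is itself forced (its edge distinguished) by informative triples living in the relevant color pairs once the node it sits below is correctly identified as the root of some $\rthin$ class with respect to a witnessing color, so that $\texttt{BUILD}$ separates the same components in both runs. Making this compatibility precise at each recursion level of $\texttt{BUILD}$ — that the connected components of the Aho-graph $[\mathscr{R}(G,\sigma),L']$ and $[R,L']$ coincide for every $L'$ — is the heart of the argument.
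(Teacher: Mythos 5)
Your proposal rests, in both directions of the final sandwich argument, on the principle that a tree displaying a triple set $R_0$ must also display $\Aho(R_0)$: you use it once as ``$\Aho(\mathscr{R}(G,\sigma))$ displays every triple of $\mathscr{R}(G_{st},\sigma_{st})$, hence displays $(T^*_{st},\sigma_{st})$'', and once as ``$\mathscr{R}(G,\sigma)\subseteq R$ trivially gives $\Aho(\mathscr{R}(G,\sigma))\le\Aho(R)$''. That principle is false. Take $L_0=\{a,b,c,e\}$, $R_1=\{ab|e\}$ and $R_2=\{ab|e,\,ac|b\}$. Then $\Aho(R_1)$ is the tree with the cherry $ab$ and the two isolated leaves $c,e$ attached to the root, so it displays $ab|c$; but $\Aho(R_2)$ is the caterpillar $(((a,c),b),e)$, which displays $ac|b$. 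Hence $R_1\subseteq R_2$ and $\Aho(R_2)$ displays all of $R_1$, yet neither Aho tree displays the other. So neither ``$R\subseteq r(\Aho(\mathscr{R}))\Rightarrow\Aho(R)\le\Aho(\mathscr{R})$'' nor the direction you call trivial is available as a general fact about \texttt{BUILD}; both would have to be extracted from the specific structure of $\mathscr{R}(G,\sigma)$ and $R$, which is exactly what the lemma is asserting in the first place.

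What you relegate to the ``main obstacle'' is therefore not a loose end but the entire proof, and it is precisely the route the paper takes: one shows, by induction along the recursion of \texttt{BUILD} applied to $T=\Aho(R)$, that the Aho graphs $[R,L']$ and $[\mathscr{R},L']$ have the same connected components at every step. Since $\mathscr{R}(G,\sigma)\subseteq R$, each component of $[\mathscr{R},L']$ lies inside a component of $[R,L']$, so the only substantive claim is that for each inner edge $uv$ of $T$ the set $L(T(v))$ remains connected in $[\mathscr{R}_{|L(T(u))},L(T(u))]$. The paper gets this from Theorem \ref{thm:lr-n}: $uv$ is relevant, so $v=\rho_{\alpha,s}$ for some $\rthin$ class $\alpha$ and some color $s\in\sigma(L(T(u)\setminus T(v)))$, and Lemma \ref{lem:N-n} then yields $y\in L(T(v))\cap L[s]$ with $\alpha y\in E(G)$ and $y'\in L(T(u)\setminus T(v))\cap L[s]$ with $\alpha y'\notin E(G)$, i.e., an informative triple $\alpha y|y'$ that ties the children of $v$ together inside $[\mathscr{R}_{|L(T(u))},L(T(u))]$. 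You correctly name these witnesses, but you do not carry out the induction, and the scaffolding you build around it does not stand on its own.
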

\begin{proof}  
  We first observe that the two triple sets $R$ and
  $\mathscr{R}:=\mathscr{R}(G,\sigma)$ have the same Aho tree
  $\Aho(R) = \Aho(\mathscr{R})$ if, in each step of \texttt{BUILD}, the
  respective Aho-graphs $[R,L']$ and $[\mathscr{R},L']$, as defined at the
  beginning of this section, have the same connected components. It is not
  necessary, however, that $[R,L']$ and $[\mathscr{R},L']$ are
  isomorphic. In the following set $T=\Aho(R)$.

  If $T$ is the star tree on $L$, then
  $\mathscr{R}\subseteq R = \emptyset$, thus $[R,L]=[\mathscr{R},L]$ is the
  edgeless graph on $L$, hence in particular
  $\Aho(\mathscr{R})=\Aho(R)$. 

	Now suppose $T$ is not the star tree. Then
  there is a vertex $w\in V^0(T)$ such that $L(T(w))=\child(w)$. For
  simplicity, we write $L_w:=L(T(w))$. Since $(T(w),\sigma_{L_w})$ is a
  star tree, we can apply the same argument again to conclude that
  $[R_{|L_w},L_w]=[\mathscr{R}_{|L_w},L_w]$, hence both Aho-graphs have the
  same connected components.
  Now let $u=\rho_T$ and assume by induction that $[R_{|L_{u'}},L_{u'}]$
  and $[\mathscr{R}_{|L_{u'}},L_{u'}]$ have the same connected components
  for every $u'\prec_T u$, and thus, in particular, for
  $v\in\child(u)$. Consequently, for any $v_i\in\child(v)$ the set
  $L_{v_i}$ is connected in $[\mathscr{R}_{|L_{v}},L_{v}]$. Since
  $\mathscr{R}_{|L_{v}}\subseteq \mathscr{R}_{|L_{u}}$, the set $L_{v_i}$
  must also be connected in $[\mathscr{R}_{|L_{u}},L_{u}]$ for every
  $v_i\in\child(v)$ (cf.\ Prop.\ 8 in \cite{Bryant:95}). It remains to show
  that all $L_{v_i}$ are connected in
  $[\mathscr{R}_{|L_{u}},L_{u}]$.

  Since $(T,\sigma)$ is least resolved w.r.t.\ $(G,\sigma)$, it follows
  from Theorem \ref{thm:lr-n} that $v=\rho_{\alpha,s}$ for some color
  $s\in \sigma(L(T(u)\setminus T(v)))$ and an $\rthin$ class $\alpha$ with
  $\sigma(\alpha)\neq s$. In particular, therefore,
  $s\notin \sigma(L_{v_i})$ if $\alpha\in L_{v_i}$ (say $i=1$). By
  definition of $s$, there must be a $v_j\in \child(v)\setminus \{v_1\}$
  (say $j=2$) such that $s\in \sigma(L_{v_2})$. Let
  $y\in L_{v_2}\cap L[s]$. Lemma \ref{lem:N-n} implies $y\in N_s(\alpha)$,
  i.e., $\alpha y \in E(G)$. Moreover, by definition of $s$, there must be
  a leaf $y'\in L(T(u)\setminus T(v))\cap L[s]$. Since
  $\lca(\alpha,y)\prec_T\lca(\alpha,y')$, we have $\alpha y'\notin E(G)$,
  whereas $y'\alpha$ may or may not be contained in
  $(G,\sigma)$. Therefore, the induced subgraph on $\{\alpha y y'\}$ is of
  the form $X_1$, $X_2$, $X_3$, or $X_4$ and thus provides the informative
  triple $\alpha y | y'$. It follows that $L_{v_1}$ and $L_{v_2}$ are
  connected in $[\mathscr{R}_{|L_{u}},L_{u}]$. In particular,
  this implies that any $L_{v_j}$ with
  $\sigma(L_{v_j})\subseteq \sigma(L_v)$ containing $s$ is connected to any
  $L_{v_i}$ that does not contain $s$.  Since $(G,\sigma)$ is connected,
  such a set $L_{v_i}$ always exists by Theorem \ref{thm:connected}. Now
  let $L_1:=\{L_{v_j}\mid v_j\in\child(v), s\in\sigma(L_{v_j}) \}$ and
  $L_2:=\{L_{v_i}\mid v_i\in\child(v), s\notin\sigma(L_{v_i}) \}$.  It then
  follows from the arguments above that $L_1$ and $L_2$ form a complete
  bipartite graph, hence $[\mathscr{R}_{|L_{u}},L_{u}]$ is connected.  
\end{proof}

\NEW{%
As an immediate consequence, Theorem \ref{thm:ncBMG} can be rephrased as:
\begin{corollary}
  A connected colored digraph $(G,\sigma)$ is an $n$-cBMG if and only if
  (i) all induced subgraphs $(G_{st},\sigma_{st})$ on two colors are
  2-cBMGs and (ii) the union $\mathscr{R}$ of informative triples
  $\mathscr{R}(G_{st},\sigma_{st})$ obtained from the induced subgraphs
  $(G_{st},\sigma_{st})$ forms a consistent set. In particular,
  $\Aho(\mathscr{R})$ is the unique least resolved tree that explains
  $(G,\sigma)$.
\label{cor:ncBMG} 
\end{corollary}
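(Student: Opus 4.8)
The plan is to obtain Corollary~\ref{cor:ncBMG} from Theorem~\ref{thm:ncBMG} by replacing the triple set $R=\bigcup_{s,t}r(T^*_{st})$ with the set $\mathscr{R}$ of informative triples. A preliminary observation I would record is that $\mathscr{R}$ as defined in the corollary coincides with $\mathscr{R}(G,\sigma)$: every informative triple $ab|c$ carries an arc $a\to b$, so $\sigma(a)\neq\sigma(b)$, and by the convention of Fig.~\ref{fig:triples} the third vertex $c$ also carries one of the two colours $\{\sigma(a),\sigma(b)\}=:\{s,t\}$; hence each informative triple of $(G,\sigma)$ lives on $L[s]\cup L[t]$ and is an informative triple of $(G_{st},\sigma_{st})$, giving $\mathscr{R}(G,\sigma)=\bigcup_{s,t}\mathscr{R}(G_{st},\sigma_{st})=\mathscr{R}$. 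Since $\mathscr{R}\subseteq R$, the forward direction is almost immediate, whereas the backward direction requires promoting consistency of the smaller set $\mathscr{R}$ to consistency of the larger set $R$, which is where the work lies.

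For the ``only if'' direction I would assume $(G,\sigma)$ is an $n$-cBMG. Theorem~\ref{thm:ncBMG} already gives condition~(i) and the consistency of $R$; as $\mathscr{R}\subseteq R$, any tree displaying $R$ also displays $\mathscr{R}$, so $\mathscr{R}$ is consistent and~(ii) holds. The Lemma immediately preceding the corollary yields $\Aho(\mathscr{R})=\Aho(R)$, and Theorem~\ref{thm:ncBMG} identifies $\Aho(R)$ as the unique least resolved tree explaining $(G,\sigma)$; hence so is $\Aho(\mathscr{R})$.

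For the ``if'' direction I would assume~(i) and~(ii) and reduce to Theorem~\ref{thm:ncBMG} by proving that $R$ is consistent. From~(ii) the tree $\Aho(\mathscr{R})$ exists and displays $\mathscr{R}$. Fix a colour pair $s,t$: by~(i) the subgraph $(G_{st},\sigma_{st})$ is a 2-cBMG with a unique least resolved tree $T^*_{st}$ (Theorem~\ref{thm:uT-conn}), and Lemma~\ref{lem:consistent} gives $\mathscr{R}(G_{st},\sigma_{st})\subseteq r(T^*_{st})$. The decisive input is Lemma~\ref{lem:edge-dist}: every inner edge of $T^*_{st}$ is distinguished by a triple of $\mathscr{R}(G_{st},\sigma_{st})$, so this set \emph{identifies} $T^*_{st}$ in the sense of \cite{GSS:07}, i.e.\ any tree on $L[s]\cup L[t]$ that displays $\mathscr{R}(G_{st},\sigma_{st})$ is a refinement of $T^*_{st}$. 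Now $\Aho(\mathscr{R})$ displays $\mathscr{R}(G_{st},\sigma_{st})$, and since these triples lie on $L[s]\cup L[t]$ so does the restriction $\Aho(\mathscr{R})_{L[s]\cup L[t]}$; therefore this restriction refines $T^*_{st}$ and displays all of $r(T^*_{st})$. Taking the union over all colour pairs yields $R\subseteq r(\Aho(\mathscr{R}))$, so $R$ is consistent. Theorem~\ref{thm:ncBMG} then shows that $(G,\sigma)$ is an $n$-cBMG with unique least resolved tree $\Aho(R)$, and the preceding Lemma identifies this with $\Aho(\mathscr{R})$.

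The main obstacle is exactly the gap between consistency of $\mathscr{R}$ and consistency of the superset $R$: consistency does not propagate to supersets, so the backward direction cannot be purely formal and must use Lemma~\ref{lem:edge-dist} to force every tree displaying the informative triples of a colour pair to be a refinement of the corresponding least resolved 2-colour tree. A secondary point I would verify is the case of a disconnected $(G_{st},\sigma_{st})$, where the informative-triple set is read with the component-specifying triples $\mathscr{R}_C$ of the remark following Theorem~\ref{thm:2cbmg-triples}; the distinguished-edge argument then applies verbatim.
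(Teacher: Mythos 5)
Your proposal is correct, and it is in fact more complete than what the paper offers: the paper derives the corollary as an ``immediate consequence'' of the preceding lemma ($\Aho(\mathscr{R}(G,\sigma))=\Aho(R)$ for $n$-cBMGs) together with Theorem~\ref{thm:ncBMG}, but that lemma carries the hypothesis that $(G,\sigma)$ already \emph{is} an $n$-cBMG, so it only settles the ``only if'' direction --- which you handle the same way. For the ``if'' direction the paper leaves implicit exactly the gap you name: consistency of the subset $\mathscr{R}\subseteq R$ does not formally yield consistency of $R$, and the step in the proof of Theorem~\ref{thm:ncBMG} that gets $(T_{st},\sigma_{st})\le\Aho(R)$ ``by construction'' is no longer free when $R$ is replaced by $\mathscr{R}$. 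Your route through Lemma~\ref{lem:consistent} (so that $\mathscr{R}(G_{st},\sigma_{st})\subseteq r(T^*_{st})$) and Lemma~\ref{lem:edge-dist} (every inner edge of $T^*_{st}$ is distinguished, hence $\mathscr{R}(G_{st},\sigma_{st})$ identifies $T^*_{st}$ in the sense of \cite{GSS:07}) is precisely the right way to force the restriction of $\Aho(\mathscr{R})$ to $L[s]\cup L[t]$ to refine $T^*_{st}$, whence $R\subseteq r(\Aho(\mathscr{R}))$ and Theorem~\ref{thm:ncBMG} applies; the paper only gestures at the identification argument in the caption of Fig.~\ref{fig:counter-triples}. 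Your flagging of the disconnected $(G_{st},\sigma_{st})$ case (where $\mathscr{R}_C$ is needed both for consistency of the definition and so that the root edges of $T^*_{st}$ are distinguished) is also a point the paper glosses over. The only caveat is your preliminary identification $\mathscr{R}(G,\sigma)=\bigcup_{s,t}\mathscr{R}(G_{st},\sigma_{st})$: if Definition~\ref{def:inftriples} were read on the full $n$-colored graph with arbitrary recoloring, a third-colored vertex $c$ could produce an $X_1$-type configuration not contained in any $G_{st}$; this matches the paper's evident intent (it asserts $\mathscr{R}(G,\sigma)\subseteq R$ as a consequence of Lemma~\ref{lem:consistent}, which only makes sense for the per-color-pair reading), but it is worth stating as a convention rather than deriving it from the figure.
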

}

\section{Algorithmic Considerations} 

The material in the previous two sections can be translated into practical
algorithms that decide for a given colored graph $(G,\sigma)$ whether it is
an $n$-cBMG and, if this is the case, compute the unique least resolved tree
that explains $(G,\sigma)$. The correctness of Algorithm~\ref{alg:ncBMG}
follows directly from Theorem \ref{thm:ncBMG} (for a single connected
component) and Theorem \ref{thm:connected} regarding the composition of
connected components. It depends on the construction of the unique least
resolved tree for the connected components of the induced 2-cBMGs, called
\texttt{LRTfrom2cBMG}() in the pseudocode of
Algorithm~\ref{alg:ncBMG}. There are two distinct ways of computing these
trees: either by constructing the hierarchy $T(\mathscr{H})$ from the
extended reachable sets $R'$ (Algorithm~\ref{alg:2cBMG}) or via
constructing the Aho tree from the set of informative triples
(Algorithm~\ref{alg:2cBMGit}). While the latter approach seems simpler, we
shall see below that it is in general slightly less efficient. Furthermore,
we use a function \texttt{BuildST}() to construct the supertree from a
collection of input trees. Together with the computation of $\Aho()$ from a
set of triples, it will be briefly discussed later in this section.

\begin{algorithm}
\caption{Unique least resolved tree of $n$-cBMG} 
\label{alg:ncBMG}
\algsetup{linenodelimiter=}
\begin{algorithmic}
\REQUIRE Vertex colored digraph $(G(L,E),\sigma)$. 
\IF { there is $xy\in E$ with $\sigma(x)=\sigma(y)$} 
    {\STATE \textbf{exit}(\emph{``not a BMG''})}
\ENDIF
\STATE determine connected components $(G_i(L_i,E_i),\sigma_i)$
\IF { $\sigma(L_i)\ne \sigma(L_j)$ for some components $i$, $j$} 
    { \STATE \textbf{exit}(\emph{``not a BMG''})}
\ENDIF 
\FORALL {connected components $(G_i(L_i,E_i),\sigma_i)$} 
  \FORALL {colors $s,t\in S$, $s\ne t$} 
     \STATE determine the induced subgraph 
        $(G_{st}(L_{st},E_{st}),\sigma_{st})$ with colors $s,t$ 
     \STATE determine connected components $(G_{st,i},\sigma_{st,i})$ 
     \FORALL {connected components $(G_{st,i},\sigma_{st,i})$} 
        \STATE $(T_{st,i},\sigma_{st,i}) \leftarrow$ 
           \texttt{LRTfrom2cBMG}($G_{st,i},\sigma_{st,i}$)
        \IF {$(T_{st,i},\sigma_{st,i})=\varnothing$} 
           { \STATE \textbf{exit}(\emph{``not a BMG''})}
        \ENDIF
     \ENDFOR 
      \STATE $(T_{st},\sigma_{st}) \leftarrow$ root $r_{st}$ with 
         children $(T_{st,i},\sigma_{st,i})$
  \ENDFOR 
  \STATE $(T_i,\sigma_i)\leftarrow$ 
     \texttt{BuildST($\bigcup_{s,t} (T_{st},\sigma_{st})$)}
  \IF {$(T_i,\sigma_i)=\varnothing$} 
      { \STATE  \textbf{exit}(\emph{``not a BMG''})}
  \ENDIF
\ENDFOR
\STATE $(T,\sigma)\leftarrow$ root $r$ with 
         children $(T_i,\sigma_i)$  
\STATE \textbf{return} $(T,\sigma)$
\end{algorithmic}
\end{algorithm}

\begin{algorithm}
\caption{Unique least resolved tree of connected 2-cBMG} 
\label{alg:2cBMG}
\algsetup{linenodelimiter=}
\begin{algorithmic}
\REQUIRE Two-colored connected bipartite digraph $(G(L,E),\sigma)$. 
\STATE compute $\rthin$ classes 
\STATE compute $N(\alpha)$ and $N(N(\alpha))$ for all $\alpha$ 
\IF {\AX{(N2)} does not hold for all $\alpha$} 
   \STATE \textbf{return} $\varnothing$
\ENDIF 
\IF {\AX{(N3)} does not hold for all $\alpha,\beta$} 
   \STATE \textbf{return} $\varnothing$
\ENDIF
\STATE compute table $Y_{\alpha\beta}=1$ iff
       $N(\alpha)\cap N(N(\beta))\ne\emptyset$ 
\IF {\AX{(N1)} does not hold for all $\alpha,\beta$} 
   \STATE \textbf{return} $\varnothing$
\ENDIF
\STATE compute $R(\alpha)$, $Q(\alpha)$, and $R'(\alpha)=
       R(\alpha)\cup Q(\alpha)$ for all $\alpha$
\STATE tabulate $P_{\alpha,\beta}=1$ iff $R'(\alpha)\subseteq R'(\beta)$.
\STATE compute Hasse $T(\mathfrak{H})$ diagram by transitive reduction
\IF {$T(\mathfrak{H})$ is not a tree} 
   \STATE \textbf{return} $\varnothing$
\ENDIF 
\IF {there are siblings $R'(\alpha)$ and $R'(\beta)$ in 
  $T(\mathfrak{H})$ with non-empty intersection}
   \STATE \textbf{return} $\varnothing$
\ENDIF 
\STATE construct $T^*(\mathfrak{H})$ by attaching the leaves to
       $T(\mathfrak{H})$
\STATE \textbf{return} $T^*(\mathfrak{H})$
\end{algorithmic}
\end{algorithm}

\begin{algorithm}
\caption{Unique least resolved tree of connected 2-cBMG via triples} 
\label{alg:2cBMGit}
\algsetup{linenodelimiter=}
\begin{algorithmic}
\REQUIRE Two-colored connected bipartite digraph $(G(L,E),\sigma)$. 

\STATE extract informative triple set $\mathscr{R}$ from $(G,\sigma)$ 
\STATE $(T,\sigma)\leftarrow\Aho(\mathscr{R},\sigma)$ 
\STATE compute $G(T,\sigma)$ 
\IF {$G(T,\sigma)=(G,\sigma)$} 
  \STATE \textbf{return} $(T,\sigma)$ 
\ELSE 
  \STATE \textbf{return} $\varnothing$ 
\ENDIF
\end{algorithmic}

\end{algorithm}

Let us now turn to analyzing the computational complexity of Algorithm
\ref{alg:ncBMG}, \ref{alg:2cBMG}, and \ref{alg:2cBMGit}. We start with the
building blocks necessary to process the 2-cBMG and consider performance
bounds on individual tasks.

\paragraph{From $(T,\sigma)$ to $(G,\sigma)$.} Given a leaf-labeled tree
$(T,\sigma)$ we first consider the construction of the corresponding cBMG.
The necessary lowest common ancestor queries can be answered in constant
time after linear time preprocessing, see e.g.\
\cite{Harel:84,Schieber:88}.  The $\lca()$ function can also be used to
express the partial orders among vertices since \NEW{we} have $x\preceq y$
if and only if $\lca(x,y)=y$. In particular, therefore,
$\lca(x,y)\preceq\lca(x,y')$ is true if and only if
$\lca(\lca(x,y),\lca(x,y'))=\lca(\lca(x,y),y')=\lca(x,y')$. Thus
$(G,\sigma)$ can be constructed from $(T,\sigma)$ \NEW{by computing
  $\lca(x,y)$ in constant time for each leaf $x$ and each $y\in L[s]$.
  Since the last common ancestors for fixed $x$ are comparable, their
  unique minimum can be determined in $O(|L[s]|)$ time. Thus we can
  construct all best matches in $O( |L|+ |L|\sum_s |L|)=O(|L|^2)$ time.}

\paragraph{Thinness classes.}
Recall that each connected component of a cBMG $(G,\sigma)$ has vertices
with all $|S|\ge 2$ colors (we disregard the trivial case of the edge-less
graph with $|S|=1$) and thus every $x\in V$ has a non-zero
out-degree. Therefore $|E|\ge|L|$, i.e.,
$O(|L|+|E|)=O(|E|)\NEW{=} O(|L|^2)$.

Consider a collection $\mathcal{F}$ of $n=|\mathcal{F}|$ subsets on $L$
with a total size of $m=\sum_{A\in\mathcal{F}}|A|$. Then the set inclusion
poset of $\mathcal{F}$ can be computed in $O(nm)$ time and $O(n^2)$ space
as follows: For each $A\in\mathcal{F}$ run through all elements $x$ of all
other sets $B\in\mathcal{F}$ and mark $B\not\subseteq A$ if $x\notin A$,
resulting in a $n\times n$ table $P_{\mathcal{F}}$ storing the set
inclusion relation.  More sophisticated algorithms that are slightly more
efficient under particular circumstances are described in
\cite{Pritchard:95,Elmasry:10}.

\NEW{In order to compute the thinness classes, we observe that the
  symmetric part of $P_{\mathcal{F}}$ corresponds to equal sets.} The
classes of equal sets can be obtained as connected components by breadth
first search on the symmetric part of $P_{\mathcal{F}}$ with an effort of
$O(n^2)$.  \NEW{This procedure is separately applied to the in- and
  out-neighborhoods of the cBMG.  Using an auxiliary graph in which
  $x,y\in L$ are connected if they are in the same component for both the
  in- and out- neighbors, the thinness classes can now be obtained by
  another breath first search in $O(n^2)$.} Since we have $n=|L|$ and
$m=|E|$ and thus the sets of vertices with equal in- and out-neighborhoods
can be identified in $O(|L|\,|E|)$ total time.

\paragraph{Recognizing 2-cBMGs.} 
Since \AX{(N0)} holds for all graphs, it will be useful to construct the
table $X$ with entries $X_{\alpha,\beta}=1$ if $\alpha\subseteq N(\beta)$
and $X_{\alpha,\beta}=0$ otherwise. This table can be constructed in
$O(|E|)$ time by iterating over all edges and retrieving (in constant time)
the $\rthin$ classes to which its endpoints belong. The $N(N(\alpha))$ can
now be obtained in $O(|E|\,|L|)$ by iterating over all edges $\alpha\beta$
and adding the classes in $N(\beta)$ to $N(N(\alpha))$.  We store this
information in a table \NEW{with entries $Q_{\alpha,\beta}=1$ if
  $\alpha\in N(N(\beta))$ and $Q_{\alpha,\beta}=0$ otherwise,} in order to
be able to decide membership in constant time later on.

A table $Y_{\alpha\beta}$ with $Y_{\alpha\beta}=0$ if
$N(\alpha)\cap N(N(\beta))=\emptyset$ and $Y_{\alpha\beta}=1$ if there is
an overlap between $N(\alpha)$ and $N(N(\beta))$ can be computed in
$O(|L|^3)$ time from the membership \NEW{tables $X$ and $Q$ for
  neighborhoods $N(\,.\,)$ and next-nearest neighborhoods $N(N(\,.\,))$,
  respectively.} From the membership table for $N(N(\alpha))$ and
$N(\gamma)$ we obtain $N(N(N(\alpha)))$ in $O(|E|\,|L|)$ time, making use
of the fact that $\sum_{\alpha}|N(\alpha)|=|E|$.  For fixed
$\alpha,\beta\in\mathcal{N}$ it only takes constant time to check the
conditions in \AX{(N1)} and \AX{(N3)} since all set inclusions and
intersections can be tested in constant time using the auxiliary data
derived above. The inclusion \AX{(N2)} can be tested directly in $O(|L|)$
time for each $\alpha$. We can summarize considerations above as
\begin{lemma} 
  A 2-cBMG can be recognized in $O(|L|^2)$ space and $O(|L|^3)$ time 
  with Algorithm~\ref{alg:2cBMG}.
\end{lemma}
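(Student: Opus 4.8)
The plan is to charge the cost of Algorithm~\ref{alg:2cBMG} line by line, show that the running times add up to at most $O(|L|^3)$ while every auxiliary table occupies $O(|L|^2)$ space, and take the maximum. Throughout I would rely on two facts already noted above: since each connected component of a cBMG uses all colors, every vertex has out-degree at least one, so $|E|\ge|L|$ and hence $O(|E|)=O(|L|^2)$; and the number of $\rthin$ classes satisfies $|\mathcal{N}|\le|L|$, so any table indexed by pairs of classes fits in $O(|L|^2)$ space. Correctness of the decision is not at issue in this lemma: it is inherited from Theorem~\ref{thm:char2}, which shows that \AX{(N1)}, \AX{(N2)}, \AX{(N3)} together with the hierarchy property of $\mathscr{H'}$ (Lemma~\ref{lem:hierarchy-R'}) characterize 2-cBMGs and yield the least resolved tree. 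So the proof is pure complexity bookkeeping.

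First I would invoke the building blocks assembled in the preceding paragraphs. The $\rthin$ classes are computed in $O(|L|\,|E|)=O(|L|^3)$ time via set-inclusion posets and breadth-first search; the membership table $X$ for $\alpha\subseteq N(\beta)$ is built in $O(|E|)$ time by a single pass over the arcs; and the second and third neighborhoods $N(N(\alpha))$ and $N(N(N(\alpha)))$ are obtained in $O(|E|\,|L|)=O(|L|^3)$ time, stored as membership tables to permit constant-time lookups. With these tables in hand, \AX{(N2)} is checked in $O(|L|)$ per class, hence $O(|L|^2)$ in total; the table $Y_{\alpha\beta}$ recording whether $N(\alpha)\cap N(N(\beta))\ne\emptyset$ is assembled in $O(|L|^3)$ time; and \AX{(N1)} and \AX{(N3)} are then verified in constant time for each pair $(\alpha,\beta)$, i.e.\ $O(|L|^2)$ overall.

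Second I would account for the hierarchy phase. The reachable sets $R(\alpha)=N(\alpha)\cup N(N(\alpha))$ are already available by \AX{(N2)}; the sets $Q(\alpha)$, and thence $R'(\alpha)=R(\alpha)\cup Q(\alpha)$, are read off the stored in- and out-neighborhood tables, so that the family $\{R'(\alpha)\mid\alpha\in\mathcal{N}\}$ consists of at most $|L|$ sets of combined size at most $|L|^2$. Feeding this family to the generic set-inclusion-poset routine produces the table $P_{\alpha,\beta}$ in $O(nm)=O(|L|^3)$ time and $O(|L|^2)$ space; the Hasse diagram $T(\mathscr{H'})$ is its transitive reduction, computed in $O(|L|^2)$; and the two validity tests (whether $T(\mathscr{H'})$ is a tree, and whether sibling extended reachable sets are disjoint), together with the final attachment of the leaves to form $T^*(\mathscr{H'})$, each cost $O(|L|^2)$. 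Summing all contributions, the dominant terms are cubic and the result is $O(|L|^3)$ time and $O(|L|^2)$ space.

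The analysis carries no real mathematical obstacle; the one place that needs genuine care is confirming that the two honestly cubic steps — building $Y$ and building the inclusion poset $P$ of the $R'(\alpha)$ — do not exceed $O(|L|^3)$. This rests precisely on the bounds $|\mathcal{N}|\le|L|$ and $\sum_{\alpha}|R'(\alpha)|\le|L|^2$, and on storing every neighborhood as a Boolean membership table so that intersections, inclusions, and membership queries are answered in constant time. Once these are fixed, taking the maximum over all steps gives the claimed $O(|L|^3)$ time within $O(|L|^2)$ space.
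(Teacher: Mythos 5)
Your proposal is correct and follows essentially the same route as the paper: the same per-step cost accounting using Boolean membership tables for $N(\,.\,)$, $N(N(\,.\,))$ and $N(N(N(\,.\,)))$, with the table $Y$ and the set-inclusion computations as the dominant cubic steps, all within $O(|L|^2)$ space since $|\mathcal{N}|\le|L|$. The only cosmetic difference is that you also account for the hierarchy/Hasse-diagram phase, which the paper defers to the subsequent lemma on constructing $T^*(\mathscr{H}')$; this changes nothing in the bounds.
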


\paragraph{Reconstruction of $T^*(\mathscr{H})$.}
For each $\alpha\in\mathcal{N}$, the reachable set $R(\alpha)$ can be found
by a breadth first search in $O(|E|)$ time, and hence with total complexity
$O(|E|\,|L|)$. For each $\alpha$, we can find all $\beta\in\mathcal{N}$
with $N^-(\beta)=N^-(\alpha)$ and $N(\beta)\subseteq N(\alpha)$ in $O(|L|)$
time by simple look-ups in the set inclusion table for the in- and
out-neighborhoods, respectively. Thus we can find all auxiliary leaf sets
$Q(\alpha)$ in $O(|L^2|)$ time and the collection of the $R'(\alpha)$ can
be constructed in $O(|E|\,|L|)$.

The construction of the set inclusion poset is also useful to check whether
the $\{R'(\alpha)\}$ form a hierarchy. In the worst case we have a tree of
depth $|L|$ and thus $m=O(|L|^2)$. Since the number of $\rthin$ classes is
bounded by $O(|L|)$, the inclusion poset of the reachable sets can be
constructed in $O(|L|^3)$. The Hasse diagram of the partial order is the
unique transitive reduction of the corresponding digraph. In our setting,
this also takes $O(|L|^3)$ time \cite{Gries:89,Aho:72}, since the inclusion
poset of the $\{R'(\alpha)\}$ may have $O(|L|^2)$ edges. It is now easy to
check whether the Hasse diagram is a tree or not. If the number of edges is
at least the number of vertices, the answer is negative. Otherwise, the
presence of a cycle can be verified e.g.\ using breadth first search in
$O(|L|)$ time. It remains to check that the non-nested sets $R(\alpha)$ are
indeed disjoint. It suffices to check this for the children of each vertex
in the Hasse tree. Traversing the tree top-down this can be verified in
$O(|L|^2)$ time since there are $O(|L|)$ vertices in the Hasse diagram and
the total number of elements in the subtrees is $O(|L|)$.

Summarizing the discussion so far, and using the fact that the vertices
$x\in\alpha$ can be attached to the corresponding vertices $R'(\alpha)$ in
total time $O(|L|)$ we obtain
\begin{lemma} 
  The unique least resolved tree $T^*(\mathscr{H}')$ of a connected 2-cBMG
  $(G,\sigma)$ can be constructed in $O(|L|^3)$ time and $O(|L|^2)$ space
  with Algorithm~\ref{alg:2cBMG}.
\end{lemma}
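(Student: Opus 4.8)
The plan is to account for the cost of each step of Algorithm~\ref{alg:2cBMG} individually and then take the maximum, relying throughout on two structural facts that were already noted: the number of $\rthin$ classes is $O(|L|)$, and since every connected component of a cBMG uses all colors we have $|E|\ge|L|$, so $O(|L|+|E|)=O(|E|)=O(|L|^2)$. First I would recall from the discussion of thinness classes that the $\rthin$ partition is obtained by computing the set-inclusion poset of the in- and out-neighborhoods and extracting equal-set classes by breadth first search, at a cost of $O(|L|\,|E|)=O(|L|^3)$ time and $O(|L|^2)$ space. The membership tables $X$ and $Q$ for $N(\cdot)$ and $N(N(\cdot))$, together with the table $Y_{\alpha\beta}$, are built in $O(|E|\,|L|)=O(|L|^3)$ time and stored in $O(|L|^2)$ space, after which each of the conditions \AX{(N1)}, \AX{(N2)}, \AX{(N3)} is verified in constant or $O(|L|)$ time per class (pair), as established in the recognition paragraph.

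Next I would tally the reconstruction steps. Each reachable set $R(\alpha)$ is found by one breadth first search in $O(|E|)$ time, so all of them cost $O(|E|\,|L|)=O(|L|^3)$; the auxiliary sets $Q(\alpha)$ are obtained by look-ups in the precomputed inclusion tables in $O(|L|)$ time per class, hence $O(|L|^2)$ overall, and $R'(\alpha)=R(\alpha)\cup Q(\alpha)$ is assembled within the same bound. Forming the inclusion poset $P_{\alpha,\beta}$ of the $O(|L|)$ sets $R'(\alpha)$, each of size $O(|L|)$, costs $O(nm)=O(|L|^3)$ with $n=O(|L|)$ and $m=O(|L|^2)$, and its transitive reduction to the Hasse diagram $T(\mathfrak{H})$ likewise costs $O(|L|^3)$ \cite{Gries:89,Aho:72}. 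The residual checks---that $T(\mathfrak{H})$ is a tree, that sibling sets are disjoint, and the final attachment of the leaves $x\in\alpha$ to the vertices $R'(\alpha)$---are each bounded by $O(|L|^2)$, as argued in the reconstruction paragraph. Every table used has dimension $O(|L|)\times O(|L|)$, so the total space is $O(|L|^2)$.

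Since each step is bounded by $O(|L|^3)$ time and $O(|L|^2)$ space, and the correctness of the output rests on Lemma~\ref{lem:hierarchy-R'} and Theorem~\ref{thm:char2}, the claimed bounds follow. I expect the only genuinely delicate point to be the poset and transitive-reduction step: naively one might fear that the reachable sets force more than $O(|L|^2)$ edges in the inclusion digraph or more than $O(|L|^2)$ stored data. The crux is to observe that there are only $O(|L|)$ distinct $\rthin$ classes and hence only $O(|L|)$ sets $R'(\alpha)$, which simultaneously caps the poset at $O(|L|^2)$ edges and the storage at $O(|L|^2)$, keeping the transitive reduction within $O(|L|^3)$; everything else is routine aggregation.
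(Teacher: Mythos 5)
Your proposal is correct and follows essentially the same route as the paper: a step-by-step accounting of Algorithm~\ref{alg:2cBMG} using the precomputed $\rthin$ classes, the membership and inclusion tables, breadth-first searches for the $R(\alpha)$, the $O(nm)$ poset construction with $n=O(|L|)$ and $m=O(|L|^2)$, the cubic-time transitive reduction, and the quadratic residual checks, with correctness delegated to Lemma~\ref{lem:hierarchy-R'} and Theorem~\ref{thm:char2}. The only difference is presentational: you flag the poset/transitive-reduction step as the delicate one and justify the $O(|L|^2)$ bound on its size explicitly, which the paper also does, so nothing of substance diverges.
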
 

\paragraph{Informative triples.} 
Since all informative triples $\mathscr{R}(G,\sigma)$ come from an induced
subgraph that contains at least one edge, it is possible to extract
$\mathscr{R}(G,\sigma)$ for a connected 2-cBMG in $O(|E|\,|L|)$
time. Furthermore, the total number of vertices and edges in
$\mathscr{R}(G,\sigma)$ is also bounded by $O(|E|\,|L|)$, hence the
algorithm of Deng and Fern{\'a}ndez-Baca can be used to construct the tree
$\Aho(\mathscr{R}(G,\sigma))$ for a connected 2-cBMG in
$O(|E|\,|L| \log^2(|E|\,|L|) )$ time \cite{Deng:18}. The graph $(G',\sigma)$ explained by
this tree can be generated in $O(|L|^3)$ time, and checking whether
$(G,\sigma)=(G',\sigma)$ requires $O(|L|^2)$ time. Asymptotically, the
approach via informative triples, Alg.~\ref{alg:2cBMGit}, is therefore at
best as good as the direct construction of the least resolved tree
$T^*(\mathscr{H}')$ with Alg.~\ref{alg:2cBMG}.

\paragraph{Effort in the $n$-color case.} 
For $n$-cBMGs it is first of all necessary to check all pairs of induced
2-cBMGs. The total effort for processing all induced 2-cBMGs is
$O(\sum_{s<t} (|L[s]|+|L[t]|)^3) \le O(|S|\,|L|\,\ell^2 + |L|^2\ell)$ with
$\ell:=\max_{s\in S} |L[s]|$, as shown by a short computation.

The 2-cBMG for colors $s$ and $t$ is of size $O(L[s]+L[t])$ hence the total
size of all $|S|(|S|-1)/2$ 2-cBMGs is $O(|S|\,|L|)$. The total effort to
construct a supertree from these 2-cBMGs is therefore only
$O(|L|\,|S| \log^2(|L|\,|S|))$ \NEW{\cite{Deng:18}}, and thus negligible
compared to the effort of building the 2-cBMGs.

\NEW{Using Lemma \ref{cor:ncBMG} it is also possible to use the set of all
  informative triples directly. Its size is bounded by $O(|L|\,|E|)$, hence
  the algorithm of \citet{Henzinger:99} can used to construct the supertree
  on $O( |L|\,|E|\log^2(|L|\,|E| )$. This bound is in fact worse than for
  the strategy of constructing all 2-cBMGs first.}

We note, finally, that for practical applications the number of genes
between different species will be comparable, hence $O(\ell)=O(|L|/|S|)$.
The total effort of recognizing an $n$-cBMG in a biologically realistic
application scenario amounts to $O(|L|^3/|S|)$. In the worst case scenario
with $O(\ell)=O(|L|)$, the total effort is $O(|S|\, |L|^3)$.

\section{Reciprocal Best Match Graphs} 

Several software tools implementing methods for tree-free orthology
assignment are typically on reciprocal best matches, i.e., the symmetric
part of a cBMG, which we will refer to as \emph{colored Reciprocal Best
  Match Graph} (cRBMG). Orthology is well known to have a cograph structure
\cite{Hellmuth:13a,HW:16a,HW:16book}. The example in
Fig.~\ref{fig:counterCOG} shows, however, that cRBMG in general are not
cographs. It is of interest, therefore to better understand this class of
colored graphs and their relationships with cographs.

\begin{figure}
  \begin{center} 
        \includegraphics[width=0.5\textwidth]{./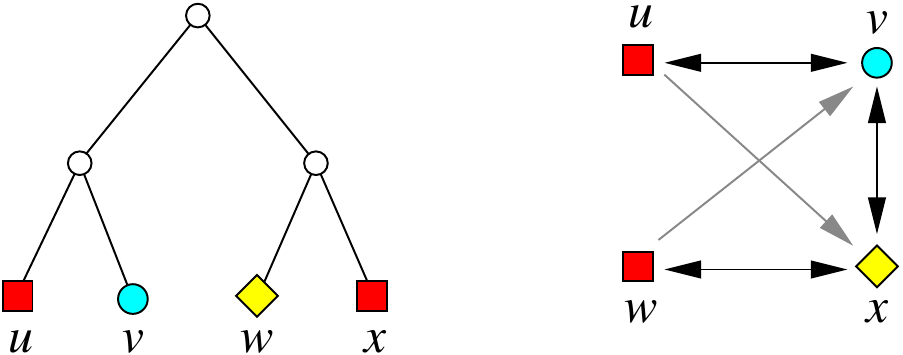}
        \caption{Colored Reciprocal Best Match Graphs are not necessarily
          cographs. This simple counterexample contains the path $u-v-x-w$
          as symmetric part. It corresponds to a species tree of the form
          $({\color{red}\blacksquare}({\color{cyan}\text{\Large\textbullet}}
          {\color{yellow}\blacklozenge}))$ and a duplication pre-dating the
          two speciations, with the speciation of
          ${\color{cyan}\text{\Large\textbullet}}$ and
          ${\color{yellow}\blacklozenge}$ being followed by complementary
          loss of one of the two copies.}
        \label{fig:counterCOG} 
  \end{center}
\end{figure} 

\begin{definition}
  A vertex-colored undirected graph $G(V,E,\sigma)$ with $\sigma:V\to S$ is
  a \emph{colored reciprocal best match graph} (cRBMG) if there is a tree
  $T$ with leaf set $V$ such that $xy\in E$ if and only if
  $\lca(x,y)\preceq \lca(x,y')$ for all $y'\in V$ with
  $\sigma(y')=\sigma(y)$ and $\lca(x,y)\preceq \lca(x',y)$ for all $x'\in
  V$ with $\sigma(x')=\sigma(x)$.
\end{definition} 
By definition $G(V,E,\sigma)$ is a cRBMG if and only if there is a cBMG
$(G',\sigma)$ with vertex set $V$ and edges $xy\in E(G)$ if and only if
both $(x,y)$ and $(y,x)$ are arcs in $(G',\sigma)$. In particular,
therefore, a cRBMG is the edge-disjoint union of the edge sets of the
induced cRBMGs by pairs of distinct colors $s,t\in S$.

\begin{figure}[htbp]
  \begin{center} 
    \includegraphics[width=0.7\textwidth]{./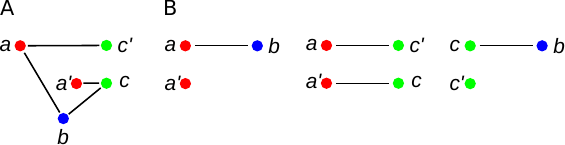}
    \caption[]{(A) A symmetric graph on three colors. (B) Each induced
      subgraph on two colors is a reciprocal Best Match Graph and a
      disjoint union of complete bipartite graphs. However, the
      corresponding symmetric graph on three colors shown in (A) does not
      have a tree representation. }
    \label{fig:counterex_sym}
  \end{center}
\end{figure}

\begin{corollary} 
  Every 2-cRBMG is the disjoint union of complete bipartite graphs. 
\end{corollary}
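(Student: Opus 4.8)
The plan is to derive the statement from the thinness-class description of $2$-cBMGs. First I would recall that, by the remark following the definition of cRBMGs, a $2$-cRBMG $G(V,E,\sigma)$ is exactly the symmetric part of a $2$-cBMG $(G',\sigma)=G(T,\sigma)$: an (undirected) edge $xy$ is present iff both arcs $x\to y$ and $y\to x$ belong to $(G',\sigma)$. Any such reciprocal edge forces $x$ and $y$ into the same connected component of $(G',\sigma)$, and the symmetric part of a disjoint union is the disjoint union of the symmetric parts. Since a disjoint union of complete bipartite graphs is again of this form, it therefore suffices to treat a single connected component; by Theorem~\ref{thm:connected} this component is itself a connected $2$-cBMG, so all results on $\rthin$ classes apply. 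From now on I assume $(G',\sigma)$ is connected.

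Next I would partition $V$ into $\rthin$ classes. Each class is monochromatic (Observation~\ref{fact:class-species}) and induces an edge-less subgraph, so in particular no reciprocal edge joins two vertices of the same class. For two classes $\alpha,\beta$ with $\sigma(\alpha)\ne\sigma(\beta)$, Lemma~\ref{lem:rthin-cases} distinguishes exactly four cases, and a reciprocal edge between $\alpha$ and $\beta$ exists iff both $\beta\subseteq N(\alpha)$ and $\alpha\subseteq N(\beta)$ hold, i.e.\ precisely in case~(i), equivalently $\rho_\alpha=\rho_\beta$. Because $N$ is constant on $\rthin$ classes, in case~(i) every $x\in\alpha$ and $y\in\beta$ satisfy $y\in N(\alpha)=N(x)$ and $x\in N(\beta)=N(y)$, so the induced subgraph on $\alpha\cup\beta$ is the complete bipartite graph $K_{|\alpha|,|\beta|}$; in cases~(ii)--(iv) at most one of the two arcs is present, so no reciprocal edge occurs.

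The key remaining step is to show that each class has at most one reciprocal partner. If $\alpha$ were reciprocally adjacent to two distinct classes $\beta\ne\beta'$, then by the previous paragraph $\rho_\alpha=\rho_\beta=\rho_{\beta'}$, while $\sigma(\beta)=\sigma(\beta')\ne\sigma(\alpha)$; this contradicts Lemma~\ref{lem:prp1}(v), which asserts that distinct $\rthin$ classes sharing a root must have distinct colors. Hence reciprocal adjacency matches the $\rthin$ classes into disjoint (unordered) pairs, with some classes left unmatched. Every reciprocal edge lies between a class and its unique partner, so each connected component of $G$ is either the complete bipartite graph $K_{|\alpha|,|\beta|}$ on a matched pair $\{\alpha,\beta\}$, or an unmatched class inducing isolated vertices (a degenerate complete bipartite graph). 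Therefore $G$ is the disjoint union of complete bipartite graphs.

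I expect the main obstacle to be the reduction to the connected case: one must verify carefully that passing to a connected component of the cBMG commutes with taking the symmetric part and yields a graph to which Lemma~\ref{lem:rthin-cases} and Lemma~\ref{lem:prp1}(v) apply. The internal combinatorics---that case~(i) is equivalent to a complete bipartite block and that no class has two reciprocal partners---is then straightforward.
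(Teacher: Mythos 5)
Your proof is correct and follows essentially the same route as the paper: Lemma~\ref{lem:rthin-cases} identifies reciprocal adjacency with case~(i), i.e.\ a shared root, and Lemma~\ref{lem:prp1}(v) then forces at most two $\rthin$ classes per root, so the components are complete bipartite blocks plus isolated vertices. The only difference is that you spell out the reduction to a connected component and the matching argument, which the paper's proof leaves implicit.
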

\begin{proof}
  By Lemma~\ref{lem:rthin-cases} there are arcs $(x,y)$ and $(y,x)$ if and
  only if $x\in\alpha\subseteq N(\beta)$ and $y\in\beta\subseteq
  N(\alpha)$. In this case $\rho_{\alpha}=\rho_{\beta}$. By
  Lemma~\ref{lem:prp1}(v) then $\sigma(\alpha)\ne\sigma(\beta)$. The same
  results also implies in a 2-cRBMG there are at most two $\rthin$ classes
  with the same root. Thus the connected components of a 2-cRBMG are the
  complete bipartite graphs formed by pairs of $\rthin$ classes with a
  common root, as well as isolated vertices corresponding to all other
  leaves of $T$. 
   
\end{proof}
The converse, however, is not true, as shown by the counterexample in
Figure~\ref{fig:counterex_sym}. The complete characterization of cRBMGs
does not seem to follow in a straightforward manner from the properties of
the underlying cBMGs. It will therefore be addressed elsewhere.

\section{Concluding Remarks} 

The main result of this contribution is a complete characterization of
colored best match graphs (cBMGs), a class of digraphs that arises
naturally at the first stage of many of the widely used computational
methods for orthology assignment. A cBMG $(G,\sigma)$ is explained by a
unique least resolved tree $(T,\sigma)$, which is displayed by the true
underlying tree. We have shown here that cBMGs can be recognized in cubic
time (in the number of genes) and with the same complexity it is possible
to reconstruct the unique least resolved tree $(T,\sigma)$. \NEW{Related
  graph classes, for instance directed cographs \cite{Crespelle:06}, which
  appear in generalizations of orthology relations \cite{Hellmuth:17a}, or
  the Fitch graphs associated with horizontal gene transfer
  \cite{Geiss:18a}, have characterizations in terms of forbidden induced
  subgraphs.  We suspect that this not the case for best match graphs
  because they are not hereditary.}

Reciprocal best match graphs, i.e., the symmetric subgraph of $(G,\sigma)$,
form the link between cBMGs and orthology relations.  The characterization
of cRBMGs, somewhat surprisingly, does not seem to be a simple consequence
of the results on cBMGs presented here. We will address this issue in
future work.

Several other questions seem to be appealing for future work. Most
importantly, what if the vertex coloring is not known \emph{a priori}? What
are the properties of BMGs in general? For connected 2-cBMGs the question
is simple, since the bipartition is easily found by a breadth first search.
In general, however, we suspect that -- similar to many other coloring
problems -- it is difficult to decide whether a digraph $G$ admits a
coloring $\sigma$ with $n=|S|$ colors such that $(G,\sigma)$ is an
$n$-cBMG. In the same vein, we may ask for the smallest number $n$ of
colors, if it exists, such that $G$ can be colored as an $n$-cBMG.

\NEW{As discussed in the introduction, usually sequence similarities are
  computed. In the presence of large differences in evolutionary rates
  between paralogous groups, maximal sequence similarity does not guarantee
  maximal evolutionary relatedness. It is often possible, however, to
  identify such problematic cases. Suppose the three species $a$, $b$, and
  $c$ form a triple $ab|c$ that is trustworthy due to independent
  phylogenetic information. Now consider a gene $x$ in $a$, two candidate
  best matches $y'$ and $y''$ in $b$, and a candidate best match $z$ in
  $c$. To decide whether $\lca(x,y')\prec \lca(x,y'')$ or not, we can use
  the support for the three possible unrooted quadruples formed by the
  sequences $\{x,y',y'',z\}$ to decide whether
  $\lca(x,y')\prec \lca(x,y'')$, which can be readily computed as the
  likelihoods of the three quadruples or using quartet-mapping
  \cite{NieseltStruwe:01}.  If the best supported quadruples is
  $(xy'|y''z)$ or $(xy''|y'z)$ it is very likely that
  $\lca(x,y')\prec \lca(x,y'')$ or $\lca(x,y'')\prec \lca(x,y')$,
  respectively, while $(xz|y'y'')$ typically indicates
  $\lca(x,y'')=\lca(x,y')$. This inference is correct as long a $z$ is
  correctly identified as outgroup to $x,y',y''$, which is very likely
  since all three of $y',y'',z$ are candidate best matches of $x$ in the
  first place.  Aggregating evidence over different choices of $z$ thus
  could be used to increase the confidence.  An empirical evaluation of
  this approach to improve \texttt{blast}-based best hit data is the
  subject of ongoing research.}

From a data analysis point of view, finally, it is of interest to ask
whether an $n$-colored digraph $(G,\sigma)$ that is not a cBMG can be
edited by adding and removing arcs to an $n$-cBMG. \NEW{This idea has been
  used successfully to obtain orthologs from noisy, empirical reciprocal
  best hit data, see e.g.\
  \citet{Hellmuth:13a,Lafond:14,Hellmuth:15a,Lafond:16,Dondi:17}. We
  propose that a step-wise approach could further improve the accuracy of
  orthology detection. In the first step, empirical (reciprocal) best hit
  data obtained with \texttt{ProteinOrtho} or a similar tool would be
  edited to conform to a cBMG or a cRBMG. These improved data are edited in
  a second step to the co-graph structure of an orthology relation. Details
  on cRBMGs and their connections with orthology will be discussed in
  forthcoming work.}

\section*{Acknowledgements}
  The nucleus of this work was performed during a retreat meeting around
  New Year 2018 in Gro{\ss}lobming, Styria, Austria. Support the by the
  German Academic Exchange Service (DAAD, PROALMEX grant no.\ 57274200),
  the Mexican Consejo Nacional de Ciencia y Tecnolog{\'i}a (CONACyT, 278966
  FONCICYT 2), and the German Federal Ministry of Education and Research
  (BMBF, project no.\ 031A538A, de.NBI-RBC) is gratefully acknowledged.


\clearpage

\newtheorem{innercustomgeneric}{\customgenericname}
\providecommand{\customgenericname}{}
\newcommand{\newcustomtheorem}[2]{%
        \newenvironment{#1}[1]
        {%
                \renewcommand\customgenericname{#2}%
                \renewcommand\theinnercustomgeneric{##1}%
                \innercustomgeneric
        }
        {\endinnercustomgeneric}
}

\newcustomtheorem{customthm}{Theorem}
\newcommand{\G}{G}

\centerline{\Large Corrigendum to ``Best Match Graphs''}

\begin{center}\large
  David Schaller$^1$
  Manuela Gei{\ss}$^2$ 
  Edgar Ch{\'a}vez$^3$
  Marcos Gonz{\'a}lez Laffitte$^3$
  Alitzel L{\'o}pez S{\'a}nchez$^4$
  B{\"a}rbel M.\ R.\ Stadler$^1$
  Dulce I.\ Valdivia$^5$
  Marc Hellmuth$^6$
  Maribel Hern{\'a}ndez Rosales$^5$
  Peter F.\ Stadler$^{1,2,7-10}$
\end{center}

\begin{center}\small
$^1$Max-Planck-Institute for Mathematics in the Sciences,
  Inselstra{\ss}e 22, D-04103 Leipzig\\
  \email{sdavid@bioinf.uni-leipzig.de, baer@bioinf.uni-leipzig.de}\\
$^2$Software Competence Center Hagenberg GmbH,
  Softwarepark 21, 4232 Hagenberg, Austria\\
  \email{manuela.geiss@scch.at}\\
$^3$Instituto de Matem{\'a}ticas, UNAM Juriquilla,
  Blvd.\ Juriquilla 3001,
  76230 Juriquilla, Quer{\'e}taro, QRO, M{\'e}xico\\
  \email{echavezaparicio@gmail.com, marcoslaffitte@im.unam.mx}\\
$^4$Department of Computer Science,
  Universit{\'e} de Sherbrooke,
  2500 Boul. de l'Universit{\'e}, Sherbrooke, J1K 2R1, Canada\\
  \email{Alitzel.Lopez.Sanchez@USherbrooke.ca}\\
$^5$Centro de Investigaci{\'o}n y de Estudios Avanzandos del IPN
  (CINVESTAV), Irapuato Unit,
  Libramiento Norte, Carretera Panamericana Irapuato-León,
  Kil{\'o}metro 9.6, 36821 Irapuato, Gto., M{\'e}xico\\
  \email{dulce.valdivia@cinvestav.mx, maribel.hr@cinvestav.mx}
$^6$School of Computing, University of Leeds, E C Stoner Building,
  Leeds LS2 9JT, UK\\
  \email{mhellmuth@mailbox.org}
$^7$ Bioinformatics Group, Department of Computer Science;
  Interdisciplinary Center of Bioinformatics;
  German Centre for Integrative Biodiversity Research (iDiv)
  Halle-Jena-Leipzig; Competence Center for Scalable Data Services
  and Solutions; and Leipzig Research Center for Civilization Diseases,
  Leipzig University,
  H{\"a}rtelstra{\ss}e 16-18, D-04107 Leipzig\\
  \email{studla@bioinf.uni-leipzig.de}\\
$^8$Inst.\ f.\ Theoretical Chemistry, University of Vienna,
  W{\"a}hringerstra{\ss}e 17, A-1090 Wien, Austria\\
$^9$Facultad de Ciencias, Universidad National de Colombia, Sede
Bogot{\'a}, Colombia\\
$^{10}$Santa Fe Institute, 1399 Hyde Park Rd., Santa Fe,
  NM 87501, USA 
\end{center}

\marginpar{\color{blue}\scriptsize The corrigendum refers to the
  published version of this manuscript.}
\begin{abstract}
  Two errors in the article \emph{Best Match Graphs} [Gei{\ss} et al., J
  Math Biol (2019) 78: 2015-2057] are corrected. One concerns the tacit
  assumption that digraphs are sink-free, which has to be added as an
  additional precondition in Lemma~9, Lemma~11, and Theorem~4. The second
  correction concerns an additional necessary condition in Theorem~9
  required to characterize best match graphs.
\end{abstract}

\section*{Best match graphs (BMGs) must be sink-free}

Throughout \cite{Geiss:19a} we have tacitly assumed that all vertex-colored
digraphs $(\G,\sigma)$ satisfy the following property, which, by construction,
is true for all colored best match graphs (cBMGs):
\begin{itemize}
\item[]\textit{For each vertex $x$ with color $\sigma(x)$, there is an arc
    $xy$ \emph{to} at least one vertex $y$ of every other color
    $\sigma(y)\neq \sigma(x)$.}
\end{itemize}
All properly 2-colored digraphs appearing in the text are therefore assumed
to be sink-free, i.e., the out-neighborhoods of their $\rthin$-classes are
assumed to be non-empty:
\begin{description}
\item[\AX{(N4)}] $N(\alpha)\ne\emptyset$ for all $\alpha\in\mathcal{N}$.
\end{description}
This assumption was not clearly stated in the text.

Property \AX{(N4)} is required in the proof of Lemma~9 [last line on page
2032]: Here $R(\beta)\cap R(\alpha^*)=\emptyset$ only implies
$\beta\subseteq R(\alpha)$ for the $\rthin$ classes $\beta$ with
$R(\beta)\subseteq R(\alpha)$ if $R(\beta)\ne\emptyset$, which in turn is
equivalent to $N(\beta)\ne\emptyset$. Furthermore, $N(\alpha)=\emptyset$
implies $Q(\alpha)=\alpha$ and thus $R'(\alpha)=\alpha$.  Property
\AX{(N4)} is therefore also necessary to ensure that $|R'(\alpha)|>1$ and
thus that the tree $T(\mathscr{H}')$ is phylogenetic [page 2035, just
before Thm.~4]. In summary, Lemma~9, Lemma~11, and Theorem~4
\marginpar{\color{blue}\scriptsize Thm.4 is Thm.6 in the preprint version.}
require
\AX{(N4)} as additional precondition.


\section*{Corrected Characterization of $n$-cBMGs}

The second paragraph of the proof of Theorem 9
\marginpar{\color{blue}\scriptsize Thm.9 is Thm.14 in the preprint version.}
in \cite[page
2045]{Geiss:19a} incorrectly states that ``\emph{for any
  $\alpha\in\mathcal{N}$ and any color $s\neq\sigma(\alpha)$ the
  out-neighborhood $N_s(\alpha)$ is the same w.r.t.\ $(T_{st},\sigma_{st})$
  and w.r.t.\ $\Aho(R)$.}'', leading to the incorrect conclusion that
$\G(\Aho(R),\sigma)=(\G,\sigma)$ whenever $\Aho(R)$ exists. We recall that
the triple set $R$ is defined as the union
\begin{equation*}
  R \coloneqq \bigcup_{s,t\in S} r(T^*_{st})
\end{equation*}
of all triples in the least resolved trees $(T^*_{st},\sigma_{st})$ that
explain the induced subgraphs $(\G_{st},\sigma_{st})$ of $(\G,\sigma)$, and
the Aho tree $\Aho(R)$ is defined on the leaf set $L=V(\G)$ (which may not
have been clear from the wording in the text). We shall show in
Prop.~\ref{prop:aho-explains-subgraph} below that $\G(\Aho(R),\sigma)$ is
always a subgraph of $(\G,\sigma)$ whenever $R$ is consistent.  The example
in Fig.~\ref{fig:counterex-thm9} shows, however, that
$\G(\Aho(R),\sigma)\ne(\G,\sigma)$ is possible because $\Aho(R)$ can
contain triples that are not present in any of the 2-colored trees
$(T_{st},\sigma_{st})$.

\begin{figure}[ht]
  \begin{center}
    \includegraphics[width=0.95\linewidth]{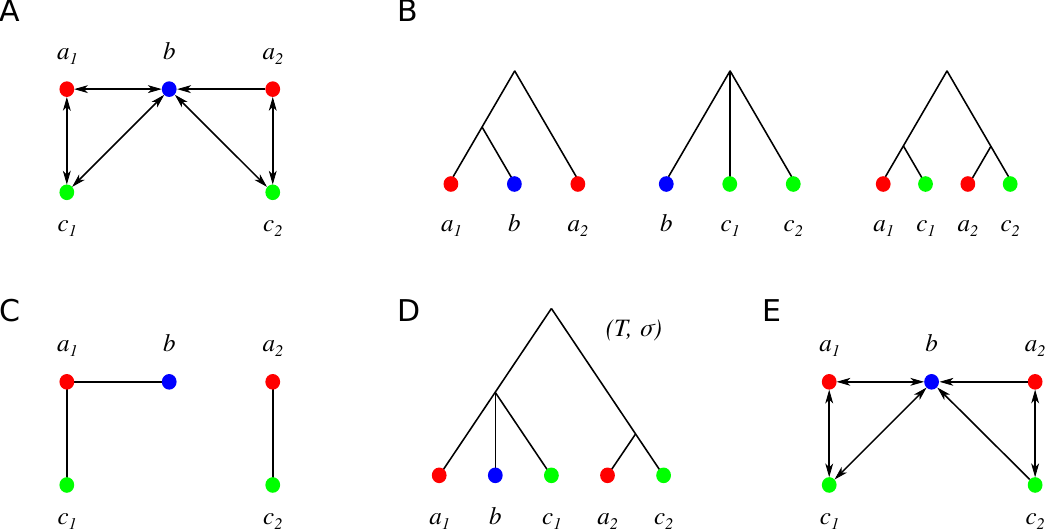}
  \end{center}
  \caption[]{Counterexample for the original version of Theorem~9. (A) A
    colored digraph with vertex set $L$ that is not a 3-cBMG. (B) The least
    resolved subtrees for the three 2-colored induced subgraphs. The
    union of their triples is
    $R\coloneqq\{a_1b|a_2,\; a_1c_1|a_2,\; a_1c_1|c_2,\; a_2c_2|a_1,\;
    a_2c_2|c_1\}$.  (C) The Aho-graph $[R, L]$. In particular, $R$ forms a
    consistent set. (D) The tree $T\coloneqq\Aho(R)$.  (E) The $3$-cBMG
    $\G(T,\sigma)$.  The arc $bc_2$ that was present in $(\G,\sigma)$ is
    missing in $\G(T,\sigma)$.}
  \label{fig:counterex-thm9}
\end{figure}

As a consequence, the characterization of $n$-cBMGs requires the equality
$(\G,\sigma)=\G(\Aho(R),\sigma)$ as an additional condition. The corrected
result, with the correction underlined, reads as follows:
\begin{customthm}{9}
  A connected colored digraph $(\G,\sigma)$ is an $n$-cBMG if and only if
  (i) all induced subgraphs $(\G_{st},\sigma_{st})$ on two colors are
  2-cBMGs, (ii) the union $R$ of all triples obtained from their least
  resolved trees $(T_{st},\sigma_{st})$ forms a consistent set, 
  \underline{and (iii) $\G(\Aho(R),\sigma)=(\G,\sigma)$}. In 
    particular, $(\Aho(R),\sigma)$ is the
  unique least resolved tree that explains $(\G,\sigma)$.
\end{customthm}
Condition (i) in Thm.~9, i.e., the requirement that all 2-colored induced
subgraphs $(\G_{st},\sigma_{st})$ of $(\G,\sigma)$ are 2-cBMGs, is
necessary to ensure that the least resolved trees $(T_{st},\sigma_{st})$
exist and thus that the triple sets $r(T_{st})$ -- and therefore also the
set $R$ of all triples displayed by the 2-colored induced subgraphs -- are
well-defined. Consistency of $R$ is necessary for the existence of
$\Aho(R)$. Clearly, Condition (iii) is sufficient to ensure that
$(\G,\sigma)$ is an $n$-cBMG. Hence, it remains to show that Condition
(iii) is also necessary. This is achieved in Prop.~\ref{prop:AhoR} below.

Before we proceed, we note that none of the necessary corrections has
consequences for the algorithmic aspects outlined in Section~5 of
\cite{Geiss:19a}. Sink-freeness can be checked trivially in $O(|E|)$
time. Regarding the recognition of $n$-cBMGs, it suffices to construct the
tree $T\coloneqq \Aho(\mathscr{R}(\G,\sigma))$, which can be done in
$O(|E||L|\log^2(|E||L|))$ time using the algorithm in \cite{Deng:18}.  The
construction of $\G(T,\sigma)$ can then be achieved in $O(|L|^2)$ time
e.g.\ using Algorithm~1 in the Supplement of \cite{Geiss:20b}, and the
final check whether $\G(T,\sigma)=(\G,\sigma)$ also requires $O(|L|^2)$
operations. The total effort therefore remains dominated by the
construction of the least resolved tree $T$.

\section*{Proof of Theorem 9}
\marginpar{\color{blue}\scriptsize Thm.9 is Thm.14 in the preprint version.}
Instead of proving the corrected version of Thm.~9 directly, we first state
and prove a slightly stronger and more convenient result,
Thm.~\ref{thm9_new} below, and then proceed to derive Thm.~9.  To this end,
we first generalize Def.~8 in \cite[page 2036]{Geiss:19a} to digraphs with
an arbitrary number of colors:
\begin{definition}{\cite[Def.~2.7]{Schaller:20}}\label{def:informative_triples}
  Let $(\G,\sigma)$ be a colored digraph. We say that a triple $ab|b'$ is
  \emph{informative} for $(\G,\sigma)$ if $a$, $b$ and $b'$ are pairwise
  distinct vertices in $\G$ such that (i)
  $\sigma(a)\neq\sigma(b)=\sigma(b')$ and (ii) $ab\in E(\G)$ and
  $ab'\notin E(\G)$. The set of informative triples is denoted by
  $\mathscr{R}(\G,\sigma)$.
\end{definition}
We briefly argue that, for 2-colored digraphs, the definition of
informative triples given here is equivalent to the one given in
\cite{Geiss:19a}: By definition, an informative triple of some colored
digraph has vertices with exactly two colors, and thus is also an
informative triple in one of its 2-colored induced subgraphs. It is easy to
check that, for 2-colored digraphs, Def.~\ref{def:informative_triples} is
equivalent to Def.~8 in \cite{Geiss:19a}, since the four induced subgraphs
shown in Fig.~8 in \cite[page 2036]{Geiss:19a} correspond to the presence
or absence of the two optional arcs $ba$ and $ca$ in the informative triple
$ab|c$ (as defined here).

We will also make use of a generalization of Lemma~12 in \cite[page
2036]{Geiss:19a}:
\begin{lemma}{\cite[Lemma~2.8]{Schaller:20}}
  \label{lem:informative_triples}
  Let $(\G,\sigma)$ be an $n$-cBMG and $ab|b'$ an informative triple for
  $(\G,\sigma)$.  Then, every tree $(T,\sigma)$ that explains $(\G,\sigma)$ 
  displays
  the triple $ab|b'$, i.e. $\lca_T(a,b)\prec_T\lca_T(a,b')=\lca_T(b,b')$.
\end{lemma}
Given a digraph $(\G,\sigma)$ for which $R$ exists,
Lemma~\ref{lem:informative_triples} in particular implies that
\begin{equation}
  \mathscr{R}(\G,\sigma)\subseteq R.
  \label{eq:1}
\end{equation}
With these preliminaries, we are ready to formulate our new main result as
\begin{theorem}
  A colored digraph $(\G,\sigma)$ is an $n$-cBMG if and only if
  $\G(\Aho(\mathscr{R}(\G,\sigma)),\sigma) = (\G,\sigma)$.  Moreover,
  $\Aho(\mathscr{R}(\G,\sigma))$ is the unique least resolved tree
  explaining an n-cBMG $(\G,\sigma)$.
  \label{thm9_new}
\end{theorem}

In the proof of Thm.~\ref{thm9_new}, we will need the following,
simplified, characterization of redundant edges:
\begin{lemma}[\textnormal{\citet{Schaller:20}, Lemma~2.10}]
  \label{lem:redundant_edges}
  Let $(\G,\sigma)$ be an $n$-cBMG explained by a tree $(T,\sigma)$.  The
  edge $e=uv$ with $v\prec_T u$ in $(T,\sigma)$ is redundant w.r.t.\
  $(\G,\sigma)$ if and only if (i) $e$ is an inner edge of $T$ and (ii)
  there is no arc $ab\in E(\G)$ such that $\lca_T(a,b)=v$ and
  $\sigma(b)\in \sigma(L(T(u))\setminus L(T(v)))$.
\end{lemma}
We note that the proofs of Lemma~\ref{lem:informative_triples}
\cite[Lemma~2.8]{Schaller:20} and Lemma~\ref{lem:redundant_edges}
\cite[Lemma~2.10]{Schaller:20} only require the definition of best match
graphs, and are thus independent of the results proved in \cite{Geiss:19a}.

Following \citet{bryant1995extension}, an inner edge $e$ of a rooted tree
$T$ is \emph{distinguished} by a triple $ab|c\in r(T)$ if the path from $a$
to $c$ in $T$ intersects the path from $b$ to the root $\rho_T$ precisely
on the edge e.  In other words, $e=uv$ with $v\prec_T u$ is distinguished
by $ab|c$ if $\lca_T(a, b) = v$ and $\lca_T(a, b, c) = u$.
Lemma~\ref{lem:redundant_edges} immediately implies the following
generalization of Lemma~13 in \cite{Geiss:19a}:
\begin{corollary}
  Let $(\G,\sigma)$ be an $n$-cBMG explained by a tree $(T,\sigma)$.  An
  inner edge $e$ of $(T,\sigma)$ is non-redundant w.r.t.\ $(\G,\sigma)$ if
  and only if it is distinguished by an informative triple $ab|b'$ for
  $(\G,\sigma)$.  In particular, if $(T,\sigma)$ is least resolved, then
  each of its inner edges is distinguished by an informative triple.
  \label{cor:non-redundant}
\end{corollary}

In addition, we will need the following two technical results relating
subtrees and induced subgraphs of $n$-cBMGs.
\begin{lemma}\label{lem:subgrap}
  Let $(T,\sigma)$ be a tree explaining an $n$-cBMG $(\G,\sigma)$.  Then
  $\G(T(u),\sigma_{|L(T(u))}) = (\G[L(T(u))],\sigma_{|L(T(u))})$ holds for
  every $u\in V(T)$.
\end{lemma}
\begin{proof}
  Let
  $(\G_1, \sigma')\coloneqq \G\left(T(u), \sigma_{|L(T(u))}\right)$ and
  $(\G_2, \sigma')\coloneqq (\G[L(T(u))],\sigma_{|L(T(u))})$.  By definition,
  we have $V(\G_1)=V(\G_2)=L(T(u))$.  First assume that $xy\in E(\G_1)$
  for some $x,y\in L(T(u))$.  Hence, it holds
  $\lca_{T(u)}(x,y)\preceq_{T(u)}\lca_{T(u)}(x,y')$ for all $y'$ with
  $\sigma(y)=\sigma(y')$ in $T(u)$ and thus, since $T(u)$ is a subtree of
  $T$, we have $\lca_T(x,y)\preceq_T\lca_T(x,y')$ for all $y'$ with
  $\sigma(y)=\sigma(y')$ in $T$. Therefore, $xy\in E(\G)$.  Since
  $x,y\in L(T(u))$ and $\G_2$ is the subgraph of $\G$ induced by $L(T(u))$,
  we have $xy\in E(\G_2)$ and thus $E(\G_1)\subseteq E(\G_2)$. Now assume
  $xy\in E(\G_2)$ for some $x,y\in L(T(u))$.  Hence, $xy\in E(\G)$.
  Consequently, there is no leaf $y'$ in $T$ with
  $\sigma(y')=\sigma(y)\ne\sigma(x)$ such that
  $\lca_{T}(x,y')\prec_{T}\lca_{T}(x,y)\preceq_{T} u$.  This clearly also
  holds for the subtree $T(u)$. Therefore, we have $xy\in E(\G_1)$ and
  thus $E(\G_2)\subseteq E(\G_1)$.  
\end{proof}

\begin{lemma}
  \label{lem:least}
  If $(T,\sigma)$ is least resolved for an $n$-cBMG $(\G,\sigma)$, then the
  subtree $T(u)$ is least resolved for the $n$-cBMG
  $\G(T(u),\sigma_{|L(T(u))})$ for each $u\in V(T)$.
\end{lemma}
\begin{proof}
  The statement is trivially satisfied if $T(u)$ does not contain any inner
  edges, which is exactly the case if either $u\in L(T)$ or $u\in V^0(T)$
  with $\child_T(u)\subseteq L(T)$. Thus, let $u\in V^0(T)$ and
  $\child_T(u)\cap V^0(T)\neq \emptyset$.  Since $(T,\sigma)$ is least
  resolved, it does not contain redundant edges.  Let $vw$ be an inner edge
  of $T(u)$ with $w\prec_T v\preceq_T u$, and note that $vw$ must also be
  an inner edge in $T$.  By Lemma~\ref{lem:redundant_edges} and since $vw$
  is not redundant in $T$, there is an arc $ab\in E(\G)$ such that
  $\lca_T(a,b)=w$ and $\sigma(b)\in \sigma(L(T(v))\setminus L(T(w)))$.
  Since $u\succeq_T v$, Lemma~\ref{lem:subgrap} implies that $ab$ is also
  an arc in $\G(T(u),\sigma_{|L(T(u))})$ and $\lca_{T(u)}(a,b)=v$.  Hence,
  in particular, we have $\sigma(b)\in\sigma(L(T(v))\setminus L(T(w)))$. We
  can now apply Lemma~\ref{lem:redundant_edges} to conclude that $vw$ is
  not redundant in $T(u)$.  Since $vw$ was chosen arbitrarily, we conclude
  that $T(u)$ does not contain any redundant edge and thus, it must be
  least resolved for $\G(T(u),\sigma_{|L(T(u))})$ for all $u\in V(T)$.
\end{proof}
We finally relate the subtrees $T(u)$ to the construction of the Aho-graph
as specified in \cite[Sec.~3.4]{Geiss:19a}. Given a set of triples $R$
on $L$, we will write $R_{|L'}$ for the set of triples $ab|c\in R$ with
$a,b,c\in L'\subseteq L$.

\begin{lemma}\label{lem:bmg-aho-components}
  Let $(T,\sigma)$ be least resolved for an $n$-cBMG $(\G,\sigma)$ with
  informative triple set $\mathscr{R}\coloneqq\mathscr{R}(\G,\sigma)$.
  Then, $L(T(v))$ is a connected component in the Aho-graph
  $[\mathscr{R}_{|L(T(u))}, L(T(u))]$ for every inner vertex $u$ and each
  of its children $v\in \child_{T}(u)$.
\end{lemma}
\begin{proof}
  We proceed by induction on $L \coloneqq V(\G)$. The statement
  trivially holds for $|L|=1$.  Hence, suppose that $|L|>1$ and assume that
  the statement is true for every $n$-cBMG with less than $|L|$ vertices.

  Let $u$ be an inner vertex of $T$ and $v\in \child_{T}(u)$.  If $uv$ is
  an outer edge, i.e.\ $v$ is a leaf, then $L(T(v))$ is trivially
  connected. Now suppose that $uv$ is an inner edge of $T$.  By
  Lemma~\ref{lem:subgrap} and~\ref{lem:least},
  $(\G[L(T(v))],\sigma_{|L(T(v))})$ is explained by the least resolved tree
  $(T(v), \sigma_{|L(T(v))})$. By the induction hypothesis, $L(T(w))$ forms
  a connected component in $[\mathscr{R}_{|L(T(v))}, L(T(v))]$ for all
  children $w\in \child_T(v)$.  Together with
  $\mathscr{R}_{|L(T(v))}\subseteq \mathscr{R}_{|L(T(u))}$, this implies
  that the elements in $L(T(w))$ are also connected in
  $[\mathscr{R}_{|L(T(u))}, L(T(u))]$ for all $w\in \child_T(v)$.  Since
  $uv$ is an inner edge of the least resolved tree $(T,\sigma)$, we can
  apply Cor.~\ref{cor:non-redundant} to conclude that there is an
  informative triple $ab|b'$ in $(\G,\sigma)$ that distinguishes $uv$,
  i.e.\ $\lca_T(a,b)=v$ and $b'\in L(T(u))\setminus L(T(v))$ with color
  $\sigma(b')=\sigma(b)$.  Hence, $ab|b'$ is also contained in
  $[\mathscr{R}_{|L(T(u))}, L(T(u))]$.  In particular, there are children
  $w,w'\in \child_T(v)$ such that $a\preceq_T w$ and $b\preceq_T w'$, and
  the edge $ab$ connects $L(T(w))$ and $L(T(w'))$ in
  $[\mathscr{R}_{|L(T(u))}, L(T(u))]$.

  Now suppose that there is an additional child
  $w''\in \child_T(v)\setminus\{w,w'\}$. We distinguish two cases. Either
  there is a leaf $b''\preceq_T w''$ with $\sigma(b'')=\sigma(b)$ or
  no such leaf exists.  If there is such a leaf $b''$, then $ab''$
  forms an arc in $(\G,\sigma)$ and $ab''|b'$ is an informative triple
  making $L(T(w))$ and $L(T(w''))$ connected in
  $[\mathscr{R}_{|L(T(u))}, L(T(u))]$. Otherwise, take an arbitrary leaf
  $c\preceq_T w''$.  Since $\sigma(b)\notin\sigma(L(T(w'')))$, we have
  $\sigma(c)\neq \sigma(b)$ and thus, there is an arc $cb$ in
  $(\G,\sigma)$.  Since $\lca_T(c,b') = u \succ_T v = \lca_T(c,b)$, the arc
  $cb'$ is not contained in $(\G,\sigma)$.  Hence, $cb|b'$ is an
  informative triple making $L(T(w'))$ and $L(T(w''))$ connected in
  $[\mathscr{R}_{|L(T(u))}, L(T(u))]$.
	
  Therefore, the subgraph in $[\mathscr{R}_{|L(T(u))}, L(T(u))]$ induced by 
  $L(T(v))$ must be connected.

  It remains to show that $L(T(v))$ is a connected component in
  $[\mathscr{R}_{|L(T(u))}, L(T(u))]$ and thus, that there are no edges
  $ab$ in $[\mathscr{R}_{|L(T(u))}, L(T(u))]$ with $a\in L(T(v))$ and
  $b\in L(T(u))\setminus L(T(v))$.  Assume, for contradiction, that there
  exists such an edge $ab$.  Hence, this edge must be supported by an
  informative triple w.l.o.g.\ $ab|b'$ with
  $\sigma(a)\neq \sigma(b)=\sigma(b')$ and $b'\in L(T(u))$.  Lemma
  \ref{lem:informative_triples} implies that $ab|b'$ must be displayed by
  $T$. However, $\lca_T(a,b) = u = \lca_T(a,b,b')$ implies that such a
  triple cannot exit. Thus, $L(T(v))$ is a connected component in
  $[\mathscr{R}_{|L(T(u))}, L(T(u))]$. 
\end{proof}
The least resolved tree of an $n$-cBMG therefore coincides with the Aho
tree of its informative triples. In more detail, we have
\begin{proposition}\label{prop:aho-unique-lrt}
  If $(\G,\sigma)$ is an $n$-cBMG, then
  $(\Aho(\mathscr{R}(\G,\sigma)),\sigma)$ is the unique least resolved tree
  for $(\G,\sigma)$.
\end{proposition}
\begin{proof}
  Since $(\G,\sigma)$ is an $n$-cBMG, Lemma~\ref{lem:informative_triples}
  implies that there is a tree displaying all triples in
  $\mathscr{R}(\G,\sigma)$.  In particular, therefore,
  $\Aho(\mathscr{R}(\G,\sigma))$ exists.  Moreover, there must be a least
  resolved tree $(T^*,\sigma)$ for $(\G,\sigma)$.  To see this, consider an
  arbitrary tree $(T,\sigma)$ that explains $(\G,\sigma)$, and repeatedly
  identify and contract a redundant edge until no redundant edges remain.
  By definition, the resulting tree still explains $(\G,\sigma)$ and is
  least resolved.  By Lemma~\ref{lem:bmg-aho-components} and by
  construction of $(\Aho(\mathscr{R}(\G,\sigma)),\sigma)$, any least
  resolved tree $(T^*,\sigma)$ for $(\G,\sigma)$ coincides with the latter.
  The uniqueness of $\Aho(\mathscr{R}(\G,\sigma))$ therefore implies that
  the least resolved tree is also unique.
\end{proof}

We now have all the pieces in place to complete the proof of the main
result:
\begin{proof}\emph{of Theorem~\ref{thm9_new}}
  If $(\G,\sigma)$ is an
  $n$-cBMG, then Prop.~\ref{prop:aho-unique-lrt} implies that
  $(\Aho(\mathscr{R}(\G,\sigma)),\sigma)$ is its unique least resolved
  tree, and thus $\G(\Aho(\mathscr{R}(\G,\sigma)),\sigma) = (\G,\sigma)$.
  Conversely, $\G(\Aho(\mathscr{R}(\G,\sigma)),\sigma)$ is an $n$-cBMG.
\end{proof}

None of the intermediate results used to prove Thm.~9 in
\cite{Geiss:19a} were used in our proof of Thm.~\ref{thm9_new}. It is
worth noting that Thm.~\ref{thm9_new} immediately implies Thms.~5, 6,
and~7, as well as the existence of a unique least resolved tree in
Thms.~2 and~8 of \cite{Geiss:19a}. It allows us to obtain the least
resolved tree of an $n$-cBMG without the need to explicitly construct the
least resolved trees of all its 2-colored induced subgraphs.

To show the correctness of Thm.~9, it only remains to show
\begin{proposition}\label{prop:AhoR}
  If $(\G,\sigma)$ is a $n$-cBMG, then
  $\Aho(\mathscr{R}(\G,\sigma))=\Aho(R)$.
\end{proposition}
\begin{proof}
  For brevity set $\mathscr{R}\coloneqq\mathscr{R}(\G,\sigma)$. From
  Eq.~(\ref{eq:1}), i.e., $\mathscr{R}\subseteq R$, we immediately have
  $\mathscr{R}_{|L(T(u))}\subseteq R_{|L(T(u))}$ for every inner vertex $u$
  of $T$. Moreover, by Thm.~\ref{thm9_new}, $(T,\sigma)$ with
  $T\coloneqq \Aho(\mathscr{R})$ is the least resolved tree that explains
  $(\G,\sigma)$.

  Hence, we can apply the same arguments as in the proof of
  Lemma~\ref{lem:bmg-aho-components} to conclude that $L(T(v))$ forms a
  connected component in the Aho-graph $[R_{|L(T(u))}, L(T(u))]$ for every
  inner vertex $u$ and each of its children $v\in \child_{T}(u)$. More
  precisely, note that connectedness of any such $L(T(v))$ is guaranteed by
  the informative triples.  Now assume, for contradiction, that there is an
  edge $ab$ in $[R_{|L(T(u))}, L(T(u))]$ with $a\in L(T(v))$ and
  $b\in L(T(u))\setminus L(T(v))$ connecting $L(T(v))$ and $L(T(v'))$ for
  some child $v'\in \child_T(u)\setminus\{v\}$.  In this case, there is a
  triple $ab|c\in R_{|L(T(u))}$ and thus, $a,b,c\in L(T(u))$ and
  $\lca_T(a,b,c)=u$. By definition of $R$ and Observation 4 in
  \cite{Geiss:19a}, $ab|c$ must be displayed by $T$. However,
  $a,b,c\in L(T(u))$ and $\lca_{T}(a,b)=u =\lca_{T}(a,b,c)$ imply that
  $ab|c$ is not displayed by $T$; a contradiction. Therefore,
  $(T,\sigma)=(\Aho(R),\sigma)$, which completes the proof. 
\end{proof}

For completeness, we show that conditions (i) and (ii) of Thm.~9 
ensure that $G(\Aho(R),\sigma)$ and $G(\Aho(\mathscr{R}(\G,\sigma)),\sigma)$
are subgraphs of $(G,\sigma)$. 
\begin{proposition}\label{prop:aho-explains-subgraph}
  Let $(\G,\sigma)$ be a properly $n$-colored digraph with all
  2-colored induced subgraphs being 2-cBMGs.  Then the following two
  statements hold:
  \begin{enumerate}[noitemsep, nolistsep, label=(\roman*)]
    \item If $\mathscr{R}(\G,\sigma)$ is consistent, then 
    $\G(\Aho(\mathscr{R}(\G,\sigma)),\sigma)\subseteq(\G,\sigma)$.
    \item If $R$ is consistent, then
    $\G(\Aho(R),\sigma)\subseteq(\G,\sigma)$.
  \end{enumerate}
\end{proposition}
\begin{proof}
  We set $(\G',\sigma')\coloneqq\G(\Aho(\mathscr{R}(\G,\sigma)),\sigma)$.
  Since $\Aho(\mathscr{R}(\G,\sigma))$ is defined on $V(G)$, we have
  $V(\G')= V(\G)$ and $\sigma'=\sigma$.  Now assume, for contradiction,
  that there is an arc $ab\in E(\G')$ such that $ab\notin E(\G)$.  By
  assumption, the induced subgraph $(\G_{st},\sigma_{st})$ of $(G,\sigma)$,
  where $s=\sigma(a)$ and $t=\sigma(b)$, is a 2-cBMG and thus sink-free.
  Therefore, there must be a vertex $b'$ of color $\sigma(b)$ with
  $ab'\in E(\G)$.  Hence, $ab'|b$ is informative for $(\G,\sigma)$ and
  contained in $\mathscr{R}(\G,\sigma)$.  In particular, $ab'|b$ must be
  displayed by $\Aho(\mathscr{R}(\G,\sigma))$; contradicting that $ab$ is
  an arc in $(\G',\sigma')$.  Hence, statement~(i) is true.
  
  Statement~(ii) can be shown by similar arguments together with 
  Eq.~(\ref{eq:1}), i.e., $\mathscr{R}(\G,\sigma)\subseteq R$.
\end{proof}

\end{document}